\providecommand{\tabularnewline}{\\}
\numberwithin{equation}{section}
\theoremstyle{plain}
\newtheorem*{thm*}{\protect\theoremname}
\theoremstyle{plain}
\newtheorem{thm}{\protect\theoremname}[section]
\theoremstyle{plain}
\newtheorem{lem}[thm]{\protect\lemmaname}
\theoremstyle{plain}
\newtheorem{cor}[thm]{\protect\corollaryname}
\theoremstyle{plain}
\newtheorem{prop}[thm]{\protect\propositionname}
\theoremstyle{remark}
\newtheorem{rem}[thm]{\protect\remarkname}
\newcommand{\xyR}[1]{\xydef@\xymatrixrowsep@{#1}}
\newcommand{\xyC}[1]{\xydef@\xymatrixcolsep@{#1}}
\definecolor{green}{RGB}{0, 128, 0}
\providecommand{\corollaryname}{Corollary}
\providecommand{\lemmaname}{Lemma}
\providecommand{\propositionname}{Proposition}
\providecommand{\remarkname}{Remark}
\providecommand{\theoremname}{Theorem}
\begin{document}
\title{Critical points in higher dimensions, I: Reverse order of periodic
orbit creations in the Lozi family}
\author{Dyi-Shing Ou\thanks{Faculty of Applied Mathematics, AGH University of Science and Technology,
Poland\protect \\
\hspace*{1.8em}dsou@agh.edu.pl\protect \\
\hspace*{1.8em}ORCID: \protect\href{https://orcid.org/0000-0003-0979-3724}{0000-0003-0979-3724}\protect \\
\hspace*{1.8em}© 2022 Dyi-Shing Ou. This is available under the arXiv.org
perpetual, non-exclusive license. All other rights reserved.}}
\maketitle
\begin{abstract}
We introduce a renormalization model which explains how the behavior
of a discrete-time continuous dynamical system changes as the dimension
of the system varies. The model applies to some two-dimensional systems,
including Hénon and Lozi maps. Here, we focus on the orientation preserving
Lozi family, a two-parameter family of continuous piecewise affine
maps, and treat the family as a perturbation of the tent family from
one to two dimensions.

First, we give a new prove that all periodic orbits can be classified
by using symbolic dynamics. For each coding, the associated periodic
orbit depends on the parameters analytically on the domain of existence.
The creation or annihilation of periodic orbits happens when there
is a border collision bifurcation. Next, we prove that the bifurcation
parameters of some types of periodic orbits form analytic curves in
the parameter space. This improves a theorem of Ishii (1997). Finally,
we use the model and the analytic curves to prove that, when the Lozi
family is arbitrary close to the tent family, the order of periodic
orbit creation reverses. This shows that a forcing relation (Guckenheimer
1979 and Collett and Eckmann 1980) on orbit creations breaks down
in two dimensions. In fact, the forcing relation does not have a continuation
to two dimensions even when the family is arbitrary close to one dimension. 
\end{abstract}
\begin{description}
\item [{Keywords:}] Dynamical systems, Lozi maps, symbolic dynamics, border
collision bifurcations
\end{description}
\global\long\def\C{\mathbb{C}}%

\global\long\def\R{\mathbb{R}}%

\global\long\def\Ih{I^{H}}%

\global\long\def\Iv{I^{V}}%

\global\long\def\ParaSpace{P}%

\global\long\def\ParaFull{P_{\operatorname{full}}}%

\global\long\def\ParaModel{P_{\operatorname{mod}}}%

\global\long\def\ParaNbd{P_{\operatorname{nbd}}}%

\global\long\def\Lozi{\boldsymbol{\Lambda}}%

\global\long\def\FormalPeriodic{\boldsymbol{\theta}}%

\global\long\def\EqBreak{\allowbreak}%

\global\long\def\Identity{\operatorname{Id}}%

\global\long\def\Interior#1{\operatornamewithlimits{Interior}(#1)}%

\global\long\def\Sign{\sigma}%

\global\long\def\Shift{T}%

\paragraph*{Acknowledgment}

This work was supported by the National Science Centre, Poland (NCN),
grant no. 2019/34/E/ST1/00237. The research topic was inspired from
a conversation with Liviana Palmisano during the conference ``Low-dimensional
and Complex Dynamics, 2019'' in Switzerland. The author thanks Jan
Boro\'{n}ski for discussions on formulating the arguments and improving
the context. The author thanks Sonja Štimac for suggesting the paper
\cite{BSV09} to improve the conclusions. 

\section{Introduction}

Studies show that the dimension of a dynamical system may affect the
behavior of the trajectories. One example from continuous-time dynamical
systems is the Poincaré--Bendixson theorem \cite{Po81,Po82,Be01}.
It implies that there is no chaos in two dimensions, whereas the Lorenz
attractor \cite{Lo63} gives an example of a chaotic system in three
dimensions. This means that being two dimension forces the system
to not have chaotic trajectories.

In discrete-time dynamical systems, there are also examples indicating
such constraints relief as the dimension increases from one to two.
First, the dimension may affect the number of attracting cycles. Singer
\cite{Si78} showed that a sufficiently smooth map on a compact interval
has at most a finite number of periodic attractors. On the contrary,
Newhouse and Robinson \cite{Ne74,Ne79,Ro83} showed that a smooth
map on a topological disc can have infinitely many periodic attractors.
Second, for continuous maps on a compact interval, the possible trajectories
are restricted by some forcing relations. A forcing relation means
that if a map has an orbit of type $A$, then it forces the map to
have an orbit of type $B$ whenever $B$ satisfies a prescribed forcing
condition $P(A)$. Sharkovsky \cite{Sa64} introduced a forcing relation
on the periods of periodic orbits given by the Sharkovsky ordering.
A boundary case of such ordering shows that the existence of a period
3 cycle forces the map to have cycles of any period. In fact, Li and
Yorke \cite{LY75} showed that the boundary case imposes the existence
of uncountably many points which are not asymptotically periodic.
For unimodal maps, the result was sharpened by using symbolic dynamics,
which is called the kneading theory \cite{MT88,CE80}. The phase space
is partitioned by the critical point, and orbits are encoded by the
partition. The encoding of an orbit is called an itinerary. A forcing
relation on unimodal maps is described by the itinerary of the critical
orbit \cite{Gu79,CE80}. In contrast to one dimension, for smooth
maps on a topological disc, it is not hard to show that there exists
a Hénon map \cite{He76} having only period one, two, and three cycles,
and all bounded orbits tend to one of the cycles \cite{Ou21}. Third,
the dimension may affect how the maps are classified. In one dimension,
it is possible to classify maps by a finite-parameter family \cite{Gu79,MT88}.
In two dimensions, a finite-parameter family is not enough to solve
the classification problem of a class of maps \cite{HMT17,CP18,BS21}. 

What make one-dimensional systems different from higher dimensional
systems are the number of critical (or turning) points. In one dimension,
the critical orbits govern the dynamical behavior of an interval map.
The number of periodic attractors is finite because the basin of each
attractor contains a critical point \cite{Si78}. Interval maps can
be classified by the itineraries of critical orbits \cite{Gu79,CE80,MT88}.
However, they are no longer true in two dimensions. This suggests
that a map in one dimension has only finitely many critical values,
while the number of them grows to infinitely many as the dimension
increases from one to two.

In this paper, we introduce a renormalization model to visualize the
critical values in two dimensions (Section \ref{sec:Renormalization Model}).
The idea was first announced by the author in a conference talk \cite{Ou21}.
The model applies to systems that mimic an unfolding of a homoclinic
tangency. This includes the Lozi \cite{Lo78} and Hénon \cite{He76}
families. The model gives explanations of aspects involving the change
of dimension, e.g., the number of sinks, the classification problem,
etc \cite{Ou21}. Here, we apply the model to orientation preserving
Lozi maps to explain that the forcing relation for unimodal maps no
longer holds in two dimensions.

The Lozi family is a two-parameter family of maps 
\[
\Lozi_{a,b}(x,y)=(-a|x|-by+(a-b-1),x)
\]
 where $a,b\in\mathbb{R}$ are the parameters. A Lozi map is orientation
preserving (resp. reversing) if $b>0$ (resp. $b<0$). It is a generalization
of the tent family 
\[
T_{a}(x)=-a|x|+(a-1)
\]
from one to two dimensions. A Lozi map is degenerate if $b=0$. When
$b=0$, a degenerate Lozi map is identified with the tent map having
the same parameter $a$. The parameter $b$ serves as the amount of
perturbation that is applied to the tent family. We study how the
dynamical behavior changes as we perturb the parameter $b$ near $b=0$.

When $b>0$, a Lozi map has infinitely many critical values $\{u_{m}\}_{m=2}^{\infty}$,
which depend continuously on the parameters. When $b=0$, all the
critical values degenerate into one: $u_{2}=u_{3}=\cdots$. Since
the Lozi family is a two-parameter family, the critical values $\{u_{m}(a,b)\}_{m=2}^{\infty}$
form a system of rank two. In other words, we can fully control two
critical values by perturbing the two parameters $a$ and $b$. To
illustrate the ideas, we use $u_{2}$ and $u_{3}$ to study when periodic
orbits appear, and prove the following theorem (a reformulation of
Theorem \ref{thm:Main theorem}). 
\begin{thm*}[The main theorem]
For all $\hat{b}\in(0,1)$, there exist $\overline{b}\in(0,\hat{b})$,
and two analytic curves $l_{2},l_{3}:[0,\overline{b}]\rightarrow(\sqrt{2},4)$
on the parameter space, such that the following properties hold:

For each $n\in\{2,3\}$, let $\ParaSpace\equiv\{(a,b);a>3b+1\text{ and }0\leq b\leq\overline{b}\}$
and $\ParaSpace_{n}=\{(a,b)\in\ParaSpace;a\geq l_{n}(b)\}$. The curve
$l_{n}$ splits the parameter space $\ParaSpace$ into two components:
$\ParaSpace_{n}$ and $\ParaSpace\backslash\ParaSpace_{n}$.
\begin{enumerate}
\item On $\ParaSpace_{n}$, there exists two analytic maps $\FormalPeriodic_{-,n},\FormalPeriodic_{+,n}:\ParaSpace_{n}\rightarrow\mathbb{R}^{2}$
such that $\FormalPeriodic_{-,n}(a,b)$ and $\FormalPeriodic_{+,n}(a,b)$
are periodic points of $\Lozi_{a,b}$ with the same period for all
$(a,b)\in\ParaSpace_{n}$. In fact, on the boundary $a=l_{n}(b)$,
the border collision bifurcation occurs and creates the two periodic
points. 
\item On $\ParaSpace\backslash\ParaSpace_{n}$, the periodic points do not
have a continuation.
\end{enumerate}
Moreover, the curves $l_{2}$ and $l_{3}$ have a unique intersection,
and the intersection is transversal.
\end{thm*}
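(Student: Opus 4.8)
The plan is to extract the two curves from the border-collision machinery already developed and then to reduce the final clause to a computation on the degenerate slice $b=0$, where the Lozi map collapses to the tent map. For each $n\in\{2,3\}$ I would first fix the pair of symbolic codings---differing only in the single symbol that crosses the switching line $x=0$---whose shared border collision is governed by the critical value $u_n$. Because $\Lozi_{a,b}$ is piecewise affine, a periodic point with a prescribed itinerary solves a linear system with coefficients analytic in $(a,b)$, so the classification result quoted above furnishes the two analytic branches $\FormalPeriodic_{-,n}$ and $\FormalPeriodic_{+,n}$ on their common domain of existence; this is item 1. The boundary of that domain is exactly the locus where the designated coordinate of the periodic point reaches $0$, an analytic equation $F_n(a,b)=0$; by the cited sharpening of Ishii's theorem this locus is an analytic curve, the two orbits are created there by a border collision, and on the far side they have no continuation, which is item 2. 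Solving $F_n(a,b)=0$ for $a$ through the implicit function theorem---legitimate once $\partial_a F_n\neq0$, which I expect from monotonicity of the relevant coordinate in $a$---yields the graph $a=l_n(b)$ and its separation of $\ParaSpace$ into $\ParaSpace_n$ and its complement.

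The heart of the matter is the last sentence, and I would attack it through the two defining equations $F_2=0$ and $F_3=0$. At $b=0$ the map reduces to the tent map and the critical values coincide, $u_2=u_3$; this degeneracy is what makes the two border-collision conditions directly comparable on that slice, pinning down the mutual position of the curves there. I would then compute the slopes by implicit differentiation, $l_n'(0)=-\partial_bF_n/\partial_aF_n$ evaluated at $(l_n(0),0)$. The splitting of the critical values for $b>0$---the first-order separation of $u_2$ and $u_3$ as $b$ leaves $0$---enters precisely in the numerators $\partial_bF_n$, and a sign comparison of these leading terms is what exhibits the reversal: the ordering of the two creation parameters inherited from the tent family is opposite to the ordering their slopes produce for $b>0$. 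Transversality at the meeting point is then the resulting inequality $l_2'\neq l_3'$.

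To promote ``a transversal intersection exists'' to ``a unique one,'' I would combine analyticity with this slope estimate. Since $l_2'(0)\neq l_3'(0)$, continuity lets me shrink $\overline{b}\in(0,\hat{b})$ so that $l_2'-l_3'$ keeps a fixed sign on all of $[0,\overline{b}]$; hence $l_2-l_3$ is strictly monotone and vanishes at most once there. The reversal detected above forces the values of $l_2-l_3$ at the endpoints to straddle zero (or to vanish exactly at $b=0$), so the zero exists, is unique, and has nonzero derivative, which is the transversality. The same shrinking also secures the remaining requirements of the statement, that both codings stay admissible and that each $l_n$ takes values in $(\sqrt{2},4)$ over $[0,\overline{b}]$.

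I expect the genuine difficulty to be the slope computation at $b=0$. Because the two critical values are glued together there, $\partial_bF_n$ cannot be read off from the tent map alone: one must resolve how the orbit indexed by $u_n$ detaches from the common tent orbit to first order in $b$, i.e. expand $u_n(a,b)=u_n(a,0)+b\,\partial_bu_n(a,0)+o(b)$ and substitute into $F_n$. Showing that these first-order terms carry the correct relative sign---so that the tent-family order is genuinely reversed---is the crux underlying both uniqueness and transversality, and it is the step on which I would spend the most care.
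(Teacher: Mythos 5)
Your treatment of items 1 and 2 tracks the paper: the analytic branches come from solving the affine linear system for each itinerary, the bifurcation locus is the zero set of an admissibility quantity (the paper uses $p_{m,n}-q_{m,n}$), and the implicit function theorem with $\partial_a>0$ yields the graph $a=l_n(b)$. Your uniqueness-plus-transversality step, via a uniform sign for $l_2'-l_3'$ on a shortened interval, is also exactly the paper's Corollary \ref{cor:The boundary of admissibility}.

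The genuine gap is in the existence of the intersection. You argue: $l_2-l_3$ is strictly monotone on $[0,\overline{b}]$ because $l_2'(0)\neq l_3'(0)$, and ``the reversal detected above forces the values of $l_2-l_3$ at the endpoints to straddle zero.'' But the only reversal you have detected is the slope inequality itself, and monotonicity of $l_2-l_3$ together with $l_2(0)<l_3(0)$ does \emph{not} force $l_2(\overline{b})>l_3(\overline{b})$: the total increase of $l_2-l_3$ over the interval is at most $\sup(l_2'-l_3')\cdot\overline{b}$ (the limiting slope difference is $2-2/a$, which is bounded), and this can easily be smaller than the initial gap $l_3(0)-l_2(0)$. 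Worse, shrinking $\overline{b}$ to control the sign of $l_2'-l_3'$ makes the available increase \emph{smaller}, working against you. What closes this gap in the paper is the hidden index $m$: the curves are really $l_{m,2}$ and $l_{m,3}$ for the family of itineraries $\iota_{\pm,m,n}$, the initial gap $l_{m,3}(0)-l_{m,2}(0)$ shrinks like $\lambda^{-m}$, and $m$ is chosen large depending on $\overline{b}$ (conditions (\ref{eq:condition on M})--(\ref{eq:definition of m})). The paper then proves the reversed ordering at $b=\overline{b}$ \emph{directly}, not by a first-order expansion: on the homoclinic-tangency curve $a=t(b)$ it passes to the logarithmic coordinate $T(x)=-\log_{\lambda}(r_{\infty}-x)$, where $T(r_m)\approx m$ while $T(u_n^{d})\approx(n-1)\log_{\lambda}b^{-1}$; since $\log_{\lambda}b^{-1}\to\infty$ as $b\to0$, one can place $r_{m-1}$ and $r_m$ strictly between $u_2^{R}$ and $u_3^{L}$, so that at $(t(\overline{b}),\overline{b})$ the pair $\FormalPeriodic_{\pm,m,3}$ already exists (full horseshoe) while $\FormalPeriodic_{\pm,m,2}$ does not, giving $l_{m,3}(\overline{b})<l_{m,2}(\overline{b})$. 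The intermediate value theorem then supplies the crossing, and only afterwards does the slope inequality give uniqueness and transversality. Your proposal, which treats $l_2,l_3$ as attached to a single pair of codings per $n$ and relies on a Taylor expansion at $b=0$, omits both the tuning of $m$ against $\overline{b}$ and the quantitative comparison of the initial gap with the accumulated slope difference; without these the crossing is not established.
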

In summary, the theorem says that the order of bifurcations reverses
for any small $b>0$. For unimodal maps, the forcing relation \cite{Gu79,CE80}
implies that the creation of periodic orbits obeys a particular ordering.
See Section \ref{subsec:Forcing relation (kneading)} for an explanation.
However, in the Lozi family, the ordering is reversed when $b=\overline{b}$.
Therefore, the forcing relation does not have a continuation in two
dimensions, even when the maps are arbitrary close to one dimension.

To prove the theorem, first we use the fact that all periodic orbits
of a Lozi map have an analytic continuation on the parameter space
whenever they exist. Milnor \cite[Proposition 3.1]{Is97a} showed
that all bounded orbits can be identified with itineraries by using
symbolic dynamics. A point is labeled by ``$-$'' and ``$+$''
according to the sign of the $x$-coordinate. A periodic point $\boldsymbol{z}$
with period $p$ is encoded by an itinerary $I$ with length $p$
according to the labeling of successive iterates. For each itinerary
$I$, $\Lozi_{a,b}$ has at most one $I$-periodic orbit $\FormalPeriodic$.
In addition, Ishii \cite[Section 4]{Is97a} showed that $\FormalPeriodic$
is saddle and depends analytically on the parameters $(a,b)$. Here,
we give a new proof (Theorem \ref{thm:Formal periodic orbits}) by
using the universal stable and unstable cones \cite{Mi80,Pr21}.

Second, to associate the periodic points with the renormalization
model (Corollary \ref{cor:Admissible periodic point in Cmn}), we
show that all orbits with interesting dynamical aspects are eventually
trapped inside a compact subset of the phase space (Theorem \ref{thm:global dynamics}).
This is a generalization of \cite{BSV09} from orientation reversing
maps to orientation preserving maps. In particular, we center on periodic
points $\FormalPeriodic_{\Sign,m,n}:\ParaSpace_{\Sign,m,n}\rightarrow\mathbb{R}^{2}$
satisfying the itineraries $\iota_{\Sign,m,n}=(+\underset{m-2}{\underbrace{-\cdots-}}++\underset{n-2}{\underbrace{-\cdots-}}\Sign)$,
where $\Sign\in\{-,+\}$ and $m,n\geq2$. For each $n\geq2$, $\{\FormalPeriodic_{\Sign,m,n}\}_{m\geq2}$
are the ones created by perturbing the critical value $u_{n}$ (Section
\ref{sec:Criteria of admissibility}). The pair $(\FormalPeriodic_{-,m,n},\FormalPeriodic_{+,m,n})$
is created or annihilated simultaneously at a parameter $(a,b)$ when
a border collision bifurcation \cite{Le59,NY92} occurs. The parameter
$(a,b)$ is called a $\iota_{\pm,m,n}$-bifurcation parameter.

Third, we introduce a geometrical criterion to search for the bifurcation
parameters (Proposition \ref{prop:Geometrical characterization of the BCB in C(m,n)}).
This is a method different from the pruning conditions introduced
by Ishii \cite{Is97a}. The pruning pair in Ishii's paper is defined
by the candidates of the stable and unstable manifolds, whereas here
the geometrical criterion is prescribed by the forward and backward
iterates of the critical locus $\{(x,y);x=0\}$.

Fourth, when $m$ is large enough, we show that the $\iota_{\pm,m,n}$-bifurcation
parameters form an analytic curve $a=l_{m,n}(b)$ near $b=0$, and
there are only creations but no annihilation as the parameter $a$
increases (Theorem \ref{thm:The boundary of admissibility}). This
gives an improvement of a theorem of Ishii \cite[Theorem 1.2(i)]{Is97b}
for some types of periodic orbits by using a different approach. Ishii
used the pruning conditions to prove that, for all types of periodic
orbits and near $b=0$, there are only creations but no annihilation
of periodic orbits as the parameter $a$ increases. He did not show
that the bifurcation parameters define a continuous curve. Here, we
used the geometrical criterion to prove that the $\iota_{\pm,m,n}$-bifurcation
parameters are actually the graph of an analytic curve.

Finally, by using the renormalization model and the parameter curves,
we show that, when $\overline{b}>0$ is arbitrary small, there exists
$m>0$ such that the two curves $l_{m,2}$ and $l_{m,3}$ have a unique
intersection on $(0,\overline{b})$. Figure \ref{fig:Intersections of bifurcation parameters}
is an illustration of such curves. The proof demonstrates how to control
the two critical values $u_{2}$ and $u_{3}$ by perturbing the two
parameters. An outline is given in Section \ref{sec:Renormalization Model}.

\begin{figure}
\center \includegraphics{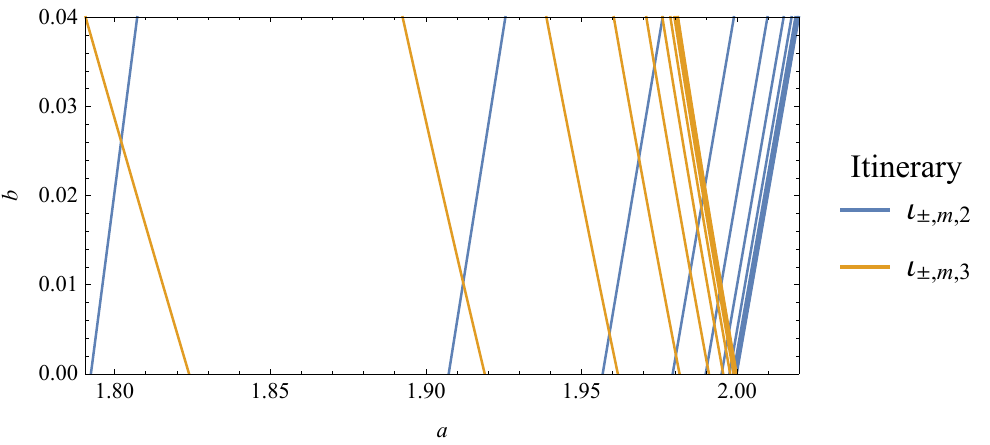}

\caption{\label{fig:Intersections of bifurcation parameters}Intersections
of $\iota_{\pm,m,n}$-bifurcation parameters. The positive-slope and
negative-slope curves are the $\iota_{\pm,m,2}$- and $\iota_{\pm,m,3}$-bifurcation
parameters respectively. For each $n$, the curves from left to right
are the ones with $m=4,5,\cdots,14$ in ascending order. For each
$m$, the two curves have a unique intersection.}
\end{figure}

Nevertheless, we still can find some patterns of the bifurcation order
from the renormalization model. If we fix the value $m$, we showed
in the main theorem and Figure \ref{fig:Intersections of bifurcation parameters}
that the bifurcation curves $l_{m,2}$ and $l_{m,3}$ intersect. Instead,
if we fix the value $n$, we see that the bifurcation curves of $\{\iota_{\pm,m,n}\}_{m=n+1}^{\infty}$
do not intersect. See Figure \ref{fig:No intersection of bifurcation parameters}.
This suggests that there might be a generalization of the forcing
relation that holds for some types of orbits. For example, Misiurewicz
and Štimac \cite{MS16} used countable many kneading sequences to
describe all possible itineraries of an orientation reversing Lozi
map. 

\begin{figure}
\begin{minipage}[t]{0.49\columnwidth}%
\subfloat[The $\iota_{\pm,m,2}$-bifurcation parameters. The curves are the
bifurcation parameters with $m=3,5,\cdots,14$ from left to right.]{\includegraphics[width=1\columnwidth]{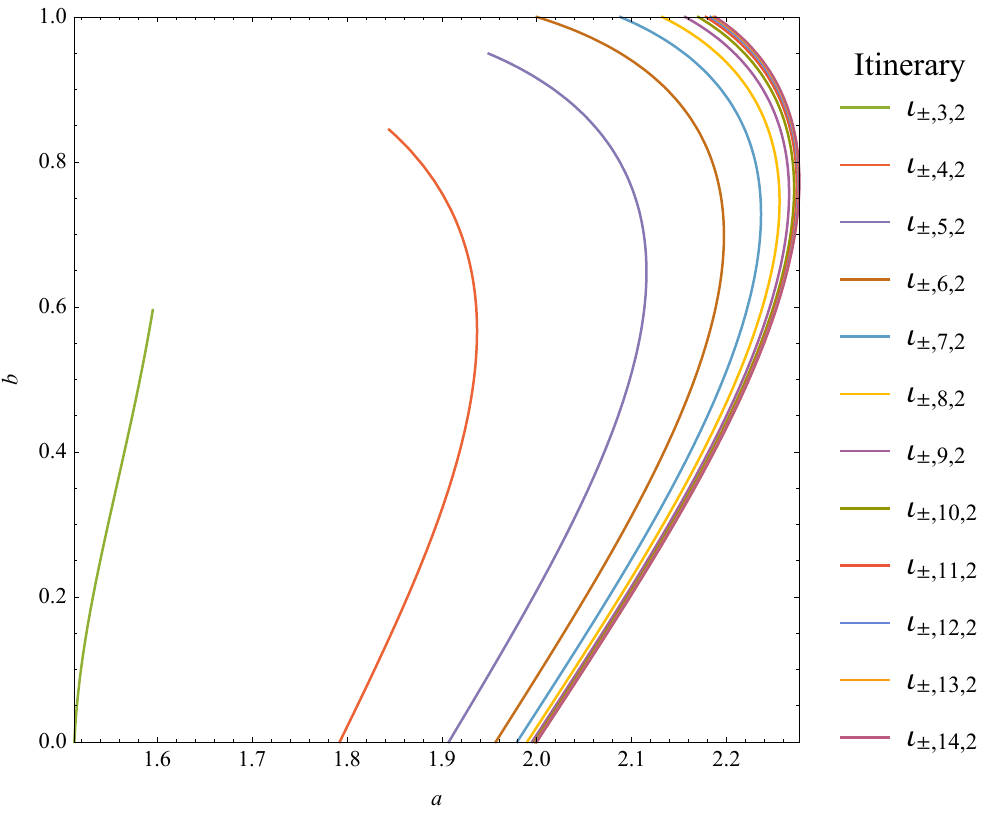}

}%
\end{minipage}\hfill{}%
\begin{minipage}[t]{0.49\columnwidth}%
\subfloat[The $\iota_{\pm,m,3}$-bifurcation parameters. The curves are the
bifurcation parameters with $m=4,5,\cdots,14$ from left to right.]{\includegraphics[width=1\columnwidth]{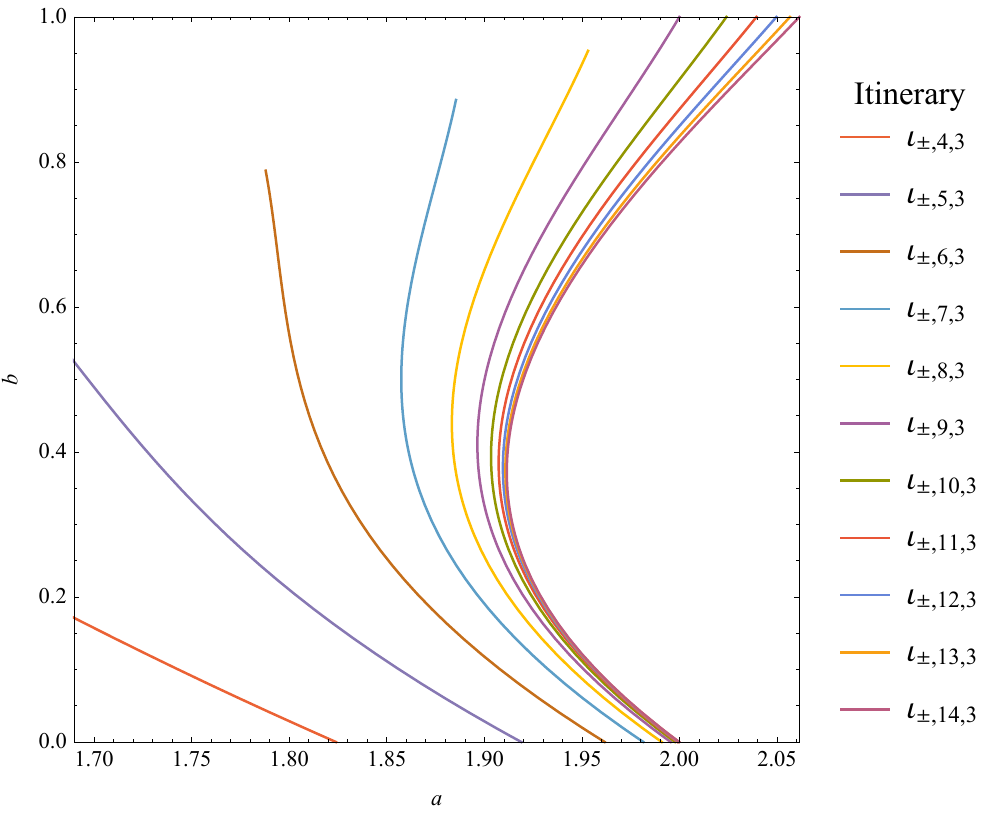}

}%
\end{minipage}

\caption{\label{fig:No intersection of bifurcation parameters}No intersection
of bifurcation parameters.}
\end{figure}

\section{\label{sec:Renormalization Model}The renormalization model}

Recall that the Lozi family is a two-parameter family of maps $\Lozi(x,y)=(-a|x|-by+(a-b-1),x)$,
where $a,b\in\mathbb{R}$ are the parameters. The map is expressed
as $\Lozi_{a,b}$ if we want to emphasize the Lozi map at a particular
parameter $(a,b)$. Let $S=\{-1,+1\}$, $\Sign\in S$, and $\mathbb{R}_{\Sign}=\Sign[0,\infty)$.
For simplicity, we may write $-$ and $+$ for the elements of $S$.
Denote the $\Sign$-affine branch of $\Lozi$ by $\Lozi_{\Sign}(x,y)=(-\Sign ax-by+(a-b-1),x)$.
Let $\ParaFull=\{(a,b);a>b+1\text{ and }0\leq b\leq1\}$ be a space
of parameters. Consider the original family $\widehat{\Lozi}_{a,b}(x,y)=(-a|x|+y+1,bx)$
introduced by Lozi \cite{Lo78}. For all $(a,b)\in\ParaFull$, we
have the semi-conjugation $\boldsymbol{H}\circ\Lozi_{a,b}=\widehat{\Lozi}_{a,-b}\circ\boldsymbol{H}$,
where $\boldsymbol{H}(x,y)=(\frac{x}{a-b-1},\frac{by}{a-b-1})$. The
map $\boldsymbol{H}$ becomes a conjugacy map when $b\neq0$.

When $(a,b)\in\ParaFull$, the map has two saddle fixed points $\boldsymbol{z}_{-}=(\zeta_{-},\zeta_{-})$
and $\boldsymbol{z}_{+}=(\zeta_{+},\zeta_{+})$, where $\zeta_{-}=-1$
and $\zeta_{+}=1-\frac{2(b+1)}{a+b+1}$. The stable and unstable multipliers
of $\boldsymbol{z}_{\Sign}$ are $-\Sign\mu$ and $-\Sign\lambda$,
with contracting and expanding directions $(-\Sign\mu,1)$ and $(-\Sign\lambda,1)$
respectively, where $\lambda=\frac{a+\sqrt{a^{2}-4b}}{2}$ and $\mu=\frac{b}{\lambda}$.
The stable $W^{S}(\boldsymbol{v})$ and unstable $W^{U}(\boldsymbol{v})$
sets of a periodic point $\boldsymbol{v}$ are unions of connected
line segments. We still call them stable and unstable manifolds, even
though they are not differentiable manifolds. For $d\in\{S,U\}$,
let $W_{0}^{d}(\boldsymbol{v})$ be the line segment of $W^{d}(\boldsymbol{v})$
containing $\boldsymbol{v}$.  
\begin{lem}
\label{lem:Expanding multiplier}If $(a,b)\in\ParaFull$, then $\lambda>1$
and $0\leq\mu<1$.
\end{lem}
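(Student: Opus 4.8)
The plan is to recognize $\lambda$ and $\mu$ as the two roots of the characteristic quadratic $p(t) = t^2 - at + b$, and then to locate them relative to $1$ using elementary facts about convex parabolas. Indeed, $\lambda = \frac{a+\sqrt{a^2-4b}}{2}$ is manifestly the larger root, and since the product of the roots of $p$ equals $b$, the smaller root is $b/\lambda = \mu$; thus $\mu \le \lambda$ and both are roots of $p$.

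First I would check that the roots are real, i.e. that the discriminant $a^2 - 4b$ is nonnegative. From the hypotheses $a > b+1$ and $0 \le b \le 1$ defining $\ParaFull$ we get $a > 0$ and hence $a^2 > (b+1)^2$, while the algebraic identity $(b+1)^2 - 4b = (b-1)^2 \ge 0$ yields $(b+1)^2 \ge 4b$. Combining, $a^2 > (b+1)^2 \ge 4b$, so $a^2 - 4b > 0$ and $\lambda, \mu$ are real and distinct.

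Next, to pin down their sizes, I would evaluate $p$ at the point $1$: $p(1) = 1 - a + b = -(a - b - 1)$. The defining inequality $a > b+1$ gives $a - b - 1 > 0$, so $p(1) < 0$. Since $p$ is a convex quadratic, $p(1) < 0$ forces $1$ to lie strictly between the two real roots; as $\lambda$ is the larger and $\mu$ the smaller, this gives $\mu < 1 < \lambda$. In particular $\lambda > 1$ and $\mu < 1$. Finally, the nonnegativity of $\mu$ is immediate from $\mu = b/\lambda$ together with $b \ge 0$ and $\lambda > 1 > 0$, whence $\mu \ge 0$.

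I do not expect any serious obstacle; the only point demanding a moment's care is confirming that the discriminant is nonnegative, so that $\lambda$ and $\mu$ are genuinely real before they are compared with $1$, and this reduces to the identity $(b-1)^2 \ge 0$ combined with the two defining inequalities of $\ParaFull$. Alternatively, one could establish $\lambda > 1$ directly by rewriting it as $\sqrt{a^2-4b} > 2 - a$, disposing of the case $a \ge 2$ at once (the right side being nonpositive) and squaring when $a < 2$ to recover precisely the hypothesis $a > b+1$; but the parabola argument treats $\lambda$ and $\mu$ simultaneously and is cleaner.
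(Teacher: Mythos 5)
Your proof is correct and complete: the discriminant bound $a^2>(b+1)^2\geq 4b$, the observation that $p(1)=-(a-b-1)<0$ places $1$ strictly between the two roots $\mu<\lambda$ of $t^2-at+b$, and the nonnegativity of $\mu=b/\lambda$ together give exactly the claim. The paper states this lemma without any proof, so there is nothing to compare against; your argument is the standard one and fills the omission cleanly.
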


Here, we introduce vertical segments and vertical strips to study
the geometry of a Lozi map. A line segment $\alpha$ is called a vertical
segment on an interval $\Iv$ if there exists an affine map $h_{\alpha}:\Iv\rightarrow\R$
such that $\alpha\cap(\R\times\Iv)=\{(h_{\alpha}(y),y);y\in\Iv\}$.
The vertical slope of $\alpha$ is the slope of $h_{\alpha}$. Suppose
that $\alpha$ and $\beta$ are disjoint vertical segments on the
interval $\Iv$.  The vertical strip $V(\alpha,\beta)\subset\R\times\Iv$
is the region bounded between $\alpha$ and $\beta$, including the
boundaries $\alpha$ and $\beta$. 

We define pullbacks of a vertical segment on $\mathbb{R}\times\Iv$,
where $\Iv$ is an interval with $0\in\Interior{\Iv}$. The image
$\Lozi(\mathbb{R}\times\Iv)$ is folded along the $x$-axis. Let $(u^{L},0)$
and $(u^{R},0)$ be the left and right boundary turning points of
$\Lozi(\mathbb{R}\times\Iv)$ respectively. Suppose that $\omega$
is a vertical segment and it intersects the $x$-axis at $(w,0)$.
Let $L(\omega)$ be the line containing $\omega$. If $u^{L}\geq w$
and $L(\omega)\cap\Lozi(\mathbb{R}\times\Iv)\subset\mathbb{R}\times\Iv$,
then $\Lozi^{-1}(\omega)\cap(\mathbb{R}\times\Iv)$ contains two vertical
segments: one is $\Pi_{-}(\omega)$ and the other is $\Pi_{+}(\omega)$,
where $\Pi_{\Sign}(\omega)=\Lozi_{\Sign}^{-1}(\omega)\cap(\mathbb{R}\times\Iv)$.
See Proposition \ref{prop:Pullback of a vertical segment} for details.
Thus, the transformations $\Pi_{-}$ and $\Pi_{+}$ define pullbacks
of a vertical segment by the two branches of $\Lozi$.

We define vertical strips on $\mathbb{R}\times\Iv$. We consider only
the orientation preserving case in this paper, i.e. $b\geq0$. Let
$\Iv=[-1,1]$. Suppose that $W_{0}^{S}(\boldsymbol{z}_{-})$ and $W_{0}^{S}(\boldsymbol{z}_{+})$
are vertical segments on $\Iv$. Let $\beta_{1}=W_{0}^{S}(\boldsymbol{z}_{+})\cap(\mathbb{R}\times\Iv)$,
$\beta_{m}=\Pi_{-}(\beta_{m-1})$ for $2\leq m<\infty$, $\beta_{\infty}=W_{0}^{S}(\boldsymbol{z}_{-})\cap(\mathbb{R}\times\Iv)$,
$\gamma_{m}=\Pi_{+}(\beta_{m})$ for $1\leq m<\infty$, and $\gamma_{\infty}=\Pi_{+}(\beta_{\infty})$.
Also, for $1\leq m\leq\infty$, let $(r_{m},0)$ be the intersection
point of $\gamma_{m}$ and the $x$-axis. Note that $\beta_{1}=\gamma_{1}$.
The vertical segments $\{\beta_{m}\}_{1\leq m<\infty}$ and $\{\gamma_{m}\}_{1\leq m<\infty}$
are subsets of $W^{S}(\boldsymbol{z}_{+})$, while $\beta_{\infty}$
and $\gamma_{\infty}$ are subsets of $W^{S}(\boldsymbol{z}_{-})$.
Let $B=V(\beta_{2},\beta_{1})$, $C=V(\gamma_{1},\gamma_{\infty})$,
$C^{+}=\{(x,y)\in\mathbb{R}\times\Iv;\gamma_{\infty}(y)\leq x\}$,
$C_{m}=V(\gamma_{m-1},\gamma_{m})$ for $2\leq m<\infty$, and $D=V(\beta_{\infty},\gamma_{\infty})$.
See Figure \ref{fig:Renormalization_Model} for an illustration. The
sets $\{C_{m}\}_{2\leq m<\infty}$ form a partition of $C$. 

\begin{figure}
\subfloat[\label{fig:Beta_and_Gamma}The $\beta$ and $\gamma$ stable manifolds.]{\includegraphics[scale=0.7]{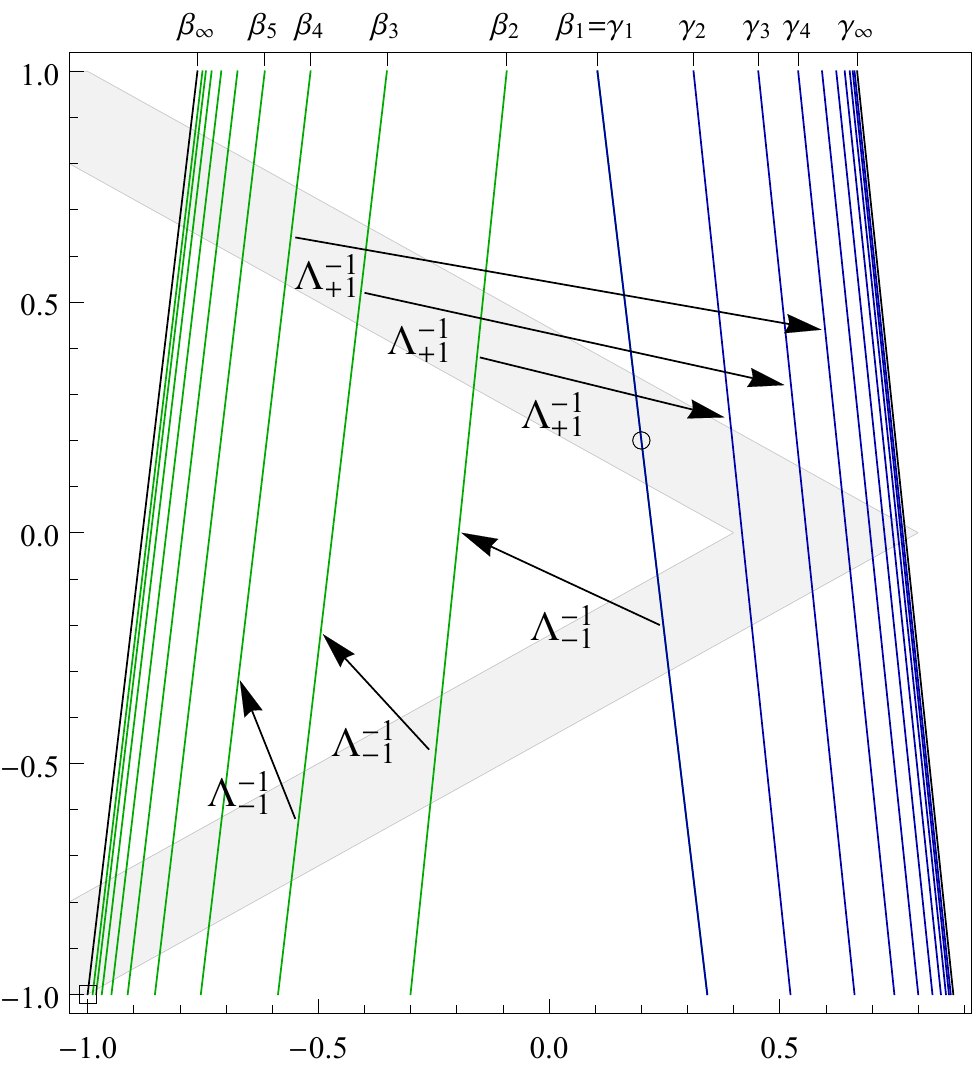}

}\hfill{}\subfloat[The sets $B$ and $C_{m}$.]{\includegraphics[scale=0.7]{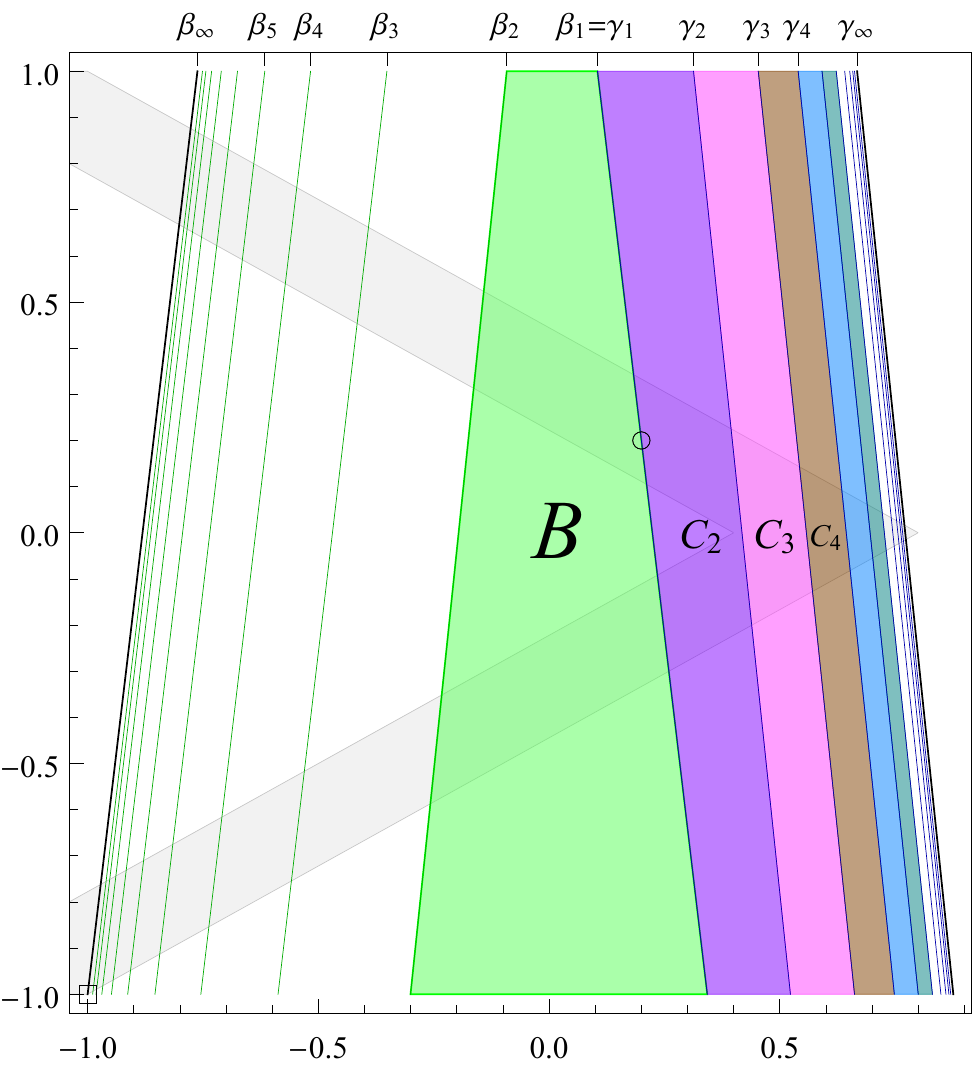}

}

\caption{\label{fig:Renormalization_Model}The renormalization model of the
Lozi map with $(a,b)=(1.8,0.2)$.}
\end{figure}

In this paper, we center on the renormalization model defined by this
partition. Let $\ParaModel=\{(a,b);a>3b+1\text{ and }0\leq b\leq1\}$.
We show that the renormalization model exists when $(a,b)\in\ParaModel$.

\begin{lem}
\label{lem:Bounds of lambda}We have 
\[
2b+1<\lambda\leq a
\]
for all $(a,b)\in\ParaModel$.
\end{lem}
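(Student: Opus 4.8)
The plan is to work directly with the closed form $\lambda=\frac{a+\sqrt{a^{2}-4b}}{2}$ recorded before Lemma \ref{lem:Expanding multiplier} and to reduce each of the two inequalities to an elementary estimate in $a$ and $b$. Before anything else I would confirm that $\lambda$ is real on $\ParaModel$: since $a>3b+1$ and $b\geq0$, squaring gives $a^{2}>(3b+1)^{2}=9b^{2}+6b+1\geq 4b$, so the discriminant $a^{2}-4b$ is strictly positive and the square root is well defined and positive.

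For the upper bound $\lambda\leq a$, I would note that it is equivalent to $\sqrt{a^{2}-4b}\leq a$. As $a>0$ on $\ParaModel$ and $b\geq0$, we have $a^{2}-4b\leq a^{2}$, so taking square roots yields the claim (with equality exactly when $b=0$).

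The lower bound $2b+1<\lambda$ is where the real content lies. I would rewrite it as $\sqrt{a^{2}-4b}>4b+2-a$ and split on the sign of the right-hand side. When $a\geq 4b+2$ the right-hand side is nonpositive while the left-hand side is strictly positive, so the inequality holds immediately. When $a<4b+2$ both sides are positive and I may square; after expanding $(4b+2-a)^{2}$ and cancelling the $a^{2}$ terms, this reduces to the equivalent inequality $a(2b+1)>(4b+1)(b+1)$.

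Finally I would close the gap using the hypothesis $a>3b+1$. It suffices to verify $(3b+1)(2b+1)\geq(4b+1)(b+1)$, and expanding both sides gives $(3b+1)(2b+1)-(4b+1)(b+1)=2b^{2}\geq0$. Hence $a(2b+1)>(3b+1)(2b+1)\geq(4b+1)(b+1)$, which is precisely what the squaring step demanded, so $\lambda>2b+1$ follows. The only point needing care is the case split together with the preservation of strictness through squaring; but since $a>3b+1$ is strict and the residual identity $2b^{2}\geq0$ degenerates only at $b=0$, I do not expect any genuine obstacle here — the lemma is essentially a bookkeeping exercise once the reduction to $a(2b+1)>(4b+1)(b+1)$ is made.
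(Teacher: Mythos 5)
Your proof is correct. The paper states this lemma without proof, so there is nothing to compare against; your argument is a complete and valid verification. In particular, the key reduction in the case $a<4b+2$ is right: squaring $\sqrt{a^{2}-4b}>4b+2-a$ and cancelling $a^{2}$ does give the equivalent condition $a(2b+1)>(4b+1)(b+1)$, and the identity $(3b+1)(2b+1)-(4b+1)(b+1)=2b^{2}\geq0$ combined with the strict hypothesis $a>3b+1$ closes the argument; the upper bound $\lambda\leq a$ and the positivity of the discriminant are immediate as you say.
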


\begin{lem}
\label{lem:Existence of beta_infty, gamma_infty}If $(a,b)\in\ParaModel$,
then $\beta_{\infty}$ and $\gamma_{\infty}$ exist. 

Moreover, let $L=[\beta_{\infty}(\min\Iv),\gamma_{\infty}(\min\Iv)]\times\{\min\Iv\}$,
$K=[\beta_{\infty}(\max\Iv),\gamma_{\infty}(\max\Iv)]\times\{\max\Iv\}$,
$L_{\Sign}=L\cap(\mathbb{R}_{\Sign}\times\mathbb{R})$, and $K_{\Sign}=K\cap(\mathbb{R}_{\Sign}\times\mathbb{R})$
for $\Sign\in S$. For each $J\in\{K,L\}$, $\Lozi(J)$ is the union
of two line segments $\Lozi(J)=\Lozi_{-}(J_{-})\cup\Lozi_{+}(J_{+})$.
The segment $\Lozi_{+}(J_{+})$ is located on the upper half plan;
while the segment $\Lozi_{-}(J_{-})$ is located on the lower half
plan. The left ends of the segments are on $\beta_{\infty}$ and the
right ends of the segments are on the $x$-axis.
\end{lem}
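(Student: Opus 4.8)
The plan is to dispose of the two existence claims first and then read off the ``moreover'' part from explicit formulas for the endpoints of $\beta_{\infty}$ and $\gamma_{\infty}$. The one algebraic fact I will use repeatedly is that $\lambda$ and $\mu$ are the two roots of $t^{2}-at+b=0$, so $\lambda+\mu=a$, $\lambda\mu=b$, and in particular $\mu(a-\mu)=b$; moreover, by Lemma~\ref{lem:Bounds of lambda}, $\mu=b/\lambda<b/(2b+1)<\tfrac{1}{2}$. This last bound is what keeps each segment in its intended half-plane.

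For the existence of $\beta_{\infty}$ I would note that $\boldsymbol{z}_{-}=(-1,-1)$ lies in the open left half-plane, where $\Lozi$ coincides with the affine branch $\Lozi_{-}$, and that its contracting direction is $(\mu,1)$. Hence the local stable manifold is the affine line $x=-1+\mu(y+1)$, and it remains a single straight segment of $W^{S}(\boldsymbol{z}_{-})$ as long as it stays in $\{x<0\}$. On $\Iv=[-1,1]$ the first coordinate runs from $-1$ at $y=-1$ to $-1+2\mu$ at $y=1$, and $-1+2\mu<0$ because $\mu<\tfrac{1}{2}$; therefore $\beta_{\infty}=W_{0}^{S}(\boldsymbol{z}_{-})\cap(\R\times\Iv)$ exists as a vertical segment of slope $\mu$ lying entirely in the left half-plane.

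For $\gamma_{\infty}=\Pi_{+}(\beta_{\infty})$ I would verify the hypotheses of Proposition~\ref{prop:Pullback of a vertical segment} for $\omega=\beta_{\infty}$. The line $L(\beta_{\infty})$ meets the $x$-axis at $w=-1+\mu$, while the fold crease of $\Lozi(\R\times\Iv)$ is the image of $\{0\}\times\Iv$, with left turning point $u^{L}=a-2b-1$; the required inequality $u^{L}\ge w$ then reads $a-2b-\mu\ge 0$, which follows from $a>3b+1$ and $\mu<\tfrac{1}{2}$. After checking the accompanying containment $L(\beta_{\infty})\cap\Lozi(\R\times\Iv)\subset\R\times\Iv$, the proposition produces $\gamma_{\infty}$; solving $\Lozi_{+}(x,y)\in L(\beta_{\infty})$ gives the explicit line $x=(a-b-\mu-by)/(a+\mu)$, which one checks lies in $\{x>0\}$ throughout $\Iv$, so $\gamma_{\infty}$ is a genuine vertical segment of slope $-b/(a+\mu)$ in the right half-plane. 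I expect this step --- pinning down the pullback hypotheses, in particular controlling the image region $\Lozi(\R\times\Iv)$ --- to be the main obstacle; everything else is essentially bookkeeping driven by the inequality $\mu<\tfrac12$.

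For the ``moreover'' part I would evaluate the two segments at the ends of $\Iv$: $\beta_{\infty}(-1)=-1$, $\gamma_{\infty}(-1)=(a-\mu)/(a+\mu)$, $\beta_{\infty}(1)=-1+2\mu$, and $\gamma_{\infty}(1)=(a-2b-\mu)/(a+\mu)$. Using $\mu<\tfrac12$ and $a>3b+1$ one sees that for each $J\in\{K,L\}$ the left endpoint (on $\beta_{\infty}$) has $x<0$ and the right endpoint (on $\gamma_{\infty}$) has $x>0$, so $J$ straddles the critical locus and $J_{-},J_{+}$ are nondegenerate, meeting at the critical point $(0,\pm1)$. Since the second coordinate of each affine branch is the old first coordinate, $\Lozi_{+}(J_{+})$ has second coordinate $x\ge0$ (upper half-plane) while $\Lozi_{-}(J_{-})$ has second coordinate $x\le0$ (lower half-plane), and the common image $\Lozi(0,\pm1)$ lies on the $x$-axis, giving the shared right endpoint (the turning point $u^{R}=a-1$ for $L$ and $u^{L}=a-2b-1$ for $K$). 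Finally, the left endpoints land on $\beta_{\infty}$ for two complementary reasons: $\Lozi_{-}$ carries $W^{S}(\boldsymbol{z}_{-})$ into itself, so the $\beta_{\infty}$-endpoint of $J$ stays on $\beta_{\infty}$; and by construction $\Lozi_{+}(\gamma_{\infty})\subset\beta_{\infty}$, so the $\gamma_{\infty}$-endpoint also lands there. A short computation using $\mu(a-\mu)=b$ confirms both images have negative first coordinate, i.e.\ they are indeed the left ends, which completes the argument.
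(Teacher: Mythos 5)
Your proposal is correct in substance and follows essentially the same route as the paper: both arguments come down to writing the endpoints of $\beta_{\infty}$ and $\gamma_{\infty}$ explicitly and checking sign conditions from the bound $\lambda>2b+1$ (your $\mu=b/\lambda<\tfrac12$ is the same estimate in disguise, and your endpoints $(a-2b-\mu)/(a+\mu)$ and $(a-\mu)/(a+\mu)$ are exactly the paper's $v_{+}=1-\tfrac{2b(\lambda+1)}{a\lambda+b}$ and $w_{+}=1-\tfrac{2b}{a\lambda+b}$). One logical wrinkle should be removed, though: you cannot obtain $\gamma_{\infty}$ by invoking Proposition \ref{prop:Pullback of a vertical segment}, because that proposition's hypothesis requires $\omega\subset D$ and $D=V(\beta_{\infty},\gamma_{\infty})$ is defined in terms of the very segment whose existence is at stake; moreover its proof cites the present lemma (it uses the segments $K_{\Sign},L_{\Sign}$ constructed here), so the appeal is circular. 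Fortunately your backup computation --- solving $\Lozi_{+}(x,y)\in L(\beta_{\infty})$ for the explicit line and checking it stays in $\{0\le x\le 1\}$ over $\Iv$ --- is self-contained and is precisely what the paper does, so you should lead with that and drop the proposition. Do make the upper bound $x\le 1$ explicit (it is what guarantees the $\Lozi_{+}$-images have second coordinate in $\Iv$ and hence land on the segment $\beta_{\infty}$ rather than merely on its supporting line); with that, your treatment of the ``moreover'' part, which is more detailed than the paper's one-line conclusion, is fine.
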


\begin{proof}
Let $v_{-}=-1+\frac{2b}{\lambda}$, $\boldsymbol{V}_{-}=(v_{-},1)$,
and $\boldsymbol{W}_{-}=z_{-}$. Clearly, $\boldsymbol{V}_{-},\boldsymbol{W}_{-}\in L(W_{0}^{S}(\boldsymbol{z}_{-}))$.
By Lemma \ref{lem:Bounds of lambda}, we have 
\[
v_{-}<-1+\frac{2b}{2b+1}<0.
\]
Thus, $\beta_{\infty}$ exists and $\beta_{\infty}=\overline{\boldsymbol{V}_{-}\boldsymbol{W}_{-}}$. 

Let $v_{+}=1-\frac{2\lambda+2}{a\lambda+b}b$, $w_{+}=1-\frac{2}{a\lambda+b}b$,
$\boldsymbol{V}_{+}=(v_{+},1)$, and $\boldsymbol{W}_{+}=(w_{+},-1)$.
By Lemma \ref{lem:Bounds of lambda} and the inequality of arithmetic
and geometric means, we have
\[
v_{+}=1-\frac{2b(\lambda+1)}{\lambda^{2}+2b}\geq1-\frac{2b(\lambda+1)}{\lambda^{2}}\geq1-\frac{2b(2b+2)}{(2b+1)^{2}}\geq0.
\]
Clearly, $v_{+}\leq w_{+}\leq1$ and hence $\Lozi(\boldsymbol{V}_{+}),\Lozi(\boldsymbol{W}_{+})\in\beta_{\infty}$.
Thus, $\gamma_{\infty}$ exists and $\gamma_{\infty}=\overline{\boldsymbol{V}_{+}\boldsymbol{W}_{+}}$.

Finally, by definition, we have $K=\overline{\boldsymbol{V}_{-}\boldsymbol{V}_{+}}$,
$L=\overline{\boldsymbol{W}_{-}\boldsymbol{W}_{+}}$, and $\Lozi(\boldsymbol{V}_{-}),\Lozi(\boldsymbol{V}_{+}),\Lozi(\boldsymbol{W}_{-}),\Lozi(\boldsymbol{W}_{+})\in\beta_{\infty}$.
This completes the proof.
\end{proof}
\begin{cor}
\label{cor:The image of D}Let $(a,b)\in\ParaModel$. Then $\Lozi(D)\subset\{(x,y)\in\mathbb{R}\times\Iv;x\geq\beta_{\infty}(y)\}$.
\end{cor}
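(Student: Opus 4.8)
The plan is to reduce the claim to a statement about finitely many corner points, using that $\Lozi$ is piecewise affine and that the strip $D$ is convex. Write $R=\{(x,y)\in\R\times\Iv;x\geq\beta_{\infty}(y)\}$ for the target region. Since $\beta_{\infty}(y)$ is affine in $y$, the set $R$ is the intersection of a half plane with the slab $\R\times\Iv$, hence convex; it therefore suffices to exhibit a finite set of points in $R$ whose convex hull already contains $\Lozi(D)$. From the previous lemma, $D=V(\beta_{\infty},\gamma_{\infty})$ is the (convex) quadrilateral with top corners $\boldsymbol{V}_{-},\boldsymbol{V}_{+}$ and bottom corners $\boldsymbol{W}_{-},\boldsymbol{W}_{+}$, its left edge $\beta_{\infty}=\overline{\boldsymbol{V}_{-}\boldsymbol{W}_{-}}$ lying in $\{x<0\}$ (since $v_{-}<0$ and $\boldsymbol{W}_{-}=\boldsymbol{z}_{-}=(-1,-1)$) and its right edge $\gamma_{\infty}=\overline{\boldsymbol{V}_{+}\boldsymbol{W}_{+}}$ lying in $\{x\geq0\}$ (since $v_{+}\geq0$ and $w_{+}\geq0$). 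Thus $D$ crosses the critical locus $\{x=0\}$, and the presence of this fold is the one feature that blocks a direct one line argument.

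To deal with the fold, I would cut $D$ along $\{x=0\}$ into $D_{-}=D\cap\{x\leq0\}$ and $D_{+}=D\cap\{x\geq0\}$. Each piece is convex, being a half plane intersected with the convex set $D$, and $\Lozi$ restricts to the affine branch $\Lozi_{-}$ on $D_{-}$ and to $\Lozi_{+}$ on $D_{+}$, so $\Lozi(D)=\Lozi_{-}(D_{-})\cup\Lozi_{+}(D_{+})$ with each $\Lozi_{\Sign}(D_{\Sign})$ equal to the convex hull of the images of the four corners of $D_{\Sign}$. The corners of $D_{-}$ are $\boldsymbol{V}_{-},(0,1),(0,-1),\boldsymbol{W}_{-}$ and those of $D_{+}$ are $(0,1),\boldsymbol{V}_{+},\boldsymbol{W}_{+},(0,-1)$, the two extra corners being the fold points $(0,\pm1)$.

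It then remains to place all six corner images in $R$, after which convexity of $R$ finishes the argument. For the four original corners the previous lemma already gives $\Lozi(\boldsymbol{V}_{-}),\Lozi(\boldsymbol{V}_{+}),\Lozi(\boldsymbol{W}_{-}),\Lozi(\boldsymbol{W}_{+})\in\beta_{\infty}\subset R$, where the inclusion $\beta_{\infty}\subset R$ holds on the boundary $x=\beta_{\infty}(y)$ with $y\in\Iv$. For the two fold points, the branches agree on $\{x=0\}$ and a direct evaluation gives
\[
\Lozi(0,1)=(a-2b-1,0),\qquad\Lozi(0,-1)=(a-1,0),
\]
both on the $x$-axis. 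Here I would invoke the hypothesis $(a,b)\in\ParaModel$, that is $a>3b+1$, to conclude $a-2b-1>0$ and $a-1>0$; since $\beta_{\infty}\subset\{x<0\}$, both points lie strictly to the right of $\beta_{\infty}$ and hence in $R$. (Each corner image also has its second coordinate in $\Iv$, as it equals the first coordinate of the corresponding corner.)

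With all six corner images in the convex set $R$, each convex hull $\Lozi_{\Sign}(D_{\Sign})$ is contained in $R$, and therefore so is $\Lozi(D)$. The main obstacle is precisely the fold: because $\Lozi$ is not globally affine on $D$, the image is not merely the degenerate convex hull of the four images lying on $\beta_{\infty}$, but develops a ridge along the image of the critical segment $\{0\}\times\Iv$. The content of the corollary is that this ridge stays on the correct side of $\beta_{\infty}$, and the computation above shows this is guaranteed exactly by $a>3b+1$.
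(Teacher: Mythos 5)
Your proof is correct and follows the same route as the paper: the paper's proof is a one-line appeal to Lemma \ref{lem:Existence of beta_infty, gamma_infty}, whose proof supplies exactly the ingredients you use (the corner images $\Lozi(\boldsymbol{V}_{-}),\Lozi(\boldsymbol{V}_{+}),\Lozi(\boldsymbol{W}_{-}),\Lozi(\boldsymbol{W}_{+})\in\beta_{\infty}$ and the images of the fold points landing on the positive $x$-axis). Your splitting of $D$ along $\{x=0\}$ and the extreme-point/convexity argument on the two affine pieces simply makes explicit the step the paper leaves implicit, namely why this boundary and corner data controls all of $\Lozi(D)$.
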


\begin{proof}
The corollary follows immediately from Lemma \ref{lem:Existence of beta_infty, gamma_infty}.
\end{proof}
\begin{prop}
\label{prop:Pullback of a vertical segment}Let $(a,b)\in\ParaModel$,
$\omega\subset D$ be a vertical segment on $\Iv$, and $(w,0)$ be
the intersection point of $\omega$ and the $x$-axis. If $w\leq u^{L}$,
then $\Pi_{\Sign}(\omega)\subset D\cap(\mathbb{R}_{\Sign}\times\mathbb{R})$
is a vertical segment on $\Iv$ for each $\Sign\in S$. We have $\Lozi^{-1}(\omega)\cap(\mathbb{R}\times\Iv)=\Pi_{-}(\omega)\cup\Pi_{+}(\omega)$.
\end{prop}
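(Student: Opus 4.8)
The plan is to invert each affine branch explicitly and then control the two endpoints of the pulled-back segment. For $b>0$ the branch $\Lozi_\Sign$ is an affine isomorphism, and solving $\Lozi_\Sign(x,y)=(X,Y)$ gives $x=Y$ together with $(\Sign a+s)x=(a-b-1-w)-by$ once $\omega$ is written as the graph $X=sY+w$ over $\Iv$, where $s$ is the vertical slope of $\omega$ and $(w,0)$ is its $x$-axis crossing. Thus $\Lozi_\Sign^{-1}(\omega)$ is the graph $x=g(y):=\frac{(a-b-1-w)-by}{\Sign a+s}$; this description stays regular as $b\to0$ (the apparent $1/b$ in the pointwise inverse cancels), so it also covers the degenerate boundary of $\ParaModel$. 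First I would check that this is a genuine vertical segment, i.e. $\Sign a+s\neq0$. Because $\omega\subset D$, its endpoints lie on the top and bottom edges $K$ and $L$, so by Lemma~\ref{lem:Existence of beta_infty, gamma_infty} the $x$-coordinates of those endpoints lie in $[\beta_\infty(\pm1),\gamma_\infty(\pm1)]\subset[-1,1]$, whence $s=\tfrac12(h_\omega(1)-h_\omega(-1))\in[-1,1]$; since $a>3b+1>1$ by hypothesis, $\Sign a+s\neq0$ and the vertical slope of $\Pi_\Sign(\omega)$ is $-b/(\Sign a+s)$.

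Since $\Pi_\Sign(\omega)$ is a sub-arc of the affine graph $x=g(y)$, and $g$ is affine, everything reduces to the two endpoints $g(1)$ and $g(-1)$. Writing $u^L=a-2b-1$ and $u^R=a-1$ for the two turning-point abscissae of $\Lozi(\R\times\Iv)$ (the peaks of the two boundary tent curves), a direct computation gives $g(1)=\frac{u^L-w}{\Sign a+s}$ and $g(-1)=\frac{u^R-w}{\Sign a+s}$. This is precisely where the hypothesis $w\le u^L$ enters: it makes both numerators nonnegative (using also $u^L\le u^R$); since the denominator $\Sign a+s$ has the sign of $\Sign$ (as $|s|\le1<a$), both $g(1)$ and $g(-1)$ lie in $\R_\Sign$, and by convexity so does the whole segment. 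This establishes $\Pi_\Sign(\omega)\subset\R_\Sign\times\R$.

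The remaining content, and the main obstacle, is the inclusion $\Pi_\Sign(\omega)\subset D$ together with the claim that the segment spans all of $\Iv$. By affineness of $g$, $\beta_\infty$, and $\gamma_\infty$, the inclusion reduces to the four endpoint inequalities $\beta_\infty(\pm1)\le g(\pm1)\le\gamma_\infty(\pm1)$, i.e. $v_-\le g(1)\le v_+$ and $-1\le g(-1)\le w_+$ for $\Sign=+$, and analogously for $\Sign=-$. I would verify these by feeding the constraints coming from $\omega\subset D$ (namely $h_\omega(1)\in[v_-,v_+]$ and $h_\omega(-1)\in[-1,w_+]$, which bound $w$ and $s$) into the explicit formulas for $g(\pm1)$, using the parameter bounds $2b+1<\lambda\le a$ and $0\le\mu<1$ from Lemmas~\ref{lem:Bounds of lambda} and~\ref{lem:Expanding multiplier} and the explicit endpoint coordinates $v_\pm,w_+$ from Lemma~\ref{lem:Existence of beta_infty, gamma_infty}. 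Conceptually this step says that $\Lozi_\Sign$ carries the half-strip $D\cap(\R_\Sign\times\Iv)$ across $\omega$ compatibly with the orientation in Corollary~\ref{cor:The image of D} (whose image of $D$ stays to the right of $\beta_\infty$), so the pullback of $\omega$ cannot escape $D$; turning this picture into the four sharp inequalities is the bulk of the work. Since $[\beta_\infty(y),\gamma_\infty(y)]\subset[-1,1]$ at $y=\pm1$, the inclusion in $D$ in particular gives $g(\pm1)\in[-1,1]$, so $g$ maps $[-1,1]$ into $[-1,1]$ and $\Pi_\Sign(\omega)$ is the full graph over $\Iv$, i.e. a vertical segment on $\Iv$.

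Finally, the identity $\Lozi^{-1}(\omega)\cap(\R\times\Iv)=\Pi_-(\omega)\cup\Pi_+(\omega)$ follows from the fact that $\Lozi$ coincides with $\Lozi_\Sign$ on $\R_\Sign\times\R$: a genuine $\Lozi$-preimage with nonnegative (resp. nonpositive) first coordinate is exactly a $\Lozi_+$- (resp. $\Lozi_-$-) preimage, and by the previous steps each $\Pi_\Sign(\omega)=\Lozi_\Sign^{-1}(\omega)\cap(\R\times\Iv)$ already lies in the correct half-plane $\R_\Sign\times\R$. Hence the two affine pullbacks account for all genuine preimages inside $\R\times\Iv$ and for nothing outside them, giving the asserted equality.
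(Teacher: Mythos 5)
Your strategy is sound and genuinely different from the paper's, but as written it does not prove the central claim. The containment $\Pi_{\Sign}(\omega)\subset D$ --- which you yourself flag as ``the main obstacle'' and ``the bulk of the work'' --- is reduced to four endpoint inequalities and then left as a plan (``I would verify these by feeding the constraints \dots''). The appeal to Corollary \ref{cor:The image of D} does not substitute for that verification: that corollary controls the forward image $\Lozi(D)$, not the preimages of $\omega$, and nothing you have written forces $g(\pm1)$ to stay between $\beta_{\infty}(\pm1)$ and $\gamma_{\infty}(\pm1)$. Since the ``vertical segment on $\Iv$'' conclusion also hinges on $g(\pm1)\in[-1,1]$, which you derive from the containment in $D$, the unexecuted step carries essentially all of the content of the proposition. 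The parts you do carry out are correct: the explicit inverse $g(y)=\frac{(a-b-1-w)-by}{\Sign a+s}$, the identities $g(1)=\frac{u^{L}-w}{\Sign a+s}$ and $g(-1)=\frac{u^{R}-w}{\Sign a+s}$ with $u^{L}=a-2b-1$, $u^{R}=a-1$, the resulting sign argument for $\Pi_{\Sign}(\omega)\subset\R_{\Sign}\times\R$, and the final identity $\Lozi^{-1}(\omega)\cap(\R\times\Iv)=\Pi_{-}(\omega)\cup\Pi_{+}(\omega)$.

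For comparison, the paper avoids coordinates entirely: by Lemma \ref{lem:Existence of beta_infty, gamma_infty} the images $\Lozi_{\Sign}(K_{\Sign})$ and $\Lozi_{\Sign}(L_{\Sign})$ of the top and bottom edges of $D$ are segments in the half-planes $\{\Sign y\geq0\}$ running from $\beta_{\infty}$ to $(u^{L},0)$ and $(u^{R},0)$; the hypothesis $w\leq u^{L}$ forces $\omega$ to cross each of them once, so $\Lozi^{-1}(\omega\cap(\R\times\R_{\Sign}))$ is a segment joining a point of $K_{\Sign}$ to a point of $L_{\Sign}$ and hence lies in the convex quadrilateral $D\cap(\R_{\Sign}\times\R)$ and spans $\Iv$. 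That one observation delivers all three conclusions at once. Your computational route does go through, and more easily than you suggest: after clearing the denominator $\Sign a+s$ (whose sign you have already fixed), each of the eight inequalities $\beta_{\infty}(\pm1)\leq g(\pm1)\leq\gamma_{\infty}(\pm1)$ either follows from your sign computation or rearranges to $h_{\omega}(y_{0})\geq\beta_{\infty}(y_{0})$ at one of the heights $y_{0}\in\{v_{-},v_{+},w_{+},-1\}$ --- for instance $g(1)\leq v_{+}$ for $\Sign=+$ is exactly $h_{\omega}(v_{+})\geq u^{L}-av_{+}=\beta_{\infty}(v_{+})$ --- using that the $\Lozi$-images of the four corners $\boldsymbol{V}_{\pm},\boldsymbol{W}_{\pm}$ of $D$ all lie on $\beta_{\infty}$ (Lemma \ref{lem:Existence of beta_infty, gamma_infty}). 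Writing out those few lines would complete your proof; without them it is an outline at the decisive point.
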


\begin{proof}
Let $K_{\Sign}$ and $L_{\Sign}$ be the line segments defined in
Lemma \ref{lem:Existence of beta_infty, gamma_infty}. If $w\leq u^{L}$,
then $\omega$ and $\Lozi(J_{\Sign})$ have a unique intersection
point on $\mathbb{R}\times\mathbb{R}_{\Sign}$ for $J\in\{K,L\}$
by Lemma \ref{lem:Existence of beta_infty, gamma_infty}. The preimage
$\Lozi^{-1}(\omega\cap(\mathbb{R}\times\mathbb{R}_{\Sign}))$ is a
line segment connecting $K_{\Sign}$ and $L_{\Sign}$. Thus, $\Pi_{\Sign}(\omega)$
is a vertical segment on $\Iv$ and $\Pi_{\Sign}(\omega)=\Lozi^{-1}(\omega)\cap(\mathbb{R}_{\Sign}\times\Iv)\subset D\cap(\mathbb{R}_{\Sign}\times\mathbb{R})$.
\end{proof}
\begin{lem}
\label{lem:Existence of beta1}If $(a,b)\in\ParaModel$, then $\beta_{1}=\gamma_{1}$
exists and $r_{1}<u^{L}$.
\end{lem}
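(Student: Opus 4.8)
The plan is to recognize $\beta_{1}$ as the local stable segment of the right fixed point $\boldsymbol{z}_{+}=(\zeta_{+},\zeta_{+})$ and to reduce everything to one explicit estimate. Since $\boldsymbol{z}_{+}$ is a saddle on the branch $\{x>0\}$ with contracting direction $(-\mu,1)$, its stable eigenline is $\ell:x=\zeta_{+}(1+\mu)-\mu y$, on which $\Lozi_{+}$ acts as the affine contraction $y\mapsto\zeta_{+}(1+\mu)-\mu y$ with fixed point $\zeta_{+}$. The line $\ell$ meets the $x$-axis at the single point $\bigl(\zeta_{+}(1+\mu),0\bigr)$, and the crease of $\Lozi(\R\times\Iv)$ is the image of $\{0\}\times\Iv$, namely the segment from $(a-2b-1,0)$ to $(a-1,0)$, so $u^{L}=a-2b-1$. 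Thus it suffices to show that the portion of $\ell$ over $\Iv$ is a genuine vertical segment of $W_{0}^{S}(\boldsymbol{z}_{+})$ that coincides with $\gamma_{1}$, and that $\zeta_{+}(1+\mu)<a-2b-1$.

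First I would establish existence. Let $\sigma=\ell\cap(\R\times\Iv)$; over $\Iv$ its $x$-coordinates fill $[\,w_{1}-\mu,\,w_{1}+\mu\,]$ with $w_{1}=\zeta_{+}(1+\mu)$, and I claim this interval lies in $(0,1]$. The left endpoint is positive precisely when $\mu<\zeta_{+}/(1-\zeta_{+})=\tfrac{a-b-1}{2(b+1)}$, which follows from $\lambda>2b+1\ge b+1$ (Lemma \ref{lem:Bounds of lambda}), giving $\mu=b/\lambda<\tfrac{b}{b+1}$, together with $a>3b+1$, giving $\tfrac{b}{b+1}\le\tfrac{a-b-1}{2(b+1)}$; the right endpoint is $\le1$ precisely when $\mu\le\tfrac{1-\zeta_{+}}{1+\zeta_{+}}=\tfrac{b+1}{a}$, which follows from $\mu<1$ (Lemma \ref{lem:Expanding multiplier}) via $\mu a=\mu(\lambda+\mu)=b+\mu^{2}<b+1$ (using $\lambda+\mu=a$, $\lambda\mu=b$). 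The first condition keeps $\sigma$ in $\{x>0\}$; the second, since the second coordinate of $\Lozi_{+}$ is $x$, gives $\Lozi_{+}(\sigma)\subseteq\sigma$. Hence the forward $\Lozi_{+}$-orbit of every point of $\sigma$ remains in $\sigma\subseteq\{x>0\}$ and converges to $\boldsymbol{z}_{+}$, so $\sigma$ lies on a single linear piece of the stable set, i.e. $\sigma\subseteq W_{0}^{S}(\boldsymbol{z}_{+})$ and $\beta_{1}=\sigma$ is a vertical segment on $\Iv$ crossing the $x$-axis at $w_{1}$. Comparing the affine segments $\beta_{\infty}$, $\beta_{1}$, $\gamma_{\infty}$ from Lemma \ref{lem:Existence of beta_infty, gamma_infty} shows $\beta_{1}\subseteq D$, so once $w_{1}<u^{L}$ is known Proposition \ref{prop:Pullback of a vertical segment} applies; its output $\gamma_{1}=\Pi_{+}(\beta_{1})$ again lies on the $\Lozi_{+}$-invariant line $\ell$ and spans $\Iv$, whence $\gamma_{1}=\ell\cap(\R\times\Iv)=\beta_{1}$ and $r_{1}=w_{1}$.

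It remains to prove $w_{1}=\zeta_{+}(1+\mu)<a-2b-1=u^{L}$. Setting $p=a-b-1$ gives $\zeta_{+}=p/(p+2b+2)$ and $u^{L}=p-b$, and after clearing the positive denominator $\lambda(p+2b+2)$ the claim becomes $pb<\lambda\bigl(p^{2}+(b+1)(p-2b)\bigr)$. The hypothesis $a>3b+1$ gives $p>2b$, so the bracket $Q:=p^{2}+(b+1)(p-2b)$ is positive and moreover $Q-pb=p^{2}+p-2b(b+1)>0$, because $p>2b$ yields $p^{2}+p>4b^{2}+2b\ge2b(b+1)$; combined with $\lambda>1$ (Lemma \ref{lem:Expanding multiplier}) this gives $\lambda Q>Q>pb$, as required. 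I expect the genuine obstacle to be the existence step rather than this estimate: one must confirm that the stable segment really covers all of $\Iv$ without meeting the critical locus $\{x=0\}$ and is forward invariant, and it is exactly here that the strengthened hypothesis $a>3b+1$ (rather than merely $a>b+1$) is used. With $\beta_{1}=\gamma_{1}$ shown to exist and $r_{1}=w_{1}<u^{L}$ established, the lemma follows.
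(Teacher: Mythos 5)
Your proof is correct, and it reaches the same two explicit facts as the paper ($r_{1}=\zeta_{+}(1+\mu)$, $u^{L}=a-2b-1$), but it gets there by a genuinely different route in both halves. For existence, the paper simply asserts that $W_{0}^{S}(\boldsymbol{z}_{+})$ is the segment from $(0,v_{1})$ to $(v_{1},v_{2})$ (the crossing of the stable eigenline with the critical locus and its $\Lozi_{+}$-preimage) and checks $v_{1}>1$ and $v_{2}<-1$ via $\lambda>2b+1$; you instead show that the portion of the eigenline over $\Iv$ has $x$-range $[w_{1}-\mu,w_{1}+\mu]\subset(0,1]$, deduce forward invariance under $\Lozi_{+}$, and conclude containment in the stable set. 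This is more self-contained, since it does not presuppose where the linear piece $W_{0}^{S}(\boldsymbol{z}_{+})$ terminates, at the cost of a slightly longer argument. For the inequality, the paper obtains the clean factorization $u^{L}-r_{1}=\frac{\lambda}{\lambda+1}(\lambda-2b-1)$ and quotes $\lambda>2b+1$, whereas your reduction to $pb<\lambda\left(p^{2}+(b+1)(p-2b)\right)$ with $p=a-b-1$ uses only $p>2b$ and $\lambda>1$; both are valid, and I checked that your reduction is algebraically equivalent. Three small remarks: the chain $\mu<\frac{b}{b+1}$ degenerates to $0<0$ at $b=0$ (the paper treats $b=0$ separately, and your conclusion $\mu<\frac{\zeta_{+}}{1-\zeta_{+}}$ still holds there since the right side is $\frac{a-1}{2}>0$); the claim $\beta_{1}\subseteq D$ is asserted rather than verified (it amounts to $r_{1}<r_{\infty}$, which is true but worth a line); and the detour through Proposition \ref{prop:Pullback of a vertical segment} to identify $\gamma_{1}$ with $\beta_{1}$ can be shortcut, since forward invariance already gives $\beta_{1}\subseteq\Lozi_{+}^{-1}(\beta_{1})\cap(\R\times\Iv)=\gamma_{1}\subseteq\ell\cap(\R\times\Iv)=\beta_{1}$.
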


\begin{proof}
Clearly, $\beta_{1}=\{\zeta_{+}\}\times\Iv$ when $b=0$. When $b>0$,
$W_{0}^{S}(\boldsymbol{z}_{+})$ is the line segment from $(0,v_{1})$
to $(v_{1},v_{2})$ where $v_{1}=(1+\frac{\lambda}{b})\zeta_{+}$
and $v_{2}=[1-(\frac{\lambda}{b})^{2}]\zeta_{+}$. Note that $\zeta_{+}=\frac{(\lambda-1)(\lambda-b)}{(\lambda+1)(\lambda+b)}$.
By Lemma \ref{lem:Bounds of lambda}, we get 
\[
v_{1}=\frac{\lambda-1}{b}\frac{\lambda-b}{\lambda+1}>\frac{2b}{b}\frac{b+1}{2b+2}=\max\Iv.
\]
 Also, by Lemma \ref{lem:Bounds of lambda}, we get 
\[
v_{2}=-\frac{\lambda-1}{b}\frac{\lambda-b}{b}\frac{\lambda-b}{\lambda+1}<-\frac{2b}{b}\frac{b+1}{b}\frac{b+1}{2b+2}<\min\Iv.
\]
Thus, $\beta_{1}$ exist.

Moreover, by definition, we have $u^{L}=a-2b-1$ and $r_{1}=(1+\frac{b}{\lambda})\zeta_{+}$.
By Lemma \ref{lem:Bounds of lambda}, we get 
\[
u^{L}-r_{1}=\frac{\lambda}{\lambda+1}(\lambda-2b-1)>0.\qedhere
\]
\end{proof}
\begin{thm}
If $(a,b)\in\ParaModel$, then the renormalization model exists.
\end{thm}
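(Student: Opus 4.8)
The data defining the renormalization model are the vertical segments $\beta_{m},\gamma_{m}$ for $1\le m\le\infty$ together with the strips $B$, $C$, $C^{+}$, $\{C_{m}\}$, and $D$ that they bound. The two ``endpoint'' segments $\beta_{\infty},\gamma_{\infty}$ exist by Lemma~\ref{lem:Existence of beta_infty, gamma_infty}, and $\beta_{1}=\gamma_{1}$ exists by Lemma~\ref{lem:Existence of beta1}, so the only thing left is to show that the recursion $\beta_{m}=\Pi_{-}(\beta_{m-1})$, $\gamma_{m}=\Pi_{+}(\beta_{m})$ never breaks down, i.e.\ that Proposition~\ref{prop:Pullback of a vertical segment} applies at every stage. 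The plan is to prove by induction on $m$ the statement $(\ast_{m})$: for each $1\le m<\infty$, the segment $\beta_{m}$ is a vertical segment on $\Iv$ with $\beta_{m}\subset D$ whose intersection $(w_{m},0)$ with the $x$-axis satisfies $w_{m}\le u^{L}$. Granting $(\ast_{m})$, Proposition~\ref{prop:Pullback of a vertical segment} immediately produces $\gamma_{m}=\Pi_{+}(\beta_{m})$ and $\beta_{m+1}=\Pi_{-}(\beta_{m})$ as vertical segments on $\Iv$ lying in $D$.

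For the base case $m=1$, Lemma~\ref{lem:Existence of beta1} gives the existence of $\beta_{1}$ and the bound $w_{1}=r_{1}<u^{L}$, so the only thing to check is the containment $\beta_{1}\subset D=V(\beta_{\infty},\gamma_{\infty})$. Since $\beta_{1}$, $\beta_{\infty}$, and $\gamma_{\infty}$ are all restrictions of affine graphs to $\Iv$, this containment reduces to the ordering $\beta_{\infty}(y)\le\beta_{1}(y)\le\gamma_{\infty}(y)$ evaluated only at the two endpoints $y=\pm1$; these four inequalities follow by substituting the explicit endpoints of $\beta_{\infty},\gamma_{\infty}$ from Lemma~\ref{lem:Existence of beta_infty, gamma_infty} and of $\beta_{1}$ (the line through $\boldsymbol{z}_{+}$ with contracting direction $(-\mu,1)$) and then invoking the bound $\lambda>2b+1$ of Lemma~\ref{lem:Bounds of lambda}. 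For the inductive step, assume $(\ast_{m})$. Then Proposition~\ref{prop:Pullback of a vertical segment} yields $\beta_{m+1}=\Pi_{-}(\beta_{m})\subset D\cap(\mathbb{R}_{-}\times\mathbb{R})$, again a vertical segment on $\Iv$. Because $\beta_{m+1}$ lies in the left half-plane its $x$-intercept is $w_{m+1}\le0$, while $u^{L}=a-2b-1>b\ge0$ on $\ParaModel$; hence $w_{m+1}\le0<u^{L}$ and $(\ast_{m+1})$ holds. This closes the induction and shows every $\beta_{m},\gamma_{m}$ with $1\le m<\infty$ exists, so that $B$, $C$, $C^{+}$, $\{C_{m}\}$, and $D$ are all well-defined.

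Finally, to record that the $\{C_{m}\}$ genuinely nest to fill $C$, I would use that $\Pi_{-}$ contracts vertical segments of $D$ toward its invariant segment $\beta_{\infty}$, the contraction being governed by $\lambda>1$ and $0\le\mu<1$ (Lemmas~\ref{lem:Expanding multiplier} and \ref{lem:Bounds of lambda}): the iterates $\beta_{m}=\Pi_{-}^{\,m-1}(\beta_{1})$ then converge monotonically to $\beta_{\infty}$, and applying the affine (hence monotone) map $\Pi_{+}$ gives a monotone sequence $\gamma_{m}\to\gamma_{\infty}$, so that $C=V(\gamma_{1},\gamma_{\infty})$ is tiled by the strips $C_{m}=V(\gamma_{m-1},\gamma_{m})$ with disjoint interiors. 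I expect the single genuine obstacle to be the base-case containment $\beta_{1}\subset D$, i.e.\ the four endpoint inequalities; once $\beta_{1}$ is placed inside $D$, the hypothesis $w_{m}\le u^{L}$ propagates for free, since every later pullback by the negative branch lands in $\{x\le0\}\subset\{x\le u^{L}\}$ and no new estimate is required.
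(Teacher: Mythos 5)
Your proof follows essentially the same route as the paper's: existence of $\beta_{\infty}$ and $\gamma_{\infty}$ from Lemma~\ref{lem:Existence of beta_infty, gamma_infty}, the base case from Lemma~\ref{lem:Existence of beta1}, and induction on $m$ via Proposition~\ref{prop:Pullback of a vertical segment}, with the key observation that each pullback by $\Pi_{-}$ lands in $\{x\le0\}$ so that $w_{m+1}\le0\le u^{L}$ propagates for free. Your extra attention to the hypothesis $\beta_{1}\subset D$ of the pullback proposition (which the paper's proof passes over silently) is a reasonable refinement rather than a different approach, and the sketch of the endpoint inequalities is the right way to discharge it.
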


\begin{proof}
The vertical segments $\beta_{\infty}$ and $\gamma_{\infty}$ exist
by Lemma \ref{lem:Existence of beta_infty, gamma_infty}.

We claim that $\beta_{m}$ exists and $w_{m}\leq u^{L}$ by induction
on $m\geq1$, where $(w_{m},0)$ is the intersection point of $\beta_{m}$
and the $x$-axis. The base case follows from Lemma \ref{lem:Existence of beta1}.
Suppose that $\beta_{m}$ is a vertical segment and $w_{m}\leq u^{L}$
for some $m\geq1$. By Proposition \ref{prop:Pullback of a vertical segment},
$\beta_{m+1}=\Pi_{-}(\beta_{m})$ exists and $w_{m+1}\leq0\leq u^{L}$.
Consequently, this proves the claim by induction.

By Proposition \ref{prop:Pullback of a vertical segment}, $\gamma_{m}=\Pi_{+}(\beta_{m})$
exists because $w_{m}\leq u^{L}$. This completes the proof.
\end{proof}
\begin{rem}
By choosing a different $\Iv$, we can show that the renormalization
model exists on $\{(a,b);a>3|b|+1\text{ and }-1\leq b\leq1\}$.
\end{rem}

Next, we study the orbit of $C_{m}$. Let $B_{m}=\Lozi^{m-1}(C_{m})$
and $U_{m}=\Lozi(B_{m})$. See Figures \ref{fig:B(m)} and \ref{fig:U(m)}
for illustrations. By the definition of the vertical segments, we
have $B_{m}\subset B$ for all $m$. The pieces $\{B_{m}\}_{2\leq m<\infty}$
converge to $W_{0}^{U}(\boldsymbol{z}_{-})$ exponentially.  The
$m$-th iterate $U_{m}$ returns to $C$, and is folded along the
$x$-axis. Thus, the $m$-fold iterate $\Lozi^{m}:C_{m}\rightarrow U_{m}$
forms a ``Lozi-like map'' in a microscopic scale. Let $(u_{m}^{L},0)$
and $(u_{m}^{R},0)$ be the left and right boundary turning points
of $U_{m}$ respectively. And let $(u_{\infty},0)$ be the intersection
point of $W_{0}^{U}(\boldsymbol{z}_{-})$ and the $x$-axis. The values
$u_{m}^{L}$ and $u_{m}^{R}$ serve as the ``critical values'' of
$\Lozi^{m}|_{C_{m}}$. They converge to $u_{\infty}$ exponentially
as $m\rightarrow\infty$ when $b>0$, and degenerate to a single value
when $b=0$. The position of the critical values govern the dynamics
in a microscopic scale. If $r_{m-1}\leq u_{m}^{R}$, then $C_{m}\cap U_{m}\neq\emptyset$,
and the orbit of a point in $C_{m}$ may have a recurrence in the
set. This is called the renormalization defined by one return to $C$. 
\begin{prop}
Suppose that $(a,b)\in\ParaModel$. Then the followings are true.
\begin{enumerate}
\item $r_{m}<r_{n}<r_{\infty}$ for all $1\leq m<n$.
\item $u^{L}\leq u_{m}^{L}\leq u_{m}^{R}\leq u_{n}^{L}\leq u_{\infty}\leq u^{R}$
for all $2\leq m<n$.
\end{enumerate}
\end{prop}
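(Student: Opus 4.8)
The plan is to treat the two assertions as dual instances of one principle: a family of vertical segments produced by iterating a single affine branch of $\Lozi$ (or of its inverse), while pinned to a fixed piece of an invariant manifold, nests monotonically onto that piece. For part (1) I would first note that $\beta_{\infty}=W_{0}^{S}(\boldsymbol{z}_{-})\cap(\R\times\Iv)$ is a fixed segment of the pullback $\Pi_{-}$, since $\boldsymbol{z}_{-}$ is fixed by $\Lozi_{-}$ and $W_{0}^{S}(\boldsymbol{z}_{-})$ is its local stable manifold. I would then prove by induction on $m$ that $\beta_{m}$ lies in the open strip between $\beta_{m-1}$ and $\beta_{\infty}$, i.e. that the $\beta_{m}$ nest onto $\beta_{\infty}$ from the right (the intercept $w_{m}$ of $\beta_{m}$ satisfies $\mu-1=w_{\infty}<w_{m}$ and decreases, where $\mu=b/\lambda$). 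The base case $\beta_{2}\subset V(\beta_{1},\beta_{\infty})$ is a finite check: with $w_{2}=\bigl((1+b/\lambda)\zeta_{+}-(a-b-1)\bigr)/(a+b/\lambda)$ and the endpoints of $\beta_{1},\beta_{\infty}$ from the earlier lemmas, verify $\beta_{\infty}(y)<\beta_{2}(y)<\beta_{1}(y)$ at $y=\pm1$.

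For the inductive step, $\Lozi_{-}^{-1}$ is an affine homeomorphism of the plane, and by the pullback proposition $\Pi_{-}$ sends vertical segments on $\Iv$ inside $D$ to vertical segments on $\Iv$; since $\Pi_{-}(\beta_{m-1})=\beta_{m}$ and $\Pi_{-}(\beta_{\infty})=\beta_{\infty}$, it carries the strip $V(\beta_{m-1},\beta_{\infty})$ homeomorphically onto $V(\beta_{m},\beta_{\infty})$. Applying this homeomorphism to the inclusion $\beta_{m-1}\subset V(\beta_{m-2},\beta_{\infty})$ gives $\beta_{m}\subset V(\beta_{m-1},\beta_{\infty})$, closing the induction. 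The segments $\gamma_{m}=\Pi_{+}(\beta_{m})$ are then disposed of in one step: $\Pi_{+}$ is the restriction of the affine homeomorphism $\Lozi_{+}^{-1}$, so it preserves betweenness of vertical segments and sends the nested family $\{\beta_{m}\}$ to a nested family $\{\gamma_{m}\}$, with $\gamma_{m}$ between $\gamma_{m-1}$ and $\gamma_{\infty}$. Fixing the orientation by the single explicit inequality $r_{1}<r_{\infty}$ (both computable: $r_{1}=(1+b/\lambda)\zeta_{+}$ and $r_{\infty}=1-b(\lambda+2)/(a\lambda+b)$) upgrades this to $r_{m}<r_{n}<r_{\infty}$ for all $1\le m<n$.

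For part (2) I would locate the turning points explicitly. Since $\Lozi$ folds along the critical locus $\{x=0\}$ and maps it to the $x$-axis by $\Lozi(0,y)=(a-b-1-by,\,0)$, the turning points of $U_{m}=\Lozi(B_{m})$ are exactly the images of the two ends of the crossing $B_{m}\cap\{x=0\}=\{0\}\times[y_{m}^{-},y_{m}^{+}]$, namely $u_{m}^{L}=(a-b-1)-b\,y_{m}^{+}$ and $u_{m}^{R}=(a-b-1)-b\,y_{m}^{-}$. Then $u_{m}^{L}\le u_{m}^{R}$ is immediate from $y_{m}^{-}\le y_{m}^{+}$; the outer bounds follow from $y_{m}^{\pm}\in\Iv=[-1,1]$ together with $u^{L}=(a-b-1)-b$ and $u^{R}=(a-b-1)+b$, while $u_{\infty}\le u^{R}$ reduces to $u_{\infty}=\lambda-1\le a-1=u^{R}$, i.e. $\lambda\le a$. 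The remaining inequalities $u_{m}^{R}\le u_{n}^{L}$ and $u_{n}^{L}\le u_{\infty}$ for $m<n$ are the unstable-side analogue of part (1): as $y\mapsto(a-b-1)-by$ is decreasing, they are equivalent to the crossings $[y_{m}^{-},y_{m}^{+}]$ descending monotonically, with successive crossings non-overlapping, to the point $W_{0}^{U}(\boldsymbol{z}_{-})\cap\{x=0\}$. I would obtain this by writing $B_{m}=\Phi_{m}(C_{m})$ with the affine map $\Phi_{m}=\Lozi_{-}^{m-2}\circ\Lozi_{+}$, tracking the affine preimages $\Phi_{m}^{-1}(\{x=0\})$ of the critical locus, and using the $\Lozi_{-}$-invariance of $W_{0}^{U}(\boldsymbol{z}_{-})$ to pin the limit $u_{\infty}$ and the one-sided, monotone approach, exactly as $\beta_{\infty}$ pinned the stable side.

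The crux in both parts is this monotone nesting, and the main obstacle is that it resists a naive scalar computation. On the stable side the intercept obeys $w_{m}=(w_{m-1}-(a-b-1))/(a-s_{m-1})$ while the vertical slope obeys its own M\"obius iteration $s_{m}=b/(a-s_{m-1})$, with attracting fixed point $\mu$ and repelling fixed point $\lambda$; for $r_{m}$ the intercept and slope then pull in opposite directions, so no single scalar recursion is monotone by inspection. The device that removes this difficulty is to stop tracking scalars and instead propagate a \emph{containment of strips} by the ambient affine homeomorphisms $\Lozi_{\pm}^{-1}$, collapsing the entire monotonicity onto the two explicit base cases above. The same discipline governs part (2), where the extra care is to confirm that $B_{m}$ genuinely meets $\{x=0\}$ and approaches $W_{0}^{U}(\boldsymbol{z}_{-})$ from one side; once the nesting is in hand, the turning-point bounds are mechanical. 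Finally I would record the degenerate case $b=0$, where $\mu=0$ forces all turning points to coincide ($u_{m}^{L}=u_{m}^{R}=u_{\infty}$), which is why part (2) is stated with $\le$ whereas part (1) remains strict.
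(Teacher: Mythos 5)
Your proposal is correct in outline, and it takes a genuinely different route from the one the paper gestures at. The paper's proof is a one\-line deferral to the techniques of Sections \ref{subsec:Geometry-forward iterates} and \ref{subsec:Geometry-backward iterates}, i.e.\ to the scalar M\"obius recursions for slopes and intersection points (Lemmas \ref{lem:Invariance of the unstable cone} and \ref{lem:Invariance of the stable cone}) and the two\-sided exponential bounds of Propositions \ref{prop:Exponential convergence of u} and \ref{prop:Exponential convergence of r}. Those two\-sided bounds by themselves do not yield the consecutive interleavings $r_{m}<r_{m+1}$ and $u_{m}^{R}\leq u_{m+1}^{L}$ (one would need $c_{2}\lambda^{-(m+1)}<c_{1}\lambda^{-m}$, which fails for $\lambda$ near $1$), so the scalar route must in addition compare the competing increments of the slope and intercept sequences step by step, exactly the difficulty you identify. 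Your device --- propagating a containment of vertical strips under the affine branches, with $\beta_{\infty}$ (resp.\ $W_{0}^{U}(\boldsymbol{z}_{-})$) serving as the fixed pin --- collapses all of the monotonicity onto finitely many base\-case inequalities and is the more robust argument on all of $\ParaModel$; it is essentially a qualitative inclination lemma where the paper's sections give a quantitative one. Two caveats, neither a conceptual gap. First, the base cases are asserted rather than carried out: for (1) you must verify $\beta_{\infty}(y)<\beta_{2}(y)<\beta_{1}(y)$ on $\Iv$, and for (2) the non\-overlap of successive crossings needs its own seed, namely $y_{3}^{+}\leq y_{2}^{-}$ (equivalently $u_{2}^{R}\leq u_{3}^{L}$), since monotone convergence of the two endpoint families separately does not force consecutive crossings to be disjoint; both inequalities do hold on $\ParaModel$ (they can be checked with Lemmas \ref{lem:Bounds of lambda} and \ref{lem:Existence of beta1}), but they are the substantive computations of the proof and should be written out. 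Second, the inductive step uses that $\Pi_{\Sign}$ carries the vertical strip between two admissible segments onto the vertical strip between their pullbacks; this is an extension of Proposition \ref{prop:Pullback of a vertical segment} from segments to strips and deserves a line of justification, especially at $b=0$ where $\Lozi_{\Sign}^{-1}$ is not a homeomorphism and the argument must be phrased in terms of preimages (the conclusion survives there, as your closing remark about the degenerate collapse $u_{m}^{L}=u_{m}^{R}=u_{\infty}$ correctly anticipates).
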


\begin{proof}
The properties can be proved by using the techniques developed in
Sections \ref{subsec:Geometry-forward iterates} and \ref{subsec:Geometry-backward iterates}.
The details are left to the reader.
\end{proof}
\begin{rem}
If the map is orientation reversing and the renormalization model
exists, then the order of the turning points will be flipping sides
alternatively:
\[
u^{L}\leq u_{3}^{L}\leq u_{3}^{R}\leq u_{5}^{L}\leq u_{5}^{R}\leq\cdots\leq u_{\infty}\leq\cdots\leq u_{4}^{L}\leq u_{4}^{R}\leq u_{2}^{L}\leq u_{2}^{R}\leq u^{R}.
\]
\end{rem}

\begin{figure}
\begin{minipage}[c]{0.48\columnwidth}%
\subfloat[The sets $B_{m}$ on the phase space. The parameters of the map are
$(a,b)=(1.8,0.2)$.]{\includegraphics[scale=0.7]{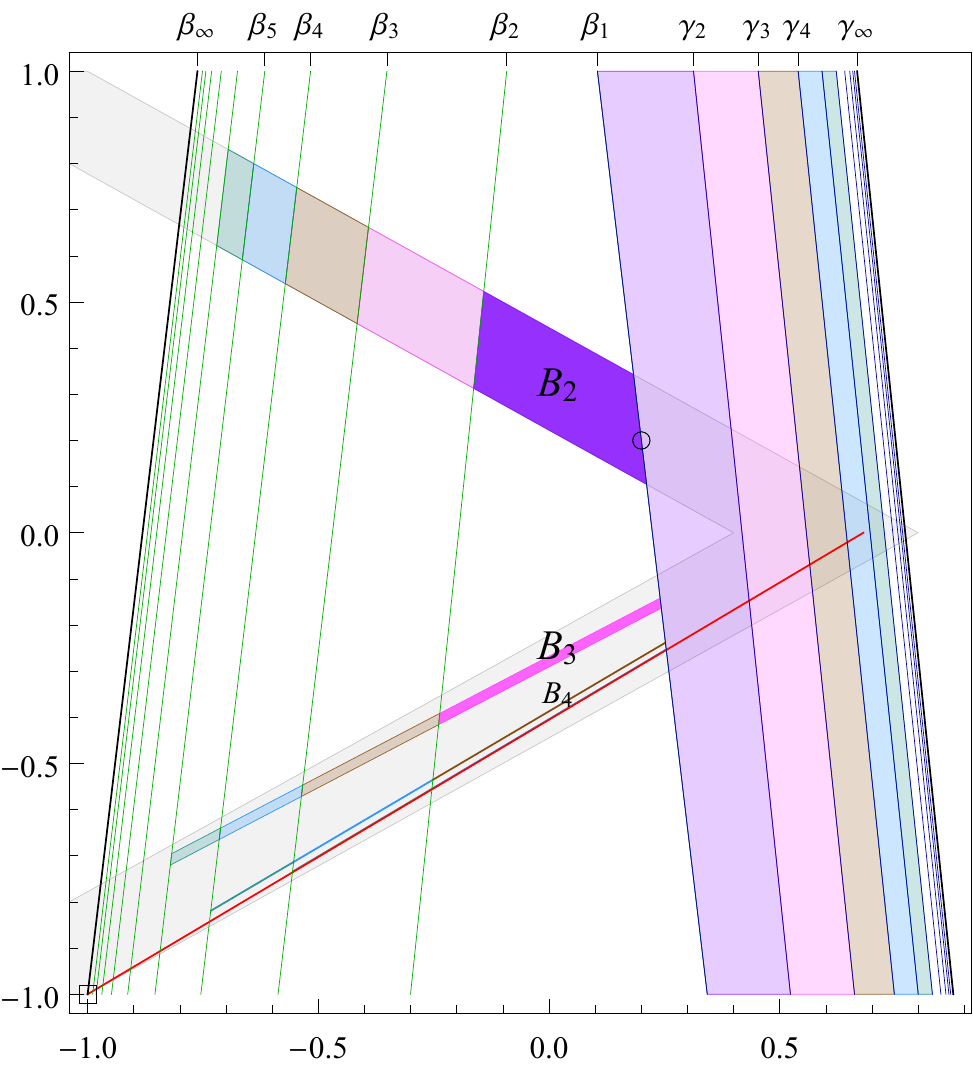}

}%
\end{minipage}\hfill{}%
\begin{minipage}[c]{0.48\columnwidth}%
\subfloat[The exponential convergence of $B_{m}$.]{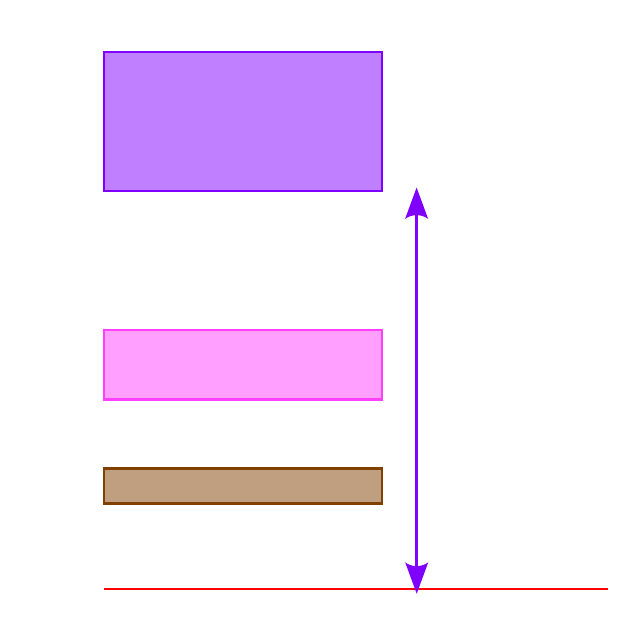

}%
\end{minipage}

\caption{\label{fig:B(m)}The sets $B_{m}$.}
\end{figure}

\begin{figure}
\begin{minipage}[c]{0.48\columnwidth}%
\subfloat[The sets $U_{m}$ on the phase space. The parameters of the map are
$(a,b)=(1.8,0.2)$.]{\includegraphics[scale=0.7]{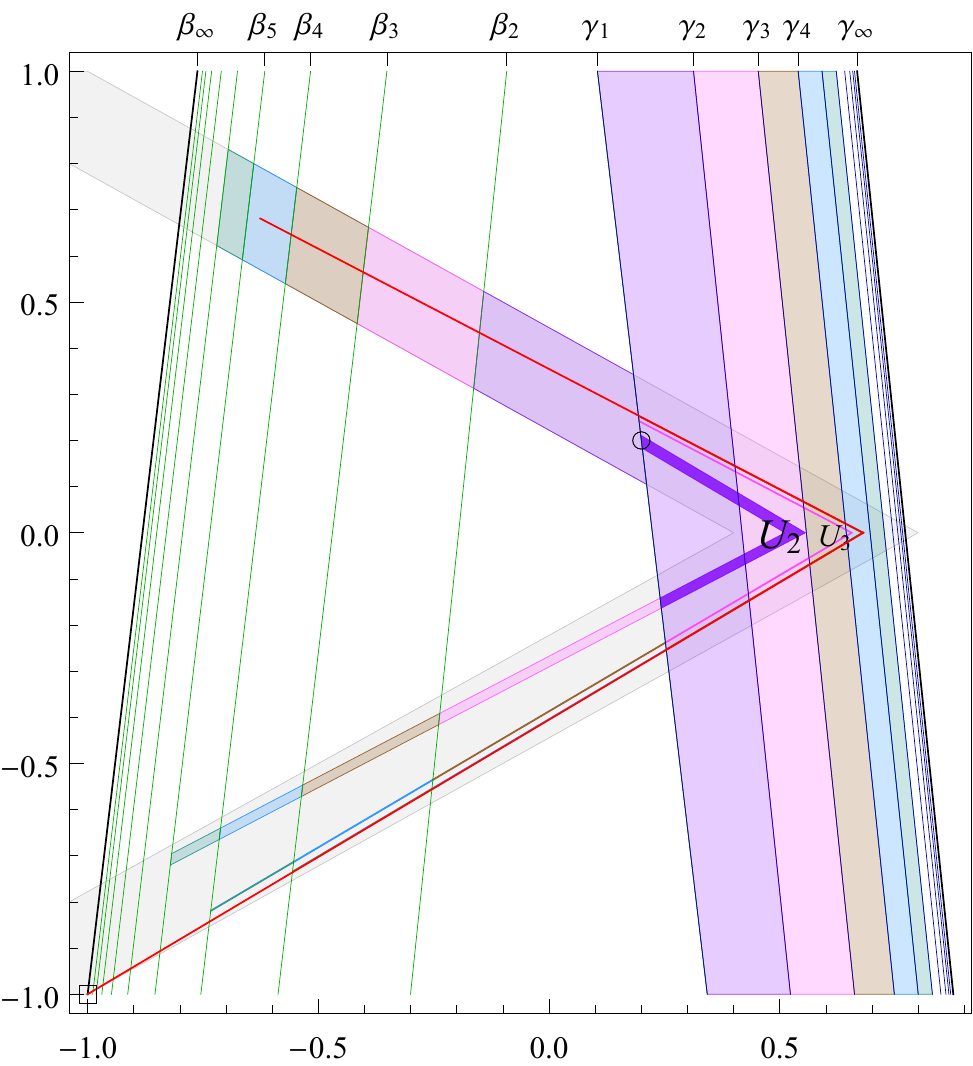}

}%
\end{minipage}\hfill{}%
\begin{minipage}[c]{0.48\columnwidth}%
\subfloat[The exponential convergence of $U_{m}$.]{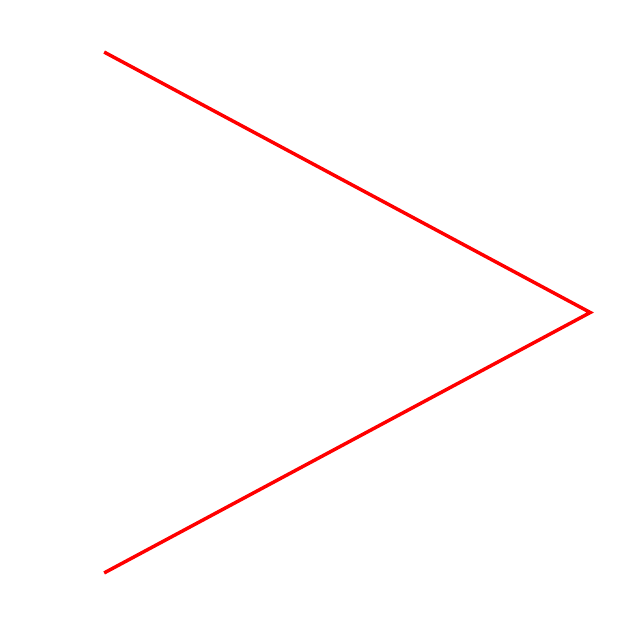

}%
\end{minipage}

\caption{\label{fig:U(m)}The sets $U_{m}$.}
\end{figure}

Moreover, we define a subpartition $\{C_{m,n}\}_{2\leq n<\infty}$
on $C_{m}$. Let $C_{m,n}=\Lozi^{-m}(U_{m}\cap C_{n})$ for $m,n\geq2$.
See Figure \ref{fig:The subpartition} for an illustration.
\begin{prop}
\label{prop:Left and right components of a subpartition}Suppose that
$(a,b)\in\ParaModel$ and $m,n\geq2$. If $r_{n}\leq u_{m}^{L}$,
then $C_{m,n}$ is the union of two disjoint vertical strips. The
vertical strips are bounded between vertical segments which are subsets
of $W^{S}(\boldsymbol{z}_{+})$. Let $C_{m,n}^{L}$ and $C_{m,n}^{R}$
be the left and right components respectively. 
\end{prop}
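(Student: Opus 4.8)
The plan is to identify $C_{m,n}$ with a preimage of the strip $C_{n}=V(\gamma_{n-1},\gamma_{n})$ under the return map $\Lozi^{m}|_{C_{m}}$ and to pull back the two boundary segments $\gamma_{n-1},\gamma_{n}$ branch by branch, in the spirit of Proposition \ref{prop:Pullback of a vertical segment} but with the turning point $u_{m}^{L}$ of the return map in the role previously played by $u^{L}$. First I would note that $C_{m,n}\subset C_{m}$ and $U_{m}=\Lozi^{m}(C_{m})$, so for $p\in C_{m}$ we have $p\in C_{m,n}$ if and only if $\Lozi^{m}(p)\in C_{n}$; hence $C_{m,n}=C_{m}\cap\Lozi^{-m}(C_{n})$.

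Next I would record that $\Lozi^{m}|_{C_{m}}$ is Lozi-like, i.e. a single fold. Because $\gamma_{j}=\Pi_{+}(\beta_{j})\subset\R_{+}\times\R$ and $\beta_{j}=\Pi_{-}(\beta_{j-1})\subset\R_{-}\times\R$ for $j\ge2$ (Proposition \ref{prop:Pullback of a vertical segment}), each of the first $m-1$ iterates of $C_{m}$ lies on one side of the critical locus $\{x=0\}$, so $\Lozi^{m-1}|_{C_{m}}$ is affine and injective and carries $C_{m}$ onto the vertical strip $B_{m}=\Lozi^{m-1}(C_{m})$. Its two boundaries lie on $\beta_{1}$ (at $r_{1}>0$) and on $\beta_{2}$ (which meets the $x$-axis at $w_{2}\le0$), so $B_{m}$ straddles $\{x=0\}$ and the last iterate $\Lozi|_{B_{m}}$ folds it along $\{x=0\}$ into the band $U_{m}$ with turning points $(u_{m}^{L},0)$ and $(u_{m}^{R},0)$. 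Pulling the fold line back through the affine part yields one vertical segment $\ell\subset C_{m}$ splitting $C_{m}$ into $C_{m}^{-}$ and $C_{m}^{+}$, on each of which $\Lozi^{m}$ is affine and injective onto one arm of $U_{m}$.

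With this structure in place, I would prove the renormalized pullback statement that mirrors Proposition \ref{prop:Pullback of a vertical segment}: if $\omega$ is a vertical segment meeting the $x$-axis at $(w,0)$ with $w\le u_{m}^{L}$, then $(\Lozi^{m}|_{C_{m}})^{-1}(\omega)$ consists of exactly two vertical segments on $\Iv$, one in $C_{m}^{-}$ and one in $C_{m}^{+}$; and if $\omega\subset W^{S}(\boldsymbol{z}_{+})$ then both preimages lie in $W^{S}(\boldsymbol{z}_{+})$ by the full invariance $\Lozi^{-1}(W^{S}(\boldsymbol{z}_{+}))\subseteq W^{S}(\boldsymbol{z}_{+})$. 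Applying this to $\omega=\gamma_{n-1}$ and $\omega=\gamma_{n}$, whose $x$-intercepts satisfy $r_{n-1}<r_{n}\le u_{m}^{L}$ by hypothesis together with $r_{n-1}<r_{n}$, produces four vertical segments in $W^{S}(\boldsymbol{z}_{+})$. The two lying in $C_{m}^{-}$ bound the left component $C_{m,n}^{L}$ and the two in $C_{m}^{+}$ bound the right component $C_{m,n}^{R}$; the components are disjoint because they sit on opposite sides of $\ell$, whose image is the fold region $[u_{m}^{L},u_{m}^{R}]\times\{0\}$, lying to the right of $C_{n}$ and hence outside it.

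The main obstacle is the renormalized pullback statement itself, that is, transferring Proposition \ref{prop:Pullback of a vertical segment} from $\Lozi$ (turning point $u^{L}$) to the composed fold $\Lozi^{m}|_{C_{m}}$ (turning point $u_{m}^{L}$). The delicate point is that $u_{m}^{L}$ is exactly the correct threshold: the hypothesis $w\le u_{m}^{L}$ forces $\omega$ to lie to the left of both turning points, so that $\omega$ meets each arm of $U_{m}$ across its full height and the two preimages are genuine vertical segments on all of $\Iv$; were $u_{m}^{L}<w\le u_{m}^{R}$, the segment would pass between the two tips and $\Lozi^{-m}(\omega)\cap C_{m}$ would fail to split into two full-height strips. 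Establishing this amounts to showing that, to the left of $u_{m}^{L}$, both arms of $U_{m}$ span $\R\times\Iv$ from bottom to top, the renormalized analogue of the condition $w\le u^{L}$ in Proposition \ref{prop:Pullback of a vertical segment}. The auxiliary bookkeeping --- that the first $m-1$ iterates are single-branch and that $B_{m}$ straddles $\{x=0\}$ --- is routine given the construction of the $\beta_{j},\gamma_{j}$ and Lemma \ref{lem:Existence of beta_infty, gamma_infty}.
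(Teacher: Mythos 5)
Your proposal is correct and follows essentially the route the paper intends: the paper's own proof of this proposition is the single remark that it is ``similar to Proposition \ref{prop:Pullback of a vertical segment}'' with details left to the reader, and your plan --- factor $\Lozi^{m}|_{C_{m}}$ into the affine bijection $\Lozi^{m-1}|_{C_{m}}:C_{m}\to B_{m}$ followed by the fold $\Lozi|_{B_{m}}$, then rerun the pullback argument with the top and bottom edges of $C_{m}$ in the role of $K,L$ and $u_{m}^{L}$ in the role of $u^{L}$ --- is exactly that renormalized argument. The one phrase to tighten is that ``both arms of $U_{m}$ span $\R\times\Iv$ from bottom to top'' (the thin strip $U_{m}$ does not span $\Iv$ vertically); what is needed is that a vertical segment with $x$-intercept in $[r_{1},u_{m}^{L}]$ meets each arm of $U_{m}$ in a full transversal section running from $\Lozi^{m}(\text{top edge of }C_{m})$ to $\Lozi^{m}(\text{bottom edge of }C_{m})$, so that its preimage crosses $B_{m}$ from edge to edge and hence pulls back through the affine part to a full-height vertical segment on $\Iv$ --- precisely the renormalized analogue of Lemma \ref{lem:Existence of beta_infty, gamma_infty} that you correctly identify as the crux.
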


\begin{proof}
The proof is similar to Proposition \ref{prop:Pullback of a vertical segment}.
The details are left to the reader.
\end{proof}
If the conclusion of Proposition \ref{prop:Left and right components of a subpartition}
holds, let $B_{m,n}^{d}=\Lozi^{m+n-1}(C_{m,n}^{d})$ and $U_{m,n}^{d}=\Lozi(B_{m,n}^{d})$
for $d\in\{L,R\}$. We note that $B_{m,n}^{d}\subset B_{n}$ and $U_{m,n}^{d}\subset U_{n}$.
The image $\Lozi^{m+n}(C_{m,n}^{d})$ is folded along the $x$-axis.
Thus, the $(m+n)$-fold iterate $\Lozi^{m+n}:C_{m,n}^{d}\rightarrow U_{m,n}^{d}$
forms a ``Lozi-like map'' in a microscopic scale. If $C_{m,n}^{d}\cap U_{m,n}^{d}\neq\emptyset$,
then the orbit of a point in $C_{m,n}^{d}$ may have a recurrence
in the set. This is called the renormalization defined by two returns
to $C$. 

\begin{figure}
\begin{minipage}[c]{0.48\columnwidth}%
\subfloat[The subpartition, $U_{2}$, and $U_{3}$ on the phase space.]{\includegraphics[scale=0.7]{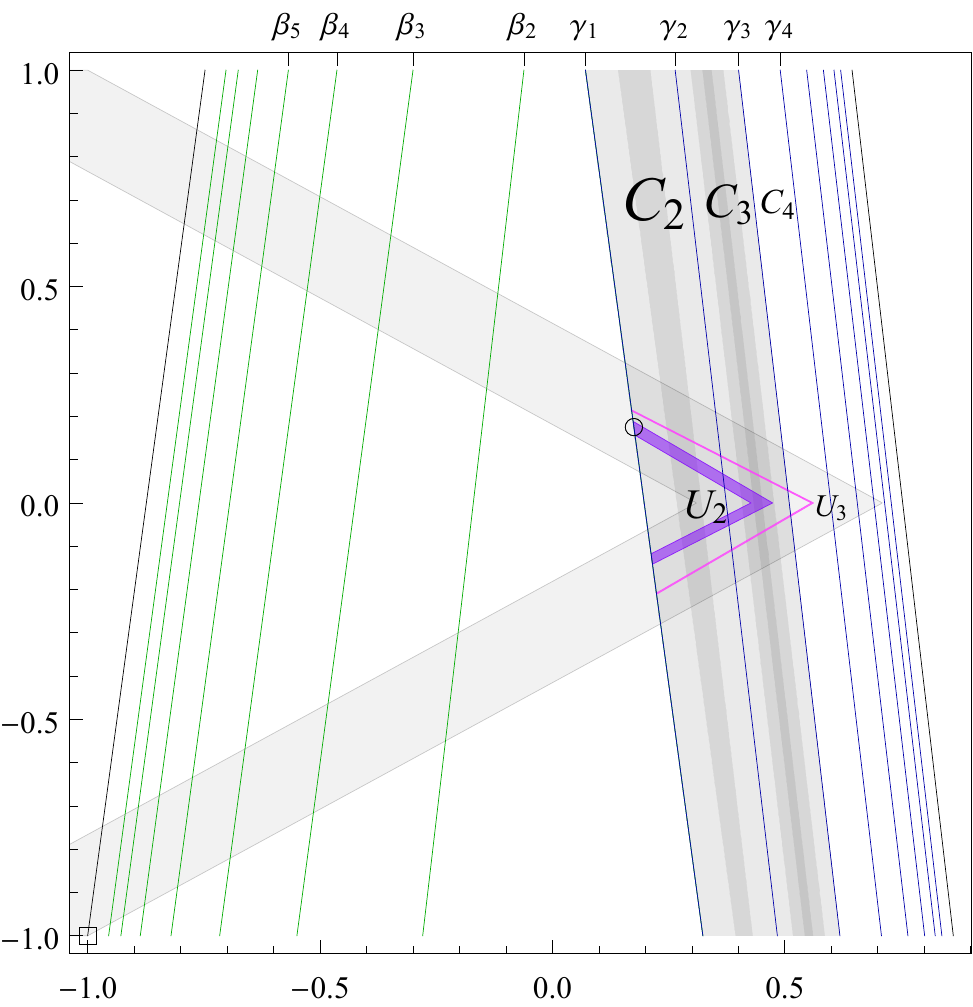}

}%
\end{minipage}\hfill{}%
\begin{minipage}[c]{0.48\columnwidth}%
\subfloat[A zoomed view of the subpartition.]{%
\begin{tabular}{c}
\includegraphics[scale=0.7]{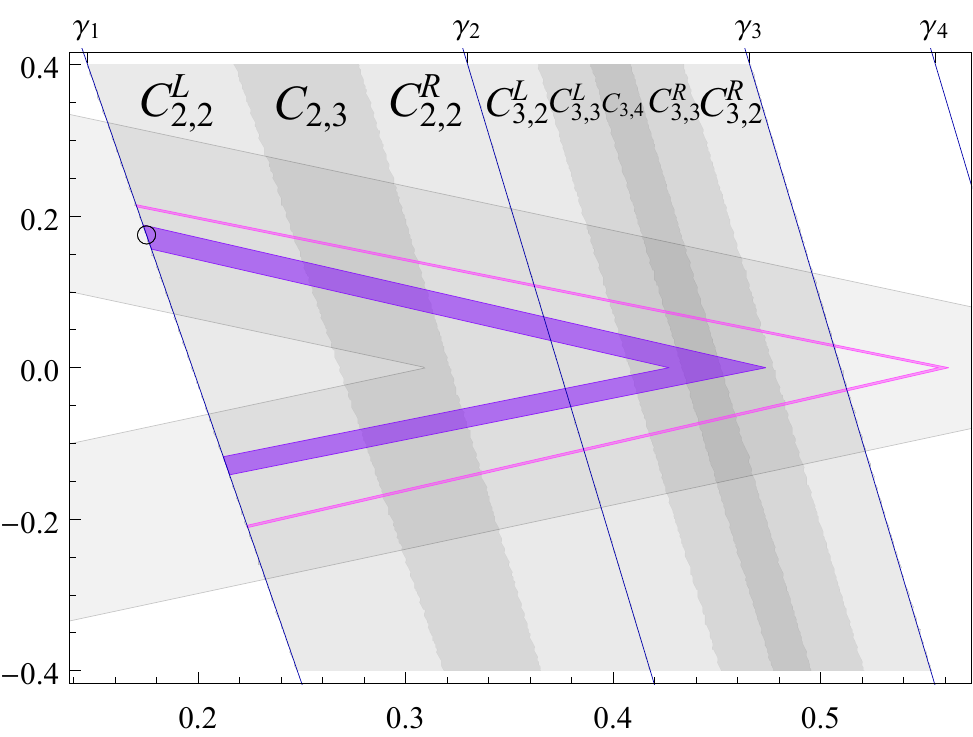}\tabularnewline
\smallskip{}
\tabularnewline
\resizebox{\columnwidth}{!}{%
\small\begin{tikzpicture}[scale=1,interval/.style={fill=white, pos=.5,inner sep=0,outer sep=0},>=stealth]

\coordinate (r1) at (2,0);
\draw ($(r1)-(0,0.4)$) -- ($(r1)+(0,0.4)$);
\node[below=0.35] at (r1) {$\gamma_{1}$};

\coordinate (r2) at (5,0);
\draw ($(r2)-(0,0.4)$) -- ($(r2)+(0,0.4)$);
\node[below=0.35] at (r2) {$\gamma_{2}$};

\coordinate (r3) at (10,0);
\draw ($(r3)-(0,0.4)$) -- ($(r3)+(0,0.4)$);
\node[below=0.35] at (r3) {$\gamma_{3}$};

\draw[-] ($(r1)-(0,0.2)$) to node[interval]{$C_2$} ($(r2)-(0,0.2)$);

\coordinate (left1) at (3,0);
\draw ($(left1)$) -- ($(left1)+(0,0.3)$);

\coordinate (left2) at (4,0);
\draw ($(left2)$) -- ($(left2)+(0,0.3)$);

\draw[-] ($(r1)+(0,0.2)$) to node[interval]{$C^{L}_{2,2}$} ($(left1)+(0,0.2)$);

\draw[-] ($(left1)+(0,0.2)$) to node[interval]{$C_{2,3}$} ($(left2)+(0,0.2)$);

\draw[-] ($(left2)+(0,0.2)$) to node[interval]{$C^{R}_{2,2}$} ($(r2)+(0,0.2)$);

\draw[-] ($(r2)-(0,0.2)$) to node[interval]{$C_3$} ($(r3)-(0,0.2)$);

\coordinate (mid1) at (6,0);
\draw ($(mid1)$) -- ($(mid1)+(0,0.3)$);

\coordinate (mid2) at (7,0);
\draw ($(mid2)$) -- ($(mid2)+(0,0.3)$);

\coordinate (mid3) at (8,0);
\draw ($(mid3)$) -- ($(mid3)+(0,0.3)$);

\coordinate (mid4) at (9,0);
\draw ($(mid4)$) -- ($(mid4)+(0,0.3)$);

\draw[-] ($(r2)+(0,0.2)$) to node[interval]{$C^{L}_{3,2}$} ($(mid1)+(0,0.2)$);

\draw[-] ($(mid1)+(0,0.2)$) to node[interval]{$C^{L}_{3,3}$} ($(mid2)+(0,0.2)$);

\draw[-] ($(mid2)+(0,0.2)$) to node[interval]{$C_{3,4}$} ($(mid3)+(0,0.2)$);

\draw[-] ($(mid3)+(0,0.2)$) to node[interval]{$C^{R}_{3,3}$} ($(mid4)+(0,0.2)$);

\draw[-] ($(mid4)+(0,0.2)$) to node[interval]{$C^{R}_{3,2}$} ($(r3)+(0,0.2)$);

\draw (r1) -- (r3);

\end{tikzpicture}}\tabularnewline
\end{tabular}

}%
\end{minipage}

\caption{\label{fig:The subpartition}The subpartition that establishes the
renormalization defined by two returns to $C$. The parameters of
the map are $(a,b)=(1.71,0.2)$.}
\end{figure}

\subsection*{An outline of the proof of the main theorem (Theorem \ref{thm:Main theorem})}

We consider the two pairs of periodic points $\FormalPeriodic_{-,m,2},\FormalPeriodic_{+,m,2}\in C_{m,2}^{L}$
and $\FormalPeriodic_{-,m,3},\FormalPeriodic_{+,m,3}\in C_{m,3}^{L}$
created by the renormalization defined by two returns to $C$. The
points depend analytically on the parameters $(a,b)$ whenever they
exist (Theorem \ref{thm:Formal periodic orbits}). For each pair $(\FormalPeriodic_{-,m,n},\FormalPeriodic_{+,m,n})$,
the two periodic points are created when there is a border collision
bifurcation \cite{Le59,NY92}. The bifurcation parameters $(a,b)$
form an analytic curve $a=l_{m,n}(b)$ on the parameter space (Theorem
\ref{thm:The boundary of admissibility}). The existence of the curve
is proved by using a geometrical characterization of the bifurcation
given in Proposition \ref{prop:Geometrical characterization of the BCB in C(m,n)}.
Our goal is to show that the two curves $l_{m,2}$ and $l_{m,3}$
have a unique intersection and the intersection is transverse.

We consider a curve $a=t(b)$ on the parameter space such that the
Lozi map $\Lozi_{a,b}$ has a homoclinic tangency $(r_{\infty},0)=(u_{\infty},0)$
(Proposition \ref{prop:Curve of homoclinic tangency}). Both the stable
laminations $(r_{m},0)$ and the turning points $(u_{m}^{L,R},0)$
converge exponentially to the homoclinic point as $m\rightarrow\infty$
(Propositions \ref{prop:Exponential convergence of u} and \ref{prop:Exponential convergence of r}).
Thus, we apply the logarithm coordinate transformation $T(x)=-\log_{\lambda}(r_{\infty}-x)$
to the $x$-coordinate. 

Under the coordinate change, the stable laminations $T(r_{m})$ are
located on the integral points $m$; while the turning points $T(u_{n}^{L,R})$
are located on $(n-1)\log_{\lambda}b^{-1}$. Since $\lambda$ is bounded
on the parameter curve $t$, the estimation $\log_{\lambda}\overline{b}^{-1}\gg2$
holds when $\overline{b}>0$ is sufficiently small (the condition
(\ref{eq:Condition when b is small})). Thus, when $(a,b)=(t(\overline{b}),\overline{b})$,
there exists an integer $m$ such that 
\begin{equation}
T(u_{2}^{R})<m-1=T(r_{m-1})<m=T(r_{m})<T(u_{3}^{L}).\label{eq:Configuration for reverse combinatorics}
\end{equation}
The geometry of the points is illustrated as in Figure \ref{fig:Reverse combinatorics}. 

We show that the order of creation of the pairs $(\FormalPeriodic_{-,m,2},\FormalPeriodic_{+,m,2})$
and $(\FormalPeriodic_{-,m,3},\FormalPeriodic_{+,m,3})$ is opposite
on the parameter lines $b=0$ and $b=\overline{b}$ while we vary
$a$. On the one hand, when $(a,b)=(t(\overline{b}),\overline{b})$,
the relation (\ref{eq:Configuration for reverse combinatorics}) holds.
We have $U_{2}\cap C_{m,2}^{L}=\emptyset$ since $T(u_{2}^{R})<T(r_{m-1})$.
Thus, $\FormalPeriodic_{-,m,2}$ and $\FormalPeriodic_{+,m,2}$ do
not exist. Also, $\Lozi^{m+3}:C_{m,3}^{L}\rightarrow U_{3}$ forms
a full horseshoe since $T(u_{3}^{L})>T(r_{m})$. Thus, $\FormalPeriodic_{-,m,3}$
and $\FormalPeriodic_{+,m,3}$ exist. This shows that $(\FormalPeriodic_{-,m,3},\FormalPeriodic_{+,m,3})$
is created before $(\FormalPeriodic_{-,m,2},\FormalPeriodic_{+,m,2})$
on the line $b=\overline{b}$. On the other hand, when $b=0$, we
have $U=U_{2}=U_{3}$. As the parameter $a$ increases, $U$ will
first intersects $C_{m,2}^{L}$ then intersects $C_{m,3}^{L}$. This
implies that $(\FormalPeriodic_{-,m,2},\FormalPeriodic_{+,m,2})$
is created before $(\FormalPeriodic_{-,m,3},\FormalPeriodic_{+,m,3})$
on the line $b=0$. See also Section \ref{subsec:Forcing relation (kneading)}
for the forcing relation in one dimension. Therefore, the order of
creation is opposite, and hence $l_{m,2}$ and $l_{m,3}$ has an intersection. 

Moreover, the intersection is unique and transverse because $\frac{\dif l_{m,2}}{\dif b}>\frac{\dif l_{m,3}}{\dif b}$
when $b$ is small (Corollary \ref{cor:The boundary of admissibility}).

\begin{figure}
\center 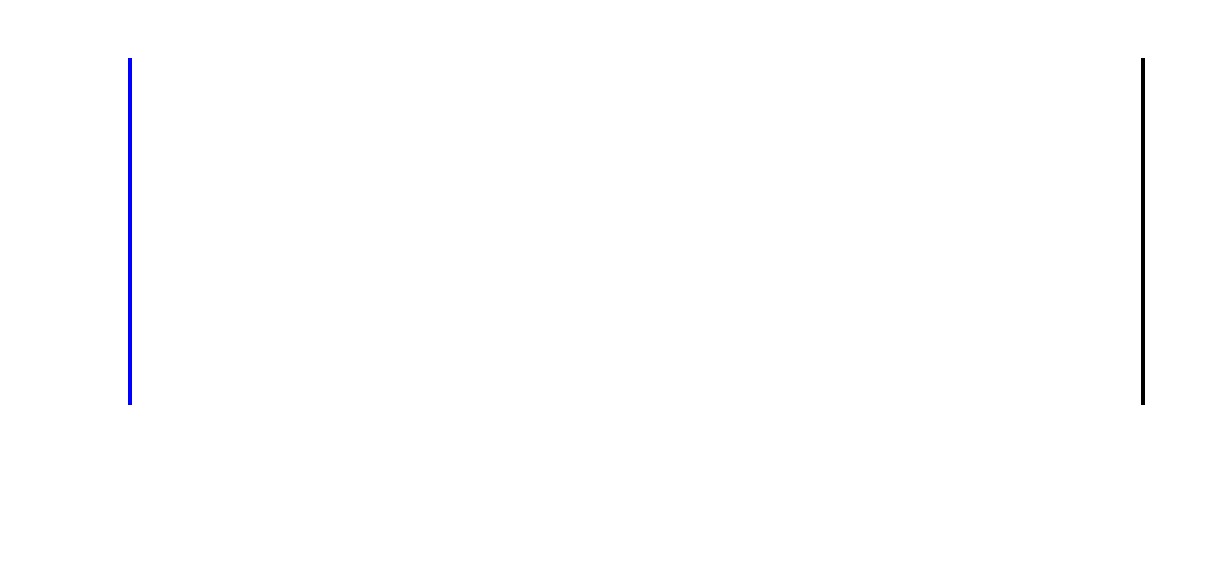

\caption{\label{fig:Reverse combinatorics}The configuration of a Lozi map
when the order of bifurcation is reversed.}
\end{figure}

\section{\label{sec:Formal-Periodic-Orbits}Symbolic dynamics and formal periodic
orbits}

We consider periodic orbits obtained from the two affine branches
$\Lozi_{-}$ and $\Lozi_{+}$. They are candidates of the periodic
orbits of the Lozi map. An itinerary $I=(I_{1},\cdots,I_{l(I)})$
is a sequence of alphabets in $S$ with a length $l(I)\in\{1,\cdots,\infty\}$.
An itinerary $I$ is finite if $l(I)<\infty$. When $I$ and $J$
are itineraries and $I$ is finite, we write $IJ$ for the concatenation
of $I$ and $J$, $I*$ for itineraries that start with $I$ and end
with any tail, and $I^{\infty}=II\cdots$. When $I$ is a finite itinerary,
we write $\Lozi_{I}=\Lozi_{I_{N}}\circ\cdots\circ\Lozi_{I_{1}}$,
where $N=l(I)$. The formal $I$-orbit $O_{I}(\FormalPeriodic)$ of
$\FormalPeriodic\in\mathbb{R}^{2}$ is the sequence $\{\FormalPeriodic_{m}\in\mathbb{R}^{2}\}_{m=0}^{l(I)}$
such that $\FormalPeriodic_{0}=\FormalPeriodic$ and 
\[
\FormalPeriodic_{m}=\Lozi_{(I_{1},\cdots,I_{m})}(\FormalPeriodic)
\]
for $1\leq m\leq l(I)$. A point $\FormalPeriodic\in\mathbb{R}^{2}$
is a formal $I$-periodic point if $I$ is finite and 
\begin{equation}
\FormalPeriodic=\Lozi_{I}(\FormalPeriodic).\label{eq:Formal Periodic Point}
\end{equation}
Its formal $I$-orbit is called a formal $I$-periodic orbit. 

Next, we introduce the notion of admissibility. Let $\FormalPeriodic$
be a point and $I$ be an itinerary. Let $\pi_{1}(x,y)=x$ and $\pi_{2}(x,y)=y$.
The point $\FormalPeriodic$ is $I$-admissible if $\pi_{1}(\FormalPeriodic)\in\R_{I_{1}}$
and $\pi_{1}\circ\Lozi_{(I_{1},\cdots,I_{m-1})}(\FormalPeriodic)\in\R_{I_{m}}$
for $m=2,\cdots,l(I)$. When $I$ is finite, the $I$-admissibility
function is defined as 
\begin{equation}
h_{I}(a,b,\FormalPeriodic)=\min\{I_{1}\pi_{1}(\FormalPeriodic),I_{2}\pi_{1}\circ\Lozi_{(I_{1})}(\FormalPeriodic),\cdots,I_{M}\pi_{1}\circ\Lozi_{(I_{1},\cdots,I_{M-1})}(\FormalPeriodic)\},\label{eq:Admissibility function}
\end{equation}
where $M=l(I)$. Thus, a point $\FormalPeriodic$ is $I$-admissible
at a parameter $(a,b)$ if and only if $h_{I}(a,b,\FormalPeriodic)\geq0$.
 If $\FormalPeriodic$ is $I$-admissible, then $\Lozi_{(I_{1},\cdots,I_{m})}(\FormalPeriodic)=\Lozi^{m}(\FormalPeriodic)$
for $m=1,\cdots,l(I)$. Thus, admissible formal periodic points are
periodic points of the Lozi map. We will show that for any given itinerary
$s$, there exists a unique formal $I$-periodic point. 

\subsection{Existence and uniqueness of formal periodic orbits}

To prove the existence of a formal periodic point, we write $\Lozi_{I}(\FormalPeriodic)=\boldsymbol{A}\FormalPeriodic-\boldsymbol{w}$,
where $I$ is a finite itinerary, $\boldsymbol{A}\equiv\Dif\Lozi_{I}$
is the linear term of the affine map $\Lozi_{I}$, and $\boldsymbol{w}$
is a vectored-polynomial in $a$ and $b$. Then (\ref{eq:Formal Periodic Point})
can be rewritten as a linear equation
\begin{equation}
(\boldsymbol{A}-\Identity)\FormalPeriodic=\boldsymbol{w}.\label{eq:Formal Periodic Point (Linear)}
\end{equation}
If the eigenvalues of $\boldsymbol{A}$ are not $1$, then (\ref{eq:Formal Periodic Point (Linear)})
has a unique solution, which yields the desired periodic point. To
ensure that the spectrum of $\Lozi_{-}$ is away from $1$, we restrict
our parameters to $\ParaFull$ in this section.
\begin{lem}
\label{lem:Spectral radius of the formal iterates}Suppose that $(a,b)\in\ParaFull$.
A lower bound of the spectral radius $\rho(\Dif\Lozi_{I})$ is provided
by 
\[
\rho(\Dif\Lozi_{I})\geq\lambda^{l(I)}.
\]
\end{lem}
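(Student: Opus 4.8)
The plan is to prove the bound through an invariant cone argument, in the spirit of a universal unstable cone. First I would record the linear parts of the two branches,
\[
\Dif\Lozi_{\Sign}=\begin{pmatrix}-\Sign a & -b\\ 1 & 0\end{pmatrix},\qquad \Sign\in S,
\]
so that $\Dif\Lozi_{I}=\Dif\Lozi_{I_{N}}\cdots\Dif\Lozi_{I_{1}}$ with $N=l(I)$. Each factor has determinant $b$, hence $\det\Dif\Lozi_{I}=b^{l(I)}=(\lambda\mu)^{l(I)}$, where I use that $\lambda$ and $\mu$ are the two roots of $t^{2}-at+b$, so $\lambda+\mu=a$ and $\lambda\mu=b$, with $\lambda>1>\mu\geq 0$ by Lemma~\ref{lem:Expanding multiplier}. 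The determinant alone only yields $\rho(\Dif\Lozi_{I})\geq(\lambda\mu)^{l(I)/2}$, which is too weak when $\mu<\lambda$; the genuine input must be uniform expansion in a suitable direction.

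Next I would introduce the cone $\mathcal{C}=\{(u,v)\in\R^{2}:|v|\leq|u|/\lambda\}$ and claim that each branch maps it into itself while expanding the horizontal extent by a factor at least $\lambda$. For $(u,v)\in\mathcal{C}$, writing $(U,V)=\Dif\Lozi_{\Sign}(u,v)=(-\Sign au-bv,\,u)$, the key estimate is
\[
|U|\geq a|u|-b|v|\geq\Bigl(a-\tfrac{b}{\lambda}\Bigr)|u|=(a-\mu)|u|=\lambda|u|,
\]
using the identity $a-b/\lambda=a-\mu=\lambda$. Since $|V|=|u|$, this simultaneously gives $|V|\leq|U|/\lambda$ (so $\Dif\Lozi_{\Sign}\mathcal{C}\subseteq\mathcal{C}$) and the expansion $|\pi_{1}\Dif\Lozi_{\Sign}(u,v)|\geq\lambda\,|\pi_{1}(u,v)|$. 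Iterating over the branches of $I$, the full product satisfies $\Dif\Lozi_{I}\mathcal{C}\subseteq\mathcal{C}$ together with $|\pi_{1}\Dif\Lozi_{I}(u,v)|\geq\lambda^{l(I)}|\pi_{1}(u,v)|$ for every $(u,v)\in\mathcal{C}$.

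Finally I would convert this horizontal expansion into the eigenvalue bound. Since $\mathcal{C}$ projects to a closed arc in $\mathbb{RP}^{1}$ and $\Dif\Lozi_{I}$ maps $\mathcal{C}\setminus\{0\}$ into itself without collapsing it to $0$ (the horizontal component is never killed, because $(u,v)\in\mathcal{C}\setminus\{0\}$ forces $u\neq0$), the induced continuous self-map of the arc has a fixed point by the intermediate value theorem; this produces an eigenvector $\boldsymbol{v}_{0}\in\mathcal{C}$ with a real eigenvalue $\rho_{0}$. Comparing horizontal components of $\Dif\Lozi_{I}\boldsymbol{v}_{0}=\rho_{0}\boldsymbol{v}_{0}$ with the expansion estimate gives $|\rho_{0}|\,|\pi_{1}(\boldsymbol{v}_{0})|\geq\lambda^{l(I)}|\pi_{1}(\boldsymbol{v}_{0})|$, hence $\rho(\Dif\Lozi_{I})\geq|\rho_{0}|\geq\lambda^{l(I)}$. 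The step I expect to require the most care is this last one: producing a genuine eigenvector inside the cone so that the per-step horizontal expansion becomes a spectral statement, and in particular checking that the argument survives the degenerate case $b=0$, where the branch matrices drop rank but still expand the horizontal direction by exactly $\lambda=a$ and collapse every cone direction onto the boundary slope $-\Sign/\lambda$, which remains a legitimate fixed direction of the induced map.
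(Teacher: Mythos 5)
Your proposal is correct. The cone $\mathcal{C}=\{(u,v):|v|\leq|u|/\lambda\}$ is exactly the paper's universal unstable cone $K^{U}$, and your per-branch computation $|U|\geq(a-b/\lambda)|u|=\lambda|u|$ is the same estimate proved in Theorem \ref{thm:Invaniance of the universal cones}. Where you genuinely diverge is in the final step that converts cone expansion into a spectral statement: the paper simply observes that the expansion gives $\left\Vert (\Dif\Lozi_{I})^{m}\right\Vert _{2}\geq\lambda^{l(I)m}$ for all $m$ and then invokes Gelfand's formula $\rho(A)=\lim_{m}\left\Vert A^{m}\right\Vert ^{1/m}$, whereas you run a Perron--Frobenius-type argument, projectivizing the cone to a closed arc in $\mathbb{RP}^{1}$ and extracting a fixed direction by the intermediate value theorem, hence an actual real eigenvector $\boldsymbol{v}_{0}\in K^{U}$ with $|\rho_{0}|\geq\lambda^{l(I)}$. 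Both are sound. The paper's route is a one-line deduction once the appendix theorem is in place and is indifferent to whether $\Dif\Lozi_{I}$ is invertible; your route is more self-contained (no external spectral-radius formula) and yields slightly more, namely a real eigenvalue of modulus at least $\lambda^{l(I)}$ whose eigenvector lies in the unstable cone, which dovetails with the saddle structure asserted in Theorem \ref{thm:Formal periodic orbits}. The price is exactly the care you flag at $b=0$: there the branch matrices have rank one, the induced map on the arc becomes constant onto the boundary direction $(-I_{N}a,1)$, and one must check (as you do) that this direction is not in the kernel, so it is still a legitimate eigendirection with nonzero eigenvalue. Your side remark that the determinant identity $\det\Dif\Lozi_{I}=b^{l(I)}$ alone is insufficient is accurate but not needed for either argument.
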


\begin{proof}
By the expansion of the universal unstable cone (Theorem \ref{thm:Invaniance of the universal cones}),
we have 
\[
\left\Vert (\Dif\Lozi_{I})^{m}\right\Vert _{2}\geq\lambda^{l(I)m}
\]
for all $m\geq0$ where the norm is the $L^{2}$ operator norm. Thus,
the lemma follows from the Gelfand's formula \cite[P. 159]{La02}. 
\end{proof}
Finally, by using the fact that the spectrum is away from $1$, this
gives a new proof of the existence and uniqueness of a formal periodic
point.
\begin{thm}
\label{thm:Formal periodic orbits}Suppose that $(a,b)\in\ParaFull$.
For each finite itinerary $I$, there exists a unique $I$-formal
periodic point $\FormalPeriodic$. The point $\FormalPeriodic$ is
a saddle fixed point of $\Lozi_{I}$. In fact, consider the formal
periodic point as a map $\FormalPeriodic:\ParaFull\rightarrow\mathbb{R}^{2}$,
the maps $\pi_{1}\circ\FormalPeriodic$ and $\pi_{2}\circ\FormalPeriodic$
are rational functions in $a$ and $b$. 
\end{thm}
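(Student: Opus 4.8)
The plan is to solve the linear equation $(\boldsymbol{A}-\Identity)\FormalPeriodic=\boldsymbol{w}$ from \eqref{eq:Formal Periodic Point (Linear)} and show it has a unique solution that is rational in the parameters. The existence and uniqueness will follow once we know that $\boldsymbol{A}-\Identity$ is invertible, i.e. that $1$ is not an eigenvalue of $\boldsymbol{A}=\Dif\Lozi_I$. First I would observe that each branch derivative $\Dif\Lozi_{\Sign}$ has determinant $b$ (the linear part is $\begin{pmatrix}-\Sign a & -b\\ 1 & 0\end{pmatrix}$, whose determinant is $b$), so $\det\boldsymbol{A}=\det\Dif\Lozi_I=(\pm b)^{l(I)}$, which lies in $[0,1)$ since $0\le b<1$ on $\ParaFull$ by the hypothesis $0\le b\le 1$ together with $a>b+1$. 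The key analytic input is Lemma \ref{lem:Spectral radius of the formal iterates}: the spectral radius satisfies $\rho(\boldsymbol{A})\ge\lambda^{l(I)}>1$ by Lemma \ref{lem:Expanding multiplier}. Since $\boldsymbol{A}$ is a $2\times 2$ matrix with $|\det\boldsymbol{A}|<1$ while its spectral radius exceeds $1$, its two eigenvalues $\nu_{1},\nu_{2}$ satisfy $|\nu_1|>1$ and $|\nu_2|=|\det\boldsymbol{A}|/|\nu_1|<1$; in particular neither equals $1$, so $\boldsymbol{A}-\Identity$ is invertible and \eqref{eq:Formal Periodic Point (Linear)} has the unique solution $\FormalPeriodic=(\boldsymbol{A}-\Identity)^{-1}\boldsymbol{w}$.

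Having secured existence and uniqueness, I would then read off the saddle property from the same eigenvalue analysis: the unique formal $I$-periodic point $\FormalPeriodic$ is a fixed point of the affine map $\Lozi_I$, and $\Dif\Lozi_I=\boldsymbol{A}$ has one eigenvalue strictly outside and one strictly inside the unit circle, which is precisely the definition of a saddle. This requires only that $|\det\boldsymbol{A}|<1$ strictly, so I should be slightly careful at the degenerate boundary $b=0$: there $\det\boldsymbol{A}=0$, so one eigenvalue is $0$ and the other has modulus $\rho(\boldsymbol{A})\ge\lambda^{l(I)}>1$. This is still a saddle in the appropriate (semi-)sense, and $\boldsymbol{A}-\Identity$ remains invertible since $0\neq 1$, so the argument goes through uniformly on all of $\ParaFull$.

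For the rationality claim, I would use Cramer's rule: writing $\FormalPeriodic=(\boldsymbol{A}-\Identity)^{-1}\boldsymbol{w}$, each coordinate $\pi_i\circ\FormalPeriodic$ is a quotient of determinants whose entries are polynomials in $a$ and $b$. Indeed $\boldsymbol{A}=\Dif\Lozi_I$ is a product of $l(I)$ matrices, each with entries that are polynomials in $a$ and $b$ (the off-diagonal entries $-b$ and $1$, the diagonal entries $-\Sign a$ and $0$), so every entry of $\boldsymbol{A}$, and hence of $\boldsymbol{A}-\Identity$, is a polynomial in $a,b$; similarly $\boldsymbol{w}$ is a vector of polynomials. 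The denominator $\det(\boldsymbol{A}-\Identity)$ is a polynomial that we have just shown is nonzero throughout $\ParaFull$, so the two coordinate functions $\pi_1\circ\FormalPeriodic$ and $\pi_2\circ\FormalPeriodic$ are genuine rational functions of $a$ and $b$, well-defined on $\ParaFull$.

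The main obstacle is the eigenvalue bookkeeping that bridges the spectral-radius lower bound and the noninvertibility exclusion: Lemma \ref{lem:Spectral radius of the formal iterates} only gives $\rho(\boldsymbol{A})>1$, which by itself does not forbid a second eigenvalue equal to $1$. The resolution is the determinant identity $\det\boldsymbol{A}=(\pm b)^{l(I)}$, which pins down the \emph{product} of the eigenvalues and forces the smaller one to have modulus $|\det\boldsymbol{A}|/\rho(\boldsymbol{A})<1$. I expect this determinant computation, and the remark that it holds for every branch because each $\Dif\Lozi_{\Sign}$ shares the determinant $b$, to be the crux; everything else (Cramer's rule, the saddle reading) is routine linear algebra once both eigenvalues are separated from the unit circle.
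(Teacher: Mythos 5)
Your proposal is correct and follows essentially the same route as the paper: both solve $(\boldsymbol{A}-\Identity)\FormalPeriodic=\boldsymbol{w}$ by combining the spectral-radius lower bound of Lemma \ref{lem:Spectral radius of the formal iterates} with $\det(\Dif\Lozi_{\Sign})=b$ to force $|\nu_{1}|\geq\lambda^{l(I)}>1$ and $|\nu_{2}|\leq\mu^{l(I)}<1$, and then read off invertibility of $\boldsymbol{A}-\Identity$, the saddle property, and rationality via Cramer's rule. (One trivial slip: $\ParaFull$ permits $b=1$, so $\det\boldsymbol{A}$ can equal $1$ and your claim that it lies in $[0,1)$ is not quite right; but the ratio bound $|\nu_{2}|\leq b^{l(I)}/\lambda^{l(I)}=\mu^{l(I)}<1$ still holds for all $b\in[0,1]$, which is exactly the estimate the paper uses.)
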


\begin{proof}
Let $\nu_{1}$ and $\nu_{2}$ be the eigenvalues of $\boldsymbol{A}$.
Without loss of generality, we assume that $|\nu_{1}|=\rho(\boldsymbol{A})$.
By Lemma \ref{lem:Expanding multiplier}, Corollary \ref{lem:Spectral radius of the formal iterates},
and $\det(\Dif\Lozi_{\Sign})=b$ for $\Sign\in S$, we get $|\nu_{1}|\geq\lambda^{M}>1$
and $|\nu_{2}|\leq\mu^{M}<1$ where $M=l(I)$. Thus, (\ref{eq:Formal Periodic Point (Linear)})
has a unique solution $\FormalPeriodic$, and $\FormalPeriodic$ is
a saddle fixed point of $\Lozi_{I}$. The solution is a rational function
because the entries of $\boldsymbol{A}$ and $\boldsymbol{w}$ are
polynomials in $a$ and $b$.
\end{proof}
\begin{rem}
Let $\hat{\lambda}=\frac{a+\sqrt{a^{2}-4|b|}}{2}$. By the existence
of the universal unstable cone, $\hat{\lambda}>1$, and $\frac{|b|}{\hat{\lambda}}<1$,
the theorem can be generalized to the parameter space $\{(a,b);a>|b|+1\}$.
\end{rem}

It follows immediately by the theorem and Theorem \ref{thm:global dynamics}
that formal periodic points are the only candidates having periodic
itineraries.
\begin{cor}
Suppose that $(a,b)\in\ParaFull$ and $I$ is a finite itinerary.
If $\Lozi$ has a $I^{\infty}$-admissible point $\boldsymbol{v}\in\mathbb{R}^{2}$,
then exactly one of the following holds.
\begin{enumerate}
\item If $\boldsymbol{v}$ has unbounded orbit, then $I^{\infty}=(-)^{\infty}$
and $\lim_{m\rightarrow\infty}\pi_{1}\circ\Lozi^{m}(\boldsymbol{v})=-\infty$.
\item If $\boldsymbol{v}$ has bounded orbit, then $\boldsymbol{v}$ is
the formal $I$-periodic point.
\end{enumerate}
\end{cor}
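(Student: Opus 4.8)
The plan is to read off the orbit structure from the hyperbolicity of the formal periodic point, using Theorem~\ref{thm:Formal periodic orbits} for the local picture and Theorem~\ref{thm:global dynamics} for the behaviour at infinity. Set $M=l(I)$ and let $\FormalPeriodic$ be the unique formal $I$-periodic point, which by Theorem~\ref{thm:Formal periodic orbits} is a saddle fixed point of the affine map $\Lozi_{I}(\boldsymbol{z})=\boldsymbol{A}\boldsymbol{z}-\boldsymbol{w}$. By Lemma~\ref{lem:Spectral radius of the formal iterates} together with $\det\boldsymbol{A}=b^{M}$, the eigenvalues $\nu_{1},\nu_{2}$ of $\boldsymbol{A}$ satisfy $\left|\nu_{1}\right|\geq\lambda^{M}>1>\mu^{M}\geq\left|\nu_{2}\right|$; write $\boldsymbol{e}_{1},\boldsymbol{e}_{2}$ for the expanding and contracting eigenvectors. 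Since $\boldsymbol{v}$ is $I^{\infty}$-admissible, the true map agrees with the prescribed branches along the whole forward orbit, so that $\Lozi^{mM}(\boldsymbol{v})=\Lozi_{I}^{m}(\boldsymbol{v})=\FormalPeriodic+\boldsymbol{A}^{m}(\boldsymbol{v}-\FormalPeriodic)$ for every $m\geq0$. I would then split $\boldsymbol{v}-\FormalPeriodic=c_{1}\boldsymbol{e}_{1}+c_{2}\boldsymbol{e}_{2}$ and analyse the two coefficients separately.

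First I would settle the unbounded alternative. If $c_{1}\neq0$ the iterates $\Lozi^{mM}(\boldsymbol{v})$ diverge, so the forward orbit is unbounded; conversely an unbounded forward orbit forces $c_{1}\neq0$. For an unbounded orbit Theorem~\ref{thm:global dynamics} applies: the only way an orbit can escape the compact trapping region is with $\pi_{1}\circ\Lozi^{m}(\boldsymbol{v})\to-\infty$ and an eventually constant itinerary equal to $-$. As the itinerary of $\boldsymbol{v}$ is the periodic sequence $I^{\infty}$, being eventually all $-$ forces it to be all $-$, i.e.\ $I^{\infty}=(-)^{\infty}$. This gives conclusion~(1).

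For the bounded alternative I would prove $\boldsymbol{v}=\FormalPeriodic$. Forward boundedness gives $c_{1}=0$, so $\boldsymbol{v}$ lies on the contracting line $\FormalPeriodic+\mathbb{R}\boldsymbol{e}_{2}$ and $\Lozi^{mM}(\boldsymbol{v})\to\FormalPeriodic$. The remaining point is that a bounded orbit is bounded in both time directions: for $b>0$ the map $\Lozi$ is invertible and $\Lozi_{I}$ is a linear isomorphism ($\det\boldsymbol{A}=b^{M}\neq0$), and the backward iterates $\Lozi_{I}^{-m}(\boldsymbol{v})=\FormalPeriodic+c_{2}\nu_{2}^{-m}\boldsymbol{e}_{2}$ diverge whenever $c_{2}\neq0$. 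Applying the trapping/escape dichotomy of Theorem~\ref{thm:global dynamics} in backward time, a divergent backward orbit must escape and hence cannot be bounded; therefore $c_{2}=0$ and $\boldsymbol{v}=\FormalPeriodic$, which is conclusion~(2). The two cases are mutually exclusive because $c_{1}\neq0$ produces an unbounded orbit while $c_{1}=0$ produces a bounded one, so exactly one alternative holds.

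The hard part is the bounded case. Forward admissibility alone only places $\boldsymbol{v}$ on the stable manifold of $\FormalPeriodic$ and shows its forward orbit is asymptotic to the periodic orbit; it does \emph{not} by itself yield $\boldsymbol{v}=\FormalPeriodic$, since an entire local stable segment is forward-admissible with bounded forward orbit. The real content is to use invertibility together with the backward direction of Theorem~\ref{thm:global dynamics} to eliminate the contracting eigendirection and collapse that segment to the single periodic point. I would also treat the boundary parameter $b=0$ separately, where $\boldsymbol{A}$ is singular ($\nu_{2}=0$) and the statement reduces to the expansivity of the tent map in the $x$-coordinate.
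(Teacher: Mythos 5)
Your handling of the unbounded alternative is fine and is all the paper itself has in mind there: Theorem~\ref{thm:global dynamics} forces an unbounded forward orbit to escape with $\pi_{1}\rightarrow-\infty$, hence an eventually negative itinerary, and periodicity of $I^{\infty}$ upgrades ``eventually'' to ``always''. The paper offers no proof at all of this corollary (it is asserted to follow ``immediately'' from Theorems~\ref{thm:Formal periodic orbits} and~\ref{thm:global dynamics}), so the real question is your bounded case, and there the argument has a genuine gap. The step ``the backward iterates $\Lozi_{I}^{-m}(\boldsymbol{v})=\FormalPeriodic+c_{2}\nu_{2}^{-m}\boldsymbol{e}_{2}$ diverge, so by Theorem~\ref{thm:global dynamics} in backward time the orbit cannot be bounded'' conflates the \emph{formal} backward orbit with the \emph{true} one: $I^{\infty}$-admissibility constrains only the forward itinerary, so the actual preimages $\Lozi^{-m}(\boldsymbol{v})$ need not follow the branches $\Lozi_{I_{j}}^{-1}$, and the identity $\Lozi^{-mM}(\boldsymbol{v})=\Lozi_{I}^{-m}(\boldsymbol{v})$ you implicitly invoke is unavailable. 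Theorem~\ref{thm:global dynamics} is moreover proved only for forward time; the paper establishes no backward dichotomy.

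No repair of this step is possible, because conclusion~(2) is false as literally written --- you in fact located the obstruction yourself when you noted that ``an entire local stable segment is forward-admissible with bounded forward orbit''. If ``bounded orbit'' means the forward orbit, every point of that segment (e.g.\ any point of $W_{0}^{S}(\boldsymbol{z}_{+})$ near $\boldsymbol{z}_{+}$ for $I=(+)$) is a counterexample; if it means the full two-sided orbit, a transverse homoclinic point of $O_{I}(\FormalPeriodic)$ chosen on the local stable segment is still forward $I^{\infty}$-admissible, has bounded full orbit, and is not $\FormalPeriodic$; and at $b=0$ the entire vertical fibre $\{x=\pi_{1}(\FormalPeriodic)\}$ is admissible with bounded orbit, so your planned reduction to expansivity of the tent map pins down only the $x$-coordinate. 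The content the paper actually uses (see the sentence introducing the corollary, and the way Proposition~\ref{prop:Full horseshoe} argues) is the weaker assertion that a \emph{periodic} point whose itinerary is $I^{\infty}$ must be the formal $I$-periodic point, which is immediate from the uniqueness in Theorem~\ref{thm:Formal periodic orbits}; for a merely bounded admissible orbit the correct conclusion is that it is forward asymptotic to $O_{I}(\FormalPeriodic)$, which is exactly what your $c_{1}=0$ computation delivers.
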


\subsection{Hyperbolicity and the border collision bifurcation}

A formal $I$-periodic point $\FormalPeriodic$ is called hyperbolic
if it is $I$-admissible and $\pi_{1}\circ\Lozi^{m}(\FormalPeriodic)\neq0$
for all $0\leq m<l(I)$. It is hyperbolic at a parameter $(a,b)$
if and only if $h_{I}(a,b,\FormalPeriodic(a,b))>0$. Thus, a hyperbolic
formal $I$-periodic point persists under a small perturbation of
a parameter.

Suppose that a formal $I$-periodic point $\FormalPeriodic$ is admissible
and $\pi_{1}\circ\Lozi^{m}(\FormalPeriodic)=0$ for some $m\geq0$
at a parameter $(a_{0},b_{0})$. This is exactly the case when $h_{I}(a_{0},b_{0},\FormalPeriodic(a_{0},b_{0}))=0$.
Without lose of generality, we may assume that $m=0$. Then $\FormalPeriodic$
is simultaneously $(-,I_{2},\cdots,I_{M})$- and $(+,I_{2},\cdots,I_{M})$-admissible
at $(a_{0},b_{0})$ where $M=l(I)$. After applying a small perturbation
to the parameter $(a_{0},b_{0})$, there may be a creation (or annihilation)
of two periodic orbits, each satisfying one of the itineraries. This
is called the border collision bifurcation \cite{Le59,NY92}. The
parameter $(a_{0},b_{0})$ is called a bifurcation parameter of the
itineraries $(\pm,I_{2},\cdots,I_{M})$.

\subsection{\label{subsec:Symbolic dynamics on the renormalization model}Symbolic
dynamics on the renormalization model}

In this section, we apply symbolic dynamics to periodic orbits created
by the renormalization defined by two returns to $C$. We find the
coding of periodic orbits in $C_{m,n}^{L}$. Recall that $\iota_{\Sign,m,n}=(+(-)^{m-2}++(-)^{n-2}\Sign)$
and $\FormalPeriodic_{\Sign,m.n}$ is the formal $\iota_{\Sign,m,n}$-periodic
point for $\Sign\in S$ and $m,n\geq2$.
\begin{prop}
\label{prop:Itinerary of a point in C_mn}Suppose that $(a,b)\in\ParaModel$,
$m,n\geq2$, and $C_{m,n}^{L}$ exists at $(a,b)$. If $\boldsymbol{v}\in C_{m,n}^{L}$,
then $\boldsymbol{v}$ is either $\iota_{-,m,n}$- or $\iota_{+,m,n}$-admissible
and $\pi_{1}\circ\Lozi^{j}(\boldsymbol{v})\neq0$ for all $j\in\{1,\cdots,m-2,m,\cdots,m+n-2\}$.
In addition, if $\boldsymbol{v}$ is a periodic point, then $\pi_{1}\circ\Lozi^{j}(\boldsymbol{v})\neq0$
for $j\in\{0,\cdots,m+n-2\}$.
\end{prop}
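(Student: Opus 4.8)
The plan is to follow the forward orbit of $\boldsymbol{v}\in C_{m,n}^{L}$ through the two consecutive returns to $C$ on which the renormalization is built, reading the itinerary symbol off the vertical-strip picture at each step and recording where the orbit is forced off the critical locus $\{x=0\}$. The coding $\iota_{\Sign,m,n}$ factors accordingly: a first block of length $m$ produced by the return of $C_{m}$ into $C_{n}$, followed by a second block of length $n$ produced by the return of $C_{n}$ into $U_{n}$. The two blocks are handled by the same mechanism; the only structural difference is that the fold reached at the end of the first block is forced to land back inside $C_{n}$, whereas the fold reached at the end of the second block is unconstrained and is exactly where the border collision is permitted.

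For the first block, $C_{m}=V(\gamma_{m-1},\gamma_{m})\subset C$ lies strictly to the right of $\{x=0\}$ over $\Iv$ (the computation in Lemma \ref{lem:Existence of beta1} places $\gamma_{1}$, hence all of $C$, in $\{x>0\}$), so $\pi_{1}(\boldsymbol{v})>0$ and $I_{1}=+$. Since $\gamma_{k}=\Pi_{+}(\beta_{k})$, the branch $\Lozi_{+}$ carries $C_{m}$ onto $V(\beta_{m-1},\beta_{m})$; since $\beta_{k}=\Pi_{-}(\beta_{k-1})$, each further application of $\Lozi_{-}$ carries $V(\beta_{k},\beta_{k-1})$ onto $V(\beta_{k-1},\beta_{k-2})$, so after $m-2$ steps the orbit reaches $B=V(\beta_{2},\beta_{1})=B_{m}$. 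This produces the symbols $(+(-)^{m-2})$ at $I_{1},\dots,I_{m-1}$, and the $-$ symbols are strict (giving $\pi_{1}\circ\Lozi^{j}(\boldsymbol{v})\neq0$ for $j\in\{1,\dots,m-2\}$) because every intermediate strip $V(\beta_{k},\beta_{k-1})$ with $k\ge3$ lies in $\{x<0\}$ over $\Iv$: the $\{x=0\}$-crossing of $\beta_{2}$ occurs at height $y=(a-b-1-r_{1})/b>1$, outside $\Iv$, precisely because $r_{1}<u^{L}$ by Lemma \ref{lem:Existence of beta1}, and the remaining $\beta_{k}$ sit farther left.

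At step $m-1$ the orbit lies in $B_{m}$, which straddles $\{x=0\}$, and $\Lozi^{m}(\boldsymbol{v})\in U_{m}\cap C_{n}\subset C_{n}$ by the definition $C_{m,n}=\Lozi^{-m}(U_{m}\cap C_{n})$. Were $\pi_{1}\circ\Lozi^{m-1}(\boldsymbol{v})=0$, then $\Lozi^{m}(\boldsymbol{v})$ would be a turning point of $U_{m}$, on the $x$-axis in $[u_{m}^{L},u_{m}^{R}]$; but $\Lozi^{m}(\boldsymbol{v})\in C_{n}$ meets the $x$-axis in $[r_{n-1},r_{n}]$ with $r_{n}\le u_{m}^{L}$ (the hypothesis of Proposition \ref{prop:Left and right components of a subpartition} under which $C_{m,n}^{L}$ exists), so this can occur only in the degenerate case $r_{n}=u_{m}^{L}$, treated below. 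Granting this, the orbit point is off the fold; identifying which half of the fold it occupies fixes $I_{m}=+$, and this is where the label $L$ enters. After $\Lozi^{m}$ the orbit restarts in $C_{n}$, and the identical argument for the second block gives $(+(-)^{n-2})$ at $I_{m+1},\dots,I_{m+n-1}$, with $I_{m+1}=+$ strict (again $C_{n}\subset\{x>0\}$) and the intervening $-$ symbols strict, covering $j\in\{m,\dots,m+n-2\}$. At the last step the orbit sits in $B_{n}$, which straddles $\{x=0\}$, and nothing forces $\Lozi^{m+n}(\boldsymbol{v})$ back into any $C_{k}$; hence $I_{m+n}$ is the sign of $\pi_{1}\circ\Lozi^{m+n-1}(\boldsymbol{v})$, which is $+$ or $-$ according to the half of $B_{n}$ containing the orbit point. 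Thus $\boldsymbol{v}$ is $\iota_{+,m,n}$- or $\iota_{-,m,n}$-admissible, and since $0\in\R_{-}\cap\R_{+}$ no strict inequality is available at $j=m+n-1$ (the border collision).

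It remains to upgrade $j=0$ and $j=m-1$ to strict inequalities when $\boldsymbol{v}$ is periodic. Position $j=0$ is in fact strict for every point, as $C_{m,n}^{L}\subset C\subset\{x>0\}$. For $j=m-1$, the only outstanding case is the degenerate one above, in which $\Lozi^{m}(\boldsymbol{v})=(r_{n},0)=(u_{m}^{L},0)$; but $(r_{n},0)\in\gamma_{n}\subset W^{S}(\boldsymbol{z}_{+})$, so a periodic $\boldsymbol{v}$ would be forward asymptotic to $\boldsymbol{z}_{+}$, forcing its finite orbit to equal $\{\boldsymbol{z}_{+}\}$ and contradicting $\pi_{1}\circ\Lozi^{m-1}(\boldsymbol{v})=0$ (every coordinate of $\boldsymbol{z}_{+}$ equals $\zeta_{+}>0$). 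Hence $\pi_{1}\circ\Lozi^{j}(\boldsymbol{v})\neq0$ for all $j\in\{0,\dots,m+n-2\}$. The step I expect to be the genuine obstacle is the orientation bookkeeping invoked above: that the \emph{left} component $C_{m,n}^{L}$ is exactly the one whose step-$m$ image lands in the right half $B_{m}\cap\{x\ge0\}$ (equivalently, in the upper prong $\Lozi_{+}(B_{m}\cap\{x\ge0\})$ of $U_{m}$), so that $I_{m}I_{m+1}=++$; establishing this requires tracking how the fold of $U_{m}$ is carried back to $C_{m}$ by the orientation-preserving affine first-return map and matching its two prongs to the left and right components of Proposition \ref{prop:Left and right components of a subpartition}.
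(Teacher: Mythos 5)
The paper states this proposition without proof (the text moves straight on to the converse direction via Lemmas \ref{lem:+-+} and \ref{lem:++}), so there is no argument of record to compare against; your strip-by-strip reading of the itinerary is exactly what the construction of Section \ref{sec:Renormalization Model} is set up to support, and nearly all of it checks out. In particular: $C\subset\{x>0\}$ over $\Iv$; your reduction of $\beta_{2}\subset\{x<0\}$ to $r_{1}<u^{L}$ from Lemma \ref{lem:Existence of beta1} is the right computation, and the same argument applies verbatim to every $\beta_{k}$ with $k\geq2$ (if $(0,y)\in\beta_{k}$ with $y\in\Iv$, then $\Lozi_{-}(0,y)\in\beta_{k-1}$ lies on the $x$-axis with abscissa at least $u^{L}>0\geq w_{k-1}$, a contradiction); the observation that only $j=m-1$ and $j=m+n-1$ can meet the critical locus is correct; and the treatment of the degenerate case $r_{n}=u_{m}^{L}$ for periodic points via $W^{S}(\boldsymbol{z}_{+})$ is sound.

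The one step you explicitly leave open --- that the \emph{left} component is the one with $\pi_{1}\circ\Lozi^{m-1}(\boldsymbol{v})\geq0$, hence $I_{m}=+$ --- is indeed the crux, but it closes in a few lines and needs no orientation count. From $\gamma_{k}=\Pi_{+}(\beta_{k})$ and $\beta_{k}=\Pi_{-}(\beta_{k-1})$ one gets $\Lozi^{m-1}(\gamma_{m-1})\subset\beta_{1}\subset\{x>0\}$ and $\Lozi^{m-1}(\gamma_{m})\subset\beta_{2}\subset\{x<0\}$. Since $\Lozi^{m-1}|_{C_{m}}=\Lozi_{-}^{m-2}\circ\Lozi_{+}$ is an affine homeomorphism onto $B_{m}$, the locus $\{\pi_{1}\circ\Lozi^{m-1}=0\}\cap C_{m}$ is a single vertical segment $\sigma$; the part of $C_{m}$ to its left (the part containing $\gamma_{m-1}$) maps into $\{x\geq0\}$ and the part to its right into $\{x\leq0\}$. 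Each component of $C_{m,n}$ lies entirely on one closed side of $\sigma$, because $\Lozi^{m}$ sends it into $U_{m}\cap C_{n}$ and $C_{n}$ meets the $x$-axis in $[r_{n-1},r_{n}]$ with $r_{n}\leq u_{m}^{L}$, so each component lands in a single prong of the fold; the two components land in opposite prongs and hence on opposite sides of $\sigma$, and the one on the left of $\sigma$ is by definition $C_{m,n}^{L}$. With this identification inserted your proof is complete. (Two cosmetic slips: $B_{m}=\Lozi^{m-1}(C_{m})$ is a proper subset of $B=V(\beta_{2},\beta_{1})$, not equal to it, and the various ``onto''s in the pullback steps should be ``into''s.)
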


Conversely, if the formal $\iota_{\Sign,m,n}$-periodic point is admissible,
we show that the point is constraint in $C_{m,n}^{L}$. 
\begin{lem}
\label{lem:+-+}Suppose that $(a,b)\in\ParaModel$ and $m\geq3$.
Let $I=(+(-)^{m-2}+)$. If $\boldsymbol{v}\in\mathbb{R}\times\Iv$
is $I$-admissible, then one of the following is true.
\begin{enumerate}
\item $\boldsymbol{v}\in C_{m}$ and $\Lozi^{m-1}(\boldsymbol{v})\in B$.
\item $\boldsymbol{v}\in C_{m-1}$ and $\Lozi^{m-1}(\boldsymbol{v})\in C\cup C^{+}$.
\end{enumerate}
\end{lem}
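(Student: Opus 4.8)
The plan is to track the forward orbit of $\boldsymbol v$ and show it marches monotonically through the nested family of stable segments $\{\beta_k\}$, so that the sign constraints of admissibility pin $\boldsymbol v$ to exactly one of $C_{m-1}$ or $C_m$. The engine is a pair of \emph{shift relations} coming from the definitions $\beta_{k+1}=\Pi_-(\beta_k)$ and $\gamma_k=\Pi_+(\beta_k)$ together with Proposition \ref{prop:Pullback of a vertical segment}: since $\Lozi_-(\beta_{k+1})\subset\beta_k$ and $\Lozi_+(\gamma_k)\subset\beta_k$, the affine branch $\Lozi_-$ sends the strip $V(\beta_{j+1},\beta_j)$ into $V(\beta_j,\beta_{j-1})$ for $j\ge2$, and $\Lozi_+$ sends $C_k=V(\gamma_{k-1},\gamma_k)$ into $V(\beta_{k-1},\beta_k)$. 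I also record the decomposition of the right half: because $\beta_2=\Pi_-(\beta_1)\subset\R_-\times\R$, every point of $\R_+\times\Iv$ lying weakly to the left of $\beta_1=\gamma_1$ belongs to $B=V(\beta_2,\beta_1)$, while every point weakly to the right belongs to $C\cup C^+$.

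First I would reduce to $\boldsymbol v\in C$. Since $I_1=+$ we have $\pi_1(\boldsymbol v)\ge0$. If $\boldsymbol v$ lay in the interior of $B\cap(\R_+\times\Iv)$, then $\Lozi_+$, which reverses the $x$-order and carries the critical locus $\{x=0\}$ onto the right boundary turning curve of $\Lozi(\R\times\Iv)$, would send $\boldsymbol v$ strictly to the right of $\beta_1$, i.e. into $C\cup C^+\subset\R_+\times\R$; this contradicts $I_2=-$ (a nonempty constraint because $m\ge3$). If instead $\boldsymbol v\in C^+$, then by the same order reversal and $\gamma_\infty=\Pi_+(\beta_\infty)$ the image $\Lozi_+(\boldsymbol v)$ lands strictly to the left of $\beta_\infty\subset W^S(\boldsymbol z_-)$; the subsequent $(-)$-orbit then stays on that side of $W^S(\boldsymbol z_-)$ and hence in the left half for all remaining steps, so it can never realize $I_m=+$. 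This escaping behavior is exactly what the global confinement of Theorem \ref{thm:global dynamics} (together with Corollary \ref{cor:The image of D}) forbids for an admissible orbit, so $\boldsymbol v\in D$ and therefore $\boldsymbol v\in C$, say $\boldsymbol v\in C_k$ for some $k\ge2$.

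Now I iterate the shift. Starting from $\boldsymbol v\in C_k$, the relations above give $\Lozi^j(\boldsymbol v)\in V(\beta_{k-j},\beta_{k-j+1})$ for each $j\ge1$, valid as long as the preceding signs are all $-$; this strip lies in the left half precisely when its lower index $k-j\ge2$, and it equals $B=V(\beta_1,\beta_2)$ (which straddles $\{x=0\}$) exactly when $k-j=1$. Thus the orbit remains in the left half until it first meets $B$ at step $k-1$. Matching against the itinerary $(+(-)^{m-2}+)$ — signs $-$ at steps $1,\dots,m-2$ and $+$ at step $m-1$ — forces $k\in\{m-1,m\}$: if $k\le m-2$ a $+$ would be produced at step $k-1$ (if $\Lozi^{k-1}(\boldsymbol v)\in B\cap\R_+$) or at step $k$ (if $\Lozi^{k-1}(\boldsymbol v)\in B\cap\R_-$, since then $\Lozi_-$ carries it past $\beta_1$), both too early; and if $k\ge m+1$ then $\Lozi^{m-1}(\boldsymbol v)\in V(\beta_{k-m+1},\beta_{k-m+2})$ with both indices $\ge2$, forcing a $-$ at step $m-1$. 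When $k=m$ we obtain $\Lozi^{m-1}(\boldsymbol v)\in B$ with sign $+$, i.e. in $B\cap(\R_+\times\Iv)\subset B$, which is conclusion (1). When $k=m-1$ we obtain $\Lozi^{m-2}(\boldsymbol v)\in B\cap(\R_-\times\Iv)$ with sign $-$, and then $\Lozi^{m-1}(\boldsymbol v)=\Lozi_-(\Lozi^{m-2}(\boldsymbol v))$ lands to the right of $\beta_1$, i.e. in $C\cup C^+$, which is conclusion (2).

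The main obstacle is the strip $B$, the unique $\beta$-strip straddling the critical locus $\{x=0\}$; it is exactly this straddling that splits the argument into the two alternatives, and the delicate point is to verify rigorously that $\Lozi_-\!\left(B\cap(\R_-\times\Iv)\right)$ lies to the right of $\beta_1$, using $\Lozi_-(\beta_2)\subset\beta_1$ together with the location of the image of the critical locus provided by Lemma \ref{lem:Existence of beta_infty, gamma_infty}. A secondary technical point is the exclusion of $C^+$: one must confirm that a point sent to the left of $\beta_\infty$ genuinely escapes under the $(-)$-branch rather than returning, which is where the confinement of Theorem \ref{thm:global dynamics} enters, and one must check along the way that the hypotheses $w\le u^L$ of Proposition \ref{prop:Pullback of a vertical segment} persist at each step so that the shift relations stay valid throughout.
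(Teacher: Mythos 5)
Your argument is correct and rests on exactly the structural facts that drive the paper's proof: the decomposition of the phase space into the region left of $\beta_{\infty}$ and the strips $A_{k}=V(\beta_{k},\beta_{k-1})$, the shift relations $\Pi_{-}(A_{k})=A_{k+1}$ and $\Pi_{+}(A_{k})=C_{k}$, and the observation that only the strips of index $2$ and $3$ can map forward into the closed right half-plane. The paper runs the same argument backward: since $I_{m-1}=-$ and $I_{m}=+$, the point $\Lozi^{m-2}(\boldsymbol{v})$ must lie in $A_{2}\cup A_{3}$, and pulling back through $\Pi_{-}$ and then $\Pi_{+}$ places $\boldsymbol{v}$ directly in $C_{m-1}\cup C_{m}$, which sidesteps your preliminary exclusion of $B$ and $C^{+}$ and, in particular, the appeal to Theorem \ref{thm:global dynamics} needed to rule out $C^{+}$. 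Both versions are sound; the backward organization is simply shorter because the constraint at the final $-\,+$ transition does all the case elimination at once.
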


\begin{proof}
Let $A^{-}=\{(x,y)\in\mathbb{R}\times I^{v};x\leq\beta(y)\}$ and
$A_{m}=V(\beta_{m},\beta_{m-1})$ for $m\geq2$. By definition, $\Lozi^{m-2}(\boldsymbol{v})\in\mathbb{R}_{-}\times\Iv\subset A^{-}\cup(\cup_{m=2}^{\infty}A_{m})$.
Since $\Lozi(A^{-}),\Lozi(A_{n})\subset(-\infty,0)\times\mathbb{R}$
for all $n\geq4$, we have $\Lozi^{m-2}(\boldsymbol{v})\in A_{2}\cup A_{3}$.
This proves the lemma.
\end{proof}
\begin{lem}
\label{lem:++}Suppose that $(a,b)\in\ParaModel$. If $\boldsymbol{v}\in\mathbb{R}\times\Iv$
is $(++)$-admissible, then one of the following is true.
\begin{enumerate}
\item $\boldsymbol{v}\in B$ and $\Lozi(\boldsymbol{v})\in C\cup C^{+}$.
\item $\boldsymbol{v}\in C_{2}$ and $\Lozi(\boldsymbol{v})\in B$.
\end{enumerate}
\end{lem}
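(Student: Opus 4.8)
The plan is to unpack the two sign conditions hidden in $(++)$-admissibility and then locate $\boldsymbol{v}$ inside the renormalization partition. Admissibility gives $\pi_{1}(\boldsymbol{v})\geq0$ and $\pi_{1}\circ\Lozi_{+}(\boldsymbol{v})\geq0$, and on such a point $\Lozi=\Lozi_{+}$. Since $\pi_{1}\circ\Lozi_{+}(x,y)=-ax-by+(a-b-1)$ is decreasing in $x$, the first inequality places $\boldsymbol{v}$ to the right of the critical locus $\{x=0\}$, while the second places it on the closed left side of the line $\ell=\Lozi_{+}^{-1}(\{x=0\})=\{ax+by=a-b-1\}$. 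Thus $\boldsymbol{v}$ is trapped in the lens $R$ between $\{x=0\}$ and $\ell$. Reading $ax+by\leq a-b-1$ with $y\geq-1$ and $b\geq0$ already yields $0\leq\pi_{1}(\boldsymbol{v})<1$, so the image $\Lozi_{+}(\boldsymbol{v})$, whose height equals $\pi_{1}(\boldsymbol{v})$, automatically lands in $\mathbb{R}\times\Iv$; this is why no separate control of the vertical coordinate is needed.

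Next I would cut $R$ along $\beta_{1}=\gamma_{1}$. By the description of $W_{0}^{S}(\boldsymbol{z}_{+})$ in Lemma \ref{lem:Existence of beta1}, $\beta_{1}$ meets $\{x=0\}$ only at the height $v_{1}>1$, so $\beta_{1}>0$ on $\Iv$; and, as established below, $\beta_{2}<0$ on $\Iv$, so the critical locus lies strictly inside $B=V(\beta_{2},\beta_{1})$. Hence $\mathbb{R}_{+}\times\Iv$ is covered by $B$, the strips $\{C_{m}\}_{m\geq2}$, and $C^{+}$. Pulling the critical locus and $\beta_{2}$ back through the order-reversing map $\Pi_{+}$, which sends $\{x=0\}$ to $\ell$ and $\beta_{2}$ to $\gamma_{2}$, and using $\beta_{2}<0$, shows that $\ell$ lies to the left of $\gamma_{2}$. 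Therefore on every $C_{m}$ with $m\geq3$ and on $C^{+}$ one has $\pi_{1}(\boldsymbol{v})\geq\gamma_{2}(\cdot)>\ell(\cdot)$, which forces $\pi_{1}\circ\Lozi_{+}(\boldsymbol{v})<0$ and contradicts admissibility. This leaves exactly the dichotomy $\boldsymbol{v}\in B$ or $\boldsymbol{v}\in C_{2}$.

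To identify the images I would track the signed horizontal gap $g(x,y)=\pi_{1}\circ\Lozi_{+}(x,y)-\beta_{1}(x)$ between $\Lozi_{+}(\boldsymbol{v})$ and $\gamma_{1}$, noting that the image has height $\pi_{2}\circ\Lozi_{+}(\boldsymbol{v})=\pi_{1}(\boldsymbol{v})=x$. Because the line $L(\beta_{1})$ is the local stable manifold of $\boldsymbol{z}_{+}$ and hence $\Lozi_{+}$-invariant, $g$ vanishes on $\beta_{1}$; and with $\beta_{1}(x)=\frac{b}{\lambda}(v_{1}-x)$ one computes $\partial_{x}g=-a+\frac{b}{\lambda}=-\lambda$, so $g\geq0$ precisely on the side $x\leq\beta_{1}(\cdot)$. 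Thus if $\boldsymbol{v}\in B$ the image lies to the right of $\gamma_{1}$, i.e. in $C\cup C^{+}$ (case 1), while if $\boldsymbol{v}\in C_{2}$ the image lies to the left of $\gamma_{1}$ and, by $\pi_{1}\circ\Lozi_{+}(\boldsymbol{v})\geq0$, to the right of $\{x=0\}$, i.e. in $B$ (case 2). The one genuinely geometric input, and the main obstacle, is the claim $\beta_{2}<0$ on $\Iv$: writing $\beta_{2}$ as the $\Lozi_{-}$-pullback of $\beta_{1}$ presents it as an increasing affine function of $y$, so it suffices to check $\beta_{2}(1)\leq0$, and after clearing denominators and substituting $a=\lambda+b/\lambda$ this collapses to $1+2b\leq\lambda$, which is exactly Lemma \ref{lem:Bounds of lambda}. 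Everything else is routine bookkeeping of the order-reversal under $\Pi_{+}$.
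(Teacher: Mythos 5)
Your proposal is correct and follows essentially the same route as the paper's (very terse) proof: cover $\mathbb{R}_{+}\times\Iv$ by $B\cup(\cup_{m\geq2}C_{m})\cup C^{+}$, rule out $C_{m}$ for $m\geq3$ and $C^{+}$ because their images under $\Lozi_{+}$ land in the closed left half-plane (your ``$\ell$ left of $\gamma_{2}$'' step, which reduces to $\beta_{2}\leq0$ on $\Iv$, equivalently $r_{1}<u^{L}$ from Lemma \ref{lem:Existence of beta1}, i.e. $\lambda>2b+1$), and then read off the image containments from the order-reversal across $\beta_{1}=\gamma_{1}$. You merely spell out the details the paper compresses into ``the lemma follows.''
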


\begin{proof}
By definition, $\mathbb{R}_{+}\times\Iv\subset B\cup(\cup_{m=2}^{\infty}C_{m})\cup C^{+}$.
Since $\Lozi(C_{m}),\Lozi(C^{+})\subset(-\infty,0)\times\Iv$ for
all $m\geq3$, the lemma follows.
\end{proof}
\begin{cor}
\label{cor:+-++}Suppose that $(a,b)\in\ParaModel$ and $m\geq3$.
Let $I=(+(-)^{m-2}++)$. If $\boldsymbol{v}\in\mathbb{R}\times\Iv$
is $I$-admissible, then one of the following is true.
\begin{enumerate}
\item $\boldsymbol{v}\in C_{m}$ and $\Lozi^{m}(\boldsymbol{v})\in C\cup C^{+}$.
\item $\boldsymbol{v}\in C_{m-1}$ and $\Lozi^{m}(\boldsymbol{v})\in B$.
\end{enumerate}
\end{cor}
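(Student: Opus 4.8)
The plan is to split the itinerary $I=(+(-)^{m-2}++)$ at the $(m-1)$-st iterate and feed its two overlapping pieces into the two lemmas just established. Write $I'=(+(-)^{m-2}+)$ for the length-$m$ prefix of $I$, and set $\boldsymbol{w}=\Lozi^{m-1}(\boldsymbol{v})$. Since $I'$ is a prefix of $I$, a point that is $I$-admissible is automatically $I'$-admissible; and since the last two symbols of $I$ are $I_m=I_{m+1}=+$, the shifted point $\boldsymbol{w}$ is $(++)$-admissible. Admissibility also guarantees that the formal composition agrees with the true iterate, so $\Lozi^{m-1}(\boldsymbol{v})=\boldsymbol{w}$ and $\Lozi^{m}(\boldsymbol{v})=\Lozi(\boldsymbol{w})$; this is precisely what lets the conclusions of the two lemmas be stitched together along the common point $\boldsymbol{w}$.

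Next I would apply Lemma \ref{lem:+-+} to $\boldsymbol{v}$ and Lemma \ref{lem:++} to $\boldsymbol{w}$. Lemma \ref{lem:+-+} yields one of two alternatives: either (A1) $\boldsymbol{v}\in C_m$ and $\boldsymbol{w}\in B$, or (A2) $\boldsymbol{v}\in C_{m-1}$ and $\boldsymbol{w}\in C\cup C^{+}$. Lemma \ref{lem:++} applied to $\boldsymbol{w}$ yields: either (B1) $\boldsymbol{w}\in B$ and $\Lozi(\boldsymbol{w})\in C\cup C^{+}$, or (B2) $\boldsymbol{w}\in C_2$ and $\Lozi(\boldsymbol{w})\in B$. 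The corollary then follows by pairing (A1) with (B1) and (A2) with (B2): the first pairing gives $\boldsymbol{v}\in C_m$ together with $\Lozi^{m}(\boldsymbol{v})=\Lozi(\boldsymbol{w})\in C\cup C^{+}$, which is conclusion~(1); the second gives $\boldsymbol{v}\in C_{m-1}$ together with $\Lozi^{m}(\boldsymbol{v})=\Lozi(\boldsymbol{w})\in B$, which is conclusion~(2).

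What makes the pairing legitimate is that the location of $\boldsymbol{w}$ determined by Lemma \ref{lem:+-+} forces the branch of Lemma \ref{lem:++}. The regions $B=V(\beta_2,\beta_1)$ and $C\cup C^{+}$ lie on opposite sides of the common segment $\beta_1=\gamma_1$, so their interiors are disjoint; likewise $C_2\subset C$ is disjoint from the interior of $B$. Hence in alternative (A1), where $\boldsymbol{w}\in B$, the option (B2) (which would place $\boldsymbol{w}\in C_2$) is excluded and (B1) must hold; and in alternative (A2), where $\boldsymbol{w}\in C\cup C^{+}$, the option (B1) is excluded and (B2) must hold. I expect the only delicate point to be the boundary case $\boldsymbol{w}\in\beta_1=\gamma_1$, where $\boldsymbol{w}$ lies in both closed regions simultaneously; but since all the sets appearing in the conclusions ($C_m$, $C_{m-1}$, $B$, $C$, $C^{+}$) are closed, either assignment of the branch still produces a valid conclusion, so the dichotomy is not broken. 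This boundary bookkeeping, rather than any substantive geometry, is the main obstacle, and it is handled entirely by the disjointness of the interiors.
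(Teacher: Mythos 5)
Your proof is correct and is exactly the argument the paper intends: the corollary is stated there without proof, as an immediate consequence of Lemmas \ref{lem:+-+} and \ref{lem:++} obtained by the same prefix/suffix splitting of the itinerary and pairing of alternatives that you carry out. Your treatment of the boundary case $\boldsymbol{w}\in\beta_{1}=\gamma_{1}$ is slightly loose --- closedness of the regions is not by itself the reason it is harmless; rather, $\beta_{1}=\gamma_{1}\subset W_{0}^{S}(\boldsymbol{z}_{+})$ is forward-invariant under $\Lozi_{+}$, so $\Lozi(\boldsymbol{w})\in\beta_{1}\subset B\cap C$ and both branches of Lemma \ref{lem:++} hold simultaneously --- but this does not affect the validity of your conclusion.
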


\begin{prop}
\label{prop:Admissible periodic point in C}Suppose that $(a,b)\in\ParaModel$,
$\Sign\in S$, $m\geq3$, and $n\geq2$. If $\FormalPeriodic_{\Sign,m,n}$
is admissible, then $\FormalPeriodic_{\Sign,m,n}\in C_{m}$ and $\Lozi^{m}(\FormalPeriodic_{\Sign,m,n})\in C_{n}$.
\end{prop}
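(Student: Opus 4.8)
The plan is to track the two orbit points $\boldsymbol{v}:=\FormalPeriodic_{\Sign,m,n}$ and $\boldsymbol{w}:=\Lozi^{m}(\boldsymbol{v})$ by splitting the itinerary $\iota_{\Sign,m,n}=(+(-)^{m-2}++(-)^{n-2}\Sign)$ after its first $m$ symbols into the two blocks $(+(-)^{m-2}+)$ and $(+(-)^{n-2}\Sign)$. Then $\boldsymbol{w}$ is precisely the point reached after the first block, and since the admissible formal periodic point has $\Lozi^{m+n}(\boldsymbol{v})=\boldsymbol{v}$, the periodicity identity $\Lozi^{n}(\boldsymbol{w})=\boldsymbol{v}$ holds. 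Throughout I would use once and for all that $B$ and $C$ meet only along the shared boundary segment $\beta_{1}=\gamma_{1}\subset W^{S}(\boldsymbol{z}_{+})$: a periodic point on this segment has forward orbit converging to $\boldsymbol{z}_{+}$, hence must equal $\boldsymbol{z}_{+}$, whose itinerary is $(+)^{\infty}$. Since $m\geq3$ forces a $-$ in $\iota_{\Sign,m,n}$, neither $\boldsymbol{v}$ nor $\boldsymbol{w}$ is $\boldsymbol{z}_{+}$, so no point of this orbit lies simultaneously in $B$ and in $C$.

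First I would localize $\boldsymbol{v}$. Being $\iota_{\Sign,m,n}$-admissible, it is in particular $(+(-)^{m-2}++)$-admissible, so Corollary \ref{cor:+-++} gives either $\boldsymbol{v}\in C_{m}$ with $\boldsymbol{w}\in C\cup C^{+}$, or $\boldsymbol{v}\in C_{m-1}$ with $\boldsymbol{w}\in B$. In both branches $\boldsymbol{v}\in C$, a fact I would then use freely.

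Next I would pin down $\boldsymbol{w}\in C_{n}$ by reading the second block off the forward itinerary $(+(-)^{n-2}\Sign+(-)^{m-2}+)^{\infty}$ of $\boldsymbol{w}$; the argument splits according to $\Sign$ and whether $n=2$. When $\Sign=+$ and $n\geq3$, the relevant prefix is $(+(-)^{n-2}++)$, so Corollary \ref{cor:+-++} (with $n$ in place of $m$) gives $\boldsymbol{w}\in C_{n}$ or else $\boldsymbol{w}\in C_{n-1}$ with $\Lozi^{n}(\boldsymbol{w})\in B$. When $\Sign=-$, the prefix becomes $(+(-)^{n-1}+)=(+(-)^{(n+1)-2}+)$, so Lemma \ref{lem:+-+} (with $n+1$ in place of $m$) gives $\boldsymbol{w}\in C_{n}$ or else $\boldsymbol{w}\in C_{n+1}$ with $\Lozi^{n}(\boldsymbol{w})\in B$. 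When $\Sign=+$ and $n=2$, the prefix is only $(++)$, so Lemma \ref{lem:++} gives $\boldsymbol{w}\in C_{2}$ or else $\boldsymbol{w}\in B$; in the latter case the image $\Lozi(\boldsymbol{w})$ is again $(++)$-admissible and lands in $C\cup C^{+}$, hence in $C_{2}$, so a second application yields $\Lozi^{2}(\boldsymbol{w})\in B$. In each of these bad subcases the periodicity identity ($\Lozi^{n}(\boldsymbol{w})=\boldsymbol{v}$, or $\Lozi^{2}(\boldsymbol{w})=\boldsymbol{v}$ when $n=2$) places $\boldsymbol{v}\in B$, contradicting $\boldsymbol{v}\in C$ from the first step; hence $\boldsymbol{w}\in C_{n}$.

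Finally, with $\boldsymbol{w}\in C_{n}\subset C$ established, the branch $\boldsymbol{v}\in C_{m-1}$ of the first step (which requires $\boldsymbol{w}\in B$) is excluded by the interior-disjointness of $B$ and $C$, leaving $\boldsymbol{v}\in C_{m}$; this gives both conclusions. The main obstacle is the asymmetry caused by the terminal symbol $\Sign$: read forward, the $n$-th entry of the second block is $\Sign$ rather than a forced $+$, so the clean dichotomy of Corollary \ref{cor:+-++} is directly available only when $\Sign=+$ and $n\geq3$, and the cases $\Sign=-$ and $n=2$ must be routed through a shifted pattern length (Lemma \ref{lem:+-+} at length $n+1$) or an extra return (a double use of Lemma \ref{lem:++}). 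What makes every case close uniformly is the pairing of the periodicity relation $\Lozi^{n}(\boldsymbol{w})=\boldsymbol{v}$ with the disjointness of $B$ and the interior of $C$, which converts each wrong localization into the impossible statement $\boldsymbol{v}\in B$.
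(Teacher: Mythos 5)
Your argument is essentially the paper's: the same three localization results (Lemma \ref{lem:+-+}, Lemma \ref{lem:++}, Corollary \ref{cor:+-++}) combined with the periodicity relation and the fact that $B$ and $C$ meet only along $\beta_{1}=\gamma_{1}\subset W^{S}(\boldsymbol{z}_{+})$, which the periodic orbit avoids. The paper orders the deductions slightly differently (it first places $\Lozi^{m}(\FormalPeriodic_{\Sign,m,n})$ in $C_{n-1}\cup C_{n}\cup C_{n+1}\subset C$, then deduces $\FormalPeriodic_{\Sign,m,n}\in C_{m}$, then sharpens to $C_{n}$ using periodicity, and it splits cases on $n=2$ versus $n\geq3$ rather than on $\Sign$), but these differences are cosmetic; your case analysis for the second block --- the shift to pattern length $n+1$ when $\Sign=-$, and the extra return through a double use of Lemma \ref{lem:++} when $\Sign=+$ and $n=2$ --- is correct, and your justification that the orbit misses $W^{S}(\boldsymbol{z}_{+})$ (a periodic point there would have to be $\boldsymbol{z}_{+}$, whose itinerary is $(+)^{\infty}$) is sound.

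There is one genuine gap: each of Lemma \ref{lem:+-+}, Lemma \ref{lem:++}, and Corollary \ref{cor:+-++} carries the hypothesis $\boldsymbol{v}\in\mathbb{R}\times\Iv$, and you never verify this for $\FormalPeriodic_{\Sign,m,n}$ before your very first application of Corollary \ref{cor:+-++}. A formal periodic point is a priori only the solution of the linear system (\ref{eq:Formal Periodic Point (Linear)}) and could lie anywhere in $\mathbb{R}^{2}$; admissibility upgrades it to a genuine periodic point of $\Lozi$, but confining its bounded invariant orbit to $D\subset\mathbb{R}\times\Iv$ requires the global dynamics result, namely Corollary \ref{cor:Invariant sets are bounded in D}. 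This is precisely the first line of the paper's proof, and without it none of the localization lemmas you invoke are applicable. The fix is a single sentence, but it is a necessary external input rather than a formality.
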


\begin{proof}
By Corollary \ref{cor:Invariant sets are bounded in D}, we have $\FormalPeriodic_{\Sign,m,n}\in D\subset\mathbb{R}\times\Iv$.
Also, $\FormalPeriodic_{\Sign,m,n}\notin W^{S}(\boldsymbol{z}_{+})$
by assumption.

First, consider the case $n=2$. By Lemma \ref{lem:+-+}, we have
$\Lozi^{m+2}(\FormalPeriodic_{\Sign})=\FormalPeriodic\in C$. This
forces $\Lozi^{m+1}(\FormalPeriodic_{\Sign,m,n})\in B$. Thus, $\FormalPeriodic_{\Sign,m,n}\in C_{m}$
and $\Lozi^{m}(\FormalPeriodic_{\Sign,m,n})\in C_{2}$ by Corollary
\ref{cor:+-++}.

Now, assume that $n\geq3$. Then $\Lozi^{m}(\FormalPeriodic_{\Sign,m,n})$
is $(+(-)^{n-1}+)$-admissible if $\FormalPeriodic_{\Sign,m,n}$ is
$\iota_{-,m,n}$-admissible, or $(+(-)^{n-2}+)$-admissible if $\FormalPeriodic_{\Sign,m,n}$
is $\iota_{+,m,n}$-admissible. By Lemma \ref{lem:+-+}, we get $\Lozi^{m}(\FormalPeriodic_{\Sign,m,n})\in C_{n-1}\cup C_{n}\cup C_{n+1}\subset C$.
This forces $\FormalPeriodic_{\Sign,m,n}\in C_{m}$ by Corollary \ref{cor:+-++}.
We obtain $\Lozi^{m}(\FormalPeriodic_{\Sign,m,n})\in C_{n}$ because
$\Lozi^{m+n}(\FormalPeriodic_{\Sign,m,n})=\FormalPeriodic_{\Sign,m,n}$.
\end{proof}
\begin{cor}
\label{cor:Admissible periodic point in Cmn}Suppose that $(a,b)\in\ParaModel$,
$\Sign\in S$, $m\geq3$, $n\geq2$, and $C_{m,n}^{L}$ exists. Then
$\boldsymbol{v}\in\mathbb{R}^{2}$ is the admissible formal $\iota_{\Sign,m,n}$-periodic
point if and only if $\boldsymbol{v}\in C_{m,n}^{L}$ is a periodic
point with period $m+n$.
\end{cor}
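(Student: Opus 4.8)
The plan is to prove the two implications separately, using Proposition \ref{prop:Admissible periodic point in C} for the forward direction and Proposition \ref{prop:Itinerary of a point in C_mn} together with the uniqueness in Theorem \ref{thm:Formal periodic orbits} for the converse. Throughout I read ``period $m+n$'' as the relation $\Lozi^{m+n}(\boldsymbol{v})=\boldsymbol{v}$, i.e. $m+n$ is a period and not necessarily the minimal one.

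Suppose first that $\boldsymbol{v}$ is the admissible formal $\iota_{\Sign,m,n}$-periodic point. Since $l(\iota_{\Sign,m,n})=m+n$ and $\boldsymbol{v}$ is $\iota_{\Sign,m,n}$-admissible, the branch composition agrees with the genuine iterate, $\Lozi_{\iota_{\Sign,m,n}}(\boldsymbol{v})=\Lozi^{m+n}(\boldsymbol{v})$, so the defining relation $\boldsymbol{v}=\Lozi_{\iota_{\Sign,m,n}}(\boldsymbol{v})$ gives $\Lozi^{m+n}(\boldsymbol{v})=\boldsymbol{v}$. By Proposition \ref{prop:Admissible periodic point in C} we have $\boldsymbol{v}\in C_{m}$ and $\Lozi^{m}(\boldsymbol{v})\in C_{n}$; since $\boldsymbol{v}\in C_{m}$ also gives $\Lozi^{m}(\boldsymbol{v})\in\Lozi^{m}(C_{m})=U_{m}$, we obtain $\Lozi^{m}(\boldsymbol{v})\in U_{m}\cap C_{n}$ and hence $\boldsymbol{v}\in C_{m,n}=C_{m,n}^{L}\cup C_{m,n}^{R}$. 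To exclude the right component I would use the fold that defines $U_{m}$: the folded map $\Lozi^{m}\colon C_{m}\rightarrow U_{m}$ creases exactly where $\pi_{1}\circ\Lozi^{m-1}$ changes sign, i.e. where the $m$-th itinerary symbol flips between $+$ and $-$, and the two sheets of this fold are precisely the preimage strips $C_{m,n}^{L}$ and $C_{m,n}^{R}$. By Proposition \ref{prop:Itinerary of a point in C_mn} the sheet $C_{m,n}^{L}$ is the one carrying an $\iota_{\pm,m,n}$ itinerary, whose $m$-th symbol is $+$. As $\boldsymbol{v}$ is $\iota_{\Sign,m,n}$-admissible its $m$-th symbol is $+$, so $\boldsymbol{v}$ lies on that sheet, giving $\boldsymbol{v}\in C_{m,n}^{L}$.

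For the converse, suppose $\boldsymbol{v}\in C_{m,n}^{L}$ satisfies $\Lozi^{m+n}(\boldsymbol{v})=\boldsymbol{v}$. By Proposition \ref{prop:Itinerary of a point in C_mn}, $\boldsymbol{v}$ is $\iota_{\tau,m,n}$-admissible for some $\tau\in S$, and, being periodic, it satisfies $\pi_{1}\circ\Lozi^{j}(\boldsymbol{v})\neq0$ for $0\le j\le m+n-2$ (the sign $\tau$ being read off from $\pi_{1}\circ\Lozi^{m+n-1}(\boldsymbol{v})$). Admissibility again turns the true iterate into the branch composition, $\Lozi^{m+n}(\boldsymbol{v})=\Lozi_{\iota_{\tau,m,n}}(\boldsymbol{v})$, so the periodicity relation reads $\boldsymbol{v}=\Lozi_{\iota_{\tau,m,n}}(\boldsymbol{v})$; that is, $\boldsymbol{v}$ is an admissible formal $\iota_{\tau,m,n}$-periodic point. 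By the uniqueness in Theorem \ref{thm:Formal periodic orbits} it is \emph{the} formal $\iota_{\tau,m,n}$-periodic point, and taking $\Sign=\tau$ closes the equivalence.

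The main obstacle is the left/right dichotomy in the forward direction: Proposition \ref{prop:Admissible periodic point in C} only places $\boldsymbol{v}$ in $C_{m}$ with $\Lozi^{m}(\boldsymbol{v})\in C_{n}$, and one must still decide which fold sheet it occupies. I expect this to come down to the sign computation above, namely that the $m$-th itinerary symbol $+$ distinguishes $C_{m,n}^{L}$ from $C_{m,n}^{R}$, matching the identification already made in Proposition \ref{prop:Itinerary of a point in C_mn}; the very construction producing the two strips in Proposition \ref{prop:Left and right components of a subpartition} supplies this. A minor bookkeeping point is the word ``period'': with the non-minimal reading it is immediate, whereas the minimal period equals $m+n$ exactly when $\iota_{\Sign,m,n}$ is primitive, which holds in the regime $m>n$ relevant to the main theorem.
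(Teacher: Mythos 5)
Your proposal is correct and follows essentially the same route as the paper: the forward direction combines Proposition \ref{prop:Admissible periodic point in C} with the observation that the two components of $C_{m,n}$ are distinguished by the sign of $\pi_{1}\circ\Lozi^{m-1}$ (the paper phrases this as $\Lozi^{m-1}(C_{m,n}^{R})\backslash W^{S}(\boldsymbol{z}_{+})\subset(-\infty,0)\times\Iv$ together with $\FormalPeriodic_{\Sign,m,n}\notin W^{S}(\boldsymbol{z}_{+})$, which also disposes of the borderline case where admissibility only gives $\pi_{1}\circ\Lozi^{m-1}\geq0$), and the converse is exactly Proposition \ref{prop:Itinerary of a point in C_mn} plus the uniqueness in Theorem \ref{thm:Formal periodic orbits}. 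The remarks on the choice of $\Sign$ and on ``period'' meaning a (not necessarily minimal) period match the paper's reading.
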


\begin{proof}
By Proposition \ref{prop:Admissible periodic point in C}, we have
$\FormalPeriodic_{\Sign,m,n}\in C_{m,n}$. Since $C_{m,n}^{L}$ exists,
we have $C_{m,n}=C_{m,n}^{L}\cup C_{m,n}^{R}$. Note that $\Lozi^{m-1}(C_{m,n}^{R})\backslash W^{S}(\boldsymbol{z}_{+})\subset(-\infty,0)\times\Iv$
and $\FormalPeriodic_{\Sign,m,n}\notin W^{S}(\boldsymbol{z}_{+})$.
Therefore, $\FormalPeriodic_{\Sign,m,n}\in C_{m,n}^{L}$.

The converse follows immediately from Proposition \ref{prop:Itinerary of a point in C_mn}.
\end{proof}

\subsection{\label{subsec:Forcing relation (kneading)}The forcing relation from
the kneading theory}

For completeness, we give a brief review of the forcing relation on
itineraries for unimodal maps. The materials are based on \cite{CE80}.
We explain why Theorem \ref{thm:Main theorem} gives a counterexample
to the one-dimensional forcing relation. The remaining part of this
paper is independent of this section. 

A continuous map $f:[-1,1]\rightarrow[-1,1]$ is unimodal if it has
a unique maximum point $c\in(-1,1)$ such that $f(c)=1$, $f(1)=-1$,
and $f$ is monotone on each component of $[-1,1]\backslash\{c\}$.
Let $I$ be an itinerary. A point $x$ (or orbit $O(x)$) is $I$-admissible
if $I_{m}(f^{m-1}(x)-c)\geq0$ for $m\in\{1,\cdots,l(I)\}$. Here,
the critical point is allowed to be encoded as either ``$+$'' or
``$-$''. This is consistent with the definition for Lozi maps.
A periodic point $x\in[-1,1]$ is an $I$-periodic point if $x$ is
$I$-admissible and $l(I)$ is the period. An itinerary $I$ is irreducible
if it cannot be expressed as $I=J^{m}$ for some $m\geq2$ and a finite
itinerary $J$.
\begin{lem}
\label{lem:Reduction of a periodic itinerary}Suppose that $I$ and
$J$ are finite itineraries such that $I=J^{m}$ for some $m\geq1$.
If a unimodal map $f$ has an $I^{\infty}$-admissible point, then
it has a $J$-periodic point.
\end{lem}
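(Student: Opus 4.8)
\textbf{The plan.} The crucial observation is that since $I=J^{m}$, the infinite sequences coincide: $I^{\infty}=(J^{m})^{\infty}=J^{\infty}$. Hence the $I^{\infty}$-admissible point supplied by the hypothesis is in fact $J^{\infty}$-admissible, and the whole problem reduces to producing a $J$-periodic point from a $J^{\infty}$-admissible point. Write $k=l(J)$ and $g=f^{k}$, and let $c$ be the critical point. For a finite itinerary $s=(s_{1},\dots,s_{n})$ define the cylinder $C_{s}=\{z\in[-1,1]:s_{i}(f^{i-1}(z)-c)\geq0\text{ for }i=1,\dots,n\}$, and set $A=\bigcap_{N\geq1}C_{J^{N}}$, the set of all $J^{\infty}$-admissible points. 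My goal is to run a one-dimensional fixed-point argument for $g$ on $A$.

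First I would establish that $A$ is a nonempty compact interval. Each half $W_{+}=[c,1]$ and $W_{-}=[-1,c]$ is a closed interval on which $f$ is monotone, so by backward induction $C_{s}=W_{s_{1}}\cap f^{-1}(C_{(s_{2},\dots,s_{n})})$ is a closed interval: the preimage of an interval under the continuous monotone branch $f|_{W_{s_{1}}}$, intersected with $W_{s_{1}}$, is again an interval. The cylinders $C_{J^{N}}$ are nested ($J^{N+1}$-admissibility forces $J^{N}$-admissibility), each is closed and contained in $[-1,1]$, and each contains the given admissible point $x$; therefore $A$ is a nonempty, compact, nested intersection of intervals, hence a nonempty compact interval $[p,q]$.

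Next I would show $g(A)\subseteq A$. If $z$ is $J^{\infty}$-admissible, then for every $m\geq1$ we have $J^{\infty}_{m+k}\bigl(f^{m+k-1}(z)-c\bigr)\geq0$; since $J^{\infty}$ has period $k$ we have $J^{\infty}_{m+k}=J^{\infty}_{m}$, and because $f^{m-1}(g(z))=f^{m+k-1}(z)$ this says precisely that $g(z)=f^{k}(z)$ is again $J^{\infty}$-admissible. Thus $g$ restricts to a continuous self-map of the compact interval $A$. A continuous self-map of a compact interval has a fixed point (the function $z\mapsto g(z)-z$ is nonnegative at $p$ and nonpositive at $q$, so it vanishes somewhere by the intermediate value theorem). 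Let $y\in A$ be such a fixed point: then $f^{k}(y)=y$ and $y$ is $J^{\infty}$-admissible, so in particular its first $k$ symbols realize $J$ and $y$ is $J$-admissible. Hence $y$ is the desired $J$-periodic point.

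\textbf{Where the care is needed.} Two points deserve attention. The first is the interval structure of the cylinders, which is exactly where unimodality enters: monotonicity of each branch of $f$ is what keeps preimages of intervals from splitting, so that $A$ is connected and the fixed-point theorem applies. The second, and the genuinely delicate point, is the matching of the \emph{minimal} period: the construction only guarantees $f^{l(J)}(y)=y$, so a priori the minimal period of $y$ could be a proper divisor of $l(J)$ when $J$ is reducible. In the intended use, $J$ is the irreducible core of the itinerary, and then an orbit avoiding $c$ whose length-$l(J)$ itinerary equals an irreducible $J$ must have minimal period exactly $l(J)$; I would invoke irreducibility (or iterate the reduction) to secure the period clause of the definition.
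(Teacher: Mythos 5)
Your argument is correct and is essentially the paper's own proof in expanded form: both identify the set of $I^{\infty}$-admissible points as a nonempty closed interval (the homterval), observe it is invariant under $f^{l(J)}$ since $I^{\infty}=J^{\infty}$, and extract a fixed point of $f^{l(J)}$ by the intermediate value theorem. Your closing remark on the minimal-period subtlety is a fair point that the paper's one-line proof glosses over, but it does not change the approach.
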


\begin{proof}
The set containing all points with the same itinerary is a closed
interval. In fact, $f^{n}$ is monotone on the interval for all $n\geq1$.
The interval is called a homterval. Let $H$ be the homterval consisting
points that are $I^{\infty}$-admissible. The interval $H$ is nonempty
by the assumption. We have $f^{N}(H)\subset H$ where $N=l(J)$. Thus,
$f^{N}$ has a fixed point in $H$, which yields the desired $J$-periodic
point.
\end{proof}
To apply the kneading theory and take care of the critical point,
we extend the symbolic space by letting $\widehat{S}=\{-,0,+\}$.
A U-itinerary is an infinite sequence of alphabets in $\widehat{S}$.
The modified coding $\widehat{I}:[-1,1]\rightarrow\widehat{S}$ is
defined as 
\[
\widehat{I}(x)=\begin{cases}
- & \text{if }x<c,\\
0 & \text{if }x=c,\text{ and}\\
+ & \text{if }x>c.
\end{cases}
\]
In contrast to the usual itineraries, here the critical point is encoded
as ``0''. The U-itinerary of an orbit $O(x)$ is the sequence $\widehat{I}\circ O(x)=\{\widehat{I}_{m}(x)\equiv\widehat{I}\circ f^{m-1}(x)\}_{m\geq1}$.
Let $\Shift$ be the shift map. Then $\widehat{I}\circ O\circ f(x)=\Shift\circ\widehat{I}\circ O(x)$
for all $x\in[-1,1]$.

We define a total order $\precsim$ on the space of U-itineraries.
Let $I$ and $J$ be distinct U-itineraries. Then the U-itineraries
can be expressed as the form $I=KP*$ and $J=KQ*$, where $K$ is
a finite itinerary and $P,Q\in\widehat{S}$ such that $P\neq Q$ or
$P=Q=0$. For each finite itinerary $K$, let $\epsilon(K)=(-1)^{N}$,
where $N$ is the number of $+$ in $K$. We say that $I\prec J$
if $\epsilon(I)P<\epsilon(I)Q$. Let $\sim$ be an equivalence relation
such that $I\sim J$ if $I=J$ or $P=Q=0$. For each finite itinerary
$K$, $\Shift^{l(K)}$ is strictly monotone on the cylindrical set
$\{K*\}$ of U-itineraries with the orientation $\epsilon(K)$. Moreover,
the coding map $\widehat{I}\circ O$ is orientation preserving, i.e.
$\widehat{I}\circ O(x)\precsim\widehat{I}\circ O(y)$ if $x<y$ \cite[Lemma II.1.3]{CE80}. 

We are ready to state the forcing relation given by the itinerary
of the critical orbit.
\begin{thm}[{\cite[Theorem II.3.8]{CE80}. See also \cite[Proposition 2.3]{Gu79}}]
\label{thm:Forcing condition on kneading sequence}Let $f$ be unimodal
and $J$ be an itinerary. Also, let $I=\widehat{I}\circ O(1)$ if
$c$ is not periodic and $I=\min\{(KL)^{\infty},(KR)^{\infty}\}$
if $\widehat{I}\circ O(1)=(K0)^{\infty}$ for some finite itinerary
$K$. Suppose that $J$ satisfies the forcing conditions 
\[
\widehat{I}\circ O(-1)\precsim J\text{ and }\Shift^{m}(J)\prec I
\]
 for all $m\geq0$. Then there exists $x\in[-1,1]$ such that $\widehat{I}\circ O(x)=J$.
\end{thm}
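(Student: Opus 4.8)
The plan is to realize the prescribed itinerary $J$ as the coding of a single point, exploiting that the itinerary map $\Theta \equiv \widehat{I}\circ O$ is monotone for $\precsim$ (the orientation-preserving property recorded above). Before constructing the point I would first note the trivial necessity direction, both as a sanity check and as the source of the intuition: for \emph{any} $x$ and any $m\geq 0$ one has $\Shift^{m}(\Theta(x)) = \Theta(f^{m}(x))$, and since $-1\leq f^{m}(x)\leq 1$, monotonicity gives $\widehat{I}\circ O(-1)\precsim \Shift^{m}(\Theta(x))\precsim \widehat{I}\circ O(1)$. Thus every realized itinerary automatically lies between $\widehat{I}\circ O(-1)$ and the kneading sequence at all shifts; the theorem is the converse, and its hypotheses are precisely the (now strict, on the upper side) versions of these inequalities.

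Next I would build the point by a monotone cut. Set $x=\sup\{y\in[-1,1] : \Theta(y)\prec J\}$. Monotonicity of $\Theta$ immediately gives $\Theta(y)\prec J$ for every $y<x$ and $J\precsim\Theta(y)$ for every $y>x$, so the whole question collapses to evaluating $\Theta$ at the single cut point $x$. I would then use the two forcing conditions to force $x$ strictly into the interior: the bound $\widehat{I}\circ O(-1)\precsim J$ keeps the defining set nonempty, hence $x>-1$, while the $m=0$ instance $J=\Shift^{0}(J)\prec I$, together with the identification of $\Theta(1)$ with $I$, keeps $x<1$. Here the case split in the definition of $I$ enters: $\Theta(1)=I$ when $c$ is not periodic, and the choice $I=\min\{(KL)^{\infty},(KR)^{\infty}\}$ when $\widehat{I}\circ O(1)=(K0)^{\infty}$ is exactly what makes $J\prec I$ yield $J\prec\Theta(1)$.

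The heart of the argument, and the step I expect to be the main obstacle, is showing $\Theta(x)=J$ rather than $\Theta$ jumping over $J$ at the cut. Discontinuities of $\Theta$ occur precisely where the orbit of the argument meets $c$, and the size and orientation of such a jump is governed by the kneading sequence through the shift-monotonicity of $\Theta$ on each cylinder $\{K*\}$ with orientation $\epsilon(K)$. So I would analyze the first position $k$ (if any) at which the orbit of $x$ hits $c$, i.e.\ $f^{k-1}(x)=c$: then $f^{k}(x)=1$ forces $\Theta(x)=(J_{1}\cdots J_{k-1})\,0\,I$, and the two one-sided limits at $x$ differ from it only by replacing that $0$ by $\mp$ followed by the appropriate continuation. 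The strict inequalities $\Shift^{m}(J)\prec I$ for all $m$ are what pin $J$ inside this jump interval: they force the first $k-1$ symbols of $J$ to coincide with those of $\Theta(x)$ and then forbid any later shift of $J$ from reaching $I$, so $J$ cannot slip past either side of the jump and must equal $\Theta(x)$ (with $J_{k}=0$ under the $\sim$-convention). When the orbit of $x$ never meets $c$, the same bookkeeping shows $\Theta$ is unambiguous at $x$ and equals the common value $J$. Carrying this combinatorics through uniformly, while correctly tracking the orientation signs $\epsilon(K)$ in every comparison, is the delicate part of the proof.

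Finally, I would record two supporting points. The strictness in $\Shift^{m}(J)\prec I$, as opposed to $\precsim$, is essential exactly because it excludes the degenerate realizations in which the orbit collapses onto the forward image of the critical point; that collapse is the only mechanism that would let $\Theta$ skip $J$, so ruling it out is what the strict bound buys. And if $J$ happens to be periodic, the reduction in Lemma~\ref{lem:Reduction of a periodic itinerary} upgrades the realizing point to an honest periodic point, which is the form needed for the intended application to the forcing relation.
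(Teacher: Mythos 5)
The paper gives no proof of this statement: it is quoted directly from the literature (Collet--Eckmann, Theorem II.3.8, and Guckenheimer), so there is nothing internal to compare your argument against. Your sketch is, in outline, the standard proof from that source: monotonicity of the coding map $\widehat{I}\circ O$, a Dedekind-cut definition of $x$, and an analysis of the jump of the coding map at the cut point, with the strict conditions $\Shift^{m}(J)\prec I$ ruling out $J$ landing strictly inside the jump interval. The one place your sketch is genuinely thin is the case in which the critical point is periodic, i.e.\ $\widehat{I}\circ O(1)=(K0)^{\infty}$: there the orbit of the cut point $x$ meets $c$ infinitely often, the one-sided limits of $\Theta$ at $x$ are not simply ``$0$ replaced by $\mp$ followed by $I$'' but involve repeated substitutions along the critical orbit, and the definition $I=\min\{(KL)^{\infty},(KR)^{\infty}\}$ is exactly what is needed to make the jump analysis close up; your text acknowledges this case enters but does not carry it out. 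You should also state explicitly in what sense $\widehat{I}\circ O(x)=J$ when the orbit of $x$ hits $c$ (equality is modulo the relation $\sim$, since $J$ has no symbol $0$). With those two points filled in, the argument is the one in the cited reference.
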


The theorem deduces a forcing relation on the itineraries of periodic
orbits. An infinite itinerary $I$ is maximum if $\Shift^{m}(I)\precsim I$
for all $m\geq0$; nontrivial if $I\neq\{-^{\infty},+^{\infty}\}$.
\begin{lem}
\label{lem:Nontrivial maximum itinerary}If a nontrivial infinite
itinerary $I$ is maximum, then $I$ has the form $(+-*)$.
\end{lem}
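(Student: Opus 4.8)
The plan is to argue by contradiction: whenever $I$ fails to begin with the block $+-$, I exhibit a shift $\Shift^{m}(I)$ that strictly exceeds $I$ in the order $\prec$, contradicting maximality. Throughout I regard $I$ as a sequence over $S=\{-,+\}$ (the paper's ``itinerary''; the ternary U\nobreakdash-itineraries are a different object), so the equivalence $\sim$ is trivial and $\precsim$ is a genuine linear order on such sequences. The only tool needed is the defining rule for $\prec$: if $I$ and $J$ first disagree after their common prefix $K$, at position $p$, then $I\prec J$ exactly when $\epsilon(K)\,I_{p}<\epsilon(K)\,J_{p}$, where $\epsilon(K)=(-1)^{N}$ with $N$ the number of $+$'s in $K$, and the symbols $-,+$ are read as the numbers $-1,+1$.

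First I would show $I_{1}=+$. Suppose instead $I_{1}=-$. Since $I$ is nontrivial, $I\neq(-)^{\infty}$, so there is a least $j\geq2$ with $I_{j}=+$. Then $\Shift^{j-1}(I)$ begins with $+$ while $I$ begins with $-$; the common prefix is empty, so $\epsilon=+1$, and $\epsilon\cdot(-1)<\epsilon\cdot(+1)$ yields $I\prec\Shift^{j-1}(I)$, contradicting $\Shift^{j-1}(I)\precsim I$. Hence $I_{1}=+$.

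Next I would show $I_{2}=-$. Suppose instead $I_{2}=+$, and let $k\geq2$ be the length of the initial block of $+$'s, i.e. $I_{1}=\cdots=I_{k}=+$ and $I_{k+1}=-$; such a finite $k$ exists because $I\neq(+)^{\infty}$. Consider the shift $\Shift^{k-1}(I)=(I_{k},I_{k+1},\dots)=(+,-,\dots)$. Comparing it with $I=(+,+,\dots)$, the two agree in the first coordinate and first disagree at position $2$, where $I_{2}=+$ and $(\Shift^{k-1}(I))_{2}=I_{k+1}=-$; the common prefix is $K=(+)$ with $\epsilon(K)=-1$, so $\epsilon(K)\cdot(+1)=-1<+1=\epsilon(K)\cdot(-1)$ gives $I\prec\Shift^{k-1}(I)$, again contradicting maximality. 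Therefore $I_{2}=-$, and $I$ has the form $(+-*)$.

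The whole argument is short, so I would not expect a serious obstacle; the only care required is the bookkeeping of the sign $\epsilon(K)$ of the common prefix and checking that the chosen shift \emph{strictly} beats $I$ rather than merely tying it. The mildly non-obvious point is the uniform choice $\Shift^{k-1}$ in the second step: it places the first post-block symbol $I_{k+1}=-$ into coordinate $2$ against a common prefix of a single $+$, which makes the comparison come out against $I$ for every $k\geq2$ irrespective of the parity of $k$ (a naive use of $\Shift^{1}$ alone fails when the leading block has odd length).
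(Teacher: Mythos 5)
Your proof is correct. The paper states this lemma without proof, and your argument — forcing $I_{1}=+$ by comparing with the shift that brings up the first $+$, then forcing $I_{2}=-$ by comparing with $\Shift^{k-1}(I)$ where $k$ is the length of the leading block of $+$'s — is the natural argument to supply; in particular, your observation that $\Shift^{k-1}$ (rather than $\Shift$) is needed to make the sign $\epsilon(K)=-1$ of the length-one common prefix work uniformly in $k$ is exactly the point that requires care, and you handle it correctly.
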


\begin{lem}
\label{lem:Nontrivial maximum periodic itinerary}Let $I$ be a finite
itinerary. If $I^{\infty}$ is nontrivial and maximum, then $\Shift(I^{\infty})$
is minimum, i.e. $\Shift(I^{\infty})\precsim\Shift^{m}(I^{\infty})$
for all $m\geq0$.
\end{lem}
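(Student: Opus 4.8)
The plan is to argue purely combinatorially with $K:=I^{\infty}$. Since $I$ is an itinerary over $S=\{-,+\}$, the sequence $K$ contains no symbol $0$, so on the \emph{finite} set of shifts $\{\Shift^{j}K\}_{j\ge0}$ (finite because $K$ is periodic) the relation $\precsim$ is a genuine total order and a minimum exists. I would first record two facts. By Lemma \ref{lem:Nontrivial maximum itinerary}, $K$ has the form $(+-*)$, so $K$ begins with $+$ and $\Shift K$ begins with $-$. By the monotonicity statement preceding Theorem \ref{thm:Forcing condition on kneading sequence}, applied to the length-one prefixes $(-)$ and $(+)$, the shift $\Shift$ is order-\emph{preserving} on the cylinder $\{-*\}$ (orientation $\epsilon(-)=+1$) and order-\emph{reversing} on $\{+*\}$ (orientation $\epsilon(+)=-1$). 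I would also use the elementary fact that, with empty common prefix, every U-itinerary beginning with $-$ is $\prec$ every U-itinerary beginning with $+$.

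Let $B=\Shift^{m}K$ be the minimum of the shifts. Because $K$ is nontrivial it carries both symbols, so at least one shift begins with $-$; the elementary fact then forces the minimum $B$ to begin with $-$, and since $K$ begins with $+$ this gives $m\ge1$. The whole statement will follow once I show $\Shift K\precsim B$: then $\Shift K$ is itself a minimum, hence $\Shift K\precsim\Shift^{j}K$ for every $j\ge0$, which is the claim (the case $j=0$ being immediate from $K\in\{+*\}$ and $\Shift K\in\{-*\}$).

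To prove $\Shift K\precsim B$ I would split on the first symbol of $\Shift^{m-1}K$. If it is $+$, then $\Shift^{m-1}K$ and $K$ both lie in $\{+*\}$, maximality of $K$ gives $\Shift^{m-1}K\precsim K$, and applying the order-reversing map $\Shift$ on $\{+*\}$ yields $\Shift K\precsim\Shift^{m}K=B$, as wanted. If it is $-$, then $\Shift^{m-1}K$ is the symbol $-$ followed by $B$, and $B$ also begins with $-$. Minimality gives $B\precsim\Shift^{m-1}K$; stripping the common leading $-$, on which $\Shift$ preserves order, turns this into $\Shift B\precsim B$, while minimality of $B$ also gives $B\precsim\Shift B$. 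Hence $B=\Shift B$, so $B=(-)^{\infty}$ and therefore $K=(-)^{\infty}$, contradicting nontriviality; thus this second case cannot occur. Combining the two cases proves $\Shift K\precsim B$.

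The main obstacle is exactly this second case: when $\Shift^{m-1}K$ begins with $-$ the comparison with $B$ is not decided by the first symbol, and the naive attempt to reduce to maximality stalls because both sequences start in the $-$ cylinder. The device that unblocks it is to feed the minimality of $B$ through the order-preserving shift on $\{-*\}$ to conclude $\Shift$-invariance of $B$, which is incompatible with $K$ being nontrivial. Everything else is a direct application of the two recorded monotonicity facts, so no delicate estimates are needed.
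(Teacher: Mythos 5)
The paper states this lemma without proof, so there is no printed argument to compare yours against; judged on its own, your proof is correct. The two monotonicity facts you isolate ($\Shift$ order-preserving on the cylinder $\{-*\}$ since $\epsilon(-)=+1$, order-reversing on $\{+*\}$ since $\epsilon(+)=-1$, and first-symbol comparison when the common prefix is empty) are exactly what the paper's definition of $\precsim$ gives, and your case split on the first symbol of $\Shift^{m-1}(I^{\infty})$ handles both branches soundly: the $+$ branch transfers maximality of $I^{\infty}$ through the order-reversing shift to get $\Shift(I^{\infty})\precsim B$, and the $-$ branch correctly derives $\Shift B=B$ from minimality of $B$ plus antisymmetry of $\precsim$ on $0$-free sequences, forcing $B=(-)^{\infty}$ and hence, by periodicity, $I^{\infty}=(-)^{\infty}$, contradicting nontriviality. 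The only point worth making explicit is the appeal to antisymmetry, which holds here because the relation $\sim$ reduces to equality on itineraries containing no symbol $0$.
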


\begin{prop}
Let $I$ and $J$ be finite itineraries such that $I$ is nontrivial.
Suppose that $J$ satisfies the forcing conditions 
\[
\Shift(I^{\infty})\prec J\text{ and }\Shift^{m}(J^{\infty})\prec I^{\infty}
\]
for all $m\geq0$. If a unimodal map $f$ has an $I$-periodic point,
then it has a $J$-periodic point. 
\end{prop}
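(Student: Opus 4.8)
The plan is to realize the periodic itinerary $J^{\infty}$ as the itinerary of a genuine point, and then collapse it to a $J$-periodic point. Concretely, I would first produce $x\in[-1,1]$ with $\widehat{I}\circ O(x)=J^{\infty}$ by invoking the forcing theorem (Theorem \ref{thm:Forcing condition on kneading sequence}) applied to the target sequence $J^{\infty}$, and then apply Lemma \ref{lem:Reduction of a periodic itinerary} with $m=1$ (taking both of the lemma's itineraries equal to $J$) to conclude that $f$ has a $J$-periodic point. Thus the whole argument reduces to verifying the two forcing conditions for $J^{\infty}$, where I read the hypothesis $\Shift(I^{\infty})\prec J$ as the comparison $\Shift(I^{\infty})\prec J^{\infty}$.

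Before checking the conditions, I would normalize the data coming from the given $I$-periodic orbit. Let $p$ be the $I$-periodic point, so that $\widehat{I}\circ O(p)=I^{\infty}$ and every other point of the orbit realizes a cyclic shift $\Shift^{k}(I^{\infty})$. Replacing $I$ by the cyclic rotation whose infinite repetition is largest (this changes only the marked starting point, not the orbit, and only weakens the hypothesis $\Shift^{m}(J^{\infty})\prec I^{\infty}$), I may assume $I^{\infty}$ is maximum. Since $I$ is nontrivial, Lemma \ref{lem:Nontrivial maximum itinerary} gives $I^{\infty}=(+-\ast)$, and Lemma \ref{lem:Nontrivial maximum periodic itinerary} gives that $\Shift(I^{\infty})$ is minimum among the shifts, hence among the itineraries realized by the orbit of $p$; in particular $I^{\infty}$ is self-maximal and therefore behaves like a legitimate kneading sequence.

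With this in hand I would verify the two forcing conditions, writing $K$ for the kneading datum of $f$ used in Theorem \ref{thm:Forcing condition on kneading sequence}. For the upper condition: every point of the orbit of $p$ is realizable, so by orientation preservation of the coding its itinerary is $\precsim\widehat{I}\circ O(1)$, giving $I^{\infty}\precsim K$; combined with the hypothesis $\Shift^{m}(J^{\infty})\prec I^{\infty}$ this yields $\Shift^{m}(J^{\infty})\prec K$ for all $m\geq0$. For the lower condition: $\Shift(I^{\infty})=\widehat{I}\circ O(f(p))$ is realizable, so $\widehat{I}\circ O(-1)\precsim\Shift(I^{\infty})$ because $-1$ is the left endpoint and the coding is orientation preserving; together with $\Shift(I^{\infty})\prec J^{\infty}$ this gives $\widehat{I}\circ O(-1)\precsim J^{\infty}$. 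The theorem then produces $x$ with $\widehat{I}\circ O(x)=J^{\infty}$, which contains no symbol $0$ and is therefore genuinely $J^{\infty}$-admissible, so Lemma \ref{lem:Reduction of a periodic itinerary} finishes the argument.

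The step I expect to be most delicate is matching $I^{\infty}$ to the modified kneading datum $K$ when the critical point $c$ is periodic: there $\widehat{I}\circ O(1)=(K'0)^{\infty}$ and the theorem replaces it by $\min\{(K'L)^{\infty},(K'R)^{\infty}\}$, which may be strictly smaller than $(K'0)^{\infty}$, so the crude bound $I^{\infty}\precsim\widehat{I}\circ O(1)$ does not immediately give $I^{\infty}\precsim K$. I would handle this by observing that the $I$-periodic orbit avoids $c$ (its itinerary lies in $\{-,+\}$), so its points are coded by the two monotone branches and their itineraries remain bounded by the effective datum $\min\{(K'L)^{\infty},(K'R)^{\infty}\}$; this is precisely where the maximality normalization earns its keep, since a self-maximal periodic itinerary of the form $(+-\ast)$ sits below $K$ even in the periodic-$c$ case.
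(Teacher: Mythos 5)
Your overall strategy --- normalize so that $I^{\infty}$ is maximum, verify the two hypotheses of Theorem \ref{thm:Forcing condition on kneading sequence} for the target sequence $J^{\infty}$, and finish with Lemma \ref{lem:Reduction of a periodic itinerary} --- is the same as the paper's, and your treatment of the case where the orbit of the $I$-periodic point misses $c$ matches the paper's first case. The gap is in the case you yourself flag as delicate, and your proposed resolution rests on a false premise: you claim the $I$-periodic orbit avoids $c$ ``because its itinerary lies in $\{-,+\}$.'' Under the paper's definitions admissibility is the weak inequality $I_{m}(f^{m-1}(x)-c)\geq0$, and the critical point is explicitly allowed to carry either sign, so an $I$-periodic orbit can pass through $c$. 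When it does (and after reducing to $I$ irreducible, a step you omit but which is needed to pin down the structure of the orbit), maximality forces $f(c)=x=1$ and $f^{N-1}(x)=c$, i.e.\ the periodic orbit \emph{is} the critical orbit. Writing $I=(+Ks)$, one then has $\widehat{I}\circ O(1)=(+K0)^{\infty}$ and the effective kneading datum in Theorem \ref{thm:Forcing condition on kneading sequence} is $\min\{(+K-)^{\infty},(+K+)^{\infty}\}$, which can be \emph{strictly smaller} than $I^{\infty}=(+Ks)^{\infty}$ (the minimum may be the branch with the opposite final symbol, depending on $\epsilon(+K)$). Hence the hypothesis $\Shift^{m}(J^{\infty})\prec I^{\infty}$ does not directly give $\Shift^{m}(J^{\infty})\prec\min\{(+K-)^{\infty},(+K+)^{\infty}\}$, and your closing assertion that a self-maximal itinerary of the form $(+-\ast)$ ``sits below $K$'' does not address this comparison at all.

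The paper closes exactly this gap with a short contradiction argument: if some $m$ satisfied $\Shift^{m}(J^{\infty})\prec(+Ks)^{\infty}$ but $\Shift^{m}(J^{\infty})\succsim\min\{(+K-)^{\infty},(+K+)^{\infty}\}$, then $\Shift^{m}(J^{\infty})$ would have to begin with the block $+K$ followed by the symbol opposite to the minimizing branch, and its tail $L=\Shift^{m+N}(J^{\infty})$ would have to satisfy $L\succ I^{\infty}$, contradicting the forcing hypothesis applied at time $m+N$. You need this (or an equivalent) argument to complete the proof; without it the critical-orbit case is unproved.
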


\begin{proof}
By Lemmas \ref{lem:Nontrivial maximum itinerary} and \ref{lem:Nontrivial maximum periodic itinerary},
we may assume without loss of generality that $I^{\infty}$ is maximum.
By Lemma \ref{lem:Reduction of a periodic itinerary}, we may further
assume that $I$ is irreducible. Let $x$ be an $I$-periodic point
and $N=l(I)$.

If $O(x)$ does not contain $c$, then $\widehat{I}\circ O(x)=I^{\infty}$.
We get $\widehat{I}\circ O(-1)\precsim\Shift(I^{\infty})$ and $I^{\infty}\precsim\widehat{I}\circ O(1)$
since $\widehat{I}\circ O$ is monotone and $x,f(x)\in(-1,1)$. Thus,
$J$ satisfies the forcing conditions in Theorem \ref{thm:Forcing condition on kneading sequence}.
Consequently, $f$ has a $J$-periodic point by Lemma \ref{lem:Reduction of a periodic itinerary}.

If $O(x)$ contains $c$, then $f(c)=x=1$, $f^{N-1}(x)=c$, and $f^{j}(x)\neq c$
for all $0\leq j\leq N-2$ by the assumption of maximum and irreducible.
We note that $N\geq3$ because $-1,1,c$ are distinct points in the
critical orbit. Write $I=(+Ks)$, where $s\in S$ and $K$ is a finite
itinerary. Then, $\widehat{I}\circ O(1)=(+K0)^{\infty}$. Here, we prove that $J$ satisfies one forcing condition in Theorem
\ref{thm:Forcing condition on kneading sequence}. The proof of the
other forcing condition is similar. Therefore, $f$ has a $J$-periodic
point by the same reason.

Suppose that there exists $m\geq0$ such that $\Shift^{m}(J^{\infty})\prec(+Ks)^{\infty}$,
but 
\[
\Shift^{m}(J^{\infty})\succsim\min\{(+K-)^{\infty},(+K+)^{\infty}\}.
\]
If $\Shift(K)=-1$, then $s=+$, $\Shift^{m}(J^{\infty})=(+K+L)$,
and $L\succ(+K+)^{\infty}$, where $L$ is an infinite itinerary.
However, this is a contradiction because 
\[
L=\Shift^{m+N}(J^{\infty})\prec I^{\infty}=(+K+)^{\infty}.
\]
The case when $\Shift(K)=+1$ is similar. Thus, $\Shift^{m}(J^{\infty})\succ\min\{(+K-)^{\infty},(+K+)^{\infty}\}$
for all $m\geq0$.
\end{proof}
Finally, we apply the forcing relation to periodic orbits created
by renormalization defined by two returns to $C$. By Lemma \ref{prop:Itinerary of a point in C_mn},
the itineraries of such orbits are given by $\iota_{\Sign,m,n}$ for
$\Sign\in S$ and $m,n\geq2$. Therefore, Theorem \ref{thm:Main theorem}
shows that the one-dimensional forcing relation cannot be extended
to two dimensions.  
\begin{cor}
\label{cor:Forcing relation on C_m,n (unimodal)}Suppose that $\Sign\in S$
and $m>n_{1}>n_{2}\geq2$. If a unimodal map has an $\iota_{+,m,n_{1}}$-periodic
point, then it has an $\iota_{\Sign,m,n_{2}}$-periodic point.
\end{cor}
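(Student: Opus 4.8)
The plan is to apply the Proposition immediately preceding this corollary with $I=\iota_{+,m,n_1}$ and $J=\iota_{\Sign,m,n_2}$; since $\Sign$ is arbitrary, the verification must cover $\Sign=+$ and $\Sign=-$ at once. Note first that $m>n_1>n_2\geq2$ forces $n_1\geq3$ and $m\geq4$, so the two words share the common prefix word $Q=+(-)^{m-2}++$ of length $m+1$, and we may write $I=Q(-)^{n_1-2}+$ and $J=Q(-)^{n_2-2}\Sign$, of periods $m+n_1$ and $m+n_2$ respectively. Both are nontrivial, so it remains only to check the two forcing conditions $\Shift(I^{\infty})\prec J^{\infty}$ and $\Shift^{k}(J^{\infty})\prec I^{\infty}$ for all $k\geq0$, and the corollary then follows at once.

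Before verifying these I would record that $I^{\infty}$ and $J^{\infty}$ are each maximal among their own cyclic shifts. The block $+(-)^{m-2}$ is the unique longest run of a $+$ followed by consecutive $-$'s in either word, because the inner runs have length $n-2<m-2$; hence, by the same parity-twisted reasoning as in Lemma \ref{lem:Nontrivial maximum itinerary}, the cyclic shift starting at position $1$ is the largest. In particular $\Shift(I^{\infty})$ is the minimal cyclic shift of $I$ by Lemma \ref{lem:Nontrivial maximum periodic itinerary}, so it represents the leftmost point of the orbit and the first forcing condition above is indeed the correct one to test.

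The first condition is immediate: since $m\geq4$, the sequence $\Shift(I^{\infty})$ begins with $-$ while $J^{\infty}$ begins with $+$, and the empty prefix has parity $\epsilon=+1$, so $\Shift(I^{\infty})\prec J^{\infty}$ in the natural order. For the second, maximality of $J^{\infty}$ gives $\Shift^{k}(J^{\infty})\precsim J^{\infty}$ for every $k$; as our codings contain no $0$ and the periods satisfy $m+n_2\neq m+n_1$, there is no equality with $I^{\infty}$, so by transitivity it suffices to prove the single comparison $J^{\infty}\prec I^{\infty}$. Here the two sequences agree along $Q$ and along the first $n_2-2$ symbols of the inner $-$-block, and the first discrepancy appears once $J$'s shorter inner block (or its terminal sign, or the wraparound of its shorter period) meets the still-running $-$'s of $I$. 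At that position the common prefix contains either the three $+$'s of $Q$ alone, giving $\epsilon=-1$, or, in the borderline configurations, also the terminal $+$ of $I$, giving $\epsilon=+1$; in each instance the defining inequality $\epsilon\,J_{\mathrm{pos}}<\epsilon\,I_{\mathrm{pos}}$ reduces to $-1<+1$, yielding $J^{\infty}\prec I^{\infty}$. With both forcing conditions established, the preceding Proposition produces the desired $\iota_{\Sign,m,n_2}$-periodic point.

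The only genuine work lies in this last step, which I expect to be the main obstacle. The first discrepancy must be located across the cases $n_2=2$ versus $n_2\geq3$, across $\Sign=+$ versus $\Sign=-$, and across the near-equal-length wraparounds $n_2=n_1-1$ and $(n_2,n_1)=(2,3)$, and in each case the parity $\epsilon$ of the common prefix has to be tracked correctly as the comparison passes the $++$ of $Q$ and possibly the terminal $+$ of $I$. The reward is that every branch collapses to the same sign inequality, but confirming that it does, together with the strictness needed to keep \emph{all} shifts of $J^{\infty}$ strictly below $I^{\infty}$, is a matter of careful bookkeeping rather than any new idea.
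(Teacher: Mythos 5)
Your proposal is correct and follows exactly the route the paper intends: the statement is presented as a corollary of the immediately preceding proposition, whose forcing conditions $\Shift(I^{\infty})\prec J$ and $\Shift^{k}(J^{\infty})\prec I^{\infty}$ you verify for $I=\iota_{+,m,n_{1}}$ and $J=\iota_{\Sign,m,n_{2}}$ (the paper leaves this verification implicit, offering only the geometric remark about the critical value moving rightward). Your case analysis --- maximality of $J^{\infty}$ among its shifts via the unique longest $-$-run, the single comparison $J^{\infty}\prec I^{\infty}$, and the borderline wraparounds at $n_{2}=n_{1}-1$ with $\Sign=-$ --- checks out in every branch.
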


\begin{rem}
In terms of the renormalization model, the vertical strips $C_{m,2}^{L},C_{m,3}^{L},\cdots$
are aligned from left to right whenever they exist. When a Lozi map
is degenerate, the vertical strips share the same critical value $u$.
Corollary \ref{cor:Forcing relation on C_m,n (unimodal)} is true
because the critical value $u$ moves from left to right as the parameter
$a$ increases.
\end{rem}

\section{\label{sec:Criteria of admissibility}Criteria of admissibility}

In this section, we find conditions such that $\Lozi$ has a periodic
orbit with periodic $m+n$ in $C_{m,n}^{L}$ for $m>n\geq2$. We study
the case when $U_{n}$ intersects $C_{m}$, i.e. $r_{m-1}\leq u_{n}^{R}$.
Since $n<m$ and the map is orientation preserving, we have $u_{m}^{L}\geq u_{n}^{R}\geq r_{m-1}\geq r_{n}$.
Thus, $C_{m,n}^{L}$ exists. We fix the value of $b\geq0$ and vary
$a$. The signs of the quantities $u_{n}^{L}-r_{m}$ and $u_{n}^{R}-r_{m-1}$
divide the parameter space of $a$ into three regions. 

\paragraph{Large values of $a$.}

First, we start with large values of $a$ such that $r_{m}\leq u_{n}^{L}$.
See Figure \ref{fig:Large a}. The map $\Lozi^{m+n}:C_{m,n}^{L}\rightarrow U_{n}$
forms a full horseshoe. Thus, it has two saddle fixed points $\FormalPeriodic_{-,m,n}$
and $\FormalPeriodic_{+,m,n}$ in $C_{m,n}^{L}$ which are the formal
periodic points (Theorem \ref{prop:Full horseshoe}). 
\begin{prop}
\label{prop:Full horseshoe}Suppose that $(a,b)\in\ParaModel$, $m,n\geq2$,
and $C_{m,n}^{L}$ exists at $(a,b)$. Let $V(\omega^{L},\omega^{R})=C_{m,n}^{L}$
where $\omega^{L}$ and $\omega^{R}$ are the left and right boundaries
respectively. Let $(w,0)$ be the intersection of $\omega^{R}$ and
the x-axis. If $w\leq u_{n}^{L}$, then $\FormalPeriodic_{-,m,n}$
and $\FormalPeriodic_{+,m,m}$ are admissible and $\FormalPeriodic_{-,m,n},\FormalPeriodic_{+,m,n}\in C_{m,n}^{L}$.
\end{prop}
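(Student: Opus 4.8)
The plan is to obtain the two points as the fixed points produced by the horseshoe and then identify them by their itineraries. The maps $\FormalPeriodic_{-,m,n}$ and $\FormalPeriodic_{+,m,n}$ already exist as the unique saddle fixed points of the affine maps $\Lozi_{\iota_{-,m,n}}$ and $\Lozi_{\iota_{+,m,n}}$ by Theorem \ref{thm:Formal periodic orbits}, so the entire content of the statement is to show that each $\FormalPeriodic_{\Sign,m,n}$ is $\iota_{\Sign,m,n}$-admissible and lies in $C_{m,n}^{L}$. Now Proposition \ref{prop:Itinerary of a point in C_mn} already guarantees that \emph{every} point of $C_{m,n}^{L}$ is either $\iota_{-,m,n}$- or $\iota_{+,m,n}$-admissible. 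Thus it suffices to produce, for each $\Sign\in S$, a fixed point of $\Lozi^{m+n}$ lying in $C_{m,n}^{L}$ on the $\Sign$-side of the fold, and then to appeal to uniqueness in Theorem \ref{thm:Formal periodic orbits}.

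First I would pin down the fold geometry of the return map. Writing $F=\Lozi^{m+n}|_{C_{m,n}^{L}}=\Lozi\circ G$ with $G=\Lozi^{m+n-1}$, the first $m+n-1$ symbols of $\iota_{-,m,n}$ and $\iota_{+,m,n}$ coincide, so $G$ restricts to a single affine diffeomorphism on $C_{m,n}^{L}$ with $G(C_{m,n}^{L})=B_{m,n}^{L}\subset B_{n}\subset B$, a strip straddling the critical locus $\{x=0\}$. Consequently $U_{m,n}^{L}=\Lozi(B_{m,n}^{L})=F(C_{m,n}^{L})$ is folded along the $x$-axis, its turning point being the left boundary turning point $(u_{n}^{L},0)$ of $U_{n}$, and the sides $\{x\geq0\}$ and $\{x<0\}$ of $B_{m,n}^{L}$ select the last symbol $\Sign=+$ and $\Sign=-$ respectively.

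The key geometric step is to show that the hypothesis $w\leq u_{n}^{L}$ turns $F$ into a full horseshoe, i.e.\ that the folded strip $U_{m,n}^{L}$ crosses the vertical strip $C_{m,n}^{L}$ completely in two legs. Since $\omega^{R}$ meets the $x$-axis at $(w,0)$ and the fold tip sits at $(u_{n}^{L},0)$ with $w\leq u_{n}^{L}$, the turning point lies on or to the right of the right boundary of $C_{m,n}^{L}$, so both legs of $U_{m,n}^{L}$ sweep leftward across the whole strip; equivalently, the pullback of $C_{m,n}^{L}$ by either branch of $\Lozi^{-1}$ is a vertical sub-strip of $C_{m,n}^{L}$, exactly as in the pullback construction of Proposition \ref{prop:Pullback of a vertical segment} carried out at the microscopic scale of the return map, with the inequality $w\leq u_{n}^{L}$ playing the role of $w\leq u^{L}$ there. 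I expect this alignment check --- matching the turning point $u_{n}^{L}$ against $w$ and verifying that the expanded legs also overshoot the left boundary $\omega^{L}$ --- to be the main obstacle, since it is where the hypothesis is used and where the fold must be tracked through all $m+n$ iterates.

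Once the crossing is established, I would finish with the standard hyperbolic fixed-point argument. Each branch $F_{\Sign}=\Lozi_{\Sign}\circ G$ is affine and, by Theorem \ref{thm:Formal periodic orbits} together with the invariance of the universal cones (Theorem \ref{thm:Invaniance of the universal cones}), hyperbolic, with near-vertical stable and near-horizontal unstable directions, the stable cone being contracted and the unstable cone expanded by at least $\lambda^{m+n}>1$. Applying the graph transform to the vertical sub-strip $F_{\Sign}^{-1}(C_{m,n}^{L})\subset C_{m,n}^{L}$, the nested preimages contract to a vertical segment and the nested images to a horizontal segment, whose intersection is a unique saddle fixed point $\boldsymbol{v}_{\Sign}\in C_{m,n}^{L}$ on the $\Sign$-side of the fold. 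By Proposition \ref{prop:Itinerary of a point in C_mn}, $\boldsymbol{v}_{\Sign}$ is $\iota_{\Sign,m,n}$-admissible, whence $\Lozi_{\iota_{\Sign,m,n}}(\boldsymbol{v}_{\Sign})=\Lozi^{m+n}(\boldsymbol{v}_{\Sign})=\boldsymbol{v}_{\Sign}$; uniqueness of the formal periodic point in Theorem \ref{thm:Formal periodic orbits} then forces $\boldsymbol{v}_{\Sign}=\FormalPeriodic_{\Sign,m,n}$. Therefore $\FormalPeriodic_{-,m,n}$ and $\FormalPeriodic_{+,m,n}$ are admissible and lie in $C_{m,n}^{L}$, as claimed.
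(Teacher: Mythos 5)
Your proposal is correct in substance but takes a genuinely different route from the paper's own proof, which is only three lines long: it invokes the topological horseshoe results of \cite{KY01,Ga02} as a black box to obtain two periodic points with disjoint orbits in $C_{m,n}^{L}$, notes via Proposition \ref{prop:Itinerary of a point in C_mn} that each satisfies one of the itineraries $\iota_{\pm,m,n}$ (they cannot satisfy the same one, by uniqueness), and concludes with Theorem \ref{thm:Formal periodic orbits} exactly as you do. You instead build the two fixed points by hand, using that $\Lozi^{m+n-1}$ restricts to a single affine map on $C_{m,n}^{L}$ and that each branch $\Lozi_{\Sign}\circ\Lozi^{m+n-1}$ is affine and uniformly hyperbolic by the universal cones, then extracting the fixed point of each leg by nested intersections. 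This buys self-containedness and identifies the fixed point of each branch as unique for free, but it shifts the burden onto the geometric crossing verification, which you flag as the main obstacle without completing: you must check both that the fold of $U_{m,n}^{L}$ lies weakly to the right of $\omega^{R}$ (note the fold of $U_{m,n}^{L}$ is a subsegment of $[u_{n}^{L},u_{n}^{R}]\times\{0\}$, hence sits at $x\geq u_{n}^{L}\geq w$, which is what is needed and slightly sharper than your claim that its tip is at $(u_{n}^{L},0)$) and that both legs extend leftward past $\omega^{L}$. The paper hides the same verification inside its citation, so neither argument is fully explicit on that point; your finishing step --- the side of the fold fixes the last symbol, Proposition \ref{prop:Itinerary of a point in C_mn} fixes the rest, and uniqueness identifies the point with $\FormalPeriodic_{\Sign,m,n}$ --- coincides with the paper's.
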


\begin{proof}
By \cite{KY01,Ga02}, the map $\Lozi$ has two periodic points in
$C_{m,n}^{L}$ with disjoint orbits. By Proposition \ref{prop:Itinerary of a point in C_mn},
each satisfies one of the itineraries $\iota_{-,m,n}$ and $\iota_{+,m,n}$.
This yields that $\FormalPeriodic_{-,m,n}$ and $\FormalPeriodic_{+,m,n}$
are admissible by the uniqueness of formal periodic points (Theorem
\ref{thm:Formal periodic orbits}).
\end{proof}

\paragraph{Intermediate values of $a$.}

Next, we consider intermediate values of $a$ such that $r_{m-1}\leq u_{n}^{R}$
and $u_{n}^{L}\leq r_{m}$. See Figure \ref{fig:Mid a}. Let $\Sign\in S$.
Since the condition of admissibility (\ref{eq:Admissibility function})
is a closed condition, the formal periodic point $\FormalPeriodic_{\Sign,m,n}\in C_{m,n}^{L}$
has a largest admissible continuation on a closed interval of parameters
$a\in[\hat{a}_{\Sign},\infty)$. The boundary parameter $(\hat{a}_{\Sign},b)$
is a bifurcation parameter of $\iota_{\pm,m,n}$. At the bifurcation
parameter, we have $\hat{a}\equiv\hat{a}_{-}=\hat{a}_{+}$, $\FormalPeriodic_{+,m,n}(\hat{a},b)=\FormalPeriodic_{-,m,n}(\hat{a},b)$,
and $\pi_{1}\circ\Lozi^{m+n-1}(\hat{a},b,\FormalPeriodic_{\pm,m,n}(\hat{a},b))=0$
(Proposition \ref{prop:Itinerary of a point in C_mn}).

\paragraph{Small values of $a$.}

Finally, we consider the case when $a$ is small such that $u_{n}^{R}<r_{m-1}$.
See Figure \ref{fig:Small a} for an illustration. Then $C_{m}\cap U_{n}=\emptyset$
and hence $\FormalPeriodic_{\Sign,m,n}$ is not admissible. Therefore,
the border collision bifurcation happens in the intermediate region. 
\begin{cor}
\label{cor:Disjoint C and U implies nonadmissible parameters}Suppose
that $(a,b)\in\ParaModel$, $\Sign\in S$, $m\geq3$, and $n\geq2$.
If $C_{m}\cap U_{n}=\emptyset$, then $\FormalPeriodic_{\Sign,m,n}$
is not admissible.
\end{cor}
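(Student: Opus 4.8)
The plan is to establish the contrapositive: assuming the formal periodic point $\FormalPeriodic_{\Sign,m,n}$ is admissible, I will exhibit a point lying in $C_m\cap U_n$, so that the intersection is nonempty. Since $m\geq3$ and $n\geq2$, this puts us exactly in the hypotheses of Proposition \ref{prop:Admissible periodic point in C}, which I would invoke first to get both $\FormalPeriodic_{\Sign,m,n}\in C_m$ and $\Lozi^{m}(\FormalPeriodic_{\Sign,m,n})\in C_n$. (Recall that admissibility is precisely the condition under which the formal composition $\Lozi_{(I_1,\dots,I_k)}$ agrees with the genuine iterate $\Lozi^{k}$ along the orbit, so that writing $\Lozi^{m}$ here is legitimate.)

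The second ingredient is purely the bookkeeping of the definitions. Since $B_n=\Lozi^{n-1}(C_n)$ and $U_n=\Lozi(B_n)$, we have $U_n=\Lozi^{n}(C_n)$. I would then set $\boldsymbol{w}=\Lozi^{m}(\FormalPeriodic_{\Sign,m,n})$, which by the previous paragraph lies in $C_n$, and conclude at once that $\Lozi^{n}(\boldsymbol{w})\in U_n$. It remains to identify $\Lozi^{n}(\boldsymbol{w})$ with the periodic point itself. The itinerary $\iota_{\Sign,m,n}=(+(-)^{m-2}++(-)^{n-2}\Sign)$ has length $m+n$, so the defining equation of the formal periodic point reads $\Lozi_{\iota_{\Sign,m,n}}(\FormalPeriodic_{\Sign,m,n})=\FormalPeriodic_{\Sign,m,n}$, and admissibility upgrades this to $\Lozi^{m+n}(\FormalPeriodic_{\Sign,m,n})=\FormalPeriodic_{\Sign,m,n}$. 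Hence $\Lozi^{n}(\boldsymbol{w})=\Lozi^{m+n}(\FormalPeriodic_{\Sign,m,n})=\FormalPeriodic_{\Sign,m,n}$, giving $\FormalPeriodic_{\Sign,m,n}\in U_n$. Combined with $\FormalPeriodic_{\Sign,m,n}\in C_m$, this yields $\FormalPeriodic_{\Sign,m,n}\in C_m\cap U_n$, contradicting $C_m\cap U_n=\emptyset$.

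I do not expect a genuine obstacle here: the whole argument is a short deduction from Proposition \ref{prop:Admissible periodic point in C}, which already does the heavy lifting of locating the orbit pieces $\FormalPeriodic_{\Sign,m,n}$ and $\Lozi^{m}(\FormalPeriodic_{\Sign,m,n})$ inside $C_m$ and $C_n$. The only points requiring care are the index bookkeeping (that $U_n=\Lozi^{n}(C_n)$ and that the period equals $m+n$) and the repeated use of admissibility to pass between formal compositions and true iterates of $\Lozi$.
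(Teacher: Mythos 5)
Your proposal is correct and is exactly the argument the paper intends: its one-line proof defers to Proposition \ref{prop:Admissible periodic point in C}, and you supply the same contrapositive deduction, using $U_n=\Lozi^{n}(C_n)$ together with admissibility (to identify formal compositions with true iterates and get $\Lozi^{m+n}(\FormalPeriodic_{\Sign,m,n})=\FormalPeriodic_{\Sign,m,n}$) to place the point in $C_m\cap U_n$. The index bookkeeping (period $m+n$, $U_n=\Lozi(B_n)=\Lozi^{n}(C_n)$) all checks out.
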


\begin{proof}
The corollary follows from Proposition \ref{prop:Admissible periodic point in C}.
\end{proof}
\begin{figure}
\subfloat[\label{fig:Small a}Small $a$.]{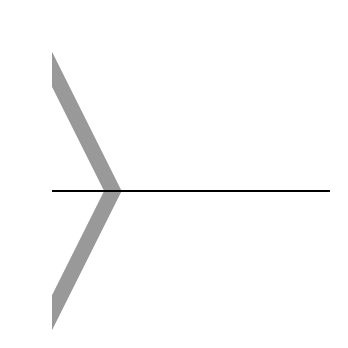

}\hfill{}\subfloat[\label{fig:Mid a}Intermediate $a$.]{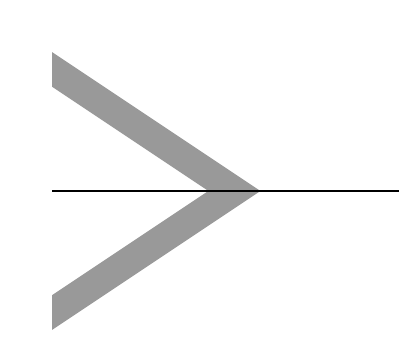

}\hfill{}\subfloat[\label{fig:Large a}Large $a$.]{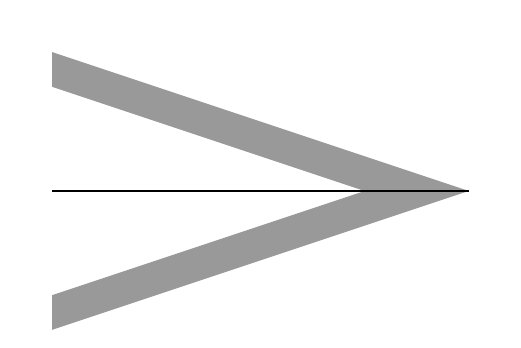

}

\caption{Position of the critical values.}
\end{figure}

We give a geometrical criterion that determines when the bifurcation
happens in $\Lozi^{m+n-1}(C_{m,n}^{L})\subset B$. First, consider
the horizontal line $\eta\equiv\mathbb{R}\times\{0\}$ and let $\eta_{m,n}=C_{m,n}^{L}\cap\eta$.
The iteration $\eta_{m,n}^{B}=\Lozi^{m+n-1}(\eta_{m,n})$ is a line
segment in $B$ with both ends attached to the boundaries $\beta_{1}$
and $\beta_{2}$. Then $\eta_{m,n}^{U}\equiv\Lozi^{m+n}(\eta_{m,n})$
is folded along the $x$-axis. Let $(p,0)$ be the turning point of
$\eta_{m,n}^{U}$. Next, consider the critical locus $\kappa=\{0\}\times I^{v}$.
The preimage $\kappa_{m,n}^{C}=(\Lozi^{m+n-1}|_{C_{m,n}^{L}})^{-1}\kappa$
is a vertical segment in $C_{m,n}^{L}$. Let $(q,0)$ be the intersection
point of $\kappa_{m,n}^{C}$ and the $x$-axis. See Figure \ref{fig:Definitions of p and q}
for an illustration. The points $p$ and $q$ are similar to the pruning
conditions defined by Ishii \cite[Definition 1.1]{Is97a} but not
the same. The pruning conditions in \cite{Is97a} are defined by the
candidates of the stable and unstable manifolds using the formal iterates.
In Theorem \ref{thm:The boundary of admissibility}, we will use the
values $p$ and $q$ to find bifurcation parameters, and prove that
the bifurcation parameters form an analytic curve in the parameter
space. 

\begin{figure}
\center 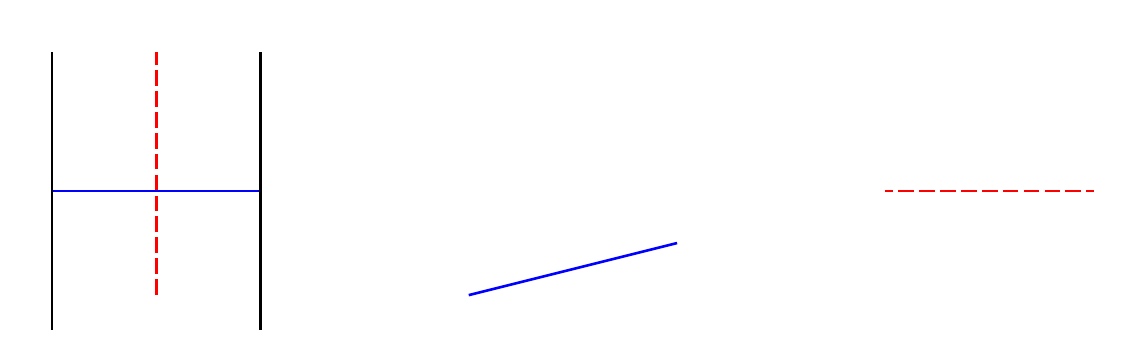

\caption{\label{fig:Definitions of p and q}Definitions of $p$ and $q$. Curves
with the same stroke are related by the maps indicated by the arrows.}
\end{figure}

\begin{prop}
\label{prop:Geometrical characterization of the BCB in C(m,n)}Suppose
that $(a,b)\in\ParaModel$, $m,n\geq2$, and $C_{m,n}^{L}$ exists
at $(a,b)$. There exist a periodic point $\boldsymbol{v}\in C_{m,n}^{L}$
with period $m+n$ such that $\pi_{1}\circ\Lozi^{m+n-1}(\boldsymbol{v})=0$
if and only if $p=q$.
\end{prop}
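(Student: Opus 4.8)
The plan is to show that $p$ and $q$ are the $x$-coordinates of one and the same candidate orbit point sitting on the $x$-axis $\eta$, and that the condition ``periodic point of period $m+n$ undergoing a border collision at step $m+n-1$'' is exactly the statement that these two coordinates coincide. The single structural fact driving everything is that the second coordinate of $\Lozi(x,y)$ equals $x$; hence $\Lozi$ carries the critical locus $\kappa=\{0\}\times\Iv$ into $\eta$. Consequently, whenever an orbit meets $\kappa$ at step $m+n-1$, its $(m+n)$-th iterate automatically lands on $\eta$.

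First I would treat the forward implication. Suppose $\boldsymbol{v}\in C_{m,n}^{L}$ has period $m+n$ and $\pi_{1}\circ\Lozi^{m+n-1}(\boldsymbol{v})=0$. Then $\Lozi^{m+n-1}(\boldsymbol{v})\in\kappa$, so $\boldsymbol{v}\in\kappa_{m,n}^{C}$ by the definition of the pullback. Writing $\Lozi^{m+n-1}(\boldsymbol{v})=(0,y_{0})$, the structural fact gives $\Lozi^{m+n}(\boldsymbol{v})=\Lozi(0,y_{0})=(-by_{0}+(a-b-1),0)\in\eta$; periodicity then forces $\boldsymbol{v}=\Lozi^{m+n}(\boldsymbol{v})\in\eta$. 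Thus $\boldsymbol{v}\in\kappa_{m,n}^{C}\cap\eta=\{(q,0)\}$ and simultaneously $\boldsymbol{v}\in C_{m,n}^{L}\cap\eta=\eta_{m,n}$, so $\boldsymbol{v}=(q,0)$. On the other hand, because $\boldsymbol{v}\in\eta_{m,n}$, its image $\Lozi^{m+n-1}(\boldsymbol{v})$ lies on $\eta_{m,n}^{B}$, and being also on $\kappa$ it is exactly the unique point where $\eta_{m,n}^{B}$ crosses $\{x=0\}$; its $\Lozi$-image is by definition the turning point $(p,0)$ of $\eta_{m,n}^{U}$. Hence $\boldsymbol{v}=\Lozi^{m+n}(\boldsymbol{v})=(p,0)$, and comparing the two expressions for $\boldsymbol{v}$ yields $p=q$.

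For the converse, I would assume $p=q$ and set $\boldsymbol{v}=(q,0)$, the intersection of $\kappa_{m,n}^{C}$ with $\eta$; this lies in $C_{m,n}^{L}\cap\eta=\eta_{m,n}$ since $\kappa_{m,n}^{C}$ is a vertical segment on $\Iv$ and $0\in\Interior{\Iv}$. By construction $\Lozi^{m+n-1}(\boldsymbol{v})\in\kappa$, so the border-collision condition $\pi_{1}\circ\Lozi^{m+n-1}(\boldsymbol{v})=0$ holds. Moreover $\Lozi^{m+n-1}(\boldsymbol{v})$ is the crossing point of $\eta_{m,n}^{B}$ with $\{x=0\}$ (it lies on both $\eta_{m,n}^{B}$ and $\kappa$), so $\Lozi^{m+n}(\boldsymbol{v})=(p,0)=(q,0)=\boldsymbol{v}$. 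Thus $\boldsymbol{v}$ is a fixed point of $\Lozi^{m+n}$ in $C_{m,n}^{L}$; that its period equals $m+n$ follows from the symbolic description of orbits in $C_{m,n}^{L}$, which assigns it the length-$(m+n)$ itinerary $\iota_{\pm,m,n}$ (Proposition \ref{prop:Itinerary of a point in C_mn} and Theorem \ref{thm:Formal periodic orbits}).

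The argument is essentially geometric bookkeeping, with no hard estimate expected. The step requiring the most care is the double identification of the single candidate point $(q,0)$: one must confirm that the preimage description (the $x$-intercept of $\kappa_{m,n}^{C}$) and the forward fold description (the turning point of $\eta_{m,n}^{U}$) refer to the same orbit. This is exactly the content of the fact $\Lozi(\kappa)\subset\eta$ combined with the observation that $\Lozi^{m+n-1}$ sends the point $(q,0)\in\eta_{m,n}$ to the unique crossing point $\eta_{m,n}^{B}\cap\kappa$, whose $\Lozi$-image is the turning point $(p,0)$. The well-definedness of $p$ and $q$ (existence of a single transversal crossing, so that $\eta_{m,n}^{U}$ has one turning point) is already guaranteed by the geometry established before the statement.
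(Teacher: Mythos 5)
Your proposal is correct and follows essentially the same route as the paper's proof: both hinge on the identity $(p,0)=\Lozi^{m+n}(q,0)$ obtained from $\Lozi(\kappa)\subset\eta$, and on the observation that periodicity together with $\pi_{1}\circ\Lozi^{m+n-1}(\boldsymbol{v})=0$ forces $\pi_{2}(\boldsymbol{v})=0$, pinning $\boldsymbol{v}$ down to $(q,0)$. Your extra remark on why the period is exactly $m+n$ (via the itinerary $\iota_{\pm,m,n}$) is a small refinement the paper leaves implicit, but the argument is otherwise the same.
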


\begin{proof}
By definition, $\Lozi^{m+n-1}(q,0)$ is the intersection point of
$\kappa$ and $\eta_{m,n}^{B}$. This implies that 
\begin{equation}
\pi_{1}\circ\Lozi^{m+n-1}(q,0)=0\label{eq: q on the critical locus}
\end{equation}
and hence $\Lozi^{m+n}(q,0)$ is the turning point of $\eta_{m,n}^{U}$.
That is, 
\begin{equation}
(p,0)=\Lozi^{m+n}(q,0).\label{eq: p and q}
\end{equation}

Suppose that there exist a periodic point $\boldsymbol{v}\in C_{m,n}^{L}$
with period $m+n$ such that $\pi_{1}\circ\Lozi^{m+n-1}(\boldsymbol{v})=0$.
Then $\pi_{2}(\boldsymbol{v})=\pi_{1}\circ\Lozi^{m+n-1}(\boldsymbol{v})=0$
and $\boldsymbol{v}\in\kappa_{m,n}^{C}$. This implies that $\boldsymbol{v}=(q,0)$.
By (\ref{eq: p and q}), we get $p=q$.

Conversely, suppose that $p=q$. By (\ref{eq: p and q}), $\boldsymbol{v}\equiv(q,0)\in C_{m,n}^{L}$
is a periodic point with period $m+n$. And by (\ref{eq: q on the critical locus}),
we get $\pi_{1}\circ\Lozi^{m+n-1}(\boldsymbol{v})=0$.
\end{proof}
\begin{cor}
\label{cor:Geometrical characterization of the BCB in C(m,n)}Suppose
that $(a,b)\in\ParaModel$, $m\geq3$, $n\geq2$, and $C_{m,n}^{L}$
exists at $(a,b)$. Then $(a,b)$ is a $\iota_{\pm,m,n}$-bifurcation
parameter if and only if $p(a,b)=q(a,b)$.
\end{cor}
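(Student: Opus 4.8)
The plan is to chain the two results that immediately precede the statement: the symbolic identification of the points of $C_{m,n}^{L}$ given in Corollary~\ref{cor:Admissible periodic point in Cmn}, and the geometric criterion $p=q$ of Proposition~\ref{prop:Geometrical characterization of the BCB in C(m,n)}. The corollary is then essentially a dictionary translation between the dynamical notion of a bifurcation parameter and the geometric notion of coinciding turning and pull-back points. The one piece of genuine content is to pin down what ``$\iota_{\pm,m,n}$-bifurcation parameter'' means in this setting: by the definition of the border collision bifurcation together with the discussion of the intermediate-$a$ case, $(a,b)$ is such a parameter precisely when $\FormalPeriodic_{\pm,m,n}$ is admissible at $(a,b)$ and the collision occurs at the position where $\iota_{-,m,n}$ and $\iota_{+,m,n}$ disagree, i.e. $\pi_{1}\circ\Lozi^{m+n-1}(\FormalPeriodic_{\pm,m,n})=0$. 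I would open the proof by recording this reformulation, using Proposition~\ref{prop:Itinerary of a point in C_mn} to justify that, for a period-$(m+n)$ point of $C_{m,n}^{L}$, the only index $j\in\{0,\dots,m+n-1\}$ at which $\pi_{1}\circ\Lozi^{j}$ can vanish is $j=m+n-1$, so the collision is forced into the last slot.

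For the forward direction, suppose $(a,b)$ is a $\iota_{\pm,m,n}$-bifurcation parameter. Then $\FormalPeriodic_{\pm,m,n}$ is admissible, and since $m\geq3$, $n\geq2$, and $C_{m,n}^{L}$ exists, Corollary~\ref{cor:Admissible periodic point in Cmn} places it in $C_{m,n}^{L}$ as a periodic point of period $m+n$. Setting $\boldsymbol{v}=\FormalPeriodic_{\pm,m,n}$, the reformulation gives $\pi_{1}\circ\Lozi^{m+n-1}(\boldsymbol{v})=0$, so $\boldsymbol{v}$ meets the hypothesis of Proposition~\ref{prop:Geometrical characterization of the BCB in C(m,n)}, and I conclude $p(a,b)=q(a,b)$.

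For the converse, assume $p(a,b)=q(a,b)$. Proposition~\ref{prop:Geometrical characterization of the BCB in C(m,n)} produces a periodic point $\boldsymbol{v}\in C_{m,n}^{L}$ of period $m+n$ with $\pi_{1}\circ\Lozi^{m+n-1}(\boldsymbol{v})=0$. Corollary~\ref{cor:Admissible periodic point in Cmn} identifies $\boldsymbol{v}$ as the admissible formal $\iota_{\Sign,m,n}$-periodic point for some $\Sign\in S$. Here I would make the one careful step: because $\pi_{1}\circ\Lozi^{m+n-1}(\boldsymbol{v})=0$, the $x$-coordinate entering the final branch vanishes, and the two affine branches $\Lozi_{-}$ and $\Lozi_{+}$ agree on the critical locus $\{x=0\}$; hence $\boldsymbol{v}$ simultaneously solves the $\iota_{-,m,n}$- and $\iota_{+,m,n}$-fixed-point equations and is admissible for both, so $\FormalPeriodic_{-,m,n}=\FormalPeriodic_{+,m,n}=\boldsymbol{v}$. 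By the reformulation of the first paragraph, $(a,b)$ is a $\iota_{\pm,m,n}$-bifurcation parameter.

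I do not expect a serious obstacle, since both heavy ingredients are already in hand and the work is essentially bookkeeping. The subtle point to get right is the identification of the collision slot and the coincidence $\FormalPeriodic_{-,m,n}=\FormalPeriodic_{+,m,n}$ at that slot, which rests on the branches $\Lozi_{\pm}$ agreeing on $\{x=0\}$; everything else is a direct application of Corollary~\ref{cor:Admissible periodic point in Cmn} and Proposition~\ref{prop:Geometrical characterization of the BCB in C(m,n)}.
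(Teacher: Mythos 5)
Your proposal is correct and follows essentially the same route as the paper: the forward direction chains admissibility, Corollary~\ref{cor:Admissible periodic point in Cmn}, Proposition~\ref{prop:Itinerary of a point in C_mn} (to force the collision into the slot $j=m+n-1$), and Proposition~\ref{prop:Geometrical characterization of the BCB in C(m,n)}, while the converse reverses the chain. Your extra care in the converse --- noting that the two branches agree on $\{x=0\}$, so the point is simultaneously $\iota_{-,m,n}$- and $\iota_{+,m,n}$-admissible --- merely spells out a step the paper leaves implicit.
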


\begin{proof}
Let $\Sign\in S$. If $(a,b)$ is a $\iota_{\pm,m,n}$-bifurcation
parameter, then $\FormalPeriodic_{\Sign,m,n}$ is admissible at $(a,b)$.
By Corollary \ref{cor:Admissible periodic point in Cmn}, we have
$\FormalPeriodic_{\Sign,m,n}\in C_{m,n}^{L}$. Also, by Proposition
\ref{prop:Itinerary of a point in C_mn}, we have $\pi_{1}\circ\Lozi^{m+n-1}(\FormalPeriodic_{\Sign,m,n})=0$.
Consequently, $p=q$ by Proposition \ref{prop:Geometrical characterization of the BCB in C(m,n)}.

Conversely, if $p=q$ at $(a,b)$, there exist a periodic point $\boldsymbol{v}\in C_{m,n}^{L}$
with period $m+n$ such that $\pi_{1}\circ\Lozi^{m+n-1}(\boldsymbol{v})=0$
by Proposition \ref{prop:Geometrical characterization of the BCB in C(m,n)}.
Therefore, $(a,b)$ is a $\iota_{\pm,m,n}$-bifurcation parameter
by Corollary \ref{cor:Admissible periodic point in Cmn}.
\end{proof}

\section{\label{sec:Geometry}The geometry of the Lozi family}

In this section, we consider the forward and backward iterates of
lines by the branches $\Lozi_{-}$ and $\Lozi_{+}$. We show that
the quantities $p$ and $q$ used in Proposition \ref{prop:Geometrical characterization of the BCB in C(m,n)}
can be estimated by using $W^{S}(\boldsymbol{z}_{-})$ and $W^{U}(\boldsymbol{z}_{-})$. 

\subsection{\label{subsec:Geometry-forward iterates}The forward iterates of
a line}

We consider lines $L$ parameterized by its slope and the intersection
point of $L$ and $W^{S}(\Lozi_{\Sign},\boldsymbol{z}_{\Sign})$,
where $\Sign\in S$. We iterate $L$ by the branches $\Lozi_{-}$
and $\Lozi_{+}$. We derive the corresponding transformations for
the slope and the intersection point. By using the transformations,
we prove a version of the inclination lemma (Propositions \ref{prop:Convergence of the horizontal slope}
and \ref{prop:Convergence of the intersection point of mu}) for the
Lozi maps. The inclination lemma shows that we can use $W^{U}(\boldsymbol{z}_{-})$
to approximate the value of $p$. 

\subsubsection{The transformation for the slope}

Let $f_{\Sign}(s)=-\frac{1}{bs+\Sign a}$ where $\Sign\in S$. We
note that $-\Sign\frac{1}{\lambda}$ is the stable fixed point of
$f_{\Sign}$.
\begin{lem}
\label{lem:Iteration of lines}Let $L$ be a line and $s$ be the
slope of $L$. If $s\neq-\frac{\Sign a}{b}$, then $\Lozi_{\Sign}(L)$
is a line with the slope $f_{\Sign}(s)$.
\end{lem}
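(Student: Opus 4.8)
The plan is to use the fact that $\Lozi_{\Sign}$ is an affine map, so that it automatically sends lines to lines, and the slope of the image depends only on the linear part $\Dif\Lozi_{\Sign}$ acting on a direction vector of $L$. Writing $\Lozi_{\Sign}(x,y)=\Dif\Lozi_{\Sign}\,(x,y)^{T}+\boldsymbol{c}$ with the constant vector $\boldsymbol{c}=(a-b-1,0)^{T}$, the translation part is irrelevant to slopes, so the entire computation reduces to a single matrix--vector product.

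Concretely, I would first record that
\[
\Dif\Lozi_{\Sign}=\begin{pmatrix}-\Sign a & -b\\ 1 & 0\end{pmatrix},
\]
read directly off $\Lozi_{\Sign}(x,y)=(-\Sign ax-by+(a-b-1),x)$. Since $L$ is non-vertical with slope $s$, it has a direction vector $(1,s)^{T}$, and I would compute
\[
\Dif\Lozi_{\Sign}\begin{pmatrix}1\\ s\end{pmatrix}=\begin{pmatrix}-\Sign a-bs\\ 1\end{pmatrix}.
\]
Because the second component of the image vector equals $1\neq0$, this image direction is never zero, so $\Lozi_{\Sign}(L)$ is genuinely a line rather than a point. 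Under the hypothesis $s\neq-\tfrac{\Sign a}{b}$ — equivalently $-\Sign a-bs\neq0$ — the first component is nonzero as well, so the image line is non-vertical, and its slope is the ratio of the components, namely $\tfrac{1}{-\Sign a-bs}=-\tfrac{1}{bs+\Sign a}=f_{\Sign}(s)$, as claimed.

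There is essentially no genuine obstacle here; the statement is a direct linear-algebra computation. The only point that needs care is the bookkeeping of degenerate cases: verifying that the excluded value $s=-\tfrac{\Sign a}{b}$ is precisely the slope whose image direction $(-\Sign a-bs,1)^{T}$ becomes vertical (so that $f_{\Sign}(s)$ would be undefined), and noting that when $b=0$ the hypothesis is vacuous while the formula still reduces correctly to $f_{\Sign}(s)=-\tfrac{1}{\Sign a}$. I would close by remarking that the identity $f_{\Sign}(-\Sign/\lambda)=-\Sign/\lambda$ recovers the asserted fixed point, consistent with $(-\Sign\lambda,1)$ being the unstable eigendirection of $\boldsymbol{z}_{\Sign}$, which motivates why $f_{\Sign}$ will be the relevant slope recursion in the inclination-lemma arguments of Section~\ref{subsec:Geometry-forward iterates}.
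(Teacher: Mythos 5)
Your computation is correct and is exactly the routine verification the paper leaves implicit (Lemma \ref{lem:Iteration of lines} is stated without proof): the affine branch sends the direction vector $(1,s)^{T}$ to $(-\Sign a-bs,1)^{T}$, whose slope is $f_{\Sign}(s)$, with the excluded value of $s$ being precisely the one making the image vertical. Your side remarks on the $b=0$ case and on the fixed point $-\Sign/\lambda$ matching the unstable eigendirection are also consistent with the paper's surrounding discussion.
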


\begin{lem}
\label{lem:Invariance of the unstable cone}Suppose that $(a,b)\in\ParaFull$.
The following are true.
\begin{enumerate}
\item The interval $[-\frac{1}{\lambda},\frac{1}{\lambda}]$ is $f_{\Sign}$-invariant.
\item $0<f_{\Sign}^{\prime}(s)\leq\frac{b}{\lambda^{2}}$ for $s\in[-\frac{1}{\lambda},\frac{1}{\lambda}]$.
\item $f_{-}(s)\geq s$ and $f_{+}(s)\leq s$ for $s\in[-\frac{1}{\lambda},\frac{1}{\lambda}]$.
\end{enumerate}
\end{lem}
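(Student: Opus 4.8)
The plan is to reduce all three claims to elementary properties of the Möbius map $f_{\Sign}$, exploiting the two algebraic identities $\lambda+\mu=a$ and $\lambda\mu=b$ (where $\mu=b/\lambda$), which hold because $\lambda$ and $\mu$ are the two roots of $\nu^{2}-a\nu+b=0$; recall that $0\leq\mu<1<\lambda$ by Lemma \ref{lem:Expanding multiplier}. The single computation that drives everything is the behavior of the denominator $bs+\Sign a$ on the interval. Since this is affine in $s$ with slope $b\geq0$, its extreme values on $[-\frac{1}{\lambda},\frac{1}{\lambda}]$ occur at the endpoints, and the identities $a-\mu=\lambda$ and $a+\mu=\lambda+2\mu$ give $\lvert bs+\Sign a\rvert\in\{\lambda,\lambda+2\mu\}$ at $s=\pm\frac{1}{\lambda}$. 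The denominator has no zero inside the interval: its zero $s=-\Sign a/b$ satisfies $\lvert s\rvert=a/b>\frac{1}{\lambda}$, equivalently $a\lambda=\lambda^{2}+b>b$. Hence $\lvert bs+\Sign a\rvert\geq\lambda$ throughout.

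Part (2) then follows by differentiating: $f_{\Sign}'(s)=b/(bs+\Sign a)^{2}$, which is nonnegative and, by the bound $\lvert bs+\Sign a\rvert\geq\lambda$ just established, at most $b/\lambda^{2}$; it is strictly positive once $b>0$. For part (1), since $f_{\Sign}'\geq0$ and there is no pole in the interval, $f_{\Sign}$ is continuous and increasing there, so it suffices to track the endpoints. A direct substitution shows that the endpoint $s=-\Sign/\lambda$ is exactly the fixed point noted in the text, hence is fixed, while the opposite endpoint $s=\Sign/\lambda$ maps to $-\Sign/(\lambda+2\mu)$, which lies in $[-\frac{1}{\lambda},\frac{1}{\lambda}]$ because $\lambda+2\mu\geq\lambda$. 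Monotonicity then forces $f_{\Sign}([-\frac{1}{\lambda},\frac{1}{\lambda}])\subseteq[-\frac{1}{\lambda},\frac{1}{\lambda}]$.

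For part (3) I would write $f_{\Sign}(s)-s=-\dfrac{bs^{2}+\Sign as+1}{bs+\Sign a}$ and factor the numerator as $b(s+\Sign/\lambda)(s+\Sign/\mu)$, the roots $-\Sign/\lambda$ and $-\Sign/\mu$ being read off from the sum $-\Sign a/b$ and product $1/b=1/(\lambda\mu)$ of the roots. On $[-\frac{1}{\lambda},\frac{1}{\lambda}]$ the two factors have a definite sign: for $\Sign=+1$ both are nonnegative, so the numerator is $\geq0$ while the denominator $bs+a>0$, giving $f_{+}(s)\leq s$; the case $\Sign=-1$ is the mirror image under $s\mapsto-s$, since $f_{-}(s)=-f_{+}(-s)$, and yields $f_{-}(s)\geq s$.

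The main thing to watch is the degenerate parameter $b=0$, which is allowed in $\ParaFull$: there $\mu=0$, the factorization in part (3) loses its second finite root, and $f_{\Sign}$ collapses to the constant $-\Sign/a$, so that $f_{\Sign}'\equiv0$ and the lower bound in (2) holds only in the weak form $0\leq f_{\Sign}'$. Apart from keeping track of this boundary case and of the signs in the two factorizations, every step is a short algebraic verification powered by the identities $\lambda+\mu=a$ and $\lambda\mu=b$, so I expect no genuine obstacle beyond bookkeeping.
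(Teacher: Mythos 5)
The paper states this lemma without proof, so there is nothing to compare against; your verification is correct and complete, and the identities $\lambda+\mu=a$, $\lambda\mu=b$ together with the endpoint evaluation of the denominator are exactly the right engine. You are also right to flag that at $b=0$ (which $\ParaFull$ permits) one has $f_{\Sign}'\equiv 0$, so the strict inequality $0<f_{\Sign}'(s)$ in item (2) fails and should be read as $0\leq f_{\Sign}'(s)\leq\frac{b}{\lambda^{2}}$; this is a defect of the statement rather than of your argument, and it is harmless for the way the lemma is used later (only the upper bound and monotonicity matter).
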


\begin{lem}
\label{lem:upper bound of b/l^2}We have 
\[
\frac{b}{\lambda^{2}}<\frac{1}{8}
\]
for all $(a,b)\in\ParaModel$.
\end{lem}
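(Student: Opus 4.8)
The plan is to reduce the two-variable bound on $\frac{b}{\lambda^{2}}$ to an elementary one-variable inequality in $b$ alone, exploiting the lower bound on $\lambda$ that is already available. By Lemma~\ref{lem:Bounds of lambda} we have $\lambda > 2b+1$ on $\ParaModel$, hence $\lambda^{2} > (2b+1)^{2}$. Since $b \ge 0$ on $\ParaModel$, this yields $\frac{b}{\lambda^{2}} \le \frac{b}{(2b+1)^{2}}$, and the inequality is strict whenever $b > 0$. Thus it suffices to bound the scalar function $g(b) = \frac{b}{(2b+1)^{2}}$ on $[0,1]$.

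The decisive observation is the algebraic identity $(2b+1)^{2} - 8b = (2b-1)^{2} \ge 0$, which gives $8b \le (2b+1)^{2}$, i.e. $g(b) \le \frac{1}{8}$ for every $b$, with equality exactly at $b = \frac{1}{2}$. Equivalently, one checks that $g'(b) = \frac{1-2b}{(2b+1)^{3}}$ vanishes only at $b = \tfrac{1}{2}$, where $g$ attains its maximum value $\tfrac{1}{8}$.

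Combining the two steps completes the argument. For $b > 0$ the first step is strict, so $\frac{b}{\lambda^{2}} < \frac{b}{(2b+1)^{2}} \le \frac{1}{8}$; for $b = 0$ we simply have $\frac{b}{\lambda^{2}} = 0 < \frac{1}{8}$. In either case $\frac{b}{\lambda^{2}} < \frac{1}{8}$, as claimed.

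There is essentially no genuine obstacle here; the only point demanding care is that the one-variable bound $g(b) \le \tfrac{1}{8}$ is \emph{attained} (at $b = \tfrac{1}{2}$), so the strict inequality asserted by the lemma genuinely rests on the strict inequality $\lambda > 2b+1$ of Lemma~\ref{lem:Bounds of lambda} rather than on any slack in $g$ itself. Treating $b = 0$ as a separate case, where that strictness degenerates, removes the last remaining ambiguity.
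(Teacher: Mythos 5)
Your proposal is correct and follows essentially the same route as the paper: treat $b=0$ separately, use the strict bound $\lambda>2b+1$ from Lemma \ref{lem:Bounds of lambda} to reduce to $\frac{b}{(2b+1)^{2}}$, and then bound that one-variable expression by $\frac{1}{8}$. Your explicit justification via the identity $(2b+1)^{2}-8b=(2b-1)^{2}\ge0$ just spells out the step the paper leaves as an assertion.
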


\begin{proof}
The lemma is clear when $b=0$. Suppose that $b>0$. By Lemma \ref{lem:Bounds of lambda},
we have 
\[
\frac{b}{\lambda^{2}}<\frac{b}{(2b+1)^{2}}.
\]
The right hand side has an upper bound $\frac{1}{8}$ on $[0,1]$.
\end{proof}
\begin{prop}
\label{prop:Exponential convergence of slope}Suppose that $(a,b)\in\ParaModel$.
There exists a constant $c\in(0,6.4)$ such that 
\[
\left(1-c\frac{b}{\lambda^{2}}\right)\left(\frac{b}{\lambda^{2}}\right)^{m}\left|s-s_{\infty}\right|\leq\left|s_{m}-s_{\infty}\right|\leq\left(\frac{b}{\lambda^{2}}\right)^{m}\left|s-s_{\infty}\right|
\]
for all $m\geq1$ and $s\in[-\frac{1}{\lambda},\frac{1}{\lambda}]$,
where $s_{m}=f_{\Sign}^{m}(s)$ and $s_{\infty}=-\Sign\frac{1}{\lambda}$.
\end{prop}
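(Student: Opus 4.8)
The plan is to exploit the explicit rational form of $f_{\Sign}$ to obtain a closed expression for the one-step contraction ratio, and then to read off both bounds from a telescoping product. First I would record the algebraic identity behind $\lambda$: since $\lambda$ is the larger root of $\lambda^{2}-a\lambda+b=0$, we have $a\lambda-b=\lambda^{2}$, equivalently $a-\frac{b}{\lambda}=\lambda$. A direct computation then confirms that $s_{\infty}=-\frac{\Sign}{\lambda}$ is fixed by $f_{\Sign}$ and that $f_{\Sign}'(s_{\infty})=\frac{b}{\lambda^{2}}$, so the maximal slope of $f_{\Sign}$ guaranteed by Lemma \ref{lem:Invariance of the unstable cone}(2) is attained exactly at the fixed point.

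Next I would compute the exact ratio. Writing $e_{k}=s_{k}-s_{\infty}$ and using the identity $f_{\Sign}(s)-f_{\Sign}(s_{\infty})=\frac{b(s-s_{\infty})}{(bs+\Sign a)(bs_{\infty}+\Sign a)}$ together with $bs_{\infty}+\Sign a=\Sign\lambda$, one obtains
\[
\frac{e_{k}}{e_{k-1}}=\frac{b}{\lambda\,g_{k-1}},\qquad g_{k}:=a+\Sign b\,s_{k}.
\]
Since $|s_{k}|\leq\frac{1}{\lambda}$ by the invariance in Lemma \ref{lem:Invariance of the unstable cone}(1), we have $g_{k}\in[\lambda,\,a+\frac{b}{\lambda}]$, so every factor is positive and at most $\frac{b}{\lambda^{2}}$. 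Multiplying these ratios gives the upper bound $|s_{m}-s_{\infty}|\leq(\frac{b}{\lambda^{2}})^{m}|s-s_{\infty}|$ at once.

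For the lower bound I would first note that $s_{\infty}$ is an endpoint of $[-\frac{1}{\lambda},\frac{1}{\lambda}]$ and that the iterates are monotone toward it by Lemma \ref{lem:Invariance of the unstable cone}(3); hence $\Sign e_{k}\geq0$ and $|e_{k}|=\Sign e_{k}$. Substituting $s_{k}=-\frac{\Sign}{\lambda}+e_{k}$ and using $a-\frac{b}{\lambda}=\lambda$ collapses the denominator to the clean form $g_{k}=\lambda+b|e_{k}|$, so that
\[
\frac{|e_{m}|}{|e_{0}|}=\left(\frac{b}{\lambda^{2}}\right)^{m}\prod_{k=0}^{m-1}\frac{1}{1+\frac{b|e_{k}|}{\lambda}}.
\]
Now I would apply $\prod_{k}(1+x_{k})^{-1}\geq e^{-\sum_{k}x_{k}}\geq1-\sum_{k}x_{k}$ with $x_{k}=\frac{b|e_{k}|}{\lambda}$, and feed back the decay $|e_{k}|\leq(\frac{b}{\lambda^{2}})^{k}|e_{0}|$ already established together with $|e_{0}|\leq\frac{2}{\lambda}$, so the tail sums geometrically:
\[
\sum_{k=0}^{m-1}\frac{b|e_{k}|}{\lambda}\leq\frac{2b}{\lambda^{2}}\cdot\frac{1}{1-\frac{b}{\lambda^{2}}}=\frac{2}{1-\frac{b}{\lambda^{2}}}\cdot\frac{b}{\lambda^{2}}.
\]
This yields the claimed lower bound with $c=\frac{2}{1-b/\lambda^{2}}$, which by Lemma \ref{lem:upper bound of b/l^2} satisfies $c<\frac{2}{1-1/8}=\frac{16}{7}<6.4$; in particular $c\frac{b}{\lambda^{2}}<1$, so the lower bound stays positive.

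The only delicate point is the bootstrap in the lower bound: the product of contraction factors is controllable precisely because each $|e_{k}|$ already decays geometrically, which is exactly the upper bound applied to a shorter orbit. The sign fact $\Sign e_{k}\geq0$ (equivalently, that $s_{\infty}$ is the endpoint toward which the monotone orbit converges) is what turns $g_{k}$ into the single nonnegative perturbation $\lambda+b|e_{k}|$ of $\lambda$; without it one cannot guarantee all factors lie below $\frac{b}{\lambda^{2}}$ nor telescope the deviation. Everything else is routine geometric-series bookkeeping.
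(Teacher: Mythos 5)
Your proof is correct, and it reaches the same two-sided estimate by a cleaner route than the paper. The overall skeleton is identical: both arguments show that the one-step ratio $|e_{k}|/|e_{k-1}|$ equals $\frac{b}{\lambda^{2}}$ up to a multiplicative error that decays geometrically in $k$, and then control the telescoped product via a $\log$-type inequality and a geometric series. The difference is in how that one-step estimate is obtained. The paper applies the mean value theorem, writes $s_{m}-s_{\infty}=f_{\Sign}'(\xi)(s_{m-1}-s_{\infty})$, and then estimates $|f_{\Sign}'(\xi)-f_{\Sign}'(s_{\infty})|$ using $f_{\Sign}'=b f_{\Sign}^{2}$; this is a perturbative, second-order argument and yields the error factor $1-4(\frac{b}{\lambda^{2}})^{m}$ and ultimately $c=\frac{64}{7}\ln 2\approx 6.34$. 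You instead exploit the M\"obius form of $f_{\Sign}$ to get the \emph{exact} identity
\[
\frac{|e_{k}|}{|e_{k-1}|}=\frac{b}{\lambda^{2}}\cdot\frac{1}{1+\frac{b|e_{k}|}{\lambda}}\raisebox{0pt}{,}
\]
where the sign observation $\Sign e_{k}\geq 0$ (forced by $s_{\infty}$ being the endpoint of the invariant interval of Lemma \ref{lem:Invariance of the unstable cone}) is what collapses the denominator to $\lambda+b|e_{k}|$. This makes the upper bound immediate, removes the need for any derivative estimates, and after feeding the upper bound back in gives the sharper constant $c=16/7$, comfortably inside $(0,6.4)$. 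The only caveat is the degenerate case $e_{k-1}=0$ (e.g.\ $b=0$ or $s=s_{\infty}$), where the ratio is undefined but both inequalities hold trivially; it would be worth one sentence to dispose of it before dividing.
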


\begin{proof}
The upper bound follows from the mean value theorem and Lemma \ref{lem:Invariance of the unstable cone}.

By the mean value theorem, there exists $\xi\in(s_{m-1},s_{\infty})$
such that $s_{m}-s_{\infty}=f'_{\Sign}(\xi)(s_{m-1}-s_{\infty})$.
We get 
\begin{equation}
\left|s_{m}-s_{\infty}\right|\geq(\left|f'_{\Sign}(s_{\infty})\right|-\left|f'_{\Sign}(\xi)-f'_{\Sign}(s_{\infty})\right|)\left|s_{m-1}-s_{\infty}\right|.\label{eq:sm-sInfty (triangular inequality)}
\end{equation}
We have $f_{\Sign}^{\prime}(s)=b(f_{\Sign}(s))^{2}$, $\left|f_{\Sign}(\xi)+f_{\Sign}(s_{\infty})\right|\leq\frac{2}{\lambda}$,
and $\left|f_{\Sign}(\xi)-f_{\Sign}(s_{\infty})\right|\leq\frac{b}{\lambda^{2}}\left|s_{m-1}-s_{\infty}\right|$.
Hence, 
\[
\left|f'_{\Sign}(\xi)-f'_{\Sign}(s_{\infty})\right|=b\left|f_{\Sign}(\xi)+f_{\Sign}(s_{\infty})\right|\left|f_{\Sign}(\xi)-f_{\Sign}(s_{\infty})\right|\leq\frac{2b^{2}}{\lambda^{3}}\left|s_{m-1}-s_{\infty}\right|.
\]
Also, $f'_{\Sign}(s_{\infty})=\frac{b}{\lambda^{2}}$ and $\left|s_{m-1}-s_{\infty}\right|\leq\frac{2}{\lambda}(\frac{b}{\lambda^{2}})^{m-1}$.
The inequality (\ref{eq:sm-sInfty (triangular inequality)}) becomes
\begin{equation}
\left|s_{m}-s_{\infty}\right|\geq\frac{b}{\lambda^{2}}\left[1-4\left(\frac{b}{\lambda^{2}}\right)^{m}\right]\left|s_{m-1}-s_{\infty}\right|\label{eq:sm-sInfty (simplify)}
\end{equation}
We note that $4(\frac{b}{\lambda^{2}})^{m}<\frac{1}{2}$ by Lemma
\ref{lem:upper bound of b/l^2} and $\ln(1-x)\geq-(2\ln2)x$ when
$x\in[0,\frac{1}{2}]$. Thus,
\[
\ln\prod_{j=1}^{m}\left[1-4\left(\frac{b}{\lambda^{2}}\right)^{j}\right]\geq-(8\ln2)\sum_{j=1}^{m}\left(\frac{b}{\lambda^{2}}\right)^{j}\geq-\left(\frac{64}{7}\ln2\right)\frac{b}{\lambda^{2}}.
\]
Consequently, (\ref{eq:sm-sInfty (simplify)}) becomes
\begin{align*}
\left|s_{m}-s_{\infty}\right| & \geq\exp\left[-\left(\frac{64}{7}\ln2\right)\frac{b}{\lambda^{2}}\right]\left(\frac{b}{\lambda^{2}}\right)^{m}\left|s-s_{\infty}\right|\geq\left(1-c\frac{b}{\lambda^{2}}\right)\left(\frac{b}{\lambda^{2}}\right)^{m}\left|s-s_{\infty}\right|
\end{align*}
where $c=\frac{64}{7}\ln2$.
\end{proof}
A sequence of maps $\{f_{n}\}_{1\leq n<\infty}$ converges locally
uniformly on a topological space $X$ if for all $x\in X$ there exists
an open neighborhood $U$ of $x$ such that the sequence converges
uniformly on $U$. 
\begin{prop}
\label{prop:Convergence of the horizontal slope}Let $\ParaSpace\subset\ParaFull$
be relatively open, $L$ be a line, $s_{0}$ be the slope of $L$,
$\Sign\in S$, $s_{m}$ be the slope of $\Lozi_{\Sign}^{m}(L)$ for
$m>0$, and $s_{\infty}=-\Sign\frac{1}{\lambda}$ be the slope of
$W^{U}(\Lozi_{\Sign},\boldsymbol{z}_{\Sign})$. If $s_{0}$ depends
analytically on $(a,b)\in\ParaSpace$ and $|s_{0}|\leq\frac{1}{\lambda}$,
then
\[
\lim_{m\rightarrow\infty}\partial_{a}^{i}\partial_{b}^{j}s_{m}=\partial_{a}^{i}\partial_{b}^{j}s_{\infty}
\]
locally uniformly on $\ParaSpace$ for $(i,j)=(0,0),(0,1),(1,0)$.
\end{prop}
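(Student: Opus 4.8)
The plan is to exploit the scalar affine recursion $s_m=f_\Sign(s_{m-1})$ from Lemma \ref{lem:Iteration of lines}, for which $s_\infty=-\Sign\frac1\lambda$ is the attracting fixed point, and to differentiate it in the two parameters. Write $F(s,a,b)=-\frac{1}{bs+\Sign a}$, so that $s_m=F(s_{m-1},a,b)$ and $F_s=bF^2$, $F_a=\Sign F^2$, $F_b=sF^2$. Since every point of the relatively open, hence locally compact, set $\ParaSpace$ has a neighborhood whose closure is a compact subset of $\ParaSpace$, it suffices to prove uniform convergence on each compact $K\subset\ParaSpace$. By Lemma \ref{lem:Expanding multiplier} we have $\frac{b}{\lambda^2}=\frac{\mu}{\lambda}<1$ on $\ParaFull$, and as $\frac{b}{\lambda^2}$ is continuous it is bounded by some $\theta=\theta_K<1$ on $K$.

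For the case $(i,j)=(0,0)$, Lemma \ref{lem:Invariance of the unstable cone} gives that $[-\frac1\lambda,\frac1\lambda]$ is $f_\Sign$-invariant and that $0<f_\Sign'\le\frac{b}{\lambda^2}\le\theta$ there; since $|s_0|\le\frac1\lambda$, the mean value theorem yields $|s_m-s_\infty|\le\theta^m|s_0-s_\infty|$, and as $|s_0-s_\infty|$ is bounded on $K$ this gives uniform convergence $s_m\to s_\infty$ on $K$. For the derivative cases, differentiating $s_m=F(s_{m-1},a,b)$ in $a$ and in $b$ produces the affine recursions
\[
\partial_a s_m=F_s(s_{m-1})\,\partial_a s_{m-1}+F_a(s_{m-1}),\qquad \partial_b s_m=F_s(s_{m-1})\,\partial_b s_{m-1}+F_b(s_{m-1}).
\]
The multiplier $F_s(s_{m-1})=f_\Sign'(s_{m-1})\in(0,\theta]$ is a uniform contraction on $K$, while $F_s(s_{m-1})\to F_s(s_\infty)=\frac{b}{\lambda^2}$ and the forcing terms $F_a(s_{m-1}),F_b(s_{m-1})$ converge uniformly on $K$ to $F_a(s_\infty),F_b(s_\infty)$, because $s_{m-1}\to s_\infty$ uniformly and $F_s,F_a,F_b$ are uniformly continuous on a compact neighborhood of the graph $\{(s_\infty(a,b),a,b):(a,b)\in K\}$.

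To identify the limits without computing them, differentiate the fixed-point identity $s_\infty=F(s_\infty,a,b)$ (recall $s_\infty=-\Sign/\lambda$ is analytic on $\ParaFull$, since $a^2-4b>0$ there) with respect to $a$ and $b$, obtaining $\partial_a s_\infty=F_s(s_\infty)\,\partial_a s_\infty+F_a(s_\infty)$ and $\partial_b s_\infty=F_s(s_\infty)\,\partial_b s_\infty+F_b(s_\infty)$. Thus $\partial_a s_\infty$ and $\partial_b s_\infty$ are exactly the \emph{unique} fixed points of the limiting constant-coefficient recursions (unique because $F_s(s_\infty)=\frac{b}{\lambda^2}<1$). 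Setting $e_m=\partial_a s_m-\partial_a s_\infty$ and subtracting the two identities gives
\[
|e_m|\le F_s(s_{m-1})\,|e_{m-1}|+\varepsilon_m\le\theta\,|e_{m-1}|+\varepsilon_m,
\]
where $\varepsilon_m=|F_s(s_{m-1})-F_s(s_\infty)|\,|\partial_a s_\infty|+|F_a(s_{m-1})-F_a(s_\infty)|\to0$ uniformly on $K$, using that $\partial_a s_\infty$ is bounded on $K$. Unrolling gives $|e_m|\le\theta^m|e_0|+\sum_{j=1}^m\theta^{m-j}\varepsilon_j$, and since $|e_0|=|\partial_a s_0-\partial_a s_\infty|$ is bounded on $K$ (because $s_0$ is analytic) and $\theta<1$, the right-hand side tends to $0$ uniformly on $K$. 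The identical argument applies to $\partial_b s_m$, completing all three cases.

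The main obstacle is the non-autonomous character of the derivative recursions: both the contraction factor $F_s(s_{m-1})$ and the inhomogeneous term vary with $m$ and with $(a,b)$. The resolution is that the base case supplies \emph{uniform} convergence $s_m\to s_\infty$, which—combined with uniform continuity of $F_s,F_a,F_b$ near the limit graph—upgrades all these quantities to uniformly convergent ones, reducing the claim to the elementary lemma on asymptotically constant affine recursions with contraction factor bounded away from $1$. The point requiring care is precisely this uniform continuity argument, ensuring $\varepsilon_m\to0$ uniformly on each compact $K$.
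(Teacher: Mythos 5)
Your proof is correct, but it takes a genuinely different route from the paper. The paper complexifies: it views $f_{\Sign}$ as a holomorphic map in $(a,b,s)\in\C^{3}$, shows it is a uniform contraction in $s$ on a complex convex neighborhood $U\times V\supset\{(a_{0},b_{0})\}\times[-\frac{1}{\lambda},\frac{1}{\lambda}]$, deduces $s_{m}\rightarrow s_{\infty}$ uniformly on $U$, and then invokes the Weierstrass convergence theorem to get convergence of the partial derivatives for free. You instead stay entirely in the real domain: you establish uniform convergence of $s_{m}$ on compacta by the mean value theorem, differentiate the recursion $s_{m}=F(s_{m-1},a,b)$ to obtain non-autonomous affine recursions for $\partial_{a}s_{m}$ and $\partial_{b}s_{m}$, identify the limits by differentiating the fixed-point identity, and close the argument with the standard estimate $|e_{m}|\leq\theta^{m}|e_{0}|+\sum_{j=1}^{m}\theta^{m-j}\varepsilon_{j}$ for asymptotically constant contracting affine recursions. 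Each approach has its merits: the paper's argument is shorter and yields locally uniform convergence of derivatives of \emph{all} orders at once (useful if one ever needed second derivatives), at the cost of a complexification step whose contraction property is only asserted ``by continuity''; your argument is more elementary and self-contained, makes the mechanism of derivative convergence completely explicit, but would require additional bookkeeping for higher-order derivatives. All the delicate points in your version (invariance of $[-\frac{1}{\lambda},\frac{1}{\lambda}]$, boundedness of $\frac{b}{\lambda^{2}}$ away from $1$ on compacta, uniform continuity of $F_{s},F_{a},F_{b}$ on the relevant compact set, and boundedness of $\partial_{a}s_{\infty}$ and $\partial_{b}s_{\infty}$) check out against Lemmas \ref{lem:Expanding multiplier}, \ref{lem:Iteration of lines}, and \ref{lem:Invariance of the unstable cone}.
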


\begin{proof}
Let $(a_{0},b_{0})\in\ParaSpace$. If we view $f_{\Sign}$ as a map
on $(a,b,s)\in\mathbb{C}^{3}$, then it is a uniform contraction in
$s\in V$ on a complex convex neighborhood $U\times V\supset\{(a_{0},b_{0})\}\times[-\frac{1}{\lambda},\frac{1}{\lambda}]$
by Lemma \ref{lem:Invariance of the unstable cone} and continuity.
Thus, $f_{\Sign}(V)\subset V$ for all $(a,b)\in U$ since $V$ is
convex and $s_{\infty}\in V$ is an attracting fixed point of $f_{\Sign}$.

Furthermore, we may assume without lose of generality that $s_{0}$
has an analytic continuation on $U$. By continuity, we may also assume
that $s_{0}(U)\subset V$. Then $s_{m}(U)\subset V$ since $s_{m}=f_{\Sign}^{m}(s)$
for all $m\geq0$. Consequently, $\lim_{m\rightarrow\infty}s_{m}=s_{\infty}$
uniformly on $U$. The conclusion follows from the Weierstrass convergence
theorem \cite[P.73]{St10}. 
\end{proof}

\subsubsection{The transformation for the intersection point}

We now consider the orbit of a point on $W^{S}(\Lozi_{\Sign},\boldsymbol{z}_{\Sign})$
for $\Sign\in S$.
\begin{lem}
\label{lem:Iteration of points on the stable manifold}Suppose that
$(a,b)\in\ParaFull$. Let $\Sign\in S$, $\boldsymbol{p}_{1}\in W^{S}(\Lozi_{\Sign},\boldsymbol{z}_{\Sign})$,
and $\boldsymbol{p}_{m+1}=\Lozi_{\Sign}^{m}(\boldsymbol{p}_{1})$
for $m\geq1$. If $\boldsymbol{p}_{1}=(v_{1}+\zeta_{\Sign},v_{0}+\zeta_{\Sign})$,
then $\boldsymbol{p}_{m}=(v_{m}+\zeta_{\Sign},v_{m-1}+\zeta_{\Sign})$
for all $m\ge1$, where $v_{m}=-\Sign\mu v_{m-1}$.
\end{lem}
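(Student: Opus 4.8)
The plan is to argue by induction on $m$, and the whole lemma rests on one algebraic identity together with the fixed-point equation for $\boldsymbol{z}_{\Sign}$. I would first record the identity. Since $\mu=b/\lambda$ and $\lambda=\tfrac{1}{2}(a+\sqrt{a^{2}-4b})$, a direct simplification gives $\mu=\tfrac{1}{2}(a-\sqrt{a^{2}-4b})$, so $\lambda$ and $\mu$ are the two roots of $t^{2}-at+b$; in particular
\[
\mu^{2}-a\mu+b=0,\qquad\text{equivalently}\qquad a\mu-b=\mu^{2}.
\]
This is exactly the statement that $-\Sign\mu$ is an eigenvalue of $\Dif\Lozi_{\Sign}$, and its eigendirection is $(-\Sign\mu,1)$ as recorded earlier, so $W^{S}(\Lozi_{\Sign},\boldsymbol{z}_{\Sign})$ is the line through $\boldsymbol{z}_{\Sign}$ with direction $(-\Sign\mu,1)$.

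For the base case $m=1$, the hypothesis $\boldsymbol{p}_{1}\in W^{S}(\Lozi_{\Sign},\boldsymbol{z}_{\Sign})$ is what anchors the recursion. Writing $\boldsymbol{p}_{1}=(v_{1}+\zeta_{\Sign},v_{0}+\zeta_{\Sign})$, membership in the line through $\boldsymbol{z}_{\Sign}$ of direction $(-\Sign\mu,1)$ means $(v_{1},v_{0})$ is proportional to $(-\Sign\mu,1)$, i.e. $v_{1}=-\Sign\mu v_{0}$. This is precisely the relation $v_{m}=-\Sign\mu v_{m-1}$ at $m=1$.

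For the inductive step, I would assume $\boldsymbol{p}_{m}=(v_{m}+\zeta_{\Sign},v_{m-1}+\zeta_{\Sign})$ with $v_{m}=-\Sign\mu v_{m-1}$ and compute $\Lozi_{\Sign}(\boldsymbol{p}_{m})$ using $\Lozi_{\Sign}(x,y)=(-\Sign ax-by+(a-b-1),x)$. The second coordinate is immediate: it equals the first coordinate of $\boldsymbol{p}_{m}$, namely $v_{m}+\zeta_{\Sign}$. For the first coordinate I would subtract the fixed-point identity $\zeta_{\Sign}=-\Sign a\zeta_{\Sign}-b\zeta_{\Sign}+(a-b-1)$, which removes the constant term and all $\zeta_{\Sign}$ contributions and leaves the linear expression $-\Sign a v_{m}-b v_{m-1}$. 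Substituting $v_{m}=-\Sign\mu v_{m-1}$ rewrites this as $(a\mu-b)v_{m-1}$, while the desired value is $v_{m+1}=-\Sign\mu v_{m}=\mu^{2}v_{m-1}$; the two agree by the identity $a\mu-b=\mu^{2}$. Hence $\boldsymbol{p}_{m+1}=\Lozi_{\Sign}(\boldsymbol{p}_{m})=(v_{m+1}+\zeta_{\Sign},v_{m}+\zeta_{\Sign})$ with $v_{m+1}=-\Sign\mu v_{m}$, closing the induction.

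The only real obstacle is isolating the identity $\mu^{2}-a\mu+b=0$; after that the verification is pure bookkeeping. I would stress that the computation uses the recursion only in the form $v_{m}=-\Sign\mu v_{m-1}$ and never divides by $\mu$, so the argument is uniform in $b\geq0$ and remains valid in the degenerate case $b=0$, where $\mu=0$, every $v_{m}=0$ for $m\geq1$, and $W^{S}$ collapses to the vertical line $x=\zeta_{\Sign}$.
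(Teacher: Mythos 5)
Your proof is correct: the identity $\mu^{2}-a\mu+b=0$ (equivalently, that $-\Sign\mu$ is the contracting eigenvalue of $\Dif\Lozi_{\Sign}$ with eigendirection $(-\Sign\mu,1)$), the base case from membership in the stable eigenline, and the inductive step using the fixed-point equation to cancel the affine part all check out, including the degenerate case $b=0$. The paper states this lemma without proof, and your argument is exactly the routine verification the author evidently intended to leave to the reader.
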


\begin{prop}
\label{prop:Convergence of the intersection point of mu}Let $\ParaSpace\subset\ParaFull$
be relatively open and $\{(v_{m}+\zeta_{\Sign},v_{m-1}+\zeta_{\Sign})\}_{m\geq1}$
be an orbit on $W^{S}(\Lozi_{\Sign},\boldsymbol{z}_{\Sign})$. If
$v_{0}$ depends analytically on $(a,b)\in\ParaSpace$, then
\[
\lim_{m\rightarrow\infty}\partial_{a}^{i}\partial_{b}^{j}v_{m}=0
\]
locally uniformly on $\ParaSpace$ for $(i,j)=(0,0),(0,1),(1,0)$.
\end{prop}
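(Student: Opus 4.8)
The plan is to follow the template of the proof of Proposition~\ref{prop:Convergence of the horizontal slope}: exhibit the iteration as a uniform contraction over $\mathbb{C}$, deduce uniform convergence of the analytic functions $v_m$ to the limit, and then upgrade to convergence of the derivatives via the Weierstrass convergence theorem. By Lemma~\ref{lem:Iteration of points on the stable manifold} the intersection coordinates satisfy $v_m=-\Sign\mu\,v_{m-1}$, hence $v_m=(-\Sign\mu)^{m}v_0$. Thus the recursion is governed by the linear multiplier $v\mapsto-\Sign\mu\,v$, whose unique fixed point is $0$; this $0$ plays the role that $s_\infty$ played for the slope. By Lemma~\ref{lem:Expanding multiplier} we have $0\le\mu<1$ on $\ParaFull$, so this map is a contraction toward $0$.

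First I would record that $\mu$ is analytic on $\ParaFull$. Since $a>b+1$ forces $a^{2}>(b+1)^{2}$ and hence $a^{2}-4b>(b-1)^{2}\ge0$, the discriminant stays strictly positive, so $\lambda=\tfrac12\bigl(a+\sqrt{a^{2}-4b}\bigr)$ is analytic and $\lambda>0$; therefore $\mu=b/\lambda$ is analytic. This lets me complexify, exactly as in the slope argument: fix $(a_0,b_0)\in\ParaSpace$, regard $\mu$ and $v_0$ as holomorphic functions of $(a,b)\in\mathbb{C}^{2}$, and choose a polydisc neighborhood $U\ni(a_0,b_0)$ on which both extend holomorphically. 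Because $|\mu(a_0,b_0)|<1$, continuity lets me shrink $U$ so that $|\mu|\le\kappa<1$ on $U$, and boundedness of the holomorphic $v_0$ lets me also assume $|v_0|\le M$ on $U$.

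With these choices the estimate is immediate: $|v_m|=|\mu|^{m}\,|v_0|\le\kappa^{m}M$ on $U$, so $v_m\to0$ uniformly on $U$. Equivalently, the disc $V=\{|v|\le M\}$ is mapped into itself by $v\mapsto-\Sign\mu\,v$ for every $(a,b)\in U$ (since $|\mu|\le\kappa<1$) and contains the attracting fixed point $0$, mirroring the invariant complex convex neighborhood in the previous proposition. Since each $v_m$ is holomorphic on $U$ and the convergence is locally uniform, the Weierstrass convergence theorem \cite[P.73]{St10} yields $\partial_a^{i}\partial_b^{j}v_m\to\partial_a^{i}\partial_b^{j}0=0$ locally uniformly on $\ParaSpace$, in particular for $(i,j)=(0,0),(0,1),(1,0)$.

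The only point requiring any care—what I would flag as the main obstacle—is the holomorphy of $\mu$, that is, staying off the branch locus $a^{2}=4b$ of the square root defining $\lambda$; the inequality $a^{2}-4b>(b-1)^{2}\ge0$ on $\ParaFull$ dispatches it. Everything else is a direct transcription of the slope argument, now essentially trivialized by the linearity of the recursion in $v$, so I do not expect any genuine difficulty beyond bookkeeping.
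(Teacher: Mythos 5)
Your proposal is correct and follows essentially the same route as the paper's proof: complexify, shrink to a neighborhood $U$ of $(a_{0},b_{0})$ on which $v_{0}$ is bounded and $|\mu|<1$, use Lemma \ref{lem:Iteration of points on the stable manifold} to get $v_{m}\rightarrow0$ uniformly, and finish with the Weierstrass convergence theorem. Your explicit check that $a^{2}-4b>0$ on $\ParaFull$ (so that $\mu$ extends holomorphically) is a small detail the paper leaves implicit, but it does not change the argument.
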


\begin{proof}
Let $(a_{0},b_{0})\in\ParaSpace$ and $U$ be a complex convex neighborhood
of $(a_{0},b_{0})$ such that $v_{0}$ has an analytic continuation
on $U$. We may also assume that $U$ is small enough such that $v_{0}$
is bounded and $|\mu|<c$ on $U$ for some $c\in(0,1)$. Then $v_{m}$
also has an analytic continuation and $\lim_{m\rightarrow\infty}v_{m}=0$
uniformly on $U$ by Lemma \ref{lem:Iteration of points on the stable manifold}.
Therefore, the conclusion follows from the Weierstrass' convergence
theorem \cite[P.73]{St10}. 
\end{proof}
\begin{lem}
\label{lem:map from the intersection point of Ws(Z-) to Ws(Z+)}Suppose
that $(a,b)\in\ParaFull$. Let $s$ be the slope of a line $L$ and
$(\Sign\mu v+\zeta_{-\Sign},v+\zeta_{-\Sign})$ be the intersection
point of $L$ and $W^{S}(\Lozi_{-\Sign},\boldsymbol{z}_{-\Sign})$.
If $s\neq-\frac{\Sign}{\mu}$, then $L$ and $W^{S}(\Lozi_{\Sign},\boldsymbol{z}_{\Sign})$
intersect at $(-\Sign\mu w+\zeta_{\Sign},w+\zeta_{\Sign})$, where
\[
w=\frac{1}{1+\Sign\mu s}\left[(1-\Sign\mu s)v-\Sign(1-s)(\zeta_{+}-\zeta_{-})\right].
\]
\end{lem}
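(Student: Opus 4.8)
The plan is to turn the statement into elementary analytic geometry by exploiting the explicit contracting directions recorded in the introduction. Recall that the stable (contracting) direction at $\boldsymbol{z}_{\Sign}=(\zeta_{\Sign},\zeta_{\Sign})$ is $(-\Sign\mu,1)$, so the stable line $W^{S}(\Lozi_{\Sign},\boldsymbol{z}_{\Sign})$ admits the parametrization $w\mapsto\boldsymbol{z}_{\Sign}+w(-\Sign\mu,1)=(-\Sign\mu w+\zeta_{\Sign},\,w+\zeta_{\Sign})$. Substituting $\Sign\mapsto-\Sign$ gives the stable line at the other fixed point parametrized by $v\mapsto(\Sign\mu v+\zeta_{-\Sign},\,v+\zeta_{-\Sign})$. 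Thus the given forms of the two points are exactly these parametrizations, with $v$ and $w$ the respective coordinates along the stable directions, and the task reduces to determining the unique $w$ for which the parametrized point of $W^{S}(\Lozi_{\Sign},\boldsymbol{z}_{\Sign})$ lies on $L$.

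First I would write down the collinearity condition. Since $L$ has slope $s$ and passes through $\boldsymbol{p}_{1}=(\Sign\mu v+\zeta_{-\Sign},\,v+\zeta_{-\Sign})$, a candidate point $\boldsymbol{p}_{2}=(-\Sign\mu w+\zeta_{\Sign},\,w+\zeta_{\Sign})$ lies on $L$ if and only if $\pi_{2}(\boldsymbol{p}_{2})-\pi_{2}(\boldsymbol{p}_{1})=s\,[\pi_{1}(\boldsymbol{p}_{2})-\pi_{1}(\boldsymbol{p}_{1})]$. Writing $\Delta=\zeta_{\Sign}-\zeta_{-\Sign}$, the two coordinate differences are $\Delta+(w-v)$ and $\Delta-\Sign\mu(w+v)$, so the condition becomes the single linear equation
\[
\Delta+(w-v)=s\bigl[\Delta-\Sign\mu(w+v)\bigr].
\]
Collecting the terms in $w$ yields $w(1+\Sign\mu s)=v(1-\Sign\mu s)+(s-1)\Delta$.

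The hypothesis $s\neq-\frac{\Sign}{\mu}$ is used precisely here: the stable direction $(-\Sign\mu,1)$ has slope $-\frac{\Sign}{\mu}$, so $s\neq-\frac{\Sign}{\mu}$ says $L$ is not parallel to $W^{S}(\Lozi_{\Sign},\boldsymbol{z}_{\Sign})$, which is equivalent to the coefficient $1+\Sign\mu s$ being nonzero (when $\mu=0$ the coefficient is simply $1$ and the hypothesis is vacuous). Dividing gives the unique solution $w=\bigl[(1-\Sign\mu s)v+(s-1)\Delta\bigr]/(1+\Sign\mu s)$. The only remaining point is the sign identity $\Delta=\zeta_{\Sign}-\zeta_{-\Sign}=\Sign(\zeta_{+}-\zeta_{-})$, which holds by inspection in both cases $\Sign=+$ and $\Sign=-$; substituting it and rewriting $(s-1)\Delta=-\Sign(1-s)(\zeta_{+}-\zeta_{-})$ produces exactly the claimed formula. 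I do not expect any genuine obstacle here: the entire argument is a one-line linear solve, and the only place to be careful is the bookkeeping of the factor $\Sign$ in the direction vectors and in the identity $\zeta_{\Sign}-\zeta_{-\Sign}=\Sign(\zeta_{+}-\zeta_{-})$.
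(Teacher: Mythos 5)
Your proof is correct: the parametrizations of the two stable lines via the contracting directions $(-\Sign\mu,1)$, the collinearity equation, the identification of $s\neq-\frac{\Sign}{\mu}$ with $1+\Sign\mu s\neq0$, and the sign identity $\zeta_{\Sign}-\zeta_{-\Sign}=\Sign(\zeta_{+}-\zeta_{-})$ all check out and yield exactly the stated formula. The paper states this lemma without proof, and your elementary linear solve is precisely the routine verification it leaves to the reader.
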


\subsubsection{Estimation of $p_{m,n}$.}

We show that the turning point of $\eta_{m,n}^{U}$ can be estimated
by a turning point of the unstable manifold $W^{U}(\Lozi,\boldsymbol{z}_{-})$. 
\begin{lem}
\label{lem:Turning point}Suppose that $(a,b)\in\ParaFull$. Let $\Sign\in S$,
$s$ be the slope of a line $L$ and $(-\Sign\mu v+\zeta_{\Sign},v+\zeta_{\Sign})$
be the intersection point of $L$ and $W^{S}(\Lozi_{\Sign},\boldsymbol{z}_{\Sign})$.
Then the turning point of $\Lozi(L)$ is 
\[
((a-b-1)-(1+\Sign s\mu)bv-(1-s)b\zeta_{\Sign},0).
\]
\end{lem}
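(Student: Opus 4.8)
The plan is to exploit that $\Lozi$ is folded along the critical locus $\{(x,y):x=0\}$, so that the turning point of $\Lozi(L)$ is simply the image under $\Lozi$ of the unique point where $L$ crosses this locus. On $\{x\geq0\}$ and $\{x\leq0\}$ the map agrees with the affine branches $\Lozi_{+}$ and $\Lozi_{-}$, and these coincide on $\{x=0\}$ since $|x|=0$ there; hence $\Lozi(L)$ is a union of (at most) two line segments meeting precisely at $\Lozi(L\cap\{x=0\})$, and this junction is by definition the turning point. Because $L$ has finite slope $s$, it is not the vertical critical line, so it meets $\{x=0\}$ in a single well-defined point and the construction makes sense.

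First I would locate $L\cap\{x=0\}$. Since $L$ has slope $s$ and passes through the point $(-\Sign\mu v+\zeta_{\Sign},\,v+\zeta_{\Sign})\in W^{S}(\Lozi_{\Sign},\boldsymbol{z}_{\Sign})$, it is given by $y=s\,(x+\Sign\mu v-\zeta_{\Sign})+v+\zeta_{\Sign}$. Setting $x=0$ yields the crossing point $(0,y_{0})$ with $y_{0}=(1+\Sign s\mu)v+(1-s)\zeta_{\Sign}$. This is the only genuinely computational step, and it reduces to careful bookkeeping of the signs attached to $\Sign\in\{-1,+1\}$ (using $\Sign^{2}=1$).

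Next I would evaluate $\Lozi$ at this crossing point. Because $x=0$, the fold term $|x|$ vanishes and $\Lozi(0,y_{0})=\bigl((a-b-1)-by_{0},\,0\bigr)$. Substituting $y_{0}$ gives first coordinate $(a-b-1)-(1+\Sign s\mu)bv-(1-s)b\zeta_{\Sign}$ and second coordinate $0$, which is exactly the claimed turning point.

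I expect no real obstacle here: the lemma is a direct computation once one recognizes that the fold of $\Lozi(L)$ sits over $L\cap\{x=0\}$. The only delicate point is the sign convention for $\Sign$ when writing down the point on $W^{S}(\Lozi_{\Sign},\boldsymbol{z}_{\Sign})$, whose contracting direction is $(-\Sign\mu,1)$; a sign slip there would propagate through $y_{0}$ into the final expression, so I would verify the two cases $\Sign=+1$ and $\Sign=-1$ give consistent signs against the stated formula.
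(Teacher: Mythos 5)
Your proof is correct: the turning point of $\Lozi(L)$ is indeed $\Lozi(L\cap\{x=0\})$, and your computation of $y_{0}=(1+\Sign s\mu)v+(1-s)\zeta_{\Sign}$ followed by $\Lozi(0,y_{0})=((a-b-1)-by_{0},0)$ reproduces the stated formula exactly. The paper leaves this lemma unproved, and your direct calculation is precisely the intended argument.
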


Let $(p_{\infty,n},0)$ be the turning point of $\Lozi\circ\Lozi_{-}^{n-2}\circ\Lozi_{+}^{2}(W^{U}(\Lozi_{-},\boldsymbol{z}_{-}))$.
\begin{prop}
\label{prop:Convergence of a horizontal line}Let $\ParaSpace\subset\ParaFull$
be relatively open, $L$ be a line, $s$ be the slope of $L$, $(-\mu v+\zeta_{+},v+\zeta_{+})$
be the intersection point of $L$ and $W^{S}(\Lozi_{+},\boldsymbol{z}_{+})$,
and $(p_{m,n},0)$ be the turning point of $\Lozi\circ\Lozi_{-}^{n-2}\circ\Lozi_{+}^{2}\circ\Lozi_{-}^{m-2}\circ\Lozi_{+}(L)$
for $m,n\geq2$. If $s$ and $v$ depends analytically on $(a,b)\in\ParaSpace$
and $|s|\leq\frac{1}{\lambda}$, then
\[
\lim_{m\rightarrow\infty}\partial_{a}^{i}\partial_{b}^{j}p_{m,n}=\partial_{a}^{i}\partial_{b}^{j}p_{\infty,n}
\]
locally uniformly on $\ParaSpace$ for $(i,j)=(0,0),(0,1),(1,0)$
and $n\geq2$.
\end{prop}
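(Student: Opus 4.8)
The plan is to show that the line $M_m\equiv\Lozi_{-}^{m-2}\circ\Lozi_{+}(L)$ converges, together with its first-order parameter derivatives and locally uniformly on $\ParaSpace$, to the line $W^{U}(\Lozi_{-},\boldsymbol{z}_{-})$ as $m\rightarrow\infty$; once this is established, applying the fixed analytic composition $\Lozi\circ\Lozi_{-}^{n-2}\circ\Lozi_{+}^{2}$ and reading off the turning point yields the claim. I encode a line by two analytic data: its slope, and the parameter $v$ of its intersection with $W^{S}(\Lozi_{-},\boldsymbol{z}_{-})$, written as $(\mu v+\zeta_{-},v+\zeta_{-})$ following Lemma \ref{lem:Iteration of points on the stable manifold}. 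Note that $W^{U}(\Lozi_{-},\boldsymbol{z}_{-})$ is exactly the line of slope $\tfrac{1}{\lambda}$ through $\boldsymbol{z}_{-}$, that is, the one with data $(\tfrac{1}{\lambda},0)$.

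First I would track the slope. The slope of $M_m$ is $s_m=f_{-}^{m-2}(f_{+}(s))$ by Lemma \ref{lem:Iteration of lines}. Since $f_{+}$ maps $[-\tfrac{1}{\lambda},\tfrac{1}{\lambda}]$ into itself (Lemma \ref{lem:Invariance of the unstable cone}), the hypothesis $|s|\le\tfrac{1}{\lambda}$ gives $|f_{+}(s)|\le\tfrac{1}{\lambda}$, and Proposition \ref{prop:Convergence of the horizontal slope} applied to the branch $\Lozi_{-}$ shows that $\partial_{a}^{i}\partial_{b}^{j}s_m\rightarrow\partial_{a}^{i}\partial_{b}^{j}\tfrac{1}{\lambda}$ locally uniformly for $(i,j)\in\{(0,0),(0,1),(1,0)\}$.

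Next I would track the intersection with $W^{S}(\Lozi_{-},\boldsymbol{z}_{-})$. The key observation is that, because $\Lozi_{-}$ maps $W^{S}(\Lozi_{-},\boldsymbol{z}_{-})$ onto itself and $M_{m+1}=\Lozi_{-}(M_m)$ for $m\ge2$, the intersection points $\{M_m\cap W^{S}(\Lozi_{-},\boldsymbol{z}_{-})\}_{m\ge2}$ form a single forward orbit of $\Lozi_{-}$ on $W^{S}(\Lozi_{-},\boldsymbol{z}_{-})$; each intersection is transverse since $|s_m|\le\tfrac{1}{\lambda}<\tfrac{1}{\mu}$. The analytic initial datum $v_2$, the parameter of $\Lozi_{+}(L)\cap W^{S}(\Lozi_{-},\boldsymbol{z}_{-})$, is obtained from $(s,v)$ by pushing $L\cap W^{S}(\Lozi_{+},\boldsymbol{z}_{+})$ forward one step with Lemma \ref{lem:Iteration of points on the stable manifold} and then changing the reference stable manifold by Lemma \ref{lem:map from the intersection point of Ws(Z-) to Ws(Z+)}; the relevant M\"obius denominators stay away from zero because $\mu|f_{+}(s)|\le\mu/\lambda<1$, so $v_2$ is analytic on $\ParaSpace$. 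Proposition \ref{prop:Convergence of the intersection point of mu} then gives $\partial_{a}^{i}\partial_{b}^{j}v_m\rightarrow0$ locally uniformly, whence $M_m\rightarrow W^{U}(\Lozi_{-},\boldsymbol{z}_{-})$ with derivatives, locally uniformly.

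Finally I would propagate this convergence through the fixed composition. For each fixed $n$, the turning point $p_{m,n}$ is produced from $(s_m,v_m)$ by a finite chain of analytic transformations independent of $m$: the slope evolves by $f_{\pm}$ (Lemma \ref{lem:Iteration of lines}), the intersection parameter scales by $-\Sign\mu$ under each branch (Lemma \ref{lem:Iteration of points on the stable manifold}) with a change of reference manifold between opposite branches (Lemma \ref{lem:map from the intersection point of Ws(Z-) to Ws(Z+)}), and the final turning point under $\Lozi$ is the explicit analytic expression of Lemma \ref{lem:Turning point}. Thus $p_{m,n}=\Phi_n(s_m,v_m,a,b)$ for an analytic $\Phi_n$, and since $(s_m,v_m)\rightarrow(\tfrac{1}{\lambda},0)$ uniformly on suitable complex neighborhoods, $p_{m,n}\rightarrow\Phi_n(\tfrac{1}{\lambda},0,a,b)=p_{\infty,n}$ uniformly there; the Weierstrass convergence theorem then upgrades this to convergence of the first derivatives, exactly as in the proofs of Propositions \ref{prop:Convergence of the horizontal slope} and \ref{prop:Convergence of the intersection point of mu}. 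The main obstacle is precisely this last bookkeeping step — keeping the reference stable manifold consistent, the transversality of every intersection, and the non-vanishing of the M\"obius-type denominators under uniform control on a complex neighborhood, so that a single analytic $\Phi_n$ represents $p_{m,n}$ — whereas the genuine analytic content, namely the exponential convergence with derivatives, is already supplied by the two cited propositions.
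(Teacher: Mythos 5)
Your proposal is correct and follows essentially the same route as the paper: the paper's proof is a one-line citation of the chain rule together with Lemmas \ref{lem:Iteration of lines}, \ref{lem:Invariance of the unstable cone}, \ref{lem:Iteration of points on the stable manifold}, \ref{lem:map from the intersection point of Ws(Z-) to Ws(Z+)}, and \ref{lem:Turning point} and Propositions \ref{prop:Convergence of the horizontal slope} and \ref{prop:Convergence of the intersection point of mu}, and your argument is precisely the detailed bookkeeping that assembles those ingredients. The slope/intersection-point encoding, the switch of reference stable manifold between branches, and the final Weierstrass upgrade to derivatives all match the intended argument.
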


\begin{proof}
The proposition follows from the chain rule, Lemmas \ref{lem:Iteration of lines},
\ref{lem:Invariance of the unstable cone}, \ref{lem:Iteration of points on the stable manifold},
\ref{lem:map from the intersection point of Ws(Z-) to Ws(Z+)}, and
\ref{lem:Turning point}, and Propositions \ref{prop:Convergence of the horizontal slope}
and \ref{prop:Convergence of the intersection point of mu}.
\end{proof}
Recall that $\eta$ is the horizontal line $\{y=0\}$, $\eta_{m,n}^{B}=\Lozi^{m+n-1}(\eta\cap C_{m,n}^{L})$,
and $\eta_{m,n}^{U}=\Lozi(\eta_{m,n}^{B})$. We apply the proposition
to the turning point of $\eta_{m,n}^{U}$.
\begin{cor}
\label{cor:Convergence of p}Let $M,n\geq2$ and $\ParaSpace\subset\ParaModel$
be relatively open. Assume that $C_{m,n}^{L}$ exists for all $(a,b)\in\ParaSpace$
and $m\geq M$. Also, let $(p_{m,n},0)$ be the turning point of $\eta_{m,n}^{U}$.
Then
\[
\lim_{m\rightarrow\infty}\partial_{a}^{i}\partial_{b}^{j}p_{m,n}=\partial_{a}^{i}\partial_{b}^{j}p_{\infty,n}
\]
locally uniformly on $\ParaSpace$ for $(i,j)=(0,0),(0,1),(1,0)$.
\end{cor}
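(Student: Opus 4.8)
The plan is to recognize this corollary as a direct specialization of Proposition~\ref{prop:Convergence of a horizontal line} to the horizontal line $L=\eta$, so the real content is to verify the hypotheses of that proposition and to match the two descriptions of the turning point.

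First I would identify the branch word that sends $\eta$ to $\eta_{m,n}^{U}$. By Proposition~\ref{prop:Itinerary of a point in C_mn} every point of $C_{m,n}^{L}$ is $\iota_{\pm,m,n}$-admissible with $\iota_{\Sign,m,n}=(+(-)^{m-2}++(-)^{n-2}\Sign)$. Reading the first $m+n-1$ symbols and using $\Lozi_{I}=\Lozi_{I_{N}}\circ\cdots\circ\Lozi_{I_{1}}$, the restriction satisfies $\Lozi^{m+n-1}|_{C_{m,n}^{L}}=\Lozi_{-}^{n-2}\circ\Lozi_{+}^{2}\circ\Lozi_{-}^{m-2}\circ\Lozi_{+}$. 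Hence $\eta_{m,n}^{B}=\Lozi_{-}^{n-2}\circ\Lozi_{+}^{2}\circ\Lozi_{-}^{m-2}\circ\Lozi_{+}(\eta_{m,n})$, and the turning point of $\eta_{m,n}^{U}=\Lozi(\eta_{m,n}^{B})$ is exactly the turning point of $\Lozi\circ\Lozi_{-}^{n-2}\circ\Lozi_{+}^{2}\circ\Lozi_{-}^{m-2}\circ\Lozi_{+}$, which is the very composition appearing in Proposition~\ref{prop:Convergence of a horizontal line}.

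Next I would check the hypotheses with $L=\eta$. Since $\ParaSpace\subset\ParaModel\subset\ParaFull$, the ambient-space requirement holds. The slope of $\eta$ is $s=0$, which is analytic on $\ParaSpace$ and satisfies $|s|\leq\frac{1}{\lambda}$. The intersection of $\eta$ with $W^{S}(\Lozi_{+},\boldsymbol{z}_{+})$ is obtained from the stable line through $\boldsymbol{z}_{+}=(\zeta_{+},\zeta_{+})$ with contracting direction $(-\mu,1)$: forcing the $y$-coordinate to vanish gives the point $(\zeta_{+}(1+\mu),0)$, i.e.\ $v=-\zeta_{+}$ in the parameterization $(-\mu v+\zeta_{+},v+\zeta_{+})$. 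As $\zeta_{+}=1-\frac{2(b+1)}{a+b+1}$ is rational and analytic on $\ParaFull$, the quantity $v$ depends analytically on $(a,b)$, so all hypotheses are met.

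The only point needing care is reconciling the two notions of $(p_{m,n},0)$: Proposition~\ref{prop:Convergence of a horizontal line} folds the image of the \emph{whole} line $\eta$, whereas the corollary uses $\eta_{m,n}^{U}$, the fold of the image of the \emph{segment} $\eta_{m,n}=\eta\cap C_{m,n}^{L}$. Both turning points are the $\Lozi$-image of the unique crossing of the affine image-line with the critical locus $\kappa=\{x=0\}$; since $\eta_{m,n}^{B}\subset B$ has both ends attached to $\beta_{1}$ and $\beta_{2}$ and therefore straddles $\kappa$, that crossing lies on the segment, so the two values coincide. Applying Proposition~\ref{prop:Convergence of a horizontal line} with this $s$ and $v$ then gives $\lim_{m\to\infty}\partial_{a}^{i}\partial_{b}^{j}p_{m,n}=\partial_{a}^{i}\partial_{b}^{j}p_{\infty,n}$ locally uniformly on $\ParaSpace$ for $(i,j)=(0,0),(0,1),(1,0)$. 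The hard part is thus purely combinatorial bookkeeping—matching the branch word read off from $\iota_{\pm,m,n}$ to the composition in the proposition and confirming the restriction to $C_{m,n}^{L}$ does not displace the turning point; the analytic convergence itself is quoted verbatim.
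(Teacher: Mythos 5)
Your proposal is correct and follows essentially the same route as the paper: the paper likewise replaces the segment $\eta_{m,n}$ by the whole line $\eta$, observes that $\eta_{m,n}^{B}=\bigl(\Lozi_{-}^{n-2}\circ\Lozi_{+}^{2}\circ\Lozi_{-}^{m-2}\circ\Lozi_{+}(\eta)\bigr)\cap B$ so the two folds share the turning point $(p_{m,n},0)$, and then quotes Proposition \ref{prop:Convergence of a horizontal line}. Your extra verifications (reading the branch word off $\iota_{\pm,m,n}$, checking $s=0$ and $v=-\zeta_{+}$ are analytic with $|s|\leq\frac{1}{\lambda}$) are details the paper leaves implicit, and they check out.
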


\begin{proof}
Let $\hat{\eta}_{m,n}^{B}=\Lozi_{-}^{n-2}\circ\Lozi_{+}^{2}\circ\Lozi_{-}^{m-2}\circ\Lozi_{+}(\eta)$
and $\hat{\eta}_{m,n}^{U}=\Lozi(\hat{\eta}_{m,n}^{B})$. Then $\eta_{m,n}^{B}=\hat{\eta}_{m,n}^{B}\cap B$.
Hence $\eta_{m,n}^{U}$ and $\hat{\eta}_{m,n}^{U}$ share the same
turning point $(p_{m,n},0)$. Therefore, the conclusion follows from
Proposition \ref{prop:Convergence of a horizontal line}.
\end{proof}

\subsection{The exponential convergence of the turning points}

We show that the turning points $\{u_{m}^{d}\}_{m=2}^{\infty}$, where
$d\in\{L,R\}$, converge exponentially.
\begin{prop}
\label{prop:Exponential convergence of u}There exist constants $c_{2}>c_{1}>0$
such that 
\[
c_{1}\left(1-\frac{1}{\lambda^{m-1}}\right)\lambda\left(\frac{b}{\lambda}\right)^{m-1}\leq u_{\infty}-u_{m}^{d}\leq c_{2}\left(1-\frac{1}{\lambda^{m-1}}\right)\lambda\left(\frac{b}{\lambda}\right)^{m-1}
\]
for all $(a,b)\in\ParaModel$, $m\geq2$, and $d\in\{L,R\}$.
\end{prop}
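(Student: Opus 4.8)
The plan is to reduce the turning point $u_m^{d}$ to the explicit turning-point formula of Lemma~\ref{lem:Turning point}, and then to read the two-sided estimate off the separate decay rates of the slope and of the stable intersection parameter. First I would write $U_m=\Lozi(B_m)$ with $B_m=\Lozi_{-}^{m-2}\circ\Lozi_{+}(C_m)$, so that each boundary turning point $u_m^{d}$ ($d\in\{L,R\}$) is the turning point of $\Lozi(L_m^{d})$, where $L_m^{d}$ is the forward image under $\Lozi_{-}^{m-2}$ of the appropriate boundary segment of $\Lozi_{+}(C_m)$. Since $L_m^{d}$ sits near $W^{U}(\boldsymbol{z}_-)$, it is transverse to $W^{S}(\Lozi_-,\boldsymbol{z}_-)$; writing $s_m^{d}$ for its slope and $v_m^{d}$ for its intersection parameter with $W^{S}(\Lozi_-,\boldsymbol{z}_-)$ in the sense of Lemma~\ref{lem:Turning point} (with $\Sign=-$, $\zeta_-=-1$), that lemma gives
\[
u_m^{d}=a-1-b\,s_m^{d}-(1-s_m^{d}\mu)\,b\,v_m^{d}.
\]
Letting $s_m^{d}\to\tfrac1\lambda$ and $v_m^{d}\to 0$ yields $u_\infty=a-1-\mu$, and I would check $u_\infty=\lambda-1$ (the $x$-axis crossing of $W_0^{U}(\boldsymbol{z}_-)$) from the characteristic identity $\lambda^{2}-a\lambda+b=0$. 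Subtracting gives the basic formula
\[
u_\infty-u_m^{d}=b\Big(s_m^{d}-\tfrac1\lambda\Big)+(1-s_m^{d}\mu)\,b\,v_m^{d}.
\]

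Next I would control the two terms with the convergence results already in hand. The stable intersection parameter obeys the \emph{exact} recursion $v_{k}=-\Sign\mu\,v_{k-1}$ of Lemma~\ref{lem:Iteration of points on the stable manifold}, so $v_m^{d}=\mu^{m-2}v_0^{d}$ with $\mu=b/\lambda$, giving a clean factor $(b/\lambda)^{m-2}$; the slope term is governed by the two-sided bound of Proposition~\ref{prop:Exponential convergence of slope}, which decays at the strictly faster rate $(b/\lambda^{2})^{m-2}$. Because $b/\lambda^{2}<b/\lambda$ by Lemma~\ref{lem:Bounds of lambda}, the slope contribution is dominated by the intersection contribution and can be absorbed into the constants, while $(1-s_m^{d}\mu)$ stays bounded above and away from $0$ on $\ParaModel$ (using Lemma~\ref{lem:upper bound of b/l^2}). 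Thus $|u_\infty-u_m^{d}|$ is comparable to $b\,(b/\lambda)^{m-2}=\lambda\,(b/\lambda)^{m-1}$ times a bounded factor, which already reproduces the target rate and, via Proposition~\ref{prop:Convergence of a horizontal line}, the qualitative convergence; the present proposition is the quantitative two-sided sharpening.

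The remaining and most delicate point is the factor $(1-\lambda^{-(m-1)})$, which \emph{cannot} arise from a fixed initial line and must therefore encode the $m$-dependence of the boundary of $C_m$. Here I would track $v_0^{d}=v_0^{d}(m)$ carefully: the segment $\Lozi_{+}(\gamma_m)$ lies on $\beta_m=\Pi_{-}^{m-1}(\beta_1)$, and the backward iterates $\beta_m$ approach $\beta_\infty=W_0^{S}(\boldsymbol{z}_-)$ with a deviation transverse to $W^{S}(\boldsymbol{z}_-)$ that contracts by one factor $1/\lambda$ per step; summing these contributions should produce $v_0^{d}(m)$ comparable to $1-\lambda^{-(m-1)}$, uniformly in $(a,b)$, supplying exactly the missing factor. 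Assembling the estimates, checking both $d\in\{L,R\}$, and verifying the degenerate endpoint $b=0$ (where both sides vanish) would then close the argument with constants $c_2>c_1>0$ independent of $(a,b)\in\ParaModel$ and of $m$.

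The hard part will be this last step: extracting the precise factor $1-\lambda^{-(m-1)}$ while simultaneously keeping $c_1$ and $c_2$ uniform. The \textbf{lower} bound is the sensitive direction, since there one must show that the faster-decaying, possibly oppositely signed slope term does not cancel the leading intersection term; this is where the two-sided nature of Proposition~\ref{prop:Exponential convergence of slope}, together with a sign/monotonicity analysis of $v_0^{d}(m)$ and the uniform bounds of Lemmas~\ref{lem:Bounds of lambda} and \ref{lem:upper bound of b/l^2}, becomes essential.
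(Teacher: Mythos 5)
Your reduction is the same one the paper uses: the turning points $u_m^{d}$ are the turning points of $\Lozi(L_m^{\Sign})$ with $L_m^{\Sign}=\Lozi_-^{m-2}\circ\Lozi_+(\{y=\Sign\})$, and subtracting the limit gives $u_\infty-u_m^{d}=b\bigl(s_m-\tfrac1\lambda\bigr)+\bigl(1-\tfrac{b}{\lambda}s_m\bigr)b\,v_m$, which is exactly the paper's identity for $b(k_m^{\Sign}-k_\infty)$. The gap is in what you do with the slope term. You propose to absorb it into the constants because it decays at the faster rate $(b/\lambda^{2})^{m-2}$; but the ratio of the slope term to the intersection term is of order $\lambda^{-(m-1)}$, and on $\ParaModel$ this is \emph{not} bounded away from $1$: taking $b=0$ and $a\to1^{+}$ gives $\lambda\to1^{+}$. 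Since $s_m\nearrow s_\infty$, the slope term is negative and nearly cancels the positive intersection term in that regime, so absorbing it destroys the lower bound. The factor $\bigl(1-\lambda^{-(m-1)}\bigr)$ is precisely the record of this cancellation: with $v_2^{\Sign}=2+O(b)$ and $s_\infty-s_2=\tfrac1\lambda+\tfrac1a=\tfrac2\lambda+O(b)$ one gets $v_m^{\Sign}\approx 2(b/\lambda)^{m-2}$ and $s_\infty-s_m\approx\tfrac2\lambda(b/\lambda^{2})^{m-2}=2\lambda^{-(m-1)}(b/\lambda)^{m-2}$, whose difference is $2(b/\lambda)^{m-2}\bigl(1-\lambda^{-(m-1)}\bigr)$. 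Extracting this requires the \emph{two-sided} estimate of Proposition \ref{prop:Exponential convergence of slope} with leading constants that match those of the $v$-term — not a crude domination argument.

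Consequently your final paragraph looks for the factor in the wrong place. The lines fed into the iteration are the horizontal boundaries $\{y=\pm1\}$ of $\mathbb{R}\times\Iv$; they, and hence the initial intersection parameter $v_2^{\Sign}$ of $\Lozi_+(\{y=\Sign\})$ with $W^{S}(\Lozi_-,\boldsymbol{z}_-)$, are independent of $m$ — the only $m$-dependence is the iteration count. The lateral boundaries $\gamma_{m-1},\gamma_m$ of $C_m$ do not enter the location of the fold of $U_m$ at all (the turning point of $\Lozi(L)$ depends only on the line $L$, and the relevant segments of $L_m^{\Sign}$ all cross the critical locus since they join $\beta_2$ to $\beta_1$). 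So there is no quantity $v_0^{d}(m)\sim 1-\lambda^{-(m-1)}$ to be produced by summing deviations of $\beta_m$ from $\beta_\infty$; that step of your plan has nothing to attach to, and the missing factor must instead come from the cancellation described above.
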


\begin{proof}
For $\Sign\in S$ and $m\geq2$, let $L_{2}^{\Sign}=\Lozi_{+}(\{y=\Sign\})$,
and $L_{m}^{\Sign}=\Lozi_{-}^{m-2}(L_{2}^{\Sign})$. We note that
$L_{m}^{-}$ and $L_{m}^{+}$ are parallel. Let $s_{m}$ be the slope
of $L_{m}^{\Sign}$, $(\frac{b}{\lambda}v_{m}^{\Sign}+\zeta_{-},v_{m}^{\Sign}+\zeta_{-})$
be the intersection point of $L_{m}^{\Sign}$ and $W^{S}(\Lozi_{-},\boldsymbol{z}_{-})$,
and $(0,k_{m}^{\Sign})$ be the intersection point of $L_{m}^{\Sign}$
and the critical locus $\kappa$. Moreover, let $L_{\infty}=W^{U}(\Lozi_{-},\boldsymbol{z}_{-})$,
$s_{\infty}=\frac{1}{\lambda}$, and $(0,k_{\infty})$ be the intersection
point of $L_{\infty}$ and the critical locus $\kappa$. Then $(u_{m}^{L},0)=\Lozi(0,k_{m}^{-})$,
$(u_{m}^{R},0)=\Lozi(0,k_{m}^{+})$, $(u_{\infty},0)=\Lozi(0,k_{\infty})$,
and 
\begin{equation}
k_{m}^{\Sign}-k_{\infty}=\left(1-\frac{b}{\lambda}s_{m}\right)v_{m}^{\Sign}-(s_{\infty}-s_{m})\label{eq:substraction of k}
\end{equation}
 for all $m\geq2$. Also, $s_{2}=-\frac{1}{a}$, $s_{m+1}=f_{-}(s_{m})$,
$v_{2}^{\Sign}=2-\frac{(1+\Sign)\lambda+2}{\lambda^{2}+2b}b$, and
$v_{m}^{\Sign}=(\frac{b}{\lambda})^{m-2}v_{2}^{\Sign}$ for all $m\geq2$.

To estimate the upper bound, we have $s_{m}\geq-\frac{1}{\lambda}$.
By Proposition \ref{prop:Exponential convergence of slope}, we have
\begin{align*}
s_{\infty}-s_{m} & \geq\left(1-c\frac{b}{\lambda^{2}}\right)\left(\frac{b}{\lambda^{2}}\right)^{m-2}(s_{\infty}-s_{2})=\frac{2}{\lambda}\left(1-\frac{cb}{\lambda^{2}}-\frac{b}{2\lambda a}+\frac{cb^{2}}{2\lambda^{3}a}\right)\left(\frac{b}{\lambda^{2}}\right)^{m-2}\\
 & \geq\frac{2}{\lambda}\left[1-\left(c+\frac{1}{2}\right)\frac{b}{\lambda^{2}}\right]\left(\frac{b}{\lambda^{2}}\right)^{m-2}
\end{align*}
for some constant $c>0$. Also, $v_{2}^{\Sign}\leq2$ and $s_{m}\geq-\frac{1}{\lambda}$.
Thus, (\ref{eq:substraction of k}) becomes 
\[
k_{m}^{\Sign}-k_{\infty}\leq2\left[1-\frac{1}{\lambda^{m-1}}+\left(c+\frac{3}{2}\right)\frac{b}{\lambda^{2}}\right]\left(\frac{b}{\lambda}\right)^{m-2}.
\]
We note that 
\[
\left(1-\frac{1}{\lambda^{m-1}}\right)^{-1}\frac{b}{\lambda^{2}}=\left(1+\frac{1}{\lambda^{m-1}-1}\right)\frac{b}{\lambda^{2}}\leq\frac{b}{\lambda^{2}}+\frac{b}{\lambda-1}<\frac{5}{8}
\]
by Lemmas \ref{lem:Bounds of lambda} and \ref{lem:upper bound of b/l^2}.
This proves that 
\[
k_{m}^{\Sign}-k_{\infty}\leq c_{2}\left(1-\frac{1}{\lambda^{m-1}}\right)\left(\frac{b}{\lambda}\right)^{m-2}
\]
for some $c_{2}>0$.

We estimate the lower bound. When $m=2$, we have $s_{2}=-\frac{1}{a}$,
$s_{\infty}-s_{2}\leq\frac{2}{\lambda}$, and $v_{2}^{\Sign}\geq2(1-\frac{\lambda+1}{\lambda a}b)$.
Also, by Lemma \ref{lem:Existence of beta_infty, gamma_infty}, we
have $v_{2}^{\Sign}>1$. Hence, (\ref{eq:substraction of k}) becomes
\begin{align*}
k_{2}^{\Sign}-k_{\infty} & =v_{2}^{\Sign}+\frac{b}{\lambda a}v_{2}^{\Sign}-(s_{\infty}-s_{2})\\
 & \geq2\left(1-\frac{\lambda+1}{\lambda a}b\right)+\frac{b}{\lambda a}-\frac{2}{\lambda}=2\left[1-\frac{1}{\lambda}-\left(1+\frac{1}{2\lambda}\right)\frac{b}{a}\right].
\end{align*}
By Lemma \ref{lem:Bounds of lambda}, we obtain 
\[
\left(1-\frac{1}{\lambda}\right)^{-1}\left(1+\frac{1}{2\lambda}\right)\frac{b}{a}=\left(b+\frac{b}{\lambda-1}\right)\left(1+\frac{1}{2\lambda}\right)\frac{1}{a}<\left(b+\frac{1}{2}\right)\frac{3}{2}\frac{1}{2b+1}=\frac{3}{4}.
\]
This yields
\[
k_{2}^{\Sign}-k_{\infty}\geq\frac{1}{4}\left(1-\frac{1}{\lambda}\right).
\]

When $m\geq3$, we have $s_{m}\leq\frac{1}{\lambda}$ and $v_{2}^{\Sign}\geq2(1-\frac{\lambda+1}{\lambda^{2}}b)$.
By Proposition \ref{prop:Exponential convergence of slope}, we have
\[
s_{\infty}-s_{m}\leq\left(\frac{b}{\lambda^{2}}\right)^{m-2}(s_{\infty}-s_{2})\leq\frac{2}{\lambda}\left(\frac{b}{\lambda^{2}}\right)^{m-2}.
\]
 Hence, (\ref{eq:substraction of k}) becomes
\[
k_{m}^{\Sign}-k_{\infty}\geq2\left(1-\frac{1}{\lambda^{m-1}}-\frac{\lambda+2}{\lambda^{2}}b\right)\left(\frac{b}{\lambda}\right)^{m-2}.
\]
Note that 
\[
\left(1-\frac{1}{\lambda^{m-1}}\right)^{-1}\frac{\lambda+2}{\lambda^{2}}b\leq\left(1-\frac{1}{\lambda^{2}}\right)^{-1}\frac{\lambda+2}{\lambda^{2}}b=\left(1+\frac{1}{\lambda+1}\right)\frac{b}{\lambda-1}<\frac{3}{4}.
\]
by Lemma \ref{lem:Bounds of lambda}. Thus,
\[
k_{m}^{\Sign}-k_{\infty}\geq\frac{1}{4}\left(1-\frac{1}{\lambda^{m-1}}\right)\left(\frac{b}{\lambda}\right)^{m-2}.
\]

Finally, we have $u_{\infty}-u_{m}^{L}=b(k_{m}^{-}-k_{\infty})$ and
$u_{\infty}-u_{m}^{R}=b(k_{m}^{+}-k_{\infty})$. This completes the
proof.
\end{proof}

\subsection{\label{subsec:Geometry-backward iterates}The backward iterates of
a line}

We consider lines $L$ parameterized by its vertical slope and the
intersection point of $L$ and $W^{U}(\Lozi_{\Sign},\boldsymbol{z}_{\Sign})$,
where $\Sign\in S$. We take the preimages of $L$ by the branches
$\Lozi_{-}$ and $\Lozi_{+}$. We derive the corresponding transformations
for the slope and the intersection point. By using the transformations,
we prove a version of the inclination lemma (Propositions \ref{prop:Convergence of the vertical slope}
and \ref{prop:Convergence of the reverse orbit on the unstable manifold})
for the Lozi maps. We first show that $\Lozi^{-(m+n-2)}(\kappa\cap B_{m,n})\rightarrow W^{S}(\Lozi_{-},\boldsymbol{z}_{-})$
uniformly as $m\rightarrow\infty$. Then we can use $W^{S}(\boldsymbol{z}_{-})$
to estimate the value of $q$.

\subsubsection{The transformation for the slope}

Let $g_{\Sign}(s)=-\frac{b}{s+\Sign a}$ where $\Sign\in S$. We note
that $-\Sign\mu$ is the stable fixed point of $g_{\Sign}$.
\begin{lem}
\label{lem:Preimage of lines}Let $L$ be a line and $s$ be the vertical
slope of $L$. If $s\neq-\Sign a$, then the vertical slope of $\Lozi_{\Sign}^{-1}(L)$
is $g_{\Sign}(s)$.
\end{lem}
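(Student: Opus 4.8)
The plan is to prove the lemma by a direct affine computation, exactly parallel to the forward statement Lemma~\ref{lem:Iteration of lines} but carried out in the vertical-slope coordinate. First I would record the meaning of the hypothesis: that $L$ has vertical slope $s$ means $L$ can be written in the form $x = s\,y + c$ for some constant $c$, i.e. $x$ is an affine function of $y$ with leading coefficient $s$. The whole argument is then a one-line substitution followed by solving a linear equation, so I expect no real obstacle; the only point demanding care is the role of the hypothesis $s\neq-\Sign a$ in keeping the preimage non-horizontal.

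Next I would characterize the preimage as a set. A point $(x,y)$ lies in $\Lozi_{\Sign}^{-1}(L)$ if and only if its image $\Lozi_{\Sign}(x,y)=(-\Sign a x-by+(a-b-1),\,x)$ satisfies the defining relation of $L$. Writing the image coordinates as $\pi_1=-\Sign a x-by+(a-b-1)$ and $\pi_2=x$, the membership condition $\pi_1=s\,\pi_2+c$ becomes
\[
-\Sign a x-by+(a-b-1)=s\,x+c.
\]
Collecting the $x$-terms gives $-(s+\Sign a)x=by+c-(a-b-1)$. Since the coefficient $-(s+\Sign a)$ of $x$ is nonzero precisely under the hypothesis $s\neq-\Sign a$, I can solve for $x$ as an affine function of $y$,
\[
x=-\frac{b}{s+\Sign a}\,y+\frac{(a-b-1)-c}{s+\Sign a},
\]
so that $\Lozi_{\Sign}^{-1}(L)$ is again a line on which $x$ is affine in $y$, with vertical slope $-\frac{b}{s+\Sign a}=g_{\Sign}(s)$. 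This is the assertion. Note the computation is purely algebraic and uses only the set-theoretic preimage, so it needs no invertibility of $\Lozi_{\Sign}$; when $s+\Sign a=0$ the coefficient of $x$ vanishes and the preimage degenerates to a horizontal line, which carries no vertical slope, explaining the excluded case.

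As a consistency check supporting the accompanying remark, I would verify that $-\Sign\mu$ is the stable fixed point of $g_{\Sign}$. Substituting $s=-\Sign\mu$ and using $\mu=b/\lambda$ together with the relation $\lambda^2-a\lambda+b=0$ (so $a-\mu=\lambda$) gives $g_{\Sign}(-\Sign\mu)=-\frac{b}{\Sign(a-\mu)}=-\frac{b}{\Sign\lambda}=-\Sign\mu$, confirming it is fixed. Moreover $g_{\Sign}'(s)=\frac{b}{(s+\Sign a)^2}$, and at $s=-\Sign\mu$ this equals $\frac{b}{(a-\mu)^2}=\frac{b}{\lambda^2}=\frac{\mu}{\lambda}$, which is $<1$ whenever $(a,b)\in\ParaFull$ by Lemma~\ref{lem:Expanding multiplier} (giving $0\le\mu<1$ and $\lambda>1$); hence the fixed point is attracting.
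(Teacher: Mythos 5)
Your computation is correct and is exactly the routine affine verification the paper leaves implicit (the lemma is stated without proof): substituting the image coordinates of $\Lozi_{\Sign}$ into $x=sy+c$ and solving for $x$ yields vertical slope $-\frac{b}{s+\Sign a}=g_{\Sign}(s)$, with the hypothesis $s\neq-\Sign a$ ensuring the coefficient of $x$ is nonzero. Your consistency check that $-\Sign\mu$ is an attracting fixed point of $g_{\Sign}$ (using $\lambda+\mu=a$) also agrees with the remark preceding the lemma.
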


\begin{lem}
\label{lem:Invariance of the stable cone}Suppose that $(a,b)\in\ParaFull$.
The following are true.
\begin{enumerate}
\item The interval $[-\mu,\mu]$ is $g_{\Sign}$-invariant.
\item $0<g_{\Sign}^{\prime}(s)\leq\frac{b}{\lambda^{2}}$ for $s\in[-\mu,\mu]$.
\item $g_{-}(s)\geq s$ and $g_{+}(s)\leq s$ for $s\in[-\mu,\mu]$.
\end{enumerate}
\end{lem}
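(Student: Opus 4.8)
The plan is to prove all three items by direct computation, exactly parallel to the unstable-cone Lemma~\ref{lem:Invariance of the unstable cone}, with $f_{\Sign}$ replaced by $g_{\Sign}$ and the cone $[-\frac{1}{\lambda},\frac{1}{\lambda}]$ replaced by $[-\mu,\mu]$. First I would record the two algebraic facts that drive everything. Since $\lambda$ and $\mu=\frac{b}{\lambda}$ are the two roots of $t^{2}-at+b=0$, Vieta's relations give $\lambda+\mu=a$ and $\lambda\mu=b$; in particular $a-\mu=\lambda$. Consequently the fixed-point polynomial of $g_{\Sign}$ factors as $s^{2}+\Sign a s+b=(s+\Sign\lambda)(s+\Sign\mu)$, so the two fixed points of $g_{\Sign}$ are the unstable $-\Sign\lambda$ and the stable $-\Sign\mu$, consistent with the remark preceding the statement. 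Throughout I assume $b>0$; when $b=0$ we have $\mu=0$, the interval $[-\mu,\mu]$ degenerates to the single point $\{0\}$, and $g_{\Sign}\equiv0$, so every item holds trivially. I also invoke Lemma~\ref{lem:Expanding multiplier} to use $0\le\mu<1<\lambda$.

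Next I would settle items (2) and (3), from which (1) then follows cleanly. For (2), differentiation gives $g_{\Sign}'(s)=\frac{b}{(s+\Sign a)^{2}}>0$, and for $s\in[-\mu,\mu]$ one has $\abs{s+\Sign a}\ge a-\mu=\lambda$, hence $(s+\Sign a)^{2}\ge\lambda^{2}$ and therefore $g_{\Sign}'(s)\le\frac{b}{\lambda^{2}}$. For (3), the factorization yields $g_{\Sign}(s)-s=-\frac{s^{2}+\Sign a s+b}{s+\Sign a}=-\frac{(s+\Sign\lambda)(s+\Sign\mu)}{s+\Sign a}$. On $[-\mu,\mu]$ the factor $s+\Sign\lambda$ has the sign of $\Sign$ (since $\abs{s}\le\mu<\lambda$) and $s+\Sign\mu$ has the sign of $\Sign$ or vanishes, so the numerator $(s+\Sign\lambda)(s+\Sign\mu)$ is $\ge0$; the denominator $s+\Sign a$ has the sign of $\Sign$ (since $\abs{s}\le\mu<a$). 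Thus the quotient has the sign of $\Sign$, and the leading minus sign gives $g_{-}(s)\ge s$ and $g_{+}(s)\le s$.

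Finally I would deduce the invariance (1) from the previous two items. By (2), $g_{\Sign}$ is strictly increasing on $[-\mu,\mu]$, and by the factorization the endpoint $-\Sign\mu$ is a fixed point. For $\Sign=+1$ the left endpoint $-\mu$ is fixed, while $g_{+}(\mu)\le\mu$ by (3) and $g_{+}(\mu)\ge g_{+}(-\mu)=-\mu$ by monotonicity, so $g_{+}([-\mu,\mu])\subset[-\mu,\mu]$; the case $\Sign=-1$ is symmetric, the right endpoint $\mu$ being fixed and $g_{-}(-\mu)\ge-\mu$ from (3) supplying the lower bound. I do not expect a genuine obstacle: the only points requiring care are the sign bookkeeping in (3) and the identity $a-\mu=\lambda$ used in (2), after which invariance is immediate. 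The argument reuses the template of Lemma~\ref{lem:Invariance of the unstable cone} verbatim in structure.
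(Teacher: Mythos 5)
Your proof is correct. The paper states this lemma without proof (as it does the parallel Lemma \ref{lem:Invariance of the unstable cone}), so there is no authorial argument to compare against; your direct computation via the Vieta relations $\lambda+\mu=a$ and $\lambda\mu=b$ --- giving $a-\mu=\lambda$ for item (2) and the factorization $s^{2}+\Sign as+b=(s+\Sign\lambda)(s+\Sign\mu)$ for item (3), with invariance then following from monotonicity and the fixed endpoint --- is the standard argument and matches the technique the paper does exhibit in the proof of Theorem \ref{thm:Invaniance of the universal cones}. The only blemish is the degenerate case $b=0$, where $g_{\Sign}\equiv0$ and the strict inequality $0<g_{\Sign}'(s)$ in item (2) actually fails rather than ``holding trivially''; this is a defect of the lemma's statement itself (shared by Lemma \ref{lem:Invariance of the unstable cone}) and not of your argument, but you should not assert it holds.
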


\begin{prop}
\label{prop:Convergence of the vertical slope}Let $\ParaSpace\subset\ParaFull$
be relatively open, $L$ be a line, $s_{0}$ be the vertical slope
of $L$, $\Sign\in S$, $s_{m}$ be the vertical slope of $\Lozi_{\Sign}^{-m}(L)$
for $m>0$, and $s_{\infty}=-\Sign\mu$ be the vertical slope of $W^{S}(\Lozi_{\Sign},\boldsymbol{z}_{\Sign})$.
If $s_{0}$ depends analytically on $(a,b)\in\ParaSpace$ and $|s_{0}|\leq\mu$,
then
\[
\lim_{m\rightarrow\infty}\partial_{a}^{i}\partial_{b}^{j}s_{m}=\partial_{a}^{i}\partial_{b}^{j}s_{\infty}
\]
locally uniformly on $\ParaSpace$ for $(i,j)=(0,0),(0,1),(1,0)$.
\end{prop}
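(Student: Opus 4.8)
The plan is to mirror the proof of Proposition~\ref{prop:Convergence of the horizontal slope}, replacing the forward slope map $f_{\Sign}$ by the backward slope map $g_{\Sign}$. By Lemma~\ref{lem:Preimage of lines}, the vertical slope is transformed under $\Lozi_{\Sign}^{-1}$ by $g_{\Sign}$, so that $s_{m}=g_{\Sign}^{m}(s_{0})$, and $s_{\infty}=-\Sign\mu$ is precisely the attracting fixed point of $g_{\Sign}$. Lemma~\ref{lem:Invariance of the stable cone} supplies the two ingredients we need: $[-\mu,\mu]$ is $g_{\Sign}$-invariant, and $0<g_{\Sign}^{\prime}(s)\le b/\lambda^{2}$ there. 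Since $b/\lambda^{2}=\mu/\lambda<1$ on $\ParaFull$ by Lemma~\ref{lem:Expanding multiplier}, the map $g_{\Sign}$ is a genuine contraction on the real interval $[-\mu,\mu]$, with a contraction factor strictly below $1$.

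First I would fix $(a_{0},b_{0})\in\ParaSpace$ and complexify, viewing $g_{\Sign}$ as a holomorphic map in $(a,b,s)\in\C^{3}$. Using the strict bound $g_{\Sign}^{\prime}<1$ together with continuity, I would choose a convex complex neighborhood $U\times V\supset\{(a_{0},b_{0})\}\times[-\mu,\mu]$ on which $g_{\Sign}$ is a uniform contraction in $s$ with some factor $c<1$ for all $(a,b)\in U$. Because $V$ is convex and contains the fixed point $s_{\infty}$, this contraction forces $g_{\Sign}(V)\subset V$ for every $(a,b)\in U$.

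Next, since $s_{0}$ is analytic with $\left|s_{0}(a_{0},b_{0})\right|\le\mu$, after shrinking $U$ I may assume that $s_{0}$ extends holomorphically to $U$ and that $s_{0}(U)\subset V$ by continuity. Iterating, $s_{m}(U)=g_{\Sign}^{m}(s_{0})(U)\subset V$ for all $m$, and the uniform contraction gives $s_{m}\to s_{\infty}$ uniformly on $U$. The functions $s_{m}$ are holomorphic on $U$, so by the Weierstrass convergence theorem \cite[P.73]{St10} all partial derivatives converge uniformly on a smaller neighborhood; in particular $\partial_{a}^{i}\partial_{b}^{j}s_{m}\to\partial_{a}^{i}\partial_{b}^{j}s_{\infty}$ for $(i,j)=(0,0),(0,1),(1,0)$. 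As $(a_{0},b_{0})$ was arbitrary, the convergence is locally uniform on $\ParaSpace$.

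The argument is essentially routine once the contraction estimate of Lemma~\ref{lem:Invariance of the stable cone} is in hand; the one point requiring care is the complexification step, namely promoting the real self-mapping $g_{\Sign}([-\mu,\mu])\subset[-\mu,\mu]$ to a uniform complex self-mapping $g_{\Sign}(V)\subset V$ holding simultaneously for all $(a,b)\in U$. This is exactly where the strict inequality $b/\lambda^{2}<1$ (rather than merely $\le 1$) is needed, so that a small complex perturbation of both the parameters and of the slope variable cannot destroy invariance of the neighborhood or the contraction. I expect no further obstacle, since the structure is identical to the forward case already treated in Proposition~\ref{prop:Convergence of the horizontal slope}.
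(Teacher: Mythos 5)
Your proposal is correct and follows essentially the same route as the paper's own proof: complexify $g_{\Sign}$, use Lemma \ref{lem:Invariance of the stable cone} to get a uniform contraction on a convex neighborhood $U\times V\supset\{(a_{0},b_{0})\}\times[-\mu,\mu]$, deduce $g_{\Sign}(V)\subset V$, and conclude with the Weierstrass convergence theorem. The only addition is your explicit observation that $b/\lambda^{2}=\mu/\lambda<1$, which the paper leaves implicit.
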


\begin{proof}
Let $(a_{0},b_{0})\in\ParaSpace$. If we view $g_{\Sign}$ as a map
on $(a,b,s)\in\mathbb{C}^{3}$, then it is a uniform contraction in
$s\in V$ on a complex convex neighborhood $U\times V\supset\{(a_{0},b_{0})\}\times[-\mu,\mu]$
by Lemma \ref{lem:Invariance of the stable cone} and continuity.
Then $g_{\Sign}(V)\subset V$ since $V$ is convex and $s_{\infty}\in V$
is an attracting fixed point of $g_{\Sign}$.

Furthermore, we may assume without lose of generality that $s_{0}$
has an analytic continuation on $U$. By continuity, we may also assume
that $s_{0}(U)\subset V$. Then $s_{m}(U)\subset V$ since $s_{m}=g_{\Sign}^{m}(s_{0})$
for all $m\geq0$. Consequently, $\lim_{m\rightarrow\infty}s_{m}=s_{\infty}$
uniformly on $U$. The conclusion follows from the Weierstrass' convergence
theorem \cite[P.73]{St10}. 
\end{proof}

\subsubsection{The transformation for the intersection point}

We now consider the backward orbit of a point on $W^{U}(\Lozi_{\Sign},\boldsymbol{z}_{\Sign})$
for $\Sign\in S$.
\begin{lem}
\label{lem:Reverse orbit on the unstable manifold}Suppose that $(a,b)\in\ParaFull$.
Let $\Sign\in S$, $\boldsymbol{p}_{1}\in W^{U}(\Lozi_{\Sign},\boldsymbol{z}_{\Sign})$,
and $\boldsymbol{p}_{m+1}=\Lozi_{\Sign}^{-m}(\boldsymbol{p}_{1})$
for $m\geq1$. If $\boldsymbol{p}_{1}=(v_{1}+\zeta_{\Sign},v_{2}+\zeta_{\Sign})$,
then $\boldsymbol{p}_{m}=(v_{m}+\zeta_{\Sign},v_{m+1}+\zeta_{\Sign})$
for all $m\geq1$, where $v_{m+1}=-\Sign\lambda^{-1}v_{m}$.
\end{lem}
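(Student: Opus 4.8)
The plan is to prove the formula by induction on $m$, propagating the coordinate representation $\boldsymbol{p}_m=(v_m+\zeta_{\Sign},v_{m+1}+\zeta_{\Sign})$ through the inverse of the affine branch $\Lozi_{\Sign}$. The structural fact that makes a single scalar sequence $\{v_m\}$ encode the whole backward orbit is that the second coordinate of $\Lozi_{\Sign}(x,y)$ equals $x$; hence the first coordinate of $\Lozi_{\Sign}^{-1}(X,Y)$ is $Y$. Thus, once $\boldsymbol{p}_m=(v_m+\zeta_{\Sign},v_{m+1}+\zeta_{\Sign})$ is known, the first coordinate of $\boldsymbol{p}_{m+1}=\Lozi_{\Sign}^{-1}(\boldsymbol{p}_m)$ is automatically $v_{m+1}+\zeta_{\Sign}$, and only the second coordinate must be checked. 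For the base case $m=1$, the displacement $\boldsymbol{p}_1-\boldsymbol{z}_{\Sign}=(v_1,v_2)$ lies along the unstable direction $(-\Sign\lambda,1)$ since $\boldsymbol{p}_1\in W^{U}(\Lozi_{\Sign},\boldsymbol{z}_{\Sign})$, which forces $v_2=-\Sign\lambda^{-1}v_1$ and initializes the recurrence.

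For the inductive step I would solve the branch explicitly (here $b>0$, so that $\Lozi_{\Sign}$ is invertible) to obtain
\[
\Lozi_{\Sign}^{-1}(X,Y)=\bigl(Y,\ b^{-1}(-\Sign a\,Y-X+(a-b-1))\bigr).
\]
Applying this to $\boldsymbol{p}_m$ and substituting the fixed-point identity $(a-b-1)=\zeta_{\Sign}(1+\Sign a+b)$, which is just $\Lozi_{\Sign}(\boldsymbol{z}_{\Sign})=\boldsymbol{z}_{\Sign}$, reduces the second coordinate of $\boldsymbol{p}_{m+1}$ to $\zeta_{\Sign}+b^{-1}(-\Sign a\,v_{m+1}-v_m)$. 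Using the inductive relation in the form $v_m=-\Sign\lambda v_{m+1}$, the parenthesis becomes $\Sign v_{m+1}(\lambda-a)=-\Sign\mu\,v_{m+1}$ by Vieta's relation $\lambda+\mu=a$ for the eigenvalues $-\Sign\lambda,-\Sign\mu$ of $\Dif\Lozi_{\Sign}$; and since $\mu=b/\lambda$ gives $b^{-1}\mu=\lambda^{-1}$, the second coordinate equals $\zeta_{\Sign}-\Sign\lambda^{-1}v_{m+1}=v_{m+2}+\zeta_{\Sign}$ with $v_{m+2}=-\Sign\lambda^{-1}v_{m+1}$. This closes the induction.

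The computation is routine, and there is no real obstacle; the single genuine point is the cancellation $\lambda-a=-\mu$ coming from $\lambda+\mu=a$, which is the hook that turns the algebra into the clean geometric multiplier $-\Sign\lambda^{-1}$. Conceptually this is unsurprising: $W^{U}(\Lozi_{\Sign},\boldsymbol{z}_{\Sign})$ is invariant under the branch, on which $\Lozi_{\Sign}$ acts as dilation from $\boldsymbol{z}_{\Sign}$ by the unstable multiplier $-\Sign\lambda$, so $\Lozi_{\Sign}^{-1}$ dilates by $-\Sign\lambda^{-1}$; combined with the coordinate-shift structure above this gives the recurrence immediately. I would nevertheless present the explicit induction as the primary argument, since it is self-contained and makes transparent the role of $\ParaFull$ in guaranteeing both invertibility ($b>0$) and the expansion $\lambda>1$ of Lemma \ref{lem:Expanding multiplier}.
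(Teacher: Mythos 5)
Your proof is correct, and since the paper states this lemma without proof, your explicit induction is exactly the evident intended argument: the shift structure of $\Lozi_{\Sign}^{-1}$ handles the first coordinate for free, and the cancellations $\lambda-a=-\mu$ and $\mu/b=\lambda^{-1}$ (Vieta for $t^{2}-at+b$) produce the multiplier $-\Sign\lambda^{-1}$. The only caveat is the degenerate case $b=0$, which $\ParaFull$ permits but for which your formula for $\Lozi_{\Sign}^{-1}$ breaks down; this is repaired at no cost by running the same computation forwards, i.e.\ verifying $\Lozi_{\Sign}(v_{m+1}+\zeta_{\Sign},v_{m+2}+\zeta_{\Sign})=(v_{m}+\zeta_{\Sign},v_{m+1}+\zeta_{\Sign})$ directly, which needs no division by $b$ and identifies $\boldsymbol{p}_{m+1}$ as the unique backward iterate lying on the invariant line $W^{U}(\Lozi_{\Sign},\boldsymbol{z}_{\Sign})$.
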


\begin{prop}
\label{prop:Convergence of the reverse orbit on the unstable manifold}Let
$\ParaSpace\subset\ParaFull$ be relatively open and $\{(v_{m}+\zeta_{\Sign},v_{m+1}+\zeta_{\Sign})\}_{1\leq m<\infty}$
be a backward orbit on $W^{U}(\Lozi_{\Sign},\boldsymbol{z}_{\Sign})$.
If $v_{1}$ depends analytically on $(a,b)\in\ParaSpace$, then
\[
\lim_{m\rightarrow\infty}\partial_{a}^{i}\partial_{b}^{j}v_{m}=0
\]
locally uniformly on $\ParaSpace$ for $(i,j)=(0,0),(0,1),(1,0)$.
\end{prop}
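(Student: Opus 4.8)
The plan is to mirror the argument of Proposition~\ref{prop:Convergence of the intersection point of mu} verbatim, with the single modification that the contraction factor $\mu$ is replaced by $\lambda^{-1}$. The backward dynamics on the unstable manifold, as recorded in Lemma~\ref{lem:Reverse orbit on the unstable manifold}, contracts the coordinate $v_{m}$ by the factor $\lambda^{-1}$ at each step, and Lemma~\ref{lem:Expanding multiplier} guarantees $\lambda>1$ on $\ParaFull$, hence $|\lambda^{-1}|<1$. So the geometric decay that drove the stable-manifold version is again available.

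First I would fix an arbitrary parameter $(a_{0},b_{0})\in\ParaSpace$ and pass to a complex convex neighborhood $U\subset\mathbb{C}^{2}$ of $(a_{0},b_{0})$ on which $v_{1}$ admits an analytic continuation (possible by hypothesis). Shrinking $U$ if necessary, I would arrange simultaneously that $v_{1}$ is bounded on $U$ and that $|\lambda^{-1}|<c$ for some fixed $c\in(0,1)$; this is legitimate because $\lambda=\frac{a+\sqrt{a^{2}-4b}}{2}$ depends analytically, and in particular continuously, on $(a,b)$ and satisfies $\lambda(a_{0},b_{0})>1$.

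Next, by Lemma~\ref{lem:Reverse orbit on the unstable manifold} the recursion $v_{m+1}=-\Sign\lambda^{-1}v_{m}$ yields the closed form $v_{m}=(-\Sign\lambda^{-1})^{m-1}v_{1}$. Consequently each $v_{m}$ extends analytically to $U$, and the bound $|v_{m}|\leq c^{m-1}\sup_{U}|v_{1}|$ shows that $v_{m}\to0$ uniformly on $U$. Since uniform convergence of a sequence of analytic functions transfers to all partial derivatives on compact subsets, the Weierstrass convergence theorem \cite[P.73]{St10} gives $\partial_{a}^{i}\partial_{b}^{j}v_{m}\to0$ uniformly near $(a_{0},b_{0})$ for $(i,j)=(0,0),(0,1),(1,0)$. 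As $(a_{0},b_{0})\in\ParaSpace$ was arbitrary, this is precisely local uniform convergence on $\ParaSpace$.

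There is essentially no hard step here: the statement is the exact backward counterpart of Proposition~\ref{prop:Convergence of the intersection point of mu}, and the only point requiring a word of care is keeping the contraction factor $\lambda^{-1}$ uniformly bounded below $1$ after complexifying the parameters, which follows immediately from the continuity of $\lambda$.
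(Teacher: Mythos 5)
Your proof is correct and follows essentially the same route as the paper's: complexify the parameters near an arbitrary $(a_{0},b_{0})$, use the recursion of Lemma \ref{lem:Reverse orbit on the unstable manifold} together with $|\lambda^{-1}|\leq c<1$ to get uniform geometric decay of $v_{m}$, and then invoke the Weierstrass convergence theorem to pass to the derivatives. Nothing further is needed.
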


\begin{proof}
Let $(a_{0},b_{0})\in\ParaSpace$ and $U$ be a complex convex neighborhood
of $(a_{0},b_{0})$ such that $v_{1}$ has an analytic continuation
on $U$. We may also assume that $U$ is small enough such that $v_{1}$
is bounded and $|\lambda^{-1}|\leq c$ on $U$ for some $c\in(0,1)$.
Then $v_{m}$ also has an analytic continuation and $\lim_{m\rightarrow\infty}v_{m}=0$
uniformly on $U$ by Lemma \ref{lem:Reverse orbit on the unstable manifold}.
Therefore, the proposition follows from the Weierstrass' convergence
theorem \cite[P.73]{St10}. 
\end{proof}
\begin{lem}
\label{lem:Flipping intersection of unstable manifolds}Suppose that
$(a,b)\in\ParaFull$. Let $s$ be the vertical slope of a line $L$
and $(v+\zeta_{-\Sign},\Sign\lambda^{-1}v+\zeta_{-\Sign})$ be the
intersection point of $L$ and $W^{U}(\Lozi_{-\Sign},\boldsymbol{z}_{-\Sign})$.
If $s\neq-\Sign\lambda$, then $L$ and $W^{U}(\Lozi_{\Sign},\boldsymbol{z}_{\Sign})$
intersect at $(w+\zeta_{\Sign},-\Sign\lambda^{-1}w+\zeta_{\Sign})$,
where 
\[
w=\frac{1}{1+\Sign\lambda^{-1}s}\left[(1-\Sign\lambda^{-1}s)v-\Sign(1-s)(\zeta_{+}-\zeta_{-})\right].
\]
\end{lem}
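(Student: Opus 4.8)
The plan is to exploit that both intersection points lie on the single line $L$, whose defining feature is that it has vertical slope $s$. Writing a point of $L$ as $(x,y)$ with $x=sy+\text{const}$, any two points $(x_{1},y_{1}),(x_{2},y_{2})\in L$ satisfy $x_{1}-x_{2}=s(y_{1}-y_{2})$, and this one relation will pin down $w$. The argument runs exactly parallel to Lemma \ref{lem:map from the intersection point of Ws(Z-) to Ws(Z+)}, with the stable manifolds replaced by the unstable ones and the stable slope $\mu$ replaced by the reciprocal expanding slope $\lambda^{-1}$.

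First I would record the parametrizations of the two unstable manifolds. Since $W^{U}(\Lozi_{\Sign},\boldsymbol{z}_{\Sign})$ passes through $\boldsymbol{z}_{\Sign}=(\zeta_{\Sign},\zeta_{\Sign})$ in the expanding direction $(-\Sign\lambda,1)$, writing a point as $\boldsymbol{z}_{\Sign}+t(-\Sign\lambda,1)$ and setting $w=-\Sign\lambda t$ gives $t=-\Sign\lambda^{-1}w$ (using $\Sign^{-1}=\Sign$), so every point of it has the form $(w+\zeta_{\Sign},-\Sign\lambda^{-1}w+\zeta_{\Sign})$, which is exactly the claimed intersection point. The analogous computation for the branch $-\Sign$ gives points $(v+\zeta_{-\Sign},\Sign\lambda^{-1}v+\zeta_{-\Sign})$, matching the hypothesized point.

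Next I would impose that both $P_{1}=(v+\zeta_{-\Sign},\Sign\lambda^{-1}v+\zeta_{-\Sign})$ and $P_{2}=(w+\zeta_{\Sign},-\Sign\lambda^{-1}w+\zeta_{\Sign})$ lie on $L$, i.e.
\[
\pi_{1}(P_{1})-\pi_{1}(P_{2})=s\bigl(\pi_{2}(P_{1})-\pi_{2}(P_{2})\bigr).
\]
Using the sign identity $\zeta_{-\Sign}-\zeta_{\Sign}=-\Sign(\zeta_{+}-\zeta_{-})$, this reduces to the single linear equation
\[
v-w-\Sign(\zeta_{+}-\zeta_{-})=s\left[\Sign\lambda^{-1}(v+w)-\Sign(\zeta_{+}-\zeta_{-})\right].
\]
Collecting the $w$-terms produces the coefficient $-(1+\Sign\lambda^{-1}s)$, and solving for $w$ yields the stated formula $w=\frac{1}{1+\Sign\lambda^{-1}s}\left[(1-\Sign\lambda^{-1}s)v-\Sign(1-s)(\zeta_{+}-\zeta_{-})\right]$.

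There is no substantive obstacle: the entire content is the sign bookkeeping for $\Sign=\pm1$ and the correct reading of ``vertical slope.'' The only point requiring care is that the division is legitimate, i.e. $1+\Sign\lambda^{-1}s\neq0$; this holds precisely because $s\neq-\Sign\lambda$ (equivalently, $L$ is not parallel to $W^{U}(\Lozi_{\Sign},\boldsymbol{z}_{\Sign})$, whose vertical slope is $-\Sign\lambda$), which is exactly the stated hypothesis. The hyperbolicity $\lambda>1>\mu\geq0$ from Lemma \ref{lem:Expanding multiplier} ensures the two unstable directions are genuinely distinct, so the geometric picture underlying the nonzero denominator is consistent and the intersection point is unique.
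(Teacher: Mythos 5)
Your proof is correct. The paper in fact states this lemma without proof (as it does for the companion Lemma \ref{lem:map from the intersection point of Ws(Z-) to Ws(Z+)}), treating it as a routine computation, and your argument is exactly the intended one: parametrize the two unstable lines through $\boldsymbol{z}_{\pm}$ in the expanding directions $(\mp\lambda,1)$, impose the collinearity condition $x_{1}-x_{2}=s(y_{1}-y_{2})$ coming from the vertical slope of $L$, use $\zeta_{-\Sign}-\zeta_{\Sign}=-\Sign(\zeta_{+}-\zeta_{-})$, and solve the resulting linear equation for $w$; the hypothesis $s\neq-\Sign\lambda$ is correctly identified as what makes the coefficient $1+\Sign\lambda^{-1}s$ nonzero.
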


\begin{lem}
\label{lem:The intersection of a stable manifold and a horizontal line}Suppose
that $(a,b)\in\ParaFull$. Let $L$ be a line, $s$ be the vertical
slope of $L$, $(v+\zeta_{-},\lambda^{-1}v+\zeta_{-})$ be the intersection
of $L$ and $W^{U}(\Lozi_{-},\boldsymbol{z}_{-})$, and $(t,0)$ be
the intersection of $\Lozi_{+}^{-1}(L)$ and the $x$-axis. Then 
\[
t=1-\frac{b+2s+(1-\lambda^{-1}s)v}{a+s}=\frac{1}{a+s}\left[\left(\lambda-s\right)\left(1-\frac{v}{\lambda}\right)-\frac{b}{\lambda}\left(\lambda-1\right)\right].
\]
\end{lem}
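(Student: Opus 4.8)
The plan is to characterize the crossing point $(t,0)$ directly through the defining relation $\Lozi_{+}(t,0)\in L$, rather than first computing the whole preimage line $\Lozi_{+}^{-1}(L)$ and then intersecting it with the $x$-axis. First I would write down an explicit equation for $L$. Since $\zeta_{-}=-1$, the prescribed intersection point of $L$ with $W^{U}(\Lozi_{-},\boldsymbol{z}_{-})$ is $(v-1,\lambda^{-1}v-1)$, and the line through this point with vertical slope $s$ (that is, with $x$ an affine function of $y$ of slope $s$) is
\[
x = s\,y + v\bigl(1-\lambda^{-1}s\bigr)-(1-s).
\]

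Next I would substitute the image of $(t,0)$. From the formula for the branch, $\Lozi_{+}(t,0)=(-at+(a-b-1),\,t)$, so the membership condition $\Lozi_{+}(t,0)\in L$ becomes the single linear equation
\[
-at+(a-b-1)=s\,t+v\bigl(1-\lambda^{-1}s\bigr)-(1-s).
\]
Collecting the terms in $t$ on one side and using $a+s\neq0$ (which holds under the standing hypotheses, and is in any case forced by the appearance of $a+s$ in the denominator of the claimed formula) I would solve to obtain
\[
t=\frac{a-b-s-v\bigl(1-\lambda^{-1}s\bigr)}{a+s}.
\]
Rewriting the numerator as $(a+s)-\bigl[\,b+2s+(1-\lambda^{-1}s)v\,\bigr]$ gives the first asserted expression $t=1-\frac{b+2s+(1-\lambda^{-1}s)v}{a+s}$.

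For the second expression I would invoke the characteristic identity satisfied by $\lambda$. By definition $\lambda=\tfrac12\bigl(a+\sqrt{a^{2}-4b}\bigr)$ is the larger root of $\lambda^{2}-a\lambda+b=0$, so $a=\lambda+\tfrac{b}{\lambda}$, and hence
\[
a-b=\lambda-\frac{b}{\lambda}(\lambda-1).
\]
Substituting this into the numerator and using the factorization $(\lambda-s)\bigl(1-\tfrac{v}{\lambda}\bigr)=(\lambda-s)-v\bigl(1-\lambda^{-1}s\bigr)$ rewrites the numerator as $(\lambda-s)\bigl(1-\tfrac{v}{\lambda}\bigr)-\tfrac{b}{\lambda}(\lambda-1)$, which is exactly the second displayed form.

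The computation has no serious obstacle: it reduces to solving one linear equation in $t$ and then performing a single algebraic rewriting of the numerator. The only points demanding care are the sign conventions — the vertical slope records $\mathrm{d}x/\mathrm{d}y$ rather than $\mathrm{d}y/\mathrm{d}x$, and the $+$ branch $\Lozi_{+}$ carries the factor $-a$ in its first coordinate — together with the use of the characteristic relation $\lambda^{2}-a\lambda+b=0$ to pass between the two displayed forms; I would double-check these to be sure the constant term $a-b-1$ of the branch lands correctly in the numerator.
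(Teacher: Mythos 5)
Your computation is correct: the line $L$ is $x=sy+v(1-\lambda^{-1}s)-(1-s)$, the condition $\Lozi_{+}(t,0)\in L$ yields $t=\frac{a-b-s-(1-\lambda^{-1}s)v}{a+s}$, which matches the first displayed form, and the passage to the second form via $a=\lambda+\frac{b}{\lambda}$ (equivalently $a-b=\lambda-\frac{b}{\lambda}(\lambda-1)$) checks out. The paper states this lemma without proof, treating it as a routine verification, and your argument is exactly the direct computation that is intended; your remark that $a+s\neq0$ is needed (and holds for the slopes actually used, since $|s|\leq\mu<1<a$) is an appropriate point of care.
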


\subsubsection{Estimation of $q_{m,n}$.}

Recall that $(r_{m},0)$ is the intersection point of $\gamma_{m}$
and the $x$-axis for $1\leq m\leq\infty.$
\begin{prop}
\label{prop:Convergence of q}Let $\ParaSpace\subset\ParaFull$ be
relatively open, $L$ be a line, $s$ be the vertical slope of $L$,
$(v+\zeta_{-},\lambda^{-1}v+\zeta_{-})$ be the intersection point
of $L$ and $W^{U}(\Lozi_{-},\boldsymbol{z}_{-})$, and $(v_{m},0)$
be the intersection point of $\Lozi_{+}^{-1}\circ\Lozi_{-}^{-(m-1)}(L)$
and the $x$-axis for $m\geq1$. If $s$ and $v$ depends analytically
on $(a,b)\in\ParaSpace$ and $|s|\leq\mu$, then
\[
\lim_{m\rightarrow\infty}\partial_{a}^{i}\partial_{b}^{j}v_{m}=\partial_{a}^{i}\partial_{b}^{j}r_{\infty}
\]
locally uniformly on $\ParaSpace$ for $(i,j)=(0,0),(0,1),(1,0)$.
\end{prop}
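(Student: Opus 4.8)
The plan is to mirror the forward case (Proposition \ref{prop:Convergence of a horizontal line}) by decomposing the backward iterate $\Lozi_{+}^{-1}\circ\Lozi_{-}^{-(m-1)}(L)$ into the two independently tractable coordinates of a line: its vertical slope and its intersection point with $W^{U}(\Lozi_{-},\boldsymbol{z}_{-})$. Write $\hat{L}_{m}=\Lozi_{-}^{-(m-1)}(L)$, let $\hat{s}_{m}$ be its vertical slope, and let $(\hat{v}_{m}+\zeta_{-},\lambda^{-1}\hat{v}_{m}+\zeta_{-})$ be its intersection with $W^{U}(\Lozi_{-},\boldsymbol{z}_{-})$. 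Since the branch $\Lozi_{-}$ is affine with $\boldsymbol{z}_{-}$ fixed and leaves its own unstable line invariant, Lemma \ref{lem:Preimage of lines} gives $\hat{s}_{m}=g_{-}^{m-1}(s)$ and Lemma \ref{lem:Reverse orbit on the unstable manifold} gives $\hat{v}_{m}=\lambda^{-(m-1)}v$. Because $v_{m}$ is precisely the $x$-axis intersection of $\Lozi_{+}^{-1}(\hat{L}_{m})$, Lemma \ref{lem:The intersection of a stable manifold and a horizontal line} applied to $\hat{L}_{m}$ expresses $v_{m}$ as a fixed rational function of $(\hat{s}_{m},\hat{v}_{m},a,b)$:
\[
v_{m}=\frac{1}{a+\hat{s}_{m}}\left[(\lambda-\hat{s}_{m})\left(1-\frac{\hat{v}_{m}}{\lambda}\right)-\frac{b}{\lambda}(\lambda-1)\right]\equiv t(\hat{s}_{m},\hat{v}_{m},a,b).
\]

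Next I would pass to the limit. By Proposition \ref{prop:Convergence of the vertical slope} (taking $\Sign=-$ and using the hypothesis $|s|\leq\mu$), $\hat{s}_{m}\rightarrow\mu$, the vertical slope of $W^{S}(\Lozi_{-},\boldsymbol{z}_{-})$, locally uniformly on $\ParaSpace$ together with its first $a$- and $b$-derivatives; by Proposition \ref{prop:Convergence of the reverse orbit on the unstable manifold}, $\hat{v}_{m}\rightarrow0$ in the same sense. Geometrically these two statements say that $\hat{L}_{m}$ converges to $W^{S}(\Lozi_{-},\boldsymbol{z}_{-})$. The map $t$ is rational and analytic near the limiting argument $(\mu,0,a,b)$ (on $\ParaFull$ the evaluation stays in the region $a+\hat{s}_{m}>0$, since $\hat{s}_{m}\in[-\mu,\mu]$ by Lemma \ref{lem:Invariance of the stable cone}), so by the chain rule the convergence propagates:
\[
\partial_{a}^{i}\partial_{b}^{j}v_{m}=\partial_{a}^{i}\partial_{b}^{j}\,t(\hat{s}_{m},\hat{v}_{m},a,b)\longrightarrow\partial_{a}^{i}\partial_{b}^{j}\,t(\mu,0,a,b)
\]
locally uniformly for $(i,j)=(0,0),(0,1),(1,0)$, because the first partials of $t$ are continuous and are evaluated along arguments that converge, with their first derivatives, to $(\mu,0,a,b)$.

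Finally I would identify the limit $t(\mu,0,a,b)$ with $r_{\infty}$. Evaluating the formula at $s=\mu$, $v=0$ is exactly Lemma \ref{lem:The intersection of a stable manifold and a horizontal line} applied to the line $L=W^{S}(\Lozi_{-},\boldsymbol{z}_{-})$ itself; its $\Lozi_{+}^{-1}$-preimage is the segment defining $\gamma_{\infty}=\Pi_{+}(\beta_{\infty})$, and by definition that preimage meets the $x$-axis at $(r_{\infty},0)$. Hence $t(\mu,0,a,b)=r_{\infty}$, and the stated convergence follows.

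The only genuinely delicate point is the passage from convergence of $\hat{s}_{m},\hat{v}_{m}$ to convergence of the \emph{derivatives} of $v_{m}$. Here one must confirm that the evaluation point $(\hat{s}_{m},\hat{v}_{m},a,b)$ remains inside the domain of analyticity of $t$ and that the property of being \emph{locally uniformly convergent with first derivatives} is preserved under composition with the fixed analytic map $t$. This is the same Weierstrass-plus-chain-rule bookkeeping already used to prove Proposition \ref{prop:Convergence of a horizontal line}, so I would simply invoke that mechanism; everything else reduces to the substitutions above.
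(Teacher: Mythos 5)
Your proposal is correct and follows essentially the same route as the paper, whose proof is a one-line citation of the chain rule together with Lemmas \ref{lem:Preimage of lines}, \ref{lem:Invariance of the stable cone}, \ref{lem:Reverse orbit on the unstable manifold}, \ref{lem:The intersection of a stable manifold and a horizontal line} and Propositions \ref{prop:Convergence of the vertical slope} and \ref{prop:Convergence of the reverse orbit on the unstable manifold}. You merely make explicit the decomposition, the identification $t(\mu,0,a,b)=r_{\infty}$, and the Weierstrass-plus-chain-rule bookkeeping that the paper leaves implicit.
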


\begin{proof}
The proposition follows from the chain rule, Lemmas \ref{lem:Preimage of lines},
\ref{lem:Invariance of the stable cone}, \ref{lem:Reverse orbit on the unstable manifold},
and \ref{lem:The intersection of a stable manifold and a horizontal line},
and Propositions \ref{prop:Convergence of the vertical slope} and
\ref{prop:Convergence of the reverse orbit on the unstable manifold}.
\end{proof}
\begin{cor}
\label{cor:Convergence of r_m}We have
\[
\lim_{m\rightarrow\infty}\partial_{a}^{i}\partial_{b}^{j}r_{m}=\partial_{a}^{i}\partial_{b}^{j}r_{\infty}
\]
locally uniformly on $\ParaModel$ for $(i,j)=(0,0),(0,1),(1,0)$.
\end{cor}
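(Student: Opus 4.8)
The plan is to read the corollary directly off Proposition \ref{prop:Convergence of q}, taking $L$ to be the line carrying $\beta_{1}=\gamma_{1}=W_{0}^{S}(\boldsymbol{z}_{+})$. On $\ParaModel$ this segment exists (Lemma \ref{lem:Existence of beta1}) and the whole renormalization model is defined, so $\gamma_{m}$ and the crossings $(r_{m},0)$ are available for every $1\le m\le\infty$. First I would reconcile the recursive construction with the preimage expression in the proposition: by the definitions $\beta_{m}=\Pi_{-}(\beta_{m-1})$, $\gamma_{m}=\Pi_{+}(\beta_{m})$ together with Proposition \ref{prop:Pullback of a vertical segment}, each pullback $\Pi_{\Sign}$ carves out a subsegment of a preimage line without altering the line itself. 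Inductively this gives $\gamma_{m}\subset\Lozi_{+}^{-1}\circ\Lozi_{-}^{-(m-1)}(L)$. Since $(r_{m},0)\in\gamma_{m}$ sits on the $x$-axis and each such preimage line (of vertical slope near $-\mu$) meets the $x$-axis in a single point, the abscissa $r_{m}$ coincides exactly with the $x$-axis intersection $v_{m}$ of $\Lozi_{+}^{-1}\circ\Lozi_{-}^{-(m-1)}(L)$ appearing in Proposition \ref{prop:Convergence of q}; that is, $r_{m}=v_{m}$ for all $m\ge1$.

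Next I would verify the hypotheses of the proposition with $\ParaSpace=\ParaModel$, which is relatively open in $\ParaFull$ (it is cut out of $\ParaFull$ by the open condition $a>3b+1$). The line $L=W^{S}(\Lozi_{+},\boldsymbol{z}_{+})$ has contracting direction $(-\mu,1)$, hence vertical slope $s=-\mu$, which is precisely the attracting fixed point $s_{\infty}=-\mu$ of the slope map $g_{+}$; by Lemma \ref{lem:Expanding multiplier} we have $0\le\mu<1$, so the required bound $|s|=\mu\le\mu$ holds. Both $s=-\mu=-b/\lambda$ and the parameter $v$ describing the intersection $(v+\zeta_{-},\lambda^{-1}v+\zeta_{-})$ of $L$ with $W^{U}(\Lozi_{-},\boldsymbol{z}_{-})$ depend analytically on $(a,b)\in\ParaModel$: the multiplier $\lambda$ is analytic on $\ParaFull$ since $a^{2}-4b>0$ there, and the two lines are transversal because their vertical slopes $-\mu$ and $\lambda$ satisfy $-\mu\le0<1<\lambda$, so the linear system for their intersection has nonvanishing determinant $\lambda+\mu$ and analytic solution.

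With these hypotheses in place, Proposition \ref{prop:Convergence of q} yields $\lim_{m\rightarrow\infty}\partial_{a}^{i}\partial_{b}^{j}v_{m}=\partial_{a}^{i}\partial_{b}^{j}r_{\infty}$ locally uniformly on $\ParaModel$ for $(i,j)=(0,0),(0,1),(1,0)$, and substituting $v_{m}=r_{m}$ gives the corollary verbatim. I expect the only nonroutine step to be the bookkeeping of the first paragraph: confirming that the restriction operators $\Pi_{\pm}$ leave the underlying preimage lines untouched, so that the crossing $r_{m}$ is genuinely the $v_{m}$ of Proposition \ref{prop:Convergence of q}. This is exactly what Proposition \ref{prop:Pullback of a vertical segment} guarantees, and once that identification is made the statement is an immediate specialization of an already-proved result.
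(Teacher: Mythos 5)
Your proposal is correct and follows essentially the same route as the paper: identify $\gamma_{m}$ with the preimage $\Lozi_{+}^{-1}\circ\Lozi_{-}^{-(m-1)}(W^{S}(\Lozi_{+},\boldsymbol{z}_{+}))$ restricted to $C$, note that the vertical slope of $W^{S}(\Lozi_{+},\boldsymbol{z}_{+})$ is $-\mu$ and its intersection with $W^{U}(\Lozi_{-},\boldsymbol{z}_{-})$ is analytic, and invoke Proposition \ref{prop:Convergence of q}. Your write-up merely spells out the hypothesis-checking that the paper leaves implicit.
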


\begin{proof}
We have $\gamma_{m}=\Lozi_{+}^{-1}\circ\Lozi_{-}^{-(m-1)}(W^{S}(\Lozi_{+},\boldsymbol{z}_{+}))\cap C$.
The intersection point of $W^{S}(\Lozi_{+},\boldsymbol{z}_{+})$ and
$W^{U}(\Lozi_{-},\boldsymbol{z}_{-})$ is $(v+\zeta_{-},\lambda^{-1}v+\zeta_{-})$
where $v=\frac{2\lambda}{\lambda+1}$. And the slope of $W^{S}(\Lozi_{+},\boldsymbol{z}_{+})$
is $-\mu$. Thus, the corollary follows from Proposition \ref{prop:Convergence of q}.
\end{proof}
Recall that $\kappa=\{0\}\times I^{v}$ is the critical locus and
$\kappa_{m,n}^{C}=(\Lozi^{m+n-1}|_{C_{m,n}^{L}})^{-1}\kappa$.
\begin{cor}
\label{cor:Convergence of q}Let $M,n\geq2$ and $\ParaSpace\subset\ParaModel$
be relatively open. Assume that $C_{m,n}^{L}$ exists for all $(a,b)\in\ParaSpace$
and $m\geq M$. Also, let $(q_{m,n},0)$ be the intersection point
of $\kappa_{m,n}^{C}$ and the $x$-axis. Then
\[
\lim_{m\rightarrow\infty}\partial_{a}^{i}\partial_{b}^{j}q_{m,n}=\partial_{a}^{i}\partial_{b}^{j}r_{\infty}
\]
locally uniformly on $\ParaSpace$ for $(i,j)=(0,0),(0,1),(1,0)$.
\end{cor}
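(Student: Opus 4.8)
The plan is to realize the vertical segment $\kappa_{m,n}^{C}$ as a backward iterate of a single line that is independent of $m$, and then to quote Proposition \ref{prop:Convergence of q} almost verbatim. On $C_{m,n}^{L}$ every point is $\iota_{\Sign,m,n}$-admissible (Proposition \ref{prop:Itinerary of a point in C_mn}), so $\Lozi^{m+n-1}$ restricted to $C_{m,n}^{L}$ coincides with the branch composition $\Lozi_{I}=\Lozi_{-}^{n-2}\circ\Lozi_{+}^{2}\circ\Lozi_{-}^{m-2}\circ\Lozi_{+}$, where $I$ is the length-$(m+n-1)$ prefix of $\iota_{\Sign,m,n}$. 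Hence $\kappa_{m,n}^{C}$ lies on the line $\Lozi_{I}^{-1}(L(\kappa))$, with $L(\kappa)$ the $y$-axis. Writing
\[
\Lozi_{I}^{-1}=\Lozi_{+}^{-1}\circ\Lozi_{-}^{-(m-2)}\circ\Lozi_{+}^{-2}\circ\Lozi_{-}^{-(n-2)}
\]
and setting $L=\Lozi_{+}^{-2}\circ\Lozi_{-}^{-(n-2)}(L(\kappa))$, a line depending on $n$ but not on $m$, I obtain $\Lozi_{I}^{-1}(L(\kappa))=\Lozi_{+}^{-1}\circ\Lozi_{-}^{-(m-2)}(L)$. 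Thus $q_{m,n}$ is precisely the $x$-axis crossing of $\Lozi_{+}^{-1}\circ\Lozi_{-}^{-(m'-1)}(L)$ with $m'=m-1$, which is exactly the quantity $v_{m'}$ appearing in Proposition \ref{prop:Convergence of q}.

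It then remains to check the two hypotheses of Proposition \ref{prop:Convergence of q} for the fixed line $L$, namely that its vertical slope $s$ satisfies $|s|\le\mu$ and that $s$ together with the intersection parameter $v$ of $L$ with $W^{U}(\Lozi_{-},\boldsymbol{z}_{-})$ depend analytically on $(a,b)\in\ParaSpace$. For the slope, $L(\kappa)$ has vertical slope $0\in[-\mu,\mu]$; each backward branch transforms the vertical slope by $g_{\Sign}$ (Lemma \ref{lem:Preimage of lines}), and $[-\mu,\mu]$ is $g_{\Sign}$-invariant (Lemma \ref{lem:Invariance of the stable cone}), so $s=g_{+}^{2}\circ g_{-}^{n-2}(0)\in[-\mu,\mu]$. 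For the intersection parameter, I would start from the crossing of $L(\kappa)$ with $W^{U}(\Lozi_{-},\boldsymbol{z}_{-})$, which is $(0,\lambda^{-1}-1)$, i.e.\ $v=1$, transport it along $W^{U}(\Lozi_{-},\boldsymbol{z}_{-})$ under $\Lozi_{-}^{-(n-2)}$ by Lemma \ref{lem:Reverse orbit on the unstable manifold} (giving $\lambda^{-(n-2)}$), switch to the $W^{U}(\Lozi_{+},\boldsymbol{z}_{+})$ parametrization via Lemma \ref{lem:Flipping intersection of unstable manifolds}, transport twice under $\Lozi_{+}^{-1}$, and flip back. Applying Proposition \ref{prop:Convergence of q} to this $L$ with the index $m'=m-1$ then yields $\lim_{m\to\infty}\partial_{a}^{i}\partial_{b}^{j}q_{m,n}=\partial_{a}^{i}\partial_{b}^{j}r_{\infty}$ locally uniformly for $(i,j)=(0,0),(0,1),(1,0)$, since shifting the index by one does not affect the limit; this is the assertion of the corollary.

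The genuinely delicate point is the analyticity in the second hypothesis, and in particular its persistence across $b=0$. One must not argue that $L$ itself ``depends analytically on $(a,b)$'' by composing the affine branch inverses, because $\Lozi_{\Sign}^{-1}$ carries a factor $1/b$ and blows up as $b\to0$. The content of the verification is exactly that the two scalars $s$ and $v$ remain finite and analytic through this degeneration, and this is forced by carrying them through the dedicated transformation lemmas rather than through the raw inverses: the relevant denominators, of the forms $s+\Sign a$ and $1\pm\lambda^{-1}s$, stay bounded away from zero on the stable cone $|s|\le\mu$ (using $|s|\le\mu<1<a$ and $\lambda>1$). This is the same mechanism used to prove Corollary \ref{cor:Convergence of r_m}, where the line was $W^{S}(\Lozi_{+},\boldsymbol{z}_{+})$; here the only additional bookkeeping is the pair of flips between the $W^{U}(\Lozi_{-},\boldsymbol{z}_{-})$ and $W^{U}(\Lozi_{+},\boldsymbol{z}_{+})$ parametrizations produced by the two $\Lozi_{+}$-branches in $L$.
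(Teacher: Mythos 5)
Your proposal is correct and follows essentially the same route as the paper: both factor $\kappa_{m,n}^{C}$ as $\Lozi_{+}^{-1}\circ\Lozi_{-}^{-(m-2)}$ applied to the fixed line $\Lozi_{+}^{-2}\circ\Lozi_{-}^{-(n-2)}(\{0\}\times\mathbb{R})$, verify that its vertical slope lies in $[-\mu,\mu]$ and that the slope and the intersection parameter with $W^{U}(\Lozi_{-},\boldsymbol{z}_{-})$ are analytic via the same four lemmas, and then invoke Proposition \ref{prop:Convergence of q} with the index shifted by one. Your extra remark on why analyticity survives $b=0$ (working through the slope/intersection transformations rather than the raw branch inverses) is a faithful elaboration of what the paper's citation of those lemmas implicitly carries.
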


\begin{proof}
Let $\hat{\kappa}=\{0\}\times\mathbb{R}$, $\hat{\kappa}_{n}^{B}=\Lozi_{+}^{-2}\circ\Lozi_{-}^{-(n-2)}(\hat{\kappa})$,
$s_{n}$ be the slope of $\hat{\kappa}_{n}^{B}$, $(v_{n}+\zeta_{-},\lambda^{-1}v_{n}+\zeta_{-})$
be the intersection point of $\hat{\kappa}_{n}^{B}$ and $W^{U}(\Lozi_{-},\boldsymbol{z}_{-})$,
and $\hat{\kappa}_{m,n}^{C}=\Lozi_{+}^{-1}\circ\Lozi_{-}^{-(m-2)}(\hat{\kappa}_{n}^{B})$.
We note that $q_{m,n}$ is also the intersection point of $\hat{\kappa}_{m,n}^{C}$
and the $x$-axis because $C_{m,n}^{L}$ exists and $\hat{\kappa}_{m,n}^{C}=\kappa_{m,n}^{C}\cap C$.
By Lemmas \ref{lem:Preimage of lines}, \ref{lem:Invariance of the stable cone},
\ref{lem:Reverse orbit on the unstable manifold}, and \ref{lem:Flipping intersection of unstable manifolds},
$s_{n}$ and $v_{n}$ are analytic functions on $\ParaModel$, and
$|s_{n}|\leq\mu$. Therefore, the corollary follows from Proposition
\ref{prop:Convergence of q}.
\end{proof}

\subsection{The exponential convergence of the stable manifolds}

We proved in Corollary \ref{cor:Convergence of r_m} that $\lim_{m\rightarrow\infty}r_{m}=r_{\infty}$
locally uniformly. In fact, here we show that the convergence is exponential.
\begin{prop}
\label{prop:Exponential convergence of r}There exist constants $c_{2}>c_{1}>0$
 such that 
\[
c_{1}\left(\frac{1}{\lambda}\right)^{m}<\left|r_{m}-r_{\infty}\right|<c_{2}\left(\frac{1}{\lambda}\right)^{m}
\]
for all $(a,b)\in\ParaModel$ and $m\geq2$\@.
\end{prop}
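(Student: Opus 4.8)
The plan is to obtain a closed formula for $r_{m}$ from Lemma~\ref{lem:The intersection of a stable manifold and a horizontal line} and then estimate $r_{m}-r_{\infty}$ directly, separating the two sources of convergence. By Corollary~\ref{cor:Convergence of r_m}, $\gamma_{m}=\Lozi_{+}^{-1}\circ\Lozi_{-}^{-(m-1)}(W^{S}(\Lozi_{+},\boldsymbol{z}_{+}))\cap C$, so $(r_{m},0)$ is the $x$-axis intersection of $\Lozi_{+}^{-1}(L_{m})$, where $L_{m}:=\Lozi_{-}^{-(m-1)}(W^{S}(\Lozi_{+},\boldsymbol{z}_{+}))$. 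I parametrize $L_{m}$ by its vertical slope $s_{m}$ and by the coordinate $v_{m}$ of its intersection $(v_{m}+\zeta_{-},\lambda^{-1}v_{m}+\zeta_{-})$ with $W^{U}(\Lozi_{-},\boldsymbol{z}_{-})$. Since $\Lozi_{-}^{-1}$ preserves $W^{U}(\Lozi_{-},\boldsymbol{z}_{-})$, Lemma~\ref{lem:Reverse orbit on the unstable manifold} (with $\Sign=-$) gives the \emph{exact} relation $v_{m}=\lambda^{-(m-1)}v_{1}$, where $v_{1}=\tfrac{2\lambda}{\lambda+1}\in(1,2)$ by Corollary~\ref{cor:Convergence of r_m}; while $s_{1}=-\mu$ and, by Lemma~\ref{lem:Invariance of the stable cone} and the mean value theorem, $\lvert s_{m}-\mu\rvert\le(b/\lambda^{2})^{m-1}\lvert s_{1}-\mu\rvert=2\mu\,(b/\lambda^{2})^{m-1}$. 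Thus $v_{m}$ decays at the exact rate $\lambda^{-1}$, while the slope error decays at the faster rate $b/\lambda^{2}$.

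Applying Lemma~\ref{lem:The intersection of a stable manifold and a horizontal line} to $L_{m}$ yields
\[
r_{m}=\frac{1}{a+s_{m}}\left[(\lambda-s_{m})\Big(1-\frac{v_{m}}{\lambda}\Big)-\frac{b}{\lambda}(\lambda-1)\right],
\]
and letting $m\to\infty$ (so $v_{m}\to0$, $s_{m}\to\mu$) gives $r_{\infty}=\frac{1}{a+\mu}\big[(\lambda-\mu)-\frac{b}{\lambda}(\lambda-1)\big]$. Writing $r(s,v)$ for the right-hand side above as a function of $(s,v)$, I split
\[
r_{m}-r_{\infty}=\big(r(s_{m},v_{m})-r(s_{m},0)\big)+\big(r(s_{m},0)-r(\mu,0)\big).
\]
The first difference is the leading term,
\[
r(s_{m},v_{m})-r(s_{m},0)=-\frac{(\lambda-s_{m})}{a+s_{m}}\frac{v_{m}}{\lambda}=-\frac{(\lambda-s_{m})\,v_{1}}{a+s_{m}}\,\lambda^{-m}=:-K_{m}\lambda^{-m}.
\]
Using $\lambda\le a$ (Lemma~\ref{lem:Bounds of lambda}), the elementary bound $\lambda\ge a/2$, the estimate $\mu\le\lambda/8$ (Lemma~\ref{lem:upper bound of b/l^2}), and $1\le v_{1}\le2$ together with $\lvert s_{m}\rvert\le\mu$, the coefficient $K_{m}$ lies in a fixed interval $[c_{1}',c_{2}']\subset(0,\infty)$ independent of $(a,b)\in\ParaModel$ and $m$ (for instance $c_{1}'=7/18$, $c_{2}'=18/7$).

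The second difference depends only on $s_{m}-\mu$; by the mean value theorem it is at most $\sup_{\lvert\xi\rvert\le\mu}\lvert\partial_{s}r(\xi,0)\rvert\cdot\lvert s_{m}-\mu\rvert$. Since $\partial_{s}r(s,0)=\dfrac{-(a+\lambda)+b(\lambda-1)/\lambda}{(a+s)^{2}}$, one has $\lvert\partial_{s}r(\xi,0)\rvert\le\dfrac{2a}{(a-\mu)^{2}}$, so the correction is bounded by a constant multiple of $\mu\,(b/\lambda^{2})^{m-1}/a$. The crucial point is that, upon dividing by the leading term $K_{m}\lambda^{-m}$ (whose size is comparable to $(\lambda v_{1}/a)\lambda^{-m}$), the factors of $a$ cancel; using $(b/\lambda^{2})^{m-1}\lambda^{m}=\lambda\mu^{m-1}$ the ratio becomes a constant multiple of $\mu^{m}$. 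Since $\mu=b/\lambda\le b/(2b+1)\le1/3$ on $\ParaModel$ by Lemma~\ref{lem:Bounds of lambda}, this ratio is bounded by some $\rho<1$ uniformly in $(a,b)$ and $m\ge2$ (the worst case being $m=2$). The triangle inequality then gives
\[
(1-\rho)\,K_{m}\,\lambda^{-m}\le\lvert r_{m}-r_{\infty}\rvert\le(1+\rho)\,K_{m}\,\lambda^{-m},
\]
and combined with $K_{m}\in[c_{1}',c_{2}']$ this proves the claim with $c_{1}=(1-\rho)c_{1}'$ and $c_{2}=(1+\rho)c_{2}'$.

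The main obstacle is the lower bound. The leading $v_{m}$-contribution (rate $\lambda^{-1}$) and the slope correction (rate $b/\lambda^{2}$) enter with \emph{opposite} signs, so a bare triangle inequality would only bound their sum from above; one must show the faster-decaying correction is a definite fraction, strictly below $1$, of the leading term. This hinges on two uniform facts on $\ParaModel$: the cancellation of the $a$-dependence described above, which reduces the ratio to a constant times $\mu^{m}$, and the uniform bound $\mu\le1/3$, which keeps $\mu^{m}$ (worst at $m=2$) small enough that the cancellation between the two contributions is never total. Tracking the explicit constants in this comparison is the only computationally delicate part; everything else follows from the exact decay $v_{m}=\lambda^{-(m-1)}v_{1}$ and the formula of Lemma~\ref{lem:The intersection of a stable manifold and a horizontal line}.
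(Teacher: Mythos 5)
Your proposal is correct and follows essentially the same route as the paper: both apply Lemma \ref{lem:The intersection of a stable manifold and a horizontal line} to split $r_{m}-r_{\infty}$ into a leading term $\frac{\lambda-s_{m}}{a+s_{m}}\frac{v_{m}}{\lambda}$ with $v_{m}=\lambda^{-(m-1)}v_{1}$ and a slope-correction term controlled by the mean value theorem, then use the uniform bounds on the coefficient and $b/\lambda<\frac{1}{3}$ to show the correction (which indeed has the opposite sign) is a definite fraction of the leading term, worst at $m=2$. The only differences are cosmetic bookkeeping of the explicit constants.
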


\begin{proof}
Let $s_{m}$ be the slope of $\beta_{m}$ and $(v_{m}+\zeta_{-},\lambda^{-1}v_{m}+\zeta_{-})$
be the intersection point of $\beta_{m}$ and $W^{U}(\Lozi_{-},\boldsymbol{z}_{-})$.
Then 
\begin{equation}
r_{\infty}-r_{m}=\frac{\lambda-s_{m}}{a+s_{m}}\frac{v_{m}}{\lambda}+\left(\frac{b+2s_{m}}{a+s_{m}}-\frac{b+2s_{\infty}}{a+s_{\infty}}\right)\label{eq:r_infty - r_m}
\end{equation}
by Lemma \ref{lem:The intersection of a stable manifold and a horizontal line}.

To estimate the first term, by Lemma \ref{lem:bounds of the coefficient of the exponential convergence},
we have 
\[
\frac{7}{10}\leq\frac{\lambda-s_{m}}{a+s_{m}}\leq\frac{9}{8}.
\]
By Lemmas \ref{lem:Reverse orbit on the unstable manifold} and \ref{lem:Bounds of the intersection point of the stable and the unstable manifolds},
we have 
\[
\left(\frac{1}{\lambda}\right)^{m}<\frac{v_{m}}{\lambda}<2\left(\frac{1}{\lambda}\right)^{m}.
\]

To estimate the second term, by the mean value theorem and Lemmas
\ref{lem:Invariance of the stable cone} and \ref{lem:bounds of the remaining term of the exponential convergence},
we have 
\[
0\geq\frac{b+2s_{m}}{a+s_{m}}-\frac{b+2s_{\infty}}{a+s_{\infty}}\geq\frac{9}{4\lambda}(s_{m}-s_{\infty})\geq\frac{9}{4\lambda}\left(\frac{b}{\lambda^{2}}\right)^{m-1}(s_{1}-s_{\infty})=-\frac{9}{2}\left(\frac{b}{\lambda^{2}}\right)^{m}.
\]
We note that $s_{m}\nearrow s_{\infty}.$ 

Thus, (\ref{eq:r_infty - r_m}) becomes 
\[
\left[\frac{7}{10}-\frac{9}{2}\left(\frac{b}{\lambda}\right)^{m}\right]\left(\frac{1}{\lambda}\right)^{m}<r_{\infty}-r_{m}<\frac{9}{4}\left(\frac{1}{\lambda}\right)^{m}.
\]
By Lemma \ref{lem:Bounds of b/lambda}, we get 
\[
\frac{7}{10}-\frac{9}{2}\left(\frac{b}{\lambda}\right)^{m}>\frac{7}{10}-\frac{9}{2}\left(\frac{1}{3}\right)^{m}\geq0.2
\]
for all $m\geq2$. This proves the proposition.
\end{proof}
\begin{lem}
\label{lem:bounds of the coefficient of the exponential convergence}Let
$h(a,b,s)=\frac{\lambda-s}{a+s}$. Then 

\[
\frac{7}{10}\leq h(a,b,s)\leq\frac{9}{8}
\]
for all $(a,b)\in\ParaModel$ and $s\in[-\mu,\mu]$.
\end{lem}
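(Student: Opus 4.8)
The plan is to exploit the algebraic relation between $a$ and the two multipliers $\lambda,\mu$. Since $-\lambda$ and $-\mu$ are the eigenvalues of $\Dif\Lozi_{+}$, whose characteristic polynomial is $\nu^{2}+a\nu+b$, the roots $\lambda,\mu$ of $x^{2}-ax+b$ satisfy $\lambda\mu=b$ and, crucially, $\lambda+\mu=a$. (Equivalently, $\lambda$ solves $\lambda^{2}-a\lambda+b=0$, so $\lambda+\mu=\lambda+\frac{b}{\lambda}=\frac{\lambda^{2}+b}{\lambda}=\frac{a\lambda}{\lambda}=a$.) Using $a=\lambda+\mu$ I would rewrite the function as
\[
h(a,b,s)=\frac{\lambda-s}{\lambda+\mu+s}.
\]
First I would note that the denominator is positive throughout the interval: for $s\geq-\mu$ we have $\lambda+\mu+s\geq\lambda+\mu-\mu=\lambda>1$ by Lemma \ref{lem:Expanding multiplier}, so $h$ has no pole on $[-\mu,\mu]$ and the expression is well defined.

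Next I would reduce to the endpoints by monotonicity. Differentiating in $s$ gives
\[
\partial_{s}h=-\frac{2\lambda+\mu}{(\lambda+\mu+s)^{2}}<0,
\]
since $2\lambda+\mu>0$ and $\mu\geq0$ by Lemma \ref{lem:Expanding multiplier}. Hence $h$ is strictly decreasing on $[-\mu,\mu]$, so its maximum is attained at $s=-\mu$ and its minimum at $s=\mu$, and it suffices to bound the two explicit values
\[
h(-\mu)=\frac{\lambda+\mu}{\lambda}=1+\frac{b}{\lambda^{2}},\qquad h(\mu)=\frac{\lambda-\mu}{\lambda+2\mu}.
\]

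Finally I would close both estimates with the single uniform inequality $\frac{b}{\lambda^{2}}<\frac{1}{8}$ supplied by Lemma \ref{lem:upper bound of b/l^2}. The upper bound is then immediate, $h(-\mu)=1+\frac{b}{\lambda^{2}}<1+\frac{1}{8}=\frac{9}{8}$. For the lower bound I would clear denominators: $h(\mu)\geq\frac{7}{10}$ is equivalent to $10(\lambda-\mu)\geq7(\lambda+2\mu)$, i.e. $3\lambda\geq24\mu$, i.e. $\lambda^{2}\geq8b$ (using $\mu=b/\lambda$), which is exactly $\frac{b}{\lambda^{2}}\leq\frac{1}{8}$. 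I do not expect any genuine difficulty here; the only nonroutine step is spotting the substitution $a=\lambda+\mu$, after which the monotonicity reduces everything to the endpoints and both bounds collapse to the one estimate $b/\lambda^{2}<1/8$ already in hand.
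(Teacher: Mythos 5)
Your proof is correct and follows essentially the same route as the paper's: both arguments use monotonicity of $h$ in $s$ to reduce to the endpoints $s=\pm\mu$ and then bound the two explicit values $1+\frac{b}{\lambda^{2}}$ and $\frac{\lambda-\mu}{\lambda+2\mu}$. The only (minor, and arguably cleaner) difference is at the lower bound: you observe that $h(\mu)\geq\frac{7}{10}$ is literally equivalent to $\frac{b}{\lambda^{2}}\leq\frac{1}{8}$, i.e.\ to Lemma \ref{lem:upper bound of b/l^2}, whereas the paper instead writes $h(a,b,\mu)=1-\frac{3b}{\lambda^{2}+2b}$, invokes $\lambda>2b+1$ from Lemma \ref{lem:Bounds of lambda}, and maximizes the resulting rational function of $b$ over $[0,1]$.
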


\begin{proof}
The map $h$ is decreasing in $s$ on $[-\mu,\mu]$. We have 
\begin{equation}
h(a,b,\mu)\leq h(a,b,s)\leq h(a,b,-\mu).\label{eq:bounds of the coefficient of exponential convergence}
\end{equation}

To find the lower bound of (\ref{eq:bounds of the coefficient of exponential convergence}),
we compute 
\[
h(a,b,\mu)=1-\frac{3b}{\lambda^{2}+2b}\geq1-\frac{3b}{(2b+1)^{2}+2b}.
\]
The last inequality follows from Lemma \ref{lem:Bounds of lambda}.
The term $\frac{3b}{(2b+1)^{2}+b}$ has an upper bound $\frac{3}{10}$
on $b\in[0,1]$. Thus, $h(a,b,s)\geq\frac{7}{10}$.

To find the upper bound of (\ref{eq:bounds of the coefficient of exponential convergence}),
we get 
\[
h(a,b,-\mu)=1+\frac{b}{\lambda^{2}}\leq\frac{9}{8}
\]
by Lemma \ref{lem:upper bound of b/l^2}. Thus, $h(a,b,s)\leq\frac{9}{8}$.
\end{proof}
\begin{lem}
\label{lem:Bounds of the intersection point of the stable and the unstable manifolds}We
have
\[
1<v_{1}<2
\]
for all $(a,b)\in\ParaModel$.
\end{lem}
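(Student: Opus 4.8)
The plan is to reduce the inequality to an explicit rational expression for $v_{1}$ and then to a one-line algebraic check. Recall that in the proof of Proposition~\ref{prop:Exponential convergence of r} the quantity $v_{1}$ is defined by writing the intersection point of $\beta_{1}$ with $W^{U}(\Lozi_{-},\boldsymbol{z}_{-})$ in the form $(v_{1}+\zeta_{-},\lambda^{-1}v_{1}+\zeta_{-})$. Since $\beta_{1}=W_{0}^{S}(\boldsymbol{z}_{+})\cap(\mathbb{R}\times\Iv)$ is a segment of the stable line $W^{S}(\Lozi_{+},\boldsymbol{z}_{+})$, this intersection point coincides with the one already computed in the proof of Corollary~\ref{cor:Convergence of r_m}. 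Hence the first step is to record the identity
\[
v_{1}=\frac{2\lambda}{\lambda+1}.
\]

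If one prefers not to invoke the corollary, the same value follows by a direct computation: intersect the line $W^{S}(\Lozi_{+},\boldsymbol{z}_{+})$, which passes through $\boldsymbol{z}_{+}=(\zeta_{+},\zeta_{+})$ with vertical slope $-\mu$, with the line $W^{U}(\Lozi_{-},\boldsymbol{z}_{-})$ parameterized as $(v+\zeta_{-},\lambda^{-1}v+\zeta_{-})$. Solving for $v$ gives $v_{1}=\frac{(1+\mu)(\zeta_{+}-\zeta_{-})}{1+\mu\lambda^{-1}}$, and simplifying with $\mu=b/\lambda$, $\zeta_{+}-\zeta_{-}=\frac{2a}{a+b+1}$, and the characteristic relation $\lambda^{2}=a\lambda-b$ (equivalently $\lambda+\mu=a$) collapses the expression to $\frac{2\lambda}{\lambda+1}$.

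With the closed form in hand, I would verify the two bounds directly. The upper bound is unconditional, since $\frac{2\lambda}{\lambda+1}<2$ is equivalent to $\lambda<\lambda+1$, which always holds. The lower bound $\frac{2\lambda}{\lambda+1}>1$ is equivalent to $2\lambda>\lambda+1$, i.e. $\lambda>1$; this follows from Lemma~\ref{lem:Bounds of lambda}, which gives $\lambda>2b+1\geq1$ on $\ParaModel$. Together these establish $1<v_{1}<2$.

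There is essentially no obstacle of substance here; the lemma is elementary once $v_{1}$ is expressed in closed form. The only point requiring care is the identification of the abstractly-defined $v_{1}$ with $\frac{2\lambda}{\lambda+1}$, which rests on the fact that $\beta_{1}$ lies on $W^{S}(\Lozi_{+},\boldsymbol{z}_{+})$, so that the intersection computed in Corollary~\ref{cor:Convergence of r_m} is exactly the one appearing in the proof of Proposition~\ref{prop:Exponential convergence of r}.
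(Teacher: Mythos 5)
Your proof is correct and follows essentially the same route as the paper: both reduce the claim to the closed form $v_{1}=\frac{2\lambda}{\lambda+1}$ and then deduce the bounds from $\lambda>1$ (the paper via Lemma~\ref{lem:Expanding multiplier} and the identity $v_{1}-1=\frac{\lambda-1}{\lambda+1}$, you via Lemma~\ref{lem:Bounds of lambda}). The only difference is that you supply the derivation of the closed form, which the paper simply asserts; your computation checks out, using $\lambda^{2}+b=a\lambda$ to collapse $\frac{(1+\mu)(\zeta_{+}-\zeta_{-})}{1+\mu\lambda^{-1}}$ to $\frac{2\lambda}{\lambda+1}$.
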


\begin{proof}
The value $v_{1}$ has a closed form
\begin{equation}
v_{1}=\frac{2\lambda}{\lambda+1}.\label{eq:formula of the intersection point of the stable and the unstable manifolds}
\end{equation}
The bounds follow immediately from $v_{1}-1=\frac{\lambda-1}{\lambda+1}$
and Lemma \ref{lem:Expanding multiplier}.
\end{proof}
\begin{lem}
\label{lem:bounds of the remaining term of the exponential convergence}Let
$h(a,b,s)=\frac{b+2s}{a+s}$. Then 

\[
\frac{\partial h}{\partial s}(a,b,s)\leq\frac{9}{4}\lambda^{-1}
\]
for all $(a,b)\in\ParaModel$ and $s\in[-\mu,\mu]$.
\end{lem}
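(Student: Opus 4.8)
The plan is to compute the derivative explicitly and then reduce the estimate to an elementary quadratic inequality, exploiting the fact that $\lambda$ and $\mu$ are the two roots of $x^{2}-ax+b=0$.

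First I would differentiate directly. By the quotient rule,
\[
\frac{\partial h}{\partial s}(a,b,s)=\frac{2(a+s)-(b+2s)}{(a+s)^{2}}=\frac{2a-b}{(a+s)^{2}}.
\]
Since $(a,b)\in\ParaModel$ gives $a>3b+1$, the numerator $2a-b$ is strictly positive, and $a+s>0$ on $s\in[-\mu,\mu]$ because $0\le\mu<1<a$ by Lemma \ref{lem:Expanding multiplier} and $a>3b+1\ge 1$. Hence the right-hand side is a strictly decreasing function of $a+s$, so its maximum over $s\in[-\mu,\mu]$ is attained at the endpoint $s=-\mu$, which yields $\frac{\partial h}{\partial s}(a,b,s)\le\frac{2a-b}{(a-\mu)^{2}}$.

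The key simplification is that $\lambda=\frac{a+\sqrt{a^{2}-4b}}{2}$ and $\mu=\frac{b}{\lambda}$ are precisely the two roots of $x^{2}-ax+b=0$, so that $\lambda+\mu=a$ and $\lambda\mu=b$. In particular $a-\mu=\lambda$, and the bound above becomes $\frac{2a-b}{\lambda^{2}}$. The lemma thus reduces to showing $\frac{2a-b}{\lambda^{2}}\le\frac{9}{4\lambda}$, i.e. $8a-4b\le 9\lambda$ after multiplying by $4\lambda^{2}>0$. Substituting $a=\lambda+\frac{b}{\lambda}$ turns this into the equivalent inequality $4b(2-\lambda)\le\lambda^{2}$.

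Finally I would dispatch this last inequality by cases on the size of $\lambda$. When $\lambda\ge 2$ the left-hand side is nonpositive and there is nothing to prove. When $\lambda<2$ (recall $\lambda>2b+1\ge 1$ by Lemma \ref{lem:Bounds of lambda}), I use $2b<\lambda-1$ together with $2-\lambda>0$ to get $4b(2-\lambda)<2(\lambda-1)(2-\lambda)=6\lambda-2\lambda^{2}-4$; the desired bound then follows from $3\lambda^{2}-6\lambda+4\ge 0$, which holds for every real $\lambda$ since the discriminant $36-48=-12$ is negative. I do not anticipate a genuine obstacle: the computation is routine, and the only points requiring care are verifying that the maximum in $s$ sits at the endpoint $s=-\mu$ and recognizing the root relation $\lambda+\mu=a$, after which the estimate collapses to a quadratic with no real zeros.
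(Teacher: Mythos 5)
Your proof is correct and follows essentially the same route as the paper: both compute $\frac{\partial h}{\partial s}=\frac{2a-b}{(a+s)^{2}}$ and bound the denominator from below by $(a-\mu)^{2}=\lambda^{2}$ using the root relation $\mu=b/\lambda$, $a-\mu=\lambda$. The only divergence is the final elementary step, where the paper drops the $-b$ in the numerator and invokes $\frac{b}{\lambda^{2}}<\frac{1}{8}$ from Lemma \ref{lem:upper bound of b/l^2}, while you keep the $-b$ and dispose of the resulting quadratic $3\lambda^{2}-6\lambda+4\geq0$ via its negative discriminant; both are routine and valid.
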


\begin{proof}
Compute
\[
\frac{\partial h}{\partial s}(a,b,s)=\frac{2a-b}{(a+s)^{2}}\leq\frac{2a}{(a-\frac{b}{\lambda})^{2}}=\frac{2}{\lambda}\left(1+\frac{b}{\lambda^{2}}\right)<\frac{9}{4}\lambda^{-1}.
\]
The last part of the inequality is obtained by Lemma \ref{lem:upper bound of b/l^2}.
\end{proof}
\begin{lem}
\label{lem:Bounds of b/lambda}We have 
\[
\frac{b}{\lambda}<\frac{1}{3}
\]
for all $(a,b)\in\ParaModel$.
\end{lem}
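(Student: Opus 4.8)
The plan is to reduce the two-variable estimate to a routine one-variable inequality in $b$ by invoking the lower bound on the expanding multiplier already recorded in Lemma \ref{lem:Bounds of lambda}. This mirrors the structure of the proof of Lemma \ref{lem:upper bound of b/l^2}, where the same bound $\lambda > 2b+1$ was used to control $b/\lambda^2$.

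First I would dispose of the degenerate case $b=0$, where $b/\lambda = 0 < \tfrac13$ trivially. For $b>0$, the key input is the strict inequality $\lambda > 2b+1$ from Lemma \ref{lem:Bounds of lambda}. Since $2b+1 > 0$ and $b>0$, dividing yields
\[
\frac{b}{\lambda} < \frac{b}{2b+1}.
\]
It then remains to show $\dfrac{b}{2b+1} \le \dfrac13$ for $b \in (0,1]$. This is equivalent to $3b \le 2b+1$, i.e.\ to $b \le 1$, which holds on $\ParaModel$. Combining the two steps gives $\dfrac{b}{\lambda} < \dfrac{b}{2b+1} \le \dfrac13$, as claimed.

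There is no genuine obstacle here; the only point requiring a moment of care is that we want the \emph{strict} inequality $b/\lambda < \tfrac13$ even though the reduced estimate $b/(2b+1) \le \tfrac13$ attains equality at the boundary value $b=1$. This is handled automatically: the strictness in $\lambda > 2b+1$ (for $b>0$) makes the first inequality in the chain strict, so the conclusion $b/\lambda < \tfrac13$ is strict throughout $\ParaModel$. The whole argument is thus an immediate corollary of Lemma \ref{lem:Bounds of lambda}, and I would present it as such in two short lines.
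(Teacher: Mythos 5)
Your proof is correct and coincides with the paper's own argument: both dispose of $b=0$ trivially and then use $\lambda>2b+1$ from Lemma \ref{lem:Bounds of lambda} to reduce to the elementary bound $\frac{b}{2b+1}\leq\frac{1}{3}$ on $[0,1]$. The remark about strictness is a fine (if unnecessary) extra touch.
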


\begin{proof}
The lemma is clear when $b=0$. Suppose that $b>0$. By Lemma \ref{lem:Bounds of lambda},
we have 
\[
\frac{b}{\lambda}<\frac{b}{2b+1}.
\]
The right hand side has an upper bound $\frac{1}{3}$ on $b\in[0,1]$. 
\end{proof}

\subsection{The parameters exhibiting a full horseshoe}

We find parameters when a Lozi map forms a full horseshoe. By Proposition
\ref{prop:Full horseshoe}, these are admissible parameters. Thus,
the bifurcation parameters are contained in a compact subset of parameters
$(a,b)\in[1,4]\times[0,1]$.
\begin{prop}
\label{prop:Parameter space of full horseshoe}Suppose that $(a,b)\in\ParaModel$.
If $a\geq2b+2$, then $r_{\infty}\leq u^{L}$\@.

In addition, if $(a,b)=(2,0)$, then $r_{\infty}=u^{L}$.
\end{prop}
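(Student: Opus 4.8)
The plan is to compute $r_\infty$ in closed form and then reduce the stated inequality to an elementary estimate in the two quantities $\lambda$ and $b$. Recall that $r_\infty$ is the $x$-intercept of $\gamma_\infty=\Pi_+(\beta_\infty)=\Lozi_+^{-1}(\beta_\infty)\cap(\mathbb{R}\times\Iv)$, and that $\beta_\infty$ lies on the line $W_0^S(\boldsymbol{z}_-)$. This line has vertical slope $\mu$ (its stable direction is $(\mu,1)$) and passes through $\boldsymbol{z}_-=(\zeta_-,\zeta_-)$, which is also its intersection with $W^U(\Lozi_-,\boldsymbol{z}_-)$. Hence, in the notation of Lemma~\ref{lem:The intersection of a stable manifold and a horizontal line}, the relevant parameters for $L=W_0^S(\boldsymbol{z}_-)$ are $s=\mu$ and $v=0$. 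Applying that lemma gives
\[
r_\infty=1-\frac{b+2\mu}{a+\mu}=\frac{a-b-\mu}{a+\mu}.
\]
By Lemma~\ref{lem:Existence of beta1} we have $u^L=a-2b-1$, so the claim $r_\infty\le u^L$ is equivalent to the numerator of $u^L-r_\infty$, taken over the positive denominator $a+\mu$, being nonnegative.

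Next I would eliminate $a$ and $\mu$ in favor of $\lambda$ and $b$ using the defining identities $a=\lambda+\mu$ and $\mu\lambda=b$ (so $a+\mu=\lambda+2\mu=(\lambda^2+2b)/\lambda$ and $r_\infty=\lambda(\lambda-b)/(\lambda^2+2b)$). Clearing the common denominator $\lambda(\lambda^2+2b)>0$, the sign of $u^L-r_\infty$ equals the sign of
\[
N=\lambda^3(\lambda-2)-2b\lambda(\lambda-1)^2+2b^2(1-2\lambda),
\]
a concave quadratic in $b$. Quick sanity checks, such as $N=\lambda^3(\lambda-2)$ when $b=0$ and $N=0$ at $(\lambda,b)=(2,0)$, confirm the bookkeeping.

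The hypothesis $a\ge 2b+2$ reads, after multiplying by $\lambda>0$, as $\lambda(\lambda-2)\ge b(2\lambda-1)$. Multiplying by $\lambda^2$ and inserting this into the first term of $N$ yields
\[
N\ge\lambda^2 b(2\lambda-1)-2b\lambda(\lambda-1)^2+2b^2(1-2\lambda)=b\bigl[\lambda(3\lambda-2)-2b(2\lambda-1)\bigr],
\]
where I used the identity $\lambda(2\lambda-1)-2(\lambda-1)^2=3\lambda-2$. It then remains to check that the bracket is nonnegative. By Lemma~\ref{lem:Bounds of lambda} we have $2b<\lambda-1$, so $2b(2\lambda-1)<(\lambda-1)(2\lambda-1)=2\lambda^2-3\lambda+1$, while $\lambda(3\lambda-2)=3\lambda^2-2\lambda$; the difference $\lambda^2+\lambda-1$ is positive for $\lambda>1$ (Lemma~\ref{lem:Expanding multiplier}). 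Hence the bracket is positive and, since $b\ge0$, we obtain $N\ge0$, i.e. $r_\infty\le u^L$.

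For the equality assertion I would simply evaluate at $(a,b)=(2,0)$: then $\lambda=2$ and $b=0$, so $u^L=a-2b-1=1$ and $r_\infty=\frac{a-b-\mu}{a+\mu}=\frac{2}{2}=1$, giving $r_\infty=u^L$. The main obstacle is the first step, namely pinning down the correct closed form for $r_\infty$; in particular one must get the sign of the vertical slope $s=\mu$ of $W_0^S(\boldsymbol{z}_-)$ right and use the observation $v=0$. Once that formula is in hand, the remaining inequality is a routine estimate whose only nontrivial input is the bound $b<(\lambda-1)/2$ supplied by Lemma~\ref{lem:Bounds of lambda}.
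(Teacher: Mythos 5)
Your proof is correct, but it takes a noticeably more laborious route than the paper's. The paper observes that $r_{\infty}=1-\frac{(\lambda+2)b}{a\lambda+b}\leq1$ for every $(a,b)\in\ParaModel$ (immediate from $b\geq0$), while the hypothesis $a\geq2b+2$ gives $u^{L}=a-2b-1\geq1$; so the constant $1$ separates the two quantities and the whole argument is two lines, with the equality case at $(2,0)$ read off at once. Your closed form $r_{\infty}=\frac{a-b-\mu}{a+\mu}=\frac{\lambda(\lambda-b)}{\lambda^{2}+2b}$ agrees with the paper's (substituting $a\lambda+b=\lambda^{2}+2b$), and your derivation of it via Lemma \ref{lem:The intersection of a stable manifold and a horizontal line} with $s=\mu$, $v=0$ is legitimate, as is the subsequent expansion of $N$, the substitution of the hypothesis in the form $\lambda(\lambda-2)\geq b(2\lambda-1)$, and the final estimate using $2b<\lambda-1$ from Lemma \ref{lem:Bounds of lambda}. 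What your computation buys is a sharper statement — it shows $u^{L}-r_{\infty}\geq b\bigl[\lambda(3\lambda-2)-2b(2\lambda-1)\bigr]/\bigl(\lambda(\lambda^{2}+2b)\bigr)$ with a strictly positive bracket, hence strict inequality whenever $b>0$ — but for the proposition as stated the paper's pivot at $1$ is the intended and far shorter argument, and you may want to notice such a separating constant before committing to a full polynomial expansion.
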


\begin{proof}
We have 
\[
r_{\infty}=1-\frac{\lambda+2}{a\lambda+b}b\leq1
\]
and 
\[
u^{L}=a-2b-1\geq1.\qedhere
\]
\end{proof}

\subsection{The parameters of period-doubling renormalizable maps}

In this section, we find parameters such that a Lozi map is period-doubling
renormalizable. For these parameters, only the phase space $C_{2}$
has interesting dynamical aspects. In fact, these are non-admissible
parameters by Corollary \ref{cor:Disjoint C and U implies nonadmissible parameters}.
\begin{prop}
\label{prop:Period-doubling renormalizable parameters}Suppose that
$(a,b)\in\ParaModel$. If $a<\sqrt{2}(1-3b)$, then $u_{\infty}<r_{2}$\@.
\end{prop}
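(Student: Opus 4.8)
The plan is to reduce the claim to an explicit polynomial inequality in $\lambda$ and $\mu$ by computing closed forms for $u_{\infty}$ and $r_{2}$, and then to close the estimate using the bounds $2b+1<\lambda\leq a$ of Lemma~\ref{lem:Bounds of lambda}. For $u_{\infty}$, following the computation in the proof of Proposition~\ref{prop:Exponential convergence of u}, the turning point of $W_{0}^{U}(\boldsymbol{z}_{-})$ is $(u_{\infty},0)=\Lozi(0,k_{\infty})$, where $(0,k_{\infty})$ is the crossing of $W^{U}(\Lozi_{-},\boldsymbol{z}_{-})$ with the $y$-axis; a direct substitution together with the characteristic identity $\lambda^{2}-a\lambda+b=0$ gives the clean formula
\[
u_{\infty}=a-1-\tfrac{b}{\lambda}=\lambda-1 .
\]

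For $r_{2}$, I would unwind the definitions $\gamma_{2}=\Pi_{+}(\beta_{2})$ and $\beta_{2}=\Pi_{-}(\beta_{1})$: the point $(r_{2},0)$ is characterized by $\Lozi_{-}\circ\Lozi_{+}(r_{2},0)\in W^{S}(\Lozi_{+},\boldsymbol{z}_{+})$, the latter being the line $X+\mu Y=\zeta_{+}(1+\mu)$ through $(\zeta_{+},\zeta_{+})$ with stable direction $(-\mu,1)$ (cf.\ Lemma~\ref{lem:Existence of beta1}). Composing the two affine branches and solving the resulting linear equation for $r_{2}$ is routine. The key to a usable form is the substitution $a=\lambda+\mu$, $b=\lambda\mu$, which yields $a-b-1=(\lambda-1)(1-\mu)$, $a+b+1=(\lambda+1)(1+\mu)$, and $\zeta_{+}=\frac{(\lambda-1)(1-\mu)}{(\lambda+1)(1+\mu)}$; after the factor $(1+\mu)$ cancels one is left with
\[
\frac{r_{2}}{\lambda-1}=\frac{(1-\mu)\bigl[\lambda^{2}+2\lambda(1+\mu)+2\mu\bigr]}{(\lambda+1)\bigl(\lambda^{2}+4\lambda\mu+2\mu^{2}\bigr)} .
\]
Since $\lambda>1$, the desired inequality $u_{\infty}<r_{2}$ is equivalent to this quotient exceeding $1$, and clearing denominators reduces everything to
\[
\lambda(2-\lambda^{2})+2\mu-4\lambda\mu-5\lambda^{2}\mu-4\lambda\mu^{2}-4\mu^{2}>0 .
\]
At $\mu=0$ this is exactly $\lambda<\sqrt 2$, i.e.\ the degenerate ($b=0$) condition $a<\sqrt 2$ — a reassuring consistency check, and the reason the hypothesis is sharp there.

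To finish, substitute $\mu=b/\lambda$, so the inequality becomes $\lambda(2-\lambda^{2})>N$ with $N=5\lambda b+4b+\tfrac{4b^{2}}{\lambda}+\tfrac{4b^{2}}{\lambda^{2}}-\tfrac{2b}{\lambda}$. The hypothesis $a<\sqrt 2(1-3b)$ together with $\lambda\leq a$ gives $\lambda^{2}<2(1-3b)^{2}$, hence $2-\lambda^{2}>6b(2-3b)$ and $\lambda(2-\lambda^{2})>6\lambda b(2-3b)$. Subtracting $N$ and factoring out $b$ leaves the bracket
\[
\lambda(7-18b)-4+\frac{2-4b}{\lambda}-\frac{4b^{2}}{\lambda^{2}} ,
\]
which I would bound below using $\lambda>2b+1$ (so $\lambda(7-18b)>7-4b-36b^{2}$), $\frac{2-4b}{\lambda}>0$, and $\frac{4b^{2}}{\lambda^{2}}<4b^{2}$, giving the lower bound $3-4b-40b^{2}$. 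On the admissible range, compatibility of $3b+1<a<\sqrt 2(1-3b)$ forces $b<\frac{(\sqrt 2-1)^{2}}{3}\approx 0.057$, where $3-4b-40b^{2}>0$ comfortably; thus the bracket is positive and the claim follows, the case $b=0$ being covered directly by $\lambda=a<\sqrt 2$.

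The main obstacle is not conceptual but book-keeping: extracting the closed form for $r_{2}$ cleanly enough that the comparison with $\lambda-1$ collapses to a tractable inequality. The $\lambda,\mu$ parametrization is what makes this manageable, and once the polynomial inequality is isolated the final estimate has ample slack. The only delicate point is that the result is essentially sharp at $b=0$, so the estimate $2-\lambda^{2}>6b(2-3b)$ must be arranged to capture the correct first-order dependence on $b$ supplied by the factor $(1-3b)$ in the hypothesis; the crude bounds on the remaining $O(b)$ terms are then more than sufficient.
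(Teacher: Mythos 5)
Your proof is correct and follows essentially the same strategy as the paper: both reduce $u_{\infty}<r_{2}$ to an explicit inequality in $\lambda$ and $b$ using $u_{\infty}=\lambda-1$ and the characterization of $(r_{2},0)$ by $\Lozi_{-}\circ\Lozi_{+}(r_{2},0)\in W^{S}(\Lozi_{+},\boldsymbol{z}_{+})$, and both close the estimate with $\lambda\leq a<\sqrt{2}(1-3b)$ together with the observation that the hypothesis forces $b<\frac{(\sqrt{2}-1)^{2}}{3}\approx0.057$. The only real difference is that you compute $r_{2}$ exactly in the $(\lambda,\mu)$ coordinates (your closed form checks out, and agrees with $r_{2}=1-\frac{2}{a(a+1)}$ at $b=0$), whereas the paper settles for a one-sided bound on $r_{2}$ via the stable-cone slope estimate $s_{2}\leq b/\lambda$ and the lemmas of Section~\ref{subsec:Geometry-backward iterates}. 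One harmless slip: after dividing by $b$, the term $\frac{4b^{2}}{\lambda^{2}}$ becomes $\frac{4b}{\lambda^{2}}$, not $\frac{4b^{2}}{\lambda^{2}}$, so with $\frac{4b}{\lambda^{2}}<4b$ the bracket's lower bound is $3-8b-36b^{2}$ rather than $3-4b-40b^{2}$ --- still comfortably positive on the admissible range of $b$, so the conclusion stands.
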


\begin{proof}
Let $s_{2}$ be the vertical slope of $\beta_{2}$. By Lemmas \ref{lem:Reverse orbit on the unstable manifold}
and \ref{lem:The intersection of a stable manifold and a horizontal line}
and (\ref{eq:formula of the intersection point of the stable and the unstable manifolds}),
we have 
\[
r_{2}=\left[\left(1-\frac{s_{2}}{\lambda}\right)\left(1+\frac{1}{\lambda+1}\right)-\frac{b}{\lambda}\right]\frac{\lambda-1}{a+s_{2}}.
\]
By Lemma \ref{lem:Invariance of the stable cone}, we have $s_{2}\leq\frac{b}{\lambda}$.
Hence, 
\begin{align*}
\frac{\lambda}{\lambda-1}r_{2} & \geq\left[\left(1-\frac{b}{\lambda^{2}}\right)\left(1+\frac{1}{\lambda+1}\right)-\frac{b}{\lambda}\right]\left(1-\frac{2b}{\lambda^{2}}\right)=\left[\lambda+2-b\left(1+\frac{2}{\lambda}+\frac{2}{\lambda^{2}}\right)\right]\left(1-\frac{2b}{\lambda^{2}}\right)\frac{1}{\lambda+1}\\
 & \geq\left[\lambda+2-b\left(1+\frac{4}{\lambda}+\frac{6}{\lambda^{2}}\right)\right]\frac{1}{\lambda+1}.
\end{align*}
Since $u_{\infty}=\lambda-1$ and $\lambda>1$, we get
\begin{equation}
\frac{\lambda(\lambda+1)}{\lambda-1}(r_{2}-u_{\infty})\geq2-\lambda^{2}-11b.\label{eq:r2-uInfty}
\end{equation}

By the assumption $a<\sqrt{2}(1-3b)$, we have $\lambda<\frac{2-7b}{\sqrt{2}}$
and $b<1-\frac{2\sqrt{2}}{3}$. Thus, (\ref{eq:r2-uInfty}) becomes
\[
\frac{\lambda(\lambda+1)}{\lambda-1}(r_{2}-u_{\infty})>b(3-\frac{49}{2}b)\geq0.\qedhere
\]
\end{proof}
\begin{cor}
\label{cor:Disjoint C and U when a is small}Suppose that $(a,b)\in\ParaModel$,
$m\geq3$, and $n\geq2$. If $a<\sqrt{2}(1-3b)$, then $C_{m}\cap U_{n}=\emptyset$\@.
\end{cor}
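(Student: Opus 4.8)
The plan is to reduce the geometric disjointness $C_{m}\cap U_{n}=\emptyset$ to a single scalar inequality between the $x$-intercepts on the $x$-axis, namely $u_{n}^{R}<r_{m-1}$, and then to verify that inequality by chaining the monotonicity of the turning points and stable laminations with the estimate $u_{\infty}<r_{2}$ supplied by Proposition \ref{prop:Period-doubling renormalizable parameters}. Recall that $C_{m}=V(\gamma_{m-1},\gamma_{m})$ is the vertical strip whose left boundary $\gamma_{m-1}$ meets the $x$-axis at $(r_{m-1},0)$, while $U_{n}$ is the folded return whose leftmost extent on the $x$-axis is governed by the turning points $(u_{n}^{L},0)$ and $(u_{n}^{R},0)$. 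As already recorded in the discussion of small values of $a$ in Section \ref{sec:Criteria of admissibility}, the two sets are disjoint precisely when $u_{n}^{R}<r_{m-1}$; so it suffices to establish this one inequality under the hypothesis $a<\sqrt{2}(1-3b)$.

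For the inequality itself, I would combine three facts. First, the ordering of the turning points ($u^{L}\leq u_{m}^{L}\leq u_{m}^{R}\leq u_{n}^{L}\leq u_{\infty}\leq u^{R}$ for $2\leq m<n$) gives $u_{n}^{R}\leq u_{\infty}$ for every $n\geq2$: choosing any auxiliary index $n'>n$, one reads off $u_{n}^{R}\leq u_{n'}^{L}\leq u_{\infty}$ from that chain. Second, the monotonicity $r_{k}<r_{k'}$ for $k<k'$ gives $r_{2}\leq r_{m-1}$ whenever $m\geq3$, since then $m-1\geq2$. Third, Proposition \ref{prop:Period-doubling renormalizable parameters} converts the hypothesis $a<\sqrt{2}(1-3b)$ into the strict inequality $u_{\infty}<r_{2}$. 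Concatenating these yields $u_{n}^{R}\leq u_{\infty}<r_{2}\leq r_{m-1}$, hence $u_{n}^{R}<r_{m-1}$, and therefore $C_{m}\cap U_{n}=\emptyset$.

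There is essentially no obstacle left at this level, since all of the analytic content is hidden inside Proposition \ref{prop:Period-doubling renormalizable parameters}; the corollary is purely a matter of assembling the orderings. The only points demanding care are bookkeeping ones: confirming that $u_{n}^{R}$ (and not merely $u_{n}^{L}$) is bounded above by $u_{\infty}$, which is exactly why the chain must be run through an auxiliary index $n'>n$ rather than read directly, and confirming that the hypothesis $m\geq3$ is precisely what pushes $m-1$ into the range $\{2,3,\dots\}$ where the lamination ordering applies. Finally, I would note that it is the \emph{strictness} of $u_{\infty}<r_{2}$ that upgrades the conclusion to the strict inequality $u_{n}^{R}<r_{m-1}$, which is the open condition needed for genuine disjointness of the closed sets $C_{m}$ and $U_{n}$.
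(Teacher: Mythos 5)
Your proof is correct and follows exactly the paper's argument: the paper's proof is the single chain $u_{n}^{R}\leq u_{\infty}<r_{2}\leq r_{m-1}$, with the middle inequality coming from Proposition \ref{prop:Period-doubling renormalizable parameters} and the outer ones from the stated orderings of the turning points and the laminations. Your additional bookkeeping (routing $u_{n}^{R}\leq u_{\infty}$ through an auxiliary index $n'>n$, and noting that $m\geq3$ puts $m-1$ in range) only makes explicit what the paper leaves implicit.
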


\begin{proof}
By Proposition \ref{prop:Period-doubling renormalizable parameters},
we have $u_{n}^{R}\leq u_{\infty}<r_{2}\leq r_{m-1}$. Thus, $C_{m}\cap U_{n}=\emptyset$.
\end{proof}

\section{The geometry near the tent family}

In this section, we study the geometry of degenerate maps ($b=0$).
By continuity, we can then extend some properties to a neighborhood
of $b=0$ in the parameter space.

\subsection{Intersections of the critical value and the $\gamma$ stable manifolds}

First, we study how the critical value $u$ intersect the stable manifolds
$\gamma_{m}$ in terms of the parameter $a$. When a Lozi map is degenerate,
all of the $u$ variables $u_{\infty},u^{L},u_{2}^{L},\cdots,u^{R},u_{2}^{R},\cdots$
have the same value $u=a-1$, and the $r$ variables have a closed
form $r_{\infty}=1$ and $r_{m}=1-\frac{2}{a^{m-1}(a+1)}$ for $1\leq m<\infty$.
\begin{prop}
\label{prop:Tent map geometry}Let $b=0$. There exists an increasing
sequence $\{a_{m}\}_{1\leq m\leq\infty}$ with $a_{1}=1$, $a_{2}=\sqrt{\text{2}}$,
and $a_{\infty}=2$, such that the following holds.
\begin{enumerate}
\item $u=r_{m}$ when $a=a_{m}$ for $2\leq m\leq\infty$.
\item $r_{m}<u<r_{m+1}$ when $a\in(a_{m},a_{m+1})$ for $1\leq m<\infty$.
\end{enumerate}
\end{prop}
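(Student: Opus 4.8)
The plan is to reduce the statement to the elementary analysis of a one-parameter family of polynomials. Using the given closed forms $u = a-1$ and $r_m = 1 - \frac{2}{a^{m-1}(a+1)}$, the equation $u = r_m$ becomes, after clearing the denominator, equivalent to $F_m(a) = 2$, where $F_m(a) = (2-a)(a+1)a^{m-1}$; more precisely $u - r_m = \frac{2 - F_m(a)}{a^{m-1}(a+1)}$, so $\operatorname{sign}(u - r_m) = \operatorname{sign}(2 - F_m(a))$ on $a > 0$. Two features of $F_m$ will drive everything: the boundary values $F_m(1) = 2$ and $F_m(2) = 0$, and the recursion $F_{m+1}(a) = a\,F_m(a)$. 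First I would record these, together with the explicit identity $u - r_1 = \frac{a(a-1)}{a+1} > 0$ on $(1,2)$, which handles the base index.

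Next I would establish, for each finite $m \ge 2$, the existence of a unique $a_m \in (1,2)$ with $F_m(a_m) = 2$ by determining the shape of $F_m$. Writing $F_m(a) = -a^{m+1} + a^m + 2a^{m-1}$ gives $F_m'(a) = a^{m-2} p(a)$ with $p(a) = -(m+1)a^2 + ma + 2(m-1)$. Since $p$ is a downward parabola with $p(1) = 2m - 3 > 0$ and $p(2) = -6 < 0$, it has a single root $a^\ast \in (1,2)$, so $F_m$ increases on $(1,a^\ast)$ and decreases on $(a^\ast,2)$. Because $F_m(1) = 2$ and $F_m(2) = 0$, the level $2$ is attained exactly once on $(1,2)$, on the decreasing branch; this defines $a_m$, and yields the sign pattern $F_m > 2$ on $(1,a_m)$ and $F_m < 2$ on $(a_m,2)$, i.e.\ $u < r_m$ before $a_m$ and $u > r_m$ after.

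With the $a_m$ in hand I would read off the remaining claims. Monotonicity is immediate from the recursion: at $a = a_m$ one has $F_{m+1}(a_m) = a_m F_m(a_m) = 2a_m > 2$, which by the sign pattern for $F_{m+1}$ forces $a_m < a_{m+1}$. The special values follow by direct substitution: $(2-\sqrt2)(\sqrt2+1)\sqrt2 = 2$ gives $a_2 = \sqrt2$, and $u = r_\infty$ reduces to $a - 1 = 1$, i.e.\ $a = 2$; setting $a_1 = 1$ and $a_\infty = 2$ completes the sequence. That $a_m \nearrow 2$ follows because if the increasing bounded sequence had limit $a_\ast < 2$, then $F_m(a_\ast) = (2-a_\ast)(a_\ast+1)a_\ast^{m-1} \to \infty$ (as $a_\ast > 1$), contradicting $F_m(a_\ast) < 2$ for all $m$. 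Finally, property (2) is exactly the sign pattern: for $2 \le m$ and $a \in (a_m,a_{m+1})$ we get $u > r_m$ (from $F_m < 2$) and $u < r_{m+1}$ (from $F_{m+1} > 2$), while the case $m = 1$ on $(1,\sqrt2)$ uses $u > r_1$ from the explicit formula together with $u < r_2$ from $F_2 > 2$.

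The only real content is the shape analysis of $F_m$, and I expect the single mild obstacle to be confirming that $F_m$ has exactly one interior critical point, so that the level set $\{F_m = 2\}$ meets $(1,2)$ in a single point; this is precisely what the sign computation $p(1) > 0 > p(2)$ for the quadratic $p$ delivers. Everything else is substitution, the recursion $F_{m+1} = aF_m$, and a monotone-limit argument, so no further estimates are needed.
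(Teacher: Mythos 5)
Your proof is correct, but it follows a genuinely different route from the paper's. The paper argues by induction on $m$, applying the intermediate value theorem directly to $h_m(a)=u(a)-r_m(a)$: the endpoint signs come from Lemma \ref{lem:Existence of beta1}, Proposition \ref{prop:Period-doubling renormalizable parameters} and Proposition \ref{prop:Parameter space of full horseshoe}, and the interlacing $r_m<u<r_{m+1}$ on $(a_m,a_{m+1})$ comes from the global monotonicity of $h_m$ on $[\tfrac{7}{5},\infty)$ supplied by Lemma \ref{lem:Increasing critical value (degenerate)} (whose proof needs the slightly delicate bound via the maximum of $x\mapsto (x-1)a^{-x}$). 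You instead clear denominators and study the polynomial $F_m(a)=(2-a)(a+1)a^{m-1}$, for which everything is elementary: the boundary values $F_m(1)=2$, $F_m(2)=0$, the unimodality of $F_m$ on $(1,2)$ (from the sign change of the quadratic factor of $F_m'$), and the recursion $F_{m+1}=aF_m$. Note that $F_m$ itself is \emph{not} monotone on $(1,2)$, but your level-set analysis circumvents this because $F_m(1)=2$ exactly, so the level $2$ is hit only on the decreasing branch. Your version is self-contained, yields uniqueness of each $a_m$ in $(1,2)$ directly, and proves $a_m\nearrow 2$ explicitly, which the paper's proof does not address. What the paper's route buys is reuse: the monotonicity lemma and the endpoint-sign propositions are stated for general $b$ and are needed again (e.g.\ in Proposition \ref{prop:Full cycle around a neighborhood of the tent family}), so the degenerate case falls out of machinery that must be built anyway, whereas your polynomial $F_m$ is specific to $b=0$.
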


\begin{proof}
Let $h_{m}(a)=u(a)-r_{m}(a)$. We construct an increasing sequence
$\{a_{m}\}_{2\leq m<\infty}$ such that $a_{m}\in[\sqrt{2},2)$, $r_{m-1}<u<r_{m}$
when $a\in(a_{m-1},a_{m})$, and $u=r_{m}$ when $a=a_{m}$ by induction
on $m$.

Let $a_{2}=\sqrt{2}$. Clearly, $h_{2}(a_{2})=0$. By Lemma \ref{lem:Existence of beta1},
we have $h_{1}(a)>0$ for all $a>a_{1}$. By Proposition \ref{prop:Period-doubling renormalizable parameters},
we have $h_{2}(a)<0$ for all $a\in(a_{1},a_{2})$. This proves the
case for $m=2$.

Suppose that the induction hypothesis holds for $m\geq2$. By the
induction hypothesis, we have $h_{m+1}(a_{m})<h_{m}(a_{m})=0$. Also,
by Proposition \ref{prop:Parameter space of full horseshoe}, we have
$h_{m+1}(a_{\infty})>h_{\infty}(a_{\infty})=0$. Thus, $h_{m+1}$
has a root $a_{m+1}\in(a_{m},a_{\infty})$ by the intermediate value
theorem. Moreover, $r_{m}<u<r_{m+1}$ holds for all $a\in(a_{m},a_{m+1})$
because $h_{m}$ and $h_{m+1}$ are increasing on $[\sqrt{2},\infty)$
(Lemma \ref{lem:Increasing critical value (degenerate)}).

Therefore, the proposition is proved by induction.
\end{proof}
\begin{lem}
\label{lem:Increasing critical value (degenerate)}Let $b=0$. There
exists a constant $c>0$ such that $\frac{\dif}{\dif a}(u-r_{m})>c$
on $[\frac{7}{5},\infty)$ for all $2\leq m<\infty$.
\end{lem}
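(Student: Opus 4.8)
The plan is to work entirely with the explicit formulas available at $b=0$. Since $\lambda=a$ there, we have $u=a-1$ and $r_m=1-\frac{2}{a^{m-1}(a+1)}$, so that
\[
u-r_m = a-2+\frac{2}{a^{m-1}(a+1)}.
\]
Differentiating in $a$ gives the closed expression
\[
\frac{\dif}{\dif a}(u-r_m) = 1 - \frac{2(m-1)}{a^{m}(a+1)} - \frac{2}{a^{m-1}(a+1)^{2}},
\]
and the whole task reduces to bounding the two subtracted terms away from $1$, uniformly in $m\ge 2$, on $a\ge\tfrac{7}{5}$.

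First I would observe that each subtracted term is a product of positive, strictly decreasing functions of $a$, so the right-hand side is \emph{increasing} in $a$ on $[\tfrac{7}{5},\infty)$. Hence it suffices to bound the expression from below at the single point $a=\tfrac{7}{5}$, where $a+1=\tfrac{12}{5}$ and $a^{-1}=\tfrac{5}{7}$. There the two terms become $\tfrac{5}{6}(m-1)\bigl(\tfrac{5}{7}\bigr)^{m}$ and $\tfrac{25}{72}\bigl(\tfrac{5}{7}\bigr)^{m-1}$.

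The only genuine point requiring care is the prefactor $(m-1)$ multiplying the geometric term $\bigl(\tfrac{5}{7}\bigr)^{m}$, and this is precisely where uniformity in $m$ must be earned. I would control it by a ratio test: the quotient of consecutive values of $(m-1)\bigl(\tfrac{5}{7}\bigr)^{m}$ equals $\tfrac{m}{m-1}\cdot\tfrac{5}{7}$, which exceeds $1$ exactly when $m\le 3$, so the sequence is maximized over the integers at $m=4$, giving $(m-1)\bigl(\tfrac{5}{7}\bigr)^{m}\le 3\bigl(\tfrac{5}{7}\bigr)^{4}<0.79$; likewise $\bigl(\tfrac{5}{7}\bigr)^{m-1}\le\tfrac{5}{7}$ for $m\ge 2$. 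Substituting these two uniform bounds yields
\[
\frac{\dif}{\dif a}(u-r_m)\Big|_{a=7/5}\;\ge\;1-\tfrac{5}{6}\cdot 0.79-\tfrac{25}{72}\cdot\tfrac{5}{7}\;>\;\tfrac{1}{10}
\]
for every $m\ge 2$, and by the monotonicity established above this lower bound persists on all of $[\tfrac{7}{5},\infty)$; taking $c=\tfrac{1}{10}$ (a sharper term-by-term count actually allows roughly $c=\tfrac{1}{5}$) completes the argument.

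I expect the main obstacle to be purely this uniformity in $m$ rather than any analytic subtlety: one must verify that the linear-in-$m$ prefactor cannot defeat the geometric decay $a^{-m}$, which is exactly why the estimate is restricted to $a\ge\tfrac{7}{5}$ (bounding $a$ away from $1$ keeps the ratio $\tfrac57$ strictly below the threshold that would let the maximum of $(m-1)(\tfrac57)^m$ climb toward $1$). Everything else is a routine monotonicity-plus-evaluation estimate.
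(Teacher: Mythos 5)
Your proof is correct and follows essentially the same route as the paper: both differentiate the explicit formula $u-r_m=a-2+\tfrac{2}{a^{m-1}(a+1)}$, reduce to the endpoint $a=\tfrac{7}{5}$, and control the prefactor $(m-1)a^{-m}$ uniformly in $m$ --- the paper via the continuous maximum $\tfrac{1}{ea\ln a}$ of $x\mapsto(x-1)a^{-x}$, you via a discrete ratio test locating the maximum at $m=4$, which yields numerically almost the same constant. One small nit: with the rounded value $0.79$ your final expression $1-\tfrac{5}{6}(0.79)-\tfrac{125}{504}\approx 0.094$ actually falls just below $\tfrac{1}{10}$; using the exact value $3(\tfrac{5}{7})^{4}=\tfrac{1875}{2401}$ does give a bound slightly above $0.10$, and in any case any positive constant suffices for the lemma, so this does not affect the argument.
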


\begin{proof}
Compute
\[
\frac{\dif}{\dif a}(u-r_{m})=1-\frac{2}{a+1}\left(\frac{m-1}{a^{m}}\right)-\frac{2}{a^{m-1}(a+1)^{2}}\geq1-\frac{5}{6}\frac{m-1}{\left(\frac{7}{5}\right)^{m}}-\frac{2}{\frac{7}{5}\left(\frac{12}{5}\right)^{2}}.
\]
We note that the map $x\rightarrow\frac{x-1}{a^{x}}$ has a maximal
value $\frac{1}{ea\ln a}$. We get
\[
\frac{\dif}{\dif a}(u-r_{m})\geq\frac{379}{504}-\frac{25}{42e\ln\frac{7}{5}}>0.\qedhere
\]
\end{proof}

\subsection{The parameters of renormalization defined by two returns to $C$}

For $\overline{b}\in(0,1)$, let $\ParaNbd(\overline{b})=\ParaModel\cap\{0\leq b\leq\overline{b}\}$.
The set $\ParaNbd(\overline{b})$ is called a neighborhood of the
tent family.
\begin{prop}
\label{prop:Full cycle around a neighborhood of the tent family}For
all $N\geq2$, there exist $\underline{a}\in(\sqrt{2},2)$ and $\overline{b}\in(0,1)$
such that the following properties hold for all $(a,b)\in\ParaNbd(\overline{b})$,
$m\geq M$, and $2\leq n\leq N$, where $M=N+2$.
\begin{enumerate}
\item If $a\geq\underline{a}$, then $C_{m,n}^{L}$ and $C_{m,n}^{R}$ exist.
\item If $a\leq\underline{a}$, then $C_{m}\cap U_{n}=\emptyset$.
\end{enumerate}
\end{prop}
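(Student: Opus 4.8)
The plan is to collapse the two families of conditions---one requirement for each pair $(m,n)$ with $m\ge M$ and $2\le n\le N$---into a single worst-case inequality apiece, and then to read those off the degenerate geometry at $b=0$ (Proposition \ref{prop:Tent map geometry}) and propagate them to small $b$ by continuity. First I would invoke the monotonicity of the laminations and turning points, $r_{m}<r_{n}<r_{\infty}$ and $u^{L}\le u_{m}^{L}\le u_{m}^{R}\le u_{n}^{L}\le u_{\infty}\le u^{R}$ for $m<n$. By Proposition \ref{prop:Left and right components of a subpartition}, $C_{m,n}^{L}$ and $C_{m,n}^{R}$ both exist as soon as $r_{n}\le u_{m}^{L}$; since $r_{n}\le r_{N}$ and $u_{m}^{L}\ge u_{M}^{L}$ for the indices in question, property 1 follows from the one inequality $r_{N}\le u_{M}^{L}$. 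Likewise, $C_{m}\cap U_{n}=\emptyset$ exactly when $u_{n}^{R}<r_{m-1}$; since $u_{n}^{R}\le u_{N}^{R}$ and $r_{m-1}\ge r_{M-1}=r_{N+1}$, property 2 follows from $u_{N}^{R}<r_{N+1}$. Using Proposition \ref{prop:Tent map geometry} I would then fix $\underline{a}\in(a_{N},a_{N+1})$; because $a_{2}=\sqrt{2}$, the $a_{m}$ are strictly increasing, and $a_{\infty}=2$, this places $\underline{a}\in(\sqrt{2},2)$, and at $b=0$ we have precisely $r_{N}<u<r_{N+1}$ on $(a_{N},a_{N+1})$.

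Next I would establish property 1 on $a\ge\underline{a}$. For $a\ge 2b+2$, Proposition \ref{prop:Parameter space of full horseshoe} gives $r_{N}<r_{\infty}\le u^{L}\le u_{M}^{L}$, so $r_{N}\le u_{M}^{L}$ holds automatically. On the remaining band $\underline{a}\le a\le 2b+2\le 4$, which is compact and contained in $\ParaModel$ once $\overline{b}$ is small enough, I would use the continuity of $u_{M}^{L}$ and $r_{N}$ in $(a,b)$ (Proposition \ref{prop:Exponential convergence of u} and Corollary \ref{cor:Convergence of r_m}). At $b=0$ one has $u_{M}^{L}=u$, and by Lemma \ref{lem:Increasing critical value (degenerate)} the map $a\mapsto u-r_{N}$ is increasing, so $u-r_{N}\ge u(\underline{a})-r_{N}(\underline{a})>0$ throughout the band; since a continuous function that is strictly positive on the slice $\{b=0\}$ of a compact set stays positive on a neighborhood, this yields $\overline{b}_{1}>0$ with $r_{N}\le u_{M}^{L}$ for $b\le\overline{b}_{1}$.

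Property 2 on $a\le\underline{a}$ is handled symmetrically. When $a\le\sqrt{2}(1-3b)$ the conclusion is Corollary \ref{cor:Disjoint C and U when a is small} (note $M=N+2\ge 4\ge 3$). On the complementary compact band $\sqrt{2}(1-3b)\le a\le\underline{a}$ (contained in $\ParaModel$ for $\overline{b}$ small, whose left endpoint tends to $\sqrt 2$ as $b\to 0$), the quantity $r_{N+1}-u_{N}^{R}$ equals $r_{N+1}-u$ at $b=0$, which by Lemma \ref{lem:Increasing critical value (degenerate)} is $\ge r_{N+1}(\underline{a})-u(\underline{a})>0$ there; continuity of $u_{N}^{R}$ and $r_{N+1}$ gives $\overline{b}_{2}>0$ with $u_{N}^{R}<r_{N+1}$ for $b\le\overline{b}_{2}$. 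Taking $\overline{b}=\min\{\overline{b}_{1},\overline{b}_{2}\}$, further shrunk so that both bands lie inside $\ParaModel$, completes the argument.

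The main obstacle is the uniformity: a single $\overline{b}$ must serve \emph{simultaneously} for all $m\ge M$. This is exactly what the monotonicity of $u_{m}^{L}$ and of $r_{m-1}$ in $m$ buys, since it reduces the infinite families to the two extreme-index inequalities $r_{N}\le u_{M}^{L}$ and $u_{N}^{R}<r_{N+1}$, after which only ordinary continuity on a compact parameter set is required. A secondary difficulty is that $a$ ranges over an unbounded interval; this is dispatched by the full-horseshoe regime (Proposition \ref{prop:Parameter space of full horseshoe}) for large $a$ and the period-doubling regime (Corollary \ref{cor:Disjoint C and U when a is small}) for small $a$, leaving only a compact band of $a$ on which the perturbation away from $b=0$ must be controlled.
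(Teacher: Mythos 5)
Your proof is correct and follows essentially the same route as the paper's: the same reduction to the extreme indices via the monotonicity of $r_{m}$ and $u_{m}^{d}$, the same case split into the period-doubling regime (small $a$, Corollary \ref{cor:Disjoint C and U when a is small}), the full-horseshoe regime (large $a$, Proposition \ref{prop:Parameter space of full horseshoe}), and a compact middle band controlled by Proposition \ref{prop:Tent map geometry} and Lemma \ref{lem:Increasing critical value (degenerate)}. The only (harmless) difference is that you propagate positivity from the entire $b=0$ slice by compactness, whereas the paper establishes the inequalities only on the line $a=\underline{a}$ and extends them in $a$ using that $\partial_{a}(u^{d}-r_{\nu})>0$ persists for small $b$.
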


\begin{proof}
By Lemma \ref{lem:Increasing critical value (degenerate)}, continuity
of the partial derivative, and the compactness of the interval $[\frac{7}{5},4]$,
there exists a constant $\overline{b}_{1}>0$ such that $[\frac{7}{5},\infty)\times[0,\overline{b}_{1}]\subset\ParaModel$
and 
\begin{equation}
\frac{\partial}{\partial a}(u^{d}-r_{\nu})>0\label{eq:Monotone u-r}
\end{equation}
 for $(a,b)\in[\frac{7}{5},4]\times[0,\overline{b}_{1}]$, $d\in\{L,R\}$,
and $\nu\in\{N,N+1\}$. Moreover, by Proposition \ref{prop:Tent map geometry},
let $\underline{a}\in(\sqrt{2},2)$ be such that $r_{N}<u<r_{N+1}$
when $(a,b)=(\underline{a},0)$. By continuity, there exists $\overline{b}\in(0,\overline{b}_{1}]$
such that 
\begin{equation}
r_{N}<u^{L}\leq u^{R}<r_{N+1}\label{eq:Condition at the left boundary}
\end{equation}
 for all $(a,b)\in\{\underline{a}\}\times[0,\overline{b}]$. Since
$\sqrt{2}>\frac{7}{5}$, we may also assume that $\overline{b}>0$
is small enough such that $\sqrt{2}(1-3b)\geq\frac{7}{5}$ for all
$b\in[0,\overline{b}]$.

Let $(a,b)\in\ParaNbd(\overline{b})$. If $a<\frac{7}{5}$, then $a<\sqrt{2}(1-3b)$.
By Corollary \ref{cor:Disjoint C and U when a is small}, we have
$C_{m}\cap U_{n}=\emptyset$. If $\frac{7}{5}\leq a\leq\underline{a}$,
then $u_{n}^{R}\leq u^{R}<r_{N+1}\leq r_{m-1}$ by (\ref{eq:Monotone u-r})
and (\ref{eq:Condition at the left boundary}). Thus, $C_{m}\cap U_{n}=\emptyset$.
If $\underline{a}\leq a\leq4$, then $r_{n}\leq r_{N}<u^{L}\leq u_{m}^{L}$
by (\ref{eq:Monotone u-r}) and (\ref{eq:Condition at the left boundary}).
Thus, $C_{m,n}^{L}$ and $C_{m,n}^{R}$ exist by Proposition \ref{prop:Left and right components of a subpartition}.
If $a>4$, then $r_{n}<r_{\infty}\leq u^{L}\leq u_{m}^{L}$ by Proposition
\ref{prop:Parameter space of full horseshoe}. Thus, $C_{m,n}^{L}$
and $C_{m,n}^{R}$ exist by Proposition \ref{prop:Left and right components of a subpartition}.
\end{proof}

\subsection{The parameter curves of the boarder collision bifurcation}

Ishii \cite[Theorem 1.2(i)]{Is97b} proved that there are only creations
of periodic orbits but no annihilation as $a$ increases. Here, we
improve his theorem for some types of itineraries by showing that
the bifurcation parameters are analytic curves.
\begin{lem}
\label{lem:Tent p}When $a>1$, we have the following: 
\begin{itemize}
\item $\left.\frac{\partial p_{\infty,n}}{\partial a}\right|_{b=0}=1$ for
$n\geq2$.
\item $\left.\frac{\partial p_{\infty,2}}{\partial b}\right|_{b=0}=\frac{1}{a}-2$.
\item $\left.\frac{\partial p_{\infty,n}}{\partial b}\right|_{b=0}=-\frac{1}{a}$
for $n\geq3$.
\end{itemize}
\end{lem}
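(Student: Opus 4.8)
The plan is to read $p_{\infty,n}$ off the turning-point formula of Lemma~\ref{lem:Turning point}, applied to the line
\[
L=\Lozi_{-}^{n-2}\circ\Lozi_{+}^{2}\bigl(W^{U}(\Lozi_{-},\boldsymbol{z}_{-})\bigr)
\]
that sits immediately before the final fold $\Lozi$. Taking $\Sign=-$ in that lemma, with $s$ the slope of $L$ and $(\mu v+\zeta_{-},v+\zeta_{-})$ the intersection point of $L$ with $W^{S}(\Lozi_{-},\boldsymbol{z}_{-})$, gives
\[
p_{\infty,n}=(a-b-1)-(1-s\mu)bv+(1-s)b .
\]
Here $s$ and $v$ are analytic in $(a,b)$ near $b=0$ (they are exactly the slope and intersection data tracked by the transformation lemmas underlying Proposition~\ref{prop:Convergence of a horizontal line}), so I may differentiate this expression termwise.

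Since every term other than $a-b-1$ carries an explicit factor $b$, and $\mu|_{b=0}=b/\lambda|_{b=0}=0$, the whole computation collapses to the two boundary values $s_{0}:=s|_{b=0}$ and $v_{0}:=v|_{b=0}$; no derivatives of $s$ or $v$ are needed. The $a$-derivative annihilates every $b$-term, leaving $\partial_{a}p_{\infty,n}|_{b=0}=\partial_{a}(a-b-1)=1$ for all $n\ge2$, which is the first assertion. The product rule then yields
\[
\partial_{b}p_{\infty,n}\big|_{b=0}=-1-v_{0}+(1-s_{0})=-v_{0}-s_{0},
\]
where the $-v_{0}$ comes from differentiating $-(1-s\mu)bv$ and invoking $\mu|_{b=0}=0$, and the $(1-s_{0})$ from differentiating $(1-s)b$.

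It remains to evaluate $s_{0}$ and $v_{0}$ in the degenerate limit $b=0$. For the slope I iterate $f_{\Sign}$ (Lemmas~\ref{lem:Iteration of lines} and~\ref{lem:Invariance of the unstable cone}); at $b=0$ one has $f_{\Sign}(s)=-\frac{1}{\Sign a}$, constant in $s$ and equal to $-1/a$ on the $+$ branch, $+1/a$ on the $-$ branch. Starting from the slope $1/\lambda=1/a$ of $W^{U}(\Lozi_{-},\boldsymbol{z}_{-})$, the two applications of $\Lozi_{+}$ give $s_{0}=-1/a$, which is the value for $n=2$; one further $\Lozi_{-}$ flips it to $s_{0}=1/a$, the value for $n\ge3$. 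For $v_{0}$ I use that at $b=0$ the map acts on the first coordinate as the tent map and $W^{S}(\Lozi_{-},\boldsymbol{z}_{-})$ is the vertical line $x=-1$. Tracing the point of $W^{U}(\Lozi_{-},\boldsymbol{z}_{-})$ whose $\Lozi_{+}^{2}$-image lands on $x=-1$, that image is $(-1,1)$; hence for $n=2$ the intersection of $L$ with $W^{S}(\Lozi_{-},\boldsymbol{z}_{-})$ is $(-1,1)$ and $v_{0}=1-\zeta_{-}=2$. Since $\Lozi_{-}(-1,1)=\boldsymbol{z}_{-}=(-1,-1)$ and $W^{S}(\Lozi_{-},\boldsymbol{z}_{-})$ is $\Lozi_{-}$-invariant, for $n\ge3$ the intersection is the fixed point $\boldsymbol{z}_{-}$, so $v_{0}=0$. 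Substituting into $-v_{0}-s_{0}$ gives $\tfrac1a-2$ when $n=2$ and $-\tfrac1a$ when $n\ge3$.

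The only genuine care needed is bookkeeping: I must confirm that the intersection of $L$ with $W^{S}(\Lozi_{-},\boldsymbol{z}_{-})$ fed into Lemma~\ref{lem:Turning point} is the same analytic branch tracked in the earlier propositions, and that this intersection exists and is transverse for small $b\ge0$ (in particular in the case $n=2$, where the natural choice would be $W^{S}(\Lozi_{+},\boldsymbol{z}_{+})$). This holds because $L$ has slope close to $\pm1/a$ while $W^{S}(\Lozi_{-},\boldsymbol{z}_{-})$ becomes vertical as $b\to0$, so the two meet transversally; everything else is the routine differentiation above.
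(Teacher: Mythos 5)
Your proposal is correct and follows essentially the same route as the paper: both isolate the explicit factors of $b$ in the turning-point formula so that only the $b=0$ values of the auxiliary line data are needed, and both then read those values off the degenerate (tent-map) geometry of $\Lozi_{-}^{n-2}\circ\Lozi_{+}^{2}(W^{U}(\Lozi_{-},\boldsymbol{z}_{-}))$. The only difference is bookkeeping: the paper parameterizes $L_{n}$ by its intercept $k_{n}$ with the critical locus, writing $p_{\infty,n}=a-b(k_{n}+1)-1$, while you use the $(s,v)$ data of Lemma \ref{lem:Turning point}; since $k_{n}|_{b=0}=v_{0}+s_{0}-1$, your values $s_{0},v_{0}$ reproduce exactly the paper's $k_{2}|_{b=0}=1-\tfrac1a$ and $k_{n}|_{b=0}=\tfrac1a-1$.
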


\begin{proof}
When $b=0$, we have 
\[
p_{\infty,n}=a-1
\]
for all $n\geq2$. This proves the first equality.

For $n\geq2$, let $L_{n}=\Lozi_{-}^{n-2}\circ\Lozi_{+}^{2}(W^{U}(\Lozi_{-},\boldsymbol{z}_{-}))$
and $(0,k_{n})$ be the intersection point of $L_{n}$ and the $y$-axis.
Then $p_{\infty,n}=a-b(k_{n}+1)-1$. Hence, 
\[
\left.\frac{\partial p_{\infty,n}}{\partial b}\right|_{b=0}=-k_{n}|_{b=0}-1.
\]
When $b=0$, we have $k_{2}=1-\frac{1}{a}$ and $k_{n}=\frac{1}{a}-1$
for all $n\geq3$. This proves the remaining two equalities.
\end{proof}
\begin{lem}
\label{lem:Tent q}When $a>1$, we have 
\[
\left.\frac{\partial r_{\infty}}{\partial a}\right|_{b=0}=0.
\]
\end{lem}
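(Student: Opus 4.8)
The plan is to use the closed-form expression for $r_\infty$ already recorded in the proof of Proposition \ref{prop:Parameter space of full horseshoe}, namely
\[
r_\infty = 1 - \frac{\lambda+2}{a\lambda+b}\,b,
\]
where $\lambda = \frac{a+\sqrt{a^2-4b}}{2}$. Since $\lambda$ is analytic away from $a^2=4b$ and $a\lambda+b>0$ on $\ParaModel$, this expresses $r_\infty$ as an analytic function of $(a,b)$ near $b=0$, so I may differentiate it freely in $a$.

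First I would observe that the entire correction term carries an explicit factor of $b$. Writing $r_\infty = 1 - b\,\varphi(a,b)$ with $\varphi(a,b) = \frac{\lambda+2}{a\lambda+b}$, differentiation in $a$ gives $\partial_a r_\infty = -b\,\partial_a \varphi(a,b)$. Because $\varphi$ and its $a$-derivative remain finite as $b\to 0$ (at $b=0$ one has $\lambda=a$, so $\varphi(a,0) = \frac{a+2}{a^2}$ and $\partial_a\varphi$ is a bounded rational expression for $a>1$), setting $b=0$ annihilates the whole right-hand side. Hence $\left.\partial_a r_\infty\right|_{b=0}=0$.

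Equivalently, and even more directly, along the line $b=0$ one has $r_\infty(a,0)=1$ (this is the value recorded for the degenerate family just before Proposition \ref{prop:Tent map geometry}), a constant in $a$; differentiating the constant function $a\mapsto r_\infty(a,0)$ yields zero.

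I do not anticipate any genuine obstacle: the only point requiring a moment's care is confirming that the $a$-differentiation and the restriction to $b=0$ commute, which is justified by the analyticity of $r_\infty$ on $\ParaModel$ established through the backward-iteration lemmas of Section \ref{subsec:Geometry-backward iterates} (cf. Proposition \ref{prop:Convergence of q} and Corollary \ref{cor:Convergence of r_m}).
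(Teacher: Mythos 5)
Your proposal is correct, and your second, ``more direct'' argument is precisely the paper's proof: the paper simply notes that $r_{\infty}=1$ when $b=0$, so the $a$-derivative along that line vanishes (and since $\partial_a$ only samples values on the line $b=0$, no commutation issue can arise). The preliminary computation via the closed form $r_\infty = 1 - \frac{\lambda+2}{a\lambda+b}\,b$ is a valid but unnecessary detour.
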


\begin{proof}
When $b=0$, we have
\[
r_{\infty}=1.\qedhere
\]
\end{proof}
Recall that $\FormalPeriodic_{\Sign,m.n}$ is the formal $\iota_{\Sign,m,n}$-periodic
point for $\Sign\in S$ and $m,n\geq2$.
\begin{thm}
\label{thm:The boundary of admissibility}For all $N\geq2$, there
exist $\overline{b}\in(0,1)$ and an integer $M\geq N+2$ such that
the following properties hold for all $m\geq M$ and $2\leq n\leq N$.

There exists an analytic curve $l_{m,n}:[0,\overline{b}]\rightarrow(\sqrt{2},4)$
that divides the parameter space $\ParaNbd(\overline{b})$ into two
regions of admissibility: Let $\Sign\in S$ and $(a,b)\in\ParaNbd(\overline{b})$.
\begin{enumerate}
\item If $a>l_{m,n}(b)$, then $\FormalPeriodic_{\Sign,m,n}$ is hyperbolic
and $\FormalPeriodic_{\Sign,m,n}\in C_{m,n}^{L}$.
\item If $a=l_{m,n}(b)$, then $(a,b)$ is an $\iota_{\pm,m,n}$-bifurcation
parameter. In particular, $\FormalPeriodic_{\Sign,m,n}$ is admissible,
$\FormalPeriodic_{-,m,n}=\FormalPeriodic_{+,m,n}\in C_{m,n}^{L}$,
and $\pi_{1}\circ\Lozi^{m+n-1}(\FormalPeriodic_{\Sign,m,n})=0$.
\item If $a<l_{m,n}(b)$, then $\FormalPeriodic_{\Sign,m,n}$ is not admissible. 
\end{enumerate}
\end{thm}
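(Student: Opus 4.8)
The plan is to realize the $\iota_{\pm,m,n}$-bifurcation parameters as the zero set of the analytic function
$g_{m,n}(a,b)=p_{m,n}(a,b)-q_{m,n}(a,b)$
and to apply the implicit function theorem. By Corollary \ref{cor:Geometrical characterization of the BCB in C(m,n)}, a parameter $(a,b)$ at which $C_{m,n}^{L}$ exists is an $\iota_{\pm,m,n}$-bifurcation parameter precisely when $p_{m,n}=q_{m,n}$, i.e. when $g_{m,n}=0$. So the whole statement reduces to showing that, for $m$ large and $b$ small, the equation $g_{m,n}(a,b)=0$ defines a graph $a=l_{m,n}(b)$ which is analytic, that $g_{m,n}$ is strictly monotone in $a$ across this graph (which yields the three-region dichotomy), and that the graph stays inside $(\sqrt{2},4)$.

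First I would fix the domain. For the given $N$, Proposition \ref{prop:Full cycle around a neighborhood of the tent family} produces $\underline{a}\in(\sqrt{2},2)$ and $\overline{b}>0$ (with $M_{0}=N+2$) such that $C_{m,n}^{L}$ exists for all $a\geq\underline{a}$, $m\geq M_{0}$, $2\leq n\leq N$, and $0\leq b\leq\overline{b}$. On this open set the turning point $p_{m,n}$ and the intersection point $q_{m,n}$ are obtained by composing the affine branches $\Lozi_{-},\Lozi_{+}$ and intersecting with the $x$-axis, so both depend analytically on $(a,b)$ and hence so does $g_{m,n}$. Since $a=4>2b+2$ on the strip, Propositions \ref{prop:Parameter space of full horseshoe} and \ref{prop:Full horseshoe} show that $\FormalPeriodic_{\Sign,m,n}$ is admissible (a full horseshoe) at $a=4$, whereas Corollary \ref{cor:Disjoint C and U implies nonadmissible parameters} (via $a\leq\underline{a}\Rightarrow C_{m}\cap U_{n}=\emptyset$) shows it is not admissible for $a\leq\underline{a}$. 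Thus any bifurcation parameter already lies in $(\underline{a},4)\subset(\sqrt{2},4)$, and it only remains to confine it to a single analytic graph.

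The heart of the argument is the transversality estimate $\partial_{a}g_{m,n}>0$. At $b=0$ the tent computations give $\partial_{a}p_{\infty,n}=1$ and $\partial_{a}r_{\infty}=0$, uniformly in $a$ (Lemmas \ref{lem:Tent p} and \ref{lem:Tent q}), so $\partial_{a}(p_{\infty,n}-r_{\infty})=1$ along $\{b=0\}$. By Corollaries \ref{cor:Convergence of p} and \ref{cor:Convergence of q}, $p_{m,n}\to p_{\infty,n}$ and $q_{m,n}\to r_{\infty}$ locally uniformly together with their first $a$- and $b$-derivatives; applying this on a relatively open neighborhood of the compact set $[\underline{a},4]\times\{0\}$ and using continuity in $b$, I would shrink $\overline{b}$ and enlarge $M_{0}$ to some $M$ so that $\partial_{a}g_{m,n}\geq\tfrac{1}{4}>0$ throughout $[\underline{a},4]\times[0,\overline{b}]$ for every $m\geq M$ and every $n\in\{2,\dots,N\}$ (a finite set, so the constants can be chosen simultaneously). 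Consequently, for each fixed $b$, the analytic map $a\mapsto g_{m,n}(a,b)$ is strictly increasing; it is negative on the non-admissible side near $a=\underline{a}$ and positive at the horseshoe value $a=4$, so it has a unique zero $l_{m,n}(b)\in(\underline{a},4)$. The implicit function theorem, with the nonvanishing derivative $\partial_{a}g_{m,n}$, then upgrades $b\mapsto l_{m,n}(b)$ to an analytic curve with values in $(\sqrt{2},4)$. Finally, the sign of $g_{m,n}$ reads off the three cases: for $a>l_{m,n}(b)$ one has $g_{m,n}>0$, so $p_{m,n}\neq q_{m,n}$ and, by Corollary \ref{cor:Admissible periodic point in Cmn}, $\FormalPeriodic_{\Sign,m,n}$ is admissible with $\pi_{1}\circ\Lozi^{m+n-1}(\FormalPeriodic_{\Sign,m,n})\neq0$, hence hyperbolic and in $C_{m,n}^{L}$; for $a=l_{m,n}(b)$ it is the bifurcation parameter with $\FormalPeriodic_{-,m,n}=\FormalPeriodic_{+,m,n}$; and for $a<l_{m,n}(b)$ it is not admissible.

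I expect the main obstacle to be precisely this uniform transversality $\partial_{a}g_{m,n}>0$ on the whole strip. The convergence in Corollaries \ref{cor:Convergence of p} and \ref{cor:Convergence of q} is only locally uniform and only through first-order derivatives, so the delicate step is to trap the bifurcation parameter inside a fixed compact $a$-range (using the horseshoe bound of Proposition \ref{prop:Parameter space of full horseshoe} and the period-doubling bound of Proposition \ref{prop:Period-doubling renormalizable parameters}) \emph{before} invoking the convergence, and then to coordinate the choices of $\overline{b}$ and $M$ so that the single derivative estimate holds for all admissible $m$ and for all finitely many $n\leq N$ at once. Once $\partial_{a}g_{m,n}>0$ is secured, the monotonicity in $a$ and the implicit function theorem make the remaining bookkeeping routine.
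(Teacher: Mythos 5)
Your proposal follows essentially the same route as the paper: trap the bifurcation parameter in $[\underline{a},4]$ via Propositions \ref{prop:Full cycle around a neighborhood of the tent family}, \ref{prop:Parameter space of full horseshoe} and Corollary \ref{cor:Disjoint C and U implies nonadmissible parameters}, get existence of a root of $p_{m,n}-q_{m,n}=0$ by the intermediate value theorem, secure uniqueness and analyticity from the tent-family derivative computations (Lemmas \ref{lem:Tent p}, \ref{lem:Tent q}) together with the locally uniform convergence of Corollaries \ref{cor:Convergence of p} and \ref{cor:Convergence of q} and the implicit function theorem. The only soft spot is the last step, where you infer admissibility for $a>l_{m,n}(b)$ directly from $p_{m,n}\neq q_{m,n}$; the paper instead argues that each complementary component is entirely hyperbolic or entirely non-admissible by continuity of the admissibility function (\ref{eq:Admissibility function}) and then identifies the components by their endpoint behavior, which is the cleaner way to close your argument.
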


\begin{proof}
First, we define a parameter space such that $C_{m,n}^{L}$ exists.
By Proposition \ref{prop:Full cycle around a neighborhood of the tent family},
let $\underline{a}\in(\sqrt{2},2)$ and $\overline{b}_{0}\in(0,1)$
be constants such that the following properties hold for all $(a,b)\in\ParaNbd(\overline{b}_{0})$,
$m\geq M_{0}$, and $2\leq n\leq N$, where $M_{0}=N+2$. 
\begin{enumerate}[label={(C\arabic*)}]
\item If $a\geq\underline{a}$, then $C_{m,n}^{L}$ and $C_{m,n}^{R}$
exist.
\item \label{enu:Small a, Empty intersection}If $a\leq\underline{a}$,
then $C_{m}\cap U_{n}=\emptyset$.
\end{enumerate}

Second, we use Corollary \ref{cor:Geometrical characterization of the BCB in C(m,n)}
to show that for each $b\in[0,\overline{b}_{0}]$, $m\geq N+2$, and
$2\leq n\leq N$ there exists $l_{m,n}(b)\in(\underline{a},4)$ such
that $(l_{m,n}(b),b)$ is a bifurcation parameter of $\iota_{\pm,m,n}$.
By Proposition \ref{prop:Parameter space of full horseshoe}, we have
\[
p_{m,n}-q_{m,n}>0
\]
when $a\geq4$. By the condition \ref{enu:Small a, Empty intersection},
we have 
\[
p_{m,n}-q_{m,n}<0
\]
when $a=\underline{a}$. Thus, for each $m\geq N+2$, $2\leq n\leq N$,
and $b\in[0,\overline{b}_{0}]$, there exists a root $a=l_{m,n}(b)\in(\underline{a},4)$
of 
\begin{equation}
p_{m,n}(a,b)-q_{m,n}(a,b)=0\label{eq:bifurcation p=00003Dq}
\end{equation}
by the intermediate value theorem. The parameter $(l_{m,n}(b),b)$
is a bifurcation parameter of $\iota_{\pm,m,n}$ by Corollary \ref{cor:Geometrical characterization of the BCB in C(m,n)}.

Third, we use the estimations from the tent family to show that the
root of (\ref{eq:bifurcation p=00003Dq}) is unique on a neighborhood
of the tent family. For each $2\leq n\leq N$, we have 
\[
\left.\frac{\partial}{\partial a}\right|_{b=0}(p_{\infty,n}-r_{\infty})=1
\]
for all $a\geq\underline{a}$ by Lemmas \ref{lem:Tent p} and \ref{lem:Tent q}.
By continuity of the partial derivative and compactness of the interval
$[\underline{a},4]$, there exists $\overline{b}\in(0,\overline{b}_{0}]$
such that 
\[
\frac{\partial}{\partial a}(p_{\infty,n}-r_{\infty})>\frac{1}{2}
\]
for all $(a,b)\in[\underline{a},4]\times[0,\overline{b}]$ and $2\leq n\leq N$.
Moreover, by Corollaries \ref{cor:Convergence of p} and \ref{cor:Convergence of q},
$\lim_{m\rightarrow\infty}p_{m,n}=p_{\infty,n}$ and $\lim_{m\rightarrow\infty}q_{m,n}=r_{\infty}$
uniformly on compact subsets. There exists $M\geq M_{0}$ such that
\begin{equation}
\frac{\partial}{\partial a}(p_{m,n}-q_{m,n})>0\label{eq:increasing in a}
\end{equation}
for all $(a,b)\in[\underline{a},4]\times[0,\overline{b}]$, $m\geq M$,
and $n\in\{2,\cdots,N\}$. Consequently, the root of (\ref{eq:bifurcation p=00003Dq})
is unique when $m\geq M$, $2\leq n\leq N$, and $b\in[0,\overline{b}]$.

Finally, by the uniqueness of the root of (\ref{eq:bifurcation p=00003Dq}),
nonzero partial derivative (\ref{eq:increasing in a}), and the implicit
function theorem \cite[Theorem 2.3.1]{KP02}, we deduce that the curve
$l_{m,n}:[0,\overline{b}]\rightarrow(\underline{a},4)$ is analytic.
The curve divides the parameter space $\ParaNbd(\overline{b})\cap\{a\geq\underline{a}\}$
into two connected components. Each component is either fully hyperbolic
or fully non-admissible by the continuity of the admissibility function
(\ref{eq:Admissibility function}), Corollary \ref{cor:Geometrical characterization of the BCB in C(m,n)},
and the uniqueness of the root of (\ref{eq:bifurcation p=00003Dq}).
By Propositions \ref{prop:Full horseshoe} and \ref{prop:Parameter space of full horseshoe},
$\FormalPeriodic_{\Sign,m,n}(a,b)$ is admissible when $a\geq4$;
by the condition \ref{enu:Small a, Empty intersection} and Corollary
\ref{cor:Disjoint C and U implies nonadmissible parameters}, $\FormalPeriodic_{\Sign,m,n}(a,b)$
is not admissible when $a\leq\underline{a}$. Therefore, the left
component is fully non-admissible and the right component is fully
hyperbolic.
\end{proof}
\begin{cor}
\label{cor:The boundary of admissibility}There exist $\overline{b}\in(0,1)$,
an integer $M\geq5$, and an analytic curve $l_{m,n}:[0,\overline{b}]\rightarrow(\sqrt{2},4)$
for each $m\geq M$ and $n\in\{2,3\}$ such that $l_{m,n}$ satisfies
the properties in Theorem \ref{thm:The boundary of admissibility}
and 
\begin{equation}
\frac{\dif l_{m,2}}{\dif b}>\frac{\dif l_{m,3}}{\dif b}\label{eq:Transversal intersection}
\end{equation}
for all $b\in[0,\overline{b}]$.
\end{cor}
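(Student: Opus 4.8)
The plan is to read off existence and all the listed properties directly from Theorem~\ref{thm:The boundary of admissibility} applied with $N=3$, so that $n$ ranges over $\{2,3\}$ and the resulting integer satisfies $M\ge N+2=5$; the genuine content to prove is then the transversality inequality (\ref{eq:Transversal intersection}). Recall that $l_{m,n}$ is cut out implicitly by $p_{m,n}(a,b)-q_{m,n}(a,b)=0$, and that the proof of that theorem already produces $\partial_a(p_{m,n}-q_{m,n})>0$ along the relevant strip, namely (\ref{eq:increasing in a}). Hence the implicit function theorem gives
\[
\frac{\dif l_{m,n}}{\dif b}=-\frac{\partial_b(p_{m,n}-q_{m,n})}{\partial_a(p_{m,n}-q_{m,n})}
\]
evaluated along the curve $(l_{m,n}(b),b)$, and the whole problem reduces to comparing these two quotients for $n=2$ and $n=3$.

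Next I would pass to the degenerate limit $m\to\infty$. By Corollaries~\ref{cor:Convergence of p} and~\ref{cor:Convergence of q}, $p_{m,n}\to p_{\infty,n}$ and $q_{m,n}\to r_{\infty}$ together with their first partials in $a$ and $b$, locally uniformly; since each $l_{m,n}$ takes values in a fixed compact interval $[\underline{a},4]\subset(\sqrt{2},4)$, this convergence is uniform along the curves. Therefore $\dif l_{m,n}/\dif b$ converges to $-\partial_b(p_{\infty,n}-r_{\infty})/\partial_a(p_{\infty,n}-r_{\infty})$ evaluated along the limit curve, which solves $p_{\infty,n}=r_{\infty}$. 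At $b=0$, Lemmas~\ref{lem:Tent p} and~\ref{lem:Tent q} give $p_{\infty,n}|_{b=0}=a-1$ and $r_{\infty}|_{b=0}=1$, so \emph{both} limit curves pass through the homoclinic-tangency value $a=2$ there, and $\partial_a(p_{\infty,n}-r_{\infty})|_{b=0}=1$. The only terms distinguishing the two indices are $\partial_b p_{\infty,2}|_{b=0}=\tfrac1a-2$ and $\partial_b p_{\infty,3}|_{b=0}=-\tfrac1a$, while the $r_{\infty}$-contribution $\partial_b r_{\infty}$ is common to both. Because the two evaluation points $l_{m,2}(0)$ and $l_{m,3}(0)$ both converge to $a=2$, the common $\partial_b r_{\infty}$ terms cancel in the limit, and I expect
\[
\lim_{m\to\infty}\left(\frac{\dif l_{m,2}}{\dif b}-\frac{\dif l_{m,3}}{\dif b}\right)\bigg|_{b=0}=\bigl(-\partial_b p_{\infty,2}+\partial_b p_{\infty,3}\bigr)\Big|_{(2,0)}=\left(2-\frac{2}{a}\right)\bigg|_{a=2}=1>0.
\]

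Finally, a continuity and uniformity argument closes the proof: the continuous functions $b\mapsto \dif l_{m,2}/\dif b-\dif l_{m,3}/\dif b$ converge uniformly on the compact interval $[0,\overline{b}_{0}]$ to a continuous limit taking the value $1$ at $b=0$, so after shrinking $\overline{b}$ and enlarging $M$ their difference stays bounded below by a positive constant (say $\tfrac12$) for all $m\ge M$ and $b\in[0,\overline{b}]$, which is precisely (\ref{eq:Transversal intersection}). The main obstacle, and the step demanding real care, is that the two slopes are evaluated at genuinely \emph{different} parameter values $l_{m,2}(b)\ne l_{m,3}(b)$; the clean cancellation of the $q_{m,n}$-contributions only happens in the limit, where both evaluation points coalesce at $a=2$ as $b\to0$. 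This is what forces the double limit (in $m$ and in $b$) to be controlled uniformly through the locally uniform convergence of $p_{m,n}$, $q_{m,n}$ and their partials, rather than by any single pointwise estimate.
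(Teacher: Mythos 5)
Your proposal is correct and follows essentially the same route as the paper's proof: apply Theorem \ref{thm:The boundary of admissibility} with $N=3$, express $\dif l_{m,n}/\dif b$ via the implicit function theorem, pass to the $m\to\infty$ limit using Corollaries \ref{cor:Convergence of p} and \ref{cor:Convergence of q}, and compute the limiting difference $2-\tfrac{2}{a}$ from Lemmas \ref{lem:Tent p} and \ref{lem:Tent q} before closing with continuity and uniform convergence. The only cosmetic difference is that the paper bounds $2-\tfrac{2}{a}\geq 2-\sqrt{2}>0$ uniformly over $a\in[\sqrt{2},4]$ rather than evaluating at the coalescence point $a=2$, which sidesteps (somewhat less explicitly than you do) the issue of the two slopes being evaluated at different parameter values.
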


\begin{proof}
Let $N=3$ in Theorem \ref{thm:The boundary of admissibility}. By
the implicit function theorem, we have 
\begin{equation}
\frac{\dif l_{m,n}}{\dif b}(b)=-\frac{\frac{\partial}{\partial b}(p_{m,n}-q_{m,n})}{\frac{\partial}{\partial a}(p_{m,n}-q_{m,n})}(l_{m,n}(b),b)\label{eq:implicit derivative of l_m,n}
\end{equation}
for $n\in\{2,3\}$. For the limiting case $m=\infty$, we have 
\[
\left[-\frac{\frac{\partial}{\partial b}|_{b=0}(p_{\infty,2}-r_{\infty})}{\frac{\partial}{\partial a}|_{b=0}(p_{\infty,2}-r_{\infty})}\right]-\left[-\frac{\frac{\partial}{\partial b}|_{b=0}(p_{\infty,3}-r_{\infty})}{\frac{\partial}{\partial a}|_{b=0}(p_{\infty,3}-r_{\infty})}\right]=2-\frac{2}{a}\ge2-\sqrt{2}>0
\]
for all $a\in[\sqrt{2},4]$ by Lemmas \ref{lem:Tent p} and \ref{lem:Tent q}.
By continuity of the partial derivative and the compactness of the
interval $[\sqrt{2},4]$, we may assume $\overline{b}>0$ is small
enough such that 
\begin{equation}
\left[-\frac{\frac{\partial}{\partial b}(p_{\infty,2}-r_{\infty})}{\frac{\partial}{\partial a}(p_{\infty,2}-r_{\infty})}\right]-\left[-\frac{\frac{\partial}{\partial b}(p_{\infty,3}-r_{\infty})}{\frac{\partial}{\partial a}(p_{\infty,3}-r_{\infty})}\right]>0\label{eq:Db limiting case}
\end{equation}
for all $(a,b)\in[\sqrt{2},4]\times[0,\overline{b}]$. Moreover, by
(\ref{eq:implicit derivative of l_m,n}) and Corollaries \ref{cor:Convergence of p}
and \ref{cor:Convergence of q}, $\frac{\dif l_{m,2}}{\dif b}-\frac{\dif l_{m,3}}{\dif b}$
converges uniformly to the left hand side of (\ref{eq:Db limiting case})
on $[0,\overline{b}]$ as $m\rightarrow\infty$. Therefore, there
exists $M\geq5$ large enough such that $\frac{\dif l_{m,2}}{\dif b}-\frac{\dif l_{m,3}}{\dif b}>0$
on $[0,\overline{b}]$ for all $m\geq M$.
\end{proof}

\subsection{The parameter curve of the first homoclinic tangency of $z_{-}$}

We show that there exists an analytic curve in the parameter space
such that $u_{\infty}=r_{\infty}$. Thus, on the parameter curve,
we may apply a logarithm coordinate change to the turning points and
the stable laminations.
\begin{prop}
\label{prop:Curve of homoclinic tangency}For all $\epsilon>0$, there
exist $\overline{b}\in(0,1)$ and an analytic curve $t:[0,\overline{b}]\rightarrow(2-\epsilon,2+\epsilon)$
such that 
\[
u_{\infty}(a,b)=r_{\infty}(a,b)
\]
for all $b\in[0,\overline{b}]$ and $a=t(b)$.
\end{prop}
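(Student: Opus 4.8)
The plan is to reduce the statement to a single application of the analytic implicit function theorem applied to the function $F(a,b) = u_{\infty}(a,b) - r_{\infty}(a,b)$. First I would record the explicit formulas already available in the excerpt: from the proof of Proposition~\ref{prop:Period-doubling renormalizable parameters} we have $u_{\infty} = \lambda - 1$, and from the proof of Proposition~\ref{prop:Parameter space of full horseshoe} we have $r_{\infty} = 1 - \frac{\lambda + 2}{a\lambda + b}\,b$. Both are real analytic on $\ParaModel$: the only potential difficulty is the square root in $\lambda = \frac{a + \sqrt{a^{2} - 4b}}{2}$, but the condition $a > 3b + 1$ forces $a^{2} - 4b > 0$, so $\lambda$, and hence $F$, is analytic throughout $\ParaModel$.

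Next I would locate the base point on the tent family. At $b = 0$ we have $\lambda = a$, so $u_{\infty} = a - 1$ and $r_{\infty} = 1$; thus $F(2,0) = 0$. This is the degenerate value at which the first homoclinic tangency of $\boldsymbol{z}_{-}$ occurs, and it is consistent with $a_{\infty} = 2$ from Proposition~\ref{prop:Tent map geometry}.

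The key computation is the transversality of $F$ in the $a$-direction at $(2,0)$. Since $\partial_{a}\lambda = \frac{1}{2}\bigl(1 + a/\sqrt{a^{2} - 4b}\bigr)$ equals $1$ when $b = 0$, I obtain $\left.\partial_{a}u_{\infty}\right|_{b=0} = 1$; and Lemma~\ref{lem:Tent q} gives $\left.\partial_{a}r_{\infty}\right|_{b=0} = 0$. Hence $\partial_{a}F(2,0) = 1 \neq 0$. The analytic implicit function theorem \cite[Theorem 2.3.1]{KP02} then produces a neighborhood of $b = 0$ and a unique real-analytic function $t$ with $t(0) = 2$ solving $F(t(b),b) = 0$, which is exactly $u_{\infty}(t(b),b) = r_{\infty}(t(b),b)$.

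Finally, given $\epsilon > 0$, I would shrink the domain to meet the codomain requirement: by continuity of $t$ at $0$ there is $\overline{b} \in (0,1)$ with $t([0,\overline{b}]) \subset (2-\epsilon, 2+\epsilon)$, and (shrinking $\overline{b}$ further if necessary) with $t(b) > 3b + 1$ for all $b \in [0,\overline{b}]$, so that $(t(b),b) \in \ParaModel$ and the formulas for $u_{\infty}$ and $r_{\infty}$ remain valid along the curve. There is no serious obstacle here; the whole argument is one invocation of the implicit function theorem, and the only point requiring care is the nonvanishing derivative $\partial_{a}F(2,0) \neq 0$, which the two derivative identities above supply.
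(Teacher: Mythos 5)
Your proposal is correct and follows exactly the paper's argument: the paper's proof likewise verifies $u_{\infty}(2,0)=r_{\infty}(2,0)=1$ and $\frac{\partial(u_{\infty}-r_{\infty})}{\partial a}(2,0)=1$ and invokes the implicit function theorem \cite[Theorem 2.3.1]{KP02}. Your additional checks (analyticity of $\lambda$ on $\ParaModel$, shrinking $\overline{b}$ to keep the curve inside $(2-\epsilon,2+\epsilon)$ and inside $\ParaModel$) are details the paper leaves implicit but are consistent with it.
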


\begin{proof}
We have $u_{\infty}(2,0)=r_{\infty}(2,0)=1$ and $\frac{\partial(u_{\infty}-r_{\infty})}{\partial a}(2,0)=1$.
Therefore, the existence of the curve follows from the implicit function
theorem \cite[Theorem 2.3.1]{KP02}.
\end{proof}

\section{The reverse order of bifurcation}

In this section, we prove our main theorem. 
\begin{thm}
\label{thm:Main theorem}For all $\hat{b}\in(0,1)$, there exist $\overline{b}\in(0,\hat{b})$,
an integer $m\geq5$, and two analytic curves $l_{m,2},l_{m,3}:[0,\overline{b}]\rightarrow(\sqrt{2},4)$
such that the following properties hold. 
\begin{enumerate}
\item For each $(a,b)\in\ParaNbd(\overline{b})$, $\Sign\in S$, and $n\in\{2,3\}$,
$\FormalPeriodic_{\Sign,m,n}$ is admissible at $(a,b)$ if and only
if $a\geq l_{m,n}(b)$. In fact, the border collision bifurcation
occurs if and only if $a=l_{m,n}(b)$.
\item When $b=0$, we have $l_{m,2}(0)<l_{m,3}(0)$; when $b=\overline{b}$,we
have $l_{m,2}(\overline{b})>l_{m,3}(\overline{b})$. 
\item The intersection of the two curves $l_{m,2}$ and $l_{m,3}$ is unique
and transverse.
\end{enumerate}
\end{thm}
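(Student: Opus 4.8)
The plan is to assemble the two curves directly from Corollary~\ref{cor:The boundary of admissibility}, read off part~(1) from Theorem~\ref{thm:The boundary of admissibility}, and then obtain the order reversal of part~(2) by comparing the combinatorics at the two endpoints $b=0$ and $b=\overline b$; part~(3) will then follow from the derivative inequality already recorded in Corollary~\ref{cor:The boundary of admissibility}. Concretely, I would first fix $\hat b$ and apply Corollary~\ref{cor:The boundary of admissibility} to obtain a threshold $\overline{b}_0\in(0,1)$, an integer $M_0\ge 5$, and, for every $m\ge M_0$ and $n\in\{2,3\}$, an analytic curve $l_{m,n}\colon[0,\overline{b}_0]\to(\sqrt2,4)$ satisfying the admissibility dichotomy of Theorem~\ref{thm:The boundary of admissibility} together with $\frac{\dif l_{m,2}}{\dif b}>\frac{\dif l_{m,3}}{\dif b}$ on $[0,\overline{b}_0]$. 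Restricting these curves to any smaller interval $[0,\overline b]\subset[0,\overline{b}_0]$ preserves every one of these properties, so part~(1) is immediate; the remaining work is to choose $\overline b\in(0,\min\{\hat b,\overline{b}_0\})$ and $m\ge M_0$ that realize the reversal.

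For the endpoint $b=0$ I would use Proposition~\ref{prop:Tent map geometry} together with the left-to-right alignment of the strips $C_{m,2}^{L}$ and $C_{m,3}^{L}$ inside $C_m$. When $b=0$ the two critical images coincide, $U_2=U_3$, so as $a$ increases the single advancing critical value meets the leftmost strip $C_{m,2}^{L}$ strictly before $C_{m,3}^{L}$; by Corollary~\ref{cor:Geometrical characterization of the BCB in C(m,n)} this is precisely the order in which the equations $p_{m,n}=q_{m,n}$ are first solved, which yields $l_{m,2}(0)<l_{m,3}(0)$.

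The substance lies in the opposite inequality at $b=\overline b$, and this is the step I expect to be the main obstacle. I would work along the homoclinic-tangency curve $a=t(b)$ from Proposition~\ref{prop:Curve of homoclinic tangency}, on which $u_\infty=r_\infty$ and $\lambda$ stays near $2$, and pass to the coordinate $T(x)=-\log_\lambda(r_\infty-x)$. The exponential estimates of Propositions~\ref{prop:Exponential convergence of r} and~\ref{prop:Exponential convergence of u} place the stable laminations at $T(r_m)=m+O(1)$ and the turning points at $T(u_n^{L,R})=(n-1)\log_\lambda\overline{b}^{-1}+O(1)$, so the $m$-clock and the $n$-clock run at genuinely different speeds. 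The delicate point is exactly that the two families are graded by different scales: once $\overline b$ is small enough that $\log_\lambda\overline{b}^{-1}>2$, the window $\bigl(\log_\lambda\overline{b}^{-1}+1,\,2\log_\lambda\overline{b}^{-1}\bigr)$ has length exceeding $1$ and hence contains an integer $m$ (automatically $\ge M_0$ once $\overline b$ is small), producing the configuration
\[
T(u_2^{R})<m-1=T(r_{m-1})<m=T(r_m)<T(u_3^{L}).
\]
Unwinding $T$, the left inequality gives $u_2^{R}<r_{m-1}$, so $U_2\cap C_m=\emptyset$ and $\FormalPeriodic_{\pm,m,2}$ is non-admissible at $(t(\overline b),\overline b)$ (the ``small $a$'' case of Section~\ref{sec:Criteria of admissibility}), while the right inequality gives $r_m<u_3^{L}$, so $\Lozi^{m+3}\colon C_{m,3}^{L}\to U_3$ is a full horseshoe and $\FormalPeriodic_{\pm,m,3}$ is admissible (the ``large $a$'' case, Proposition~\ref{prop:Full horseshoe}). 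By part~(1) this forces $l_{m,3}(\overline b)\le t(\overline b)<l_{m,2}(\overline b)$, the required reversal. The genuine care is in the bookkeeping: reconciling the $O(1)$ error terms in both clocks, ensuring the selected integer $m$ lands in the admissible range $m\ge M_0$, and checking that $t(\overline b)>\underline a$ so that $C_{m,n}^{L}$ indeed exists along the chosen curve.

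Finally, part~(3) is routine given the endpoints. The inequality $\frac{\dif l_{m,2}}{\dif b}>\frac{\dif l_{m,3}}{\dif b}$ makes $l_{m,2}-l_{m,3}$ strictly increasing on $[0,\overline b]$, while the two endpoint computations give $(l_{m,2}-l_{m,3})(0)<0<(l_{m,2}-l_{m,3})(\overline b)$; by the intermediate value theorem and strict monotonicity there is exactly one $b^{\ast}\in(0,\overline b)$ with $l_{m,2}(b^{\ast})=l_{m,3}(b^{\ast})$, and the strict difference of the derivatives there makes the crossing transverse.
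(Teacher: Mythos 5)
Your proposal is correct and follows essentially the same route as the paper: Corollary~\ref{cor:The boundary of admissibility} for the curves and the derivative inequality, the tangency curve $a=t(b)$ with the logarithmic coordinate and the exponential estimates of Propositions~\ref{prop:Exponential convergence of u} and~\ref{prop:Exponential convergence of r} to force the configuration $T(u_2^{R})<T(r_{m-1})<T(r_m)<T(u_3^{L})$ at $b=\overline b$, and the alignment of the strips (made precise in the paper via the intermediate value theorem applied to the common boundary $w_m$ of $C_{m,2}^{L}$ and $C_{m,3}^{L}$) at $b=0$. The only cosmetic difference is how the integer $m$ is selected: you pick it from a length-$>1$ window, while the paper defines it by $T(r_{m-2})\leq T(u_2^{R})<T(r_{m-1})$ and verifies $m>M$ separately.
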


\begin{proof}
The existence of the two curves is provided by Corollary \ref{cor:The boundary of admissibility}.
Precisely, there exists a neighborhood of the tent family $\ParaNbd(\overline{b}_{1})$
and a constant $M\geq5$ such that the conclusion of the corollary
holds. For $m\geq M$, let $l_{m,2}$ and $l_{m,3}$ be the two analytic
curves (boundaries of admissible parameters) given by the corollary.

We want to find the two constants $\overline{b}\in(0,\hat{b})$ and
$m\geq M$ such that the order of bifurcation reverses. Let $t:[0,\overline{b}_{2}]\rightarrow[\sqrt{2},4]$
be the parameter curve of homoclinic tangency given by Proposition
\ref{prop:Curve of homoclinic tangency}. We may assume that $\overline{b}_{2}>0$
is small enough such that $[\sqrt{2},4]\times[0,\overline{b}_{2}]\subset\ParaModel$.
When $b\in[0,\overline{b}_{2}]$ and $a=t(b)$, we have $u_{\infty}=r_{\infty}$.
We apply the logarithm coordinate transformation $T(x)=-\log_{\lambda}(r_{\infty}-x)=-\log_{\lambda}(u_{\infty}-x)$
to the $x$-coordinate of the phase space. By Proposition \ref{prop:Exponential convergence of r},
there exist constants $c_{2}>c_{1}>0$ such that
\begin{equation}
m+\log_{\lambda}c_{1}<T(r_{m})<m+\log_{\lambda}c_{2}\label{eq:T(r_m) bounds}
\end{equation}
for all $m\geq2$ and $b\in[0,\overline{b}_{2}]$. Then
\begin{equation}
T(r_{m})-T(r_{m-2})<2+\log_{\lambda}\frac{c_{2}}{c_{1}}\label{eq:Upper bound of T(r_m)-T(r_m-2)}
\end{equation}
for all $m\geq4$ and $b\in[0,\overline{b}_{2}]$. Also, since $[\sqrt{2},4]\times[0,\overline{b}_{2}]$
is a compact subset of $\ParaModel$ and $0<(1-\frac{1}{\lambda^{n-1}})\frac{1}{\lambda^{n-2}}<1$,
Proposition \ref{prop:Exponential convergence of u} can be simplified
as
\begin{equation}
(n-1)\log_{\lambda}b^{-1}+\log_{\lambda}c_{3}\leq T(u_{n}^{d})\leq(n-1)\log_{\lambda}b^{-1}+\log_{\lambda}c_{4}\label{eq:T(u_m) bounds}
\end{equation}
for all $n\in\{2,3\}$, $d\in\{R,L\}$, and $b\in[0,\overline{b}_{2}]$,
where $c_{4}>c_{3}>0$ are constants. This implies that
\begin{equation}
T(u_{3}^{L})-T(u_{2}^{R})\geq\log_{\lambda}b^{-1}+\log_{\lambda}\frac{c_{3}}{c_{4}}\label{eq:Lower bound of T(u_3)-T(u_2)}
\end{equation}
for all $b\in(0,\overline{b}_{2}]$. Since $\lambda$ is bounded on
$\ParaFull\cap\{a\leq4\}$, there exists $\overline{b}_{3}>0$ small
enough such that 
\begin{equation}
\log_{\lambda}b^{-1}+\log_{\lambda}\frac{c_{3}}{c_{4}}>2+\log_{\lambda}\frac{c_{2}}{c_{1}}\label{eq:Condition when b is small}
\end{equation}
holds for all $b\in(0,\overline{b}_{3}]$. Finally, since $\lambda$
is bounded on $\ParaFull\cap\{a\leq4\}$, there exists $\overline{b}\in(0,\min(\overline{b}_{1},\overline{b}_{2},\overline{b}_{3},\hat{b})]$
small enough such that 
\begin{equation}
\log_{\lambda}\overline{b}+\log_{\lambda}\frac{c_{3}}{c_{2}}\geq M-1.\label{eq:condition on M}
\end{equation}
Let $m$ be the integer such that 
\begin{equation}
T(r_{m-2})\leq T(u_{2}^{R})<T(r_{m-1})\label{eq:definition of m}
\end{equation}
 when $(a,b)=(t(\overline{b}),\overline{b})$. Then $m>M$ by (\ref{eq:T(r_m) bounds}),
(\ref{eq:T(u_m) bounds}), and (\ref{eq:condition on M}).

Consider the case when $(a,b)=(t(\overline{b}),\overline{b})$. We
have $T(u_{2}^{R})<T(r_{m-1})$ and
\begin{align*}
T(u_{3}^{L})-T(r_{m}) & =[T(u_{3}^{L})-T(u_{2}^{R})]-[T(r_{m})-T(r_{m-2})]+[T(u_{2}^{R})-T(r_{m-2})]\\
 & >\left(\log_{\lambda}\overline{b}^{-1}+\log_{\lambda}\frac{c_{3}}{c_{4}}\right)-\left(2+\log_{\lambda}\frac{c_{2}}{c_{1}}\right)>0
\end{align*}
by (\ref{eq:Upper bound of T(r_m)-T(r_m-2)}), (\ref{eq:Lower bound of T(u_3)-T(u_2)}),
(\ref{eq:Condition when b is small}), and (\ref{eq:definition of m}).
Thus, $u_{2}^{R}<r_{m-1}$ and $u_{3}^{L}>r_{m}$ since $T$ is monotone
increasing. By Proposition \ref{prop:Full horseshoe} and Corollary
\ref{cor:Disjoint C and U implies nonadmissible parameters}, this
shows that, at the parameter $(t(\overline{b}),\overline{b})$, $\FormalPeriodic_{\Sign,m,3}$
is admissible, whereas $\FormalPeriodic_{\Sign,m,2}$ is not admissible
for each $\Sign\in S$. Consequently, $l_{m,3}(\overline{b})\leq t(\overline{b})<l_{m,2}(\overline{b})$.

Consider the case when $b=0$. We have $u=u_{2}^{L}=u_{3}^{R}=a-1$.
Let $\{w_{m}\}\times\Iv=C_{m,2}^{L}\cap C_{m,3}^{L}$. Then $u\leq r_{2}<r_{m-1}<w_{m}$
when $a=\sqrt{2}$ by Proposition \ref{prop:Period-doubling renormalizable parameters},
and $w_{m}<r_{m}<u$ when $a=2$ by Proposition \ref{prop:Parameter space of full horseshoe}.
The value $w_{m}$ depends continuously on $a$. By the intermediate
value theorem, there exists $a_{0}\in[\sqrt{2},2)$ such that $u=w_{m}$
when $a=a_{0}$. By Propositions \ref{prop:Full horseshoe} and \ref{prop:Admissible periodic point in C},
this shows that, at the parameter $(a_{0},0)$, $\FormalPeriodic_{\Sign,m,2}$
is admissible, whereas $\FormalPeriodic_{\Sign,m,3}$ is not admissible
for each $\Sign\in S$. Consequently, $l_{m,2}(0)\leq a_{0}<l_{m,3}(0)$.

Finally, the intersection of $l_{m,2}$ and $l_{m,3}$ is unique and
transverse because (\ref{eq:Transversal intersection}) holds.
\end{proof}

\appendix

\section{The universal cones}

The universal stable and unstable cones are $K^{S}=\{(x,y)\in\mathbb{R}^{2};|x|\leq\frac{b}{\lambda}\left|y\right|\}$
and $K^{U}=\{(x,y)\in\mathbb{R}^{2};|y|\leq\frac{1}{\lambda}\left|x\right|\}$
respectively. The cones were first introduced by \cite{MS18} for
the orientation reversing case, and then generalized to the orientation
preserving case by \cite{Pr21}. Here, we include the proof of the
orientation preserving case for completeness. Let $\left\Vert \cdot\right\Vert _{n}$
be the $L^{n}$ norm for $n=1,2,\cdots,\infty$.
\begin{thm}
\label{thm:Invaniance of the universal cones}Suppose that $(a,b)\in\ParaFull$
and $n=1,2,\cdots,\infty$. The followings are true.
\begin{enumerate}
\item $(\Dif\Lozi_{\Sign})K^{U}\subset K^{U}$ and $\left\Vert (\Dif\Lozi_{\Sign})\boldsymbol{v}\right\Vert _{n}\geq\lambda\left\Vert \boldsymbol{v}\right\Vert _{n}$
for $\Sign\in S$ and $\boldsymbol{v}\in K^{U}$.
\item $(\Dif\Lozi_{\Sign}^{-1})K^{S}\subset K^{S}$ and $\left\Vert \boldsymbol{v}\right\Vert _{n}\leq\mu\left\Vert (\Dif\Lozi_{\Sign}^{-1})\boldsymbol{v}\right\Vert _{n}$
for $\Sign\in S$ and $\boldsymbol{v}\in K^{S}$.
\end{enumerate}
\end{thm}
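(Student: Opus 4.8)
The linear part of each branch is
\[
\Dif\Lozi_{\Sign}=\begin{pmatrix}-\Sign a & -b\\ 1 & 0\end{pmatrix},
\]
with $\det(\Dif\Lozi_{\Sign})=b$ and characteristic polynomial $t^{2}+\Sign a t+b$. The plan is to reduce both assertions to two elementary coordinatewise inequalities, using only the fact that $\lambda$ and $\mu$ are the roots of $t^{2}-at+b$, i.e. $\lambda+\mu=a$ and $\lambda\mu=b$, equivalently $a-\mu=\lambda$ and $b/\lambda=\mu$. Together with Lemma~\ref{lem:Expanding multiplier} ($\lambda>1$ and $0\le\mu<1$) and the signs $a>b+1>0$, $b\ge 0$, this is all that is needed. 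The one structural observation that makes the $L^{n}$ estimates uniform in $n$ is this: if a linear image $\boldsymbol{w}=(x',y')$ of $\boldsymbol{v}=(x,y)$ satisfies $|x'|\ge\rho|x|$ and $|y'|\ge\rho|y|$ coordinatewise, then $\|\boldsymbol{w}\|_{n}\ge\rho\|\boldsymbol{v}\|_{n}$ for every $n\in\{1,\dots,\infty\}$, since each $L^{n}$ norm is monotone in the absolute values of the coordinates. Thus I never split into cases on $n$.

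For (1), take $\boldsymbol{v}=(x,y)\in K^{U}$, so $|y|\le\frac{1}{\lambda}|x|$, and write $(\Dif\Lozi_{\Sign})\boldsymbol{v}=(x',y')$ with $x'=-\Sign a x-by$ and $y'=x$. First I would bound $|x'|=|ax+\Sign by|\ge a|x|-b|y|\ge(a-\mu)|x|=\lambda|x|$ by the reverse triangle inequality together with $b|y|\le\frac{b}{\lambda}|x|=\mu|x|$. Since $|y'|=|x|$, this already gives $|y'|/|x'|\le|x|/(\lambda|x|)=1/\lambda$, proving $(\Dif\Lozi_{\Sign})K^{U}\subset K^{U}$. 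Moreover $|x'|\ge\lambda|x|$ and $|y'|=|x|\ge\lambda|y|$ are exactly the coordinatewise domination above with $\rho=\lambda$, so $\|(\Dif\Lozi_{\Sign})\boldsymbol{v}\|_{n}\ge\lambda\|\boldsymbol{v}\|_{n}$ for all $n$.

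For (2), assume $b>0$ so that the branch is invertible (when $b=0$ the map $\Lozi_{\Sign}$ collapses onto a line and the statement is vacuous). Inverting gives $(\Dif\Lozi_{\Sign})^{-1}=\begin{pmatrix}0 & 1\\ -1/b & -\Sign a/b\end{pmatrix}$, so for $\boldsymbol{v}=(x,y)$ one has $(\Dif\Lozi_{\Sign}^{-1})\boldsymbol{v}=(x'',y'')$ with $x''=y$ and $y''=-\frac{1}{b}(x+\Sign a y)$. If $\boldsymbol{v}\in K^{S}$, then $|x|\le\mu|y|$; I would estimate $|y''|=\frac{1}{b}|x+\Sign a y|\ge\frac{1}{b}\bigl(a|y|-|x|\bigr)\ge\frac{1}{b}(a-\mu)|y|=\frac{\lambda}{b}|y|=\frac{1}{\mu}|y|$, again by the reverse triangle inequality and $a-\mu=\lambda$. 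Since $|x''|=|y|\ge\frac{1}{\mu}|x|$, the coordinatewise domination now holds with $\rho=1/\mu$, yielding $\|(\Dif\Lozi_{\Sign}^{-1})\boldsymbol{v}\|_{n}\ge\frac{1}{\mu}\|\boldsymbol{v}\|_{n}$, i.e. $\|\boldsymbol{v}\|_{n}\le\mu\|(\Dif\Lozi_{\Sign}^{-1})\boldsymbol{v}\|_{n}$; and rearranging $|y''|\ge\frac{1}{\mu}|y|=\frac{1}{\mu}|x''|$ into $|x''|\le\mu|y''|$ gives $(\Dif\Lozi_{\Sign}^{-1})K^{S}\subset K^{S}$.

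The computation is essentially routine; the only points requiring care are the bookkeeping of the branch sign $\Sign$ (using $|{-\Sign a x-by}|=|ax+\Sign by|$, and likewise for the inverse), and recognizing that the single identity $a-\mu=\lambda$ drives every inequality. The mild obstacle is organizational rather than mathematical: phrasing the argument so that both the cone inclusion and the norm bound fall out of the \emph{same} two coordinatewise inequalities, and isolating the coordinatewise-to-$L^{n}$ monotonicity remark so that the parameter $n$ (including $n=\infty$) never needs separate treatment.
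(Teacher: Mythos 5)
Your proof is correct and follows essentially the same route as the paper: bound $|{-\Sign ax-by}|\geq a|x|-b|y|\geq(a-\mu)|x|=\lambda|x|$ via the reverse triangle inequality and the cone condition, then read off both the cone invariance and the coordinatewise domination that gives the $L^{n}$ estimate for every $n$ at once (the paper merely normalizes $x_{1}=1$, resp.\ $y_{1}=1$, instead of working with a general vector, and leaves the coordinatewise-to-$L^{n}$ monotonicity and the $b=0$ degeneracy of part (2) implicit). No substantive differences.
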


\begin{proof}
To prove that $K^{U}$ is $\Dif\Lozi_{\Sign}$-invariant, let $(x_{1},y_{1})\in K^{U}$
and $(x_{2},y_{2})=(\Dif\Lozi_{\Sign})(x_{1},y_{1})$. Without loss
of generality, we may assume that $x_{1}=1$. Then $|y_{1}|\leq\frac{1}{\lambda}$,
$y_{2}=1$, and $x_{2}=-\Sign a-by_{1}$. Compute
\begin{equation}
\frac{1}{\lambda}|x_{2}|\geq\frac{1}{\lambda}(a-b|y_{1}|)\geq\frac{1}{\lambda}(a-\frac{b}{\lambda})=1=|y_{2}|.\label{eq:Unstable cone}
\end{equation}
Thus, $(\Dif\Lozi_{\Sign})K^{U}\subset K^{U}$. Moreover, by (\ref{eq:Unstable cone}),
we get $|x_{2}|\geq\lambda|y_{2}|=\lambda|x_{1}|$. Also, by assumption,
we have $|y_{2}|=1\geq\lambda|y_{1}|$. This proves that $\left\Vert (x_{2},y_{2})\right\Vert _{n}\geq\lambda\left\Vert (x_{1},y_{1})\right\Vert _{n}$.

To prove that $K^{S}$ is $\Dif\Lozi_{\Sign}^{-1}$-invariant, let
$(x_{1},y_{1})\in K^{S}$ and $(x_{2},y_{2})=(\Dif\Lozi_{\Sign}^{-1})(x_{1},y_{1})$.
Without loss of generality, we may assume that $y_{1}=1$. Then $|x_{1}|\leq\frac{b}{\lambda}$,
$x_{2}=1$, and $x_{1}=-\Sign a-by_{2}$. Compute
\begin{equation}
b|y_{2}|\geq a-|x_{1}|\geq a-\frac{b}{\lambda}=\lambda|x_{2}|.\label{eq:Stable cone}
\end{equation}
Thus, $(\Dif\Lozi_{\Sign}^{-1})K^{S}\subset K^{S}$. Moreover, by
(\ref{eq:Stable cone}), we get $b|y_{2}|\geq\lambda|x_{2}|=\lambda|y_{1}|$.
Also, by assumption, we have $b|x_{2}|=b\geq\lambda|x_{1}|$. This
proves that $\frac{b}{\lambda}\left\Vert (x_{2},y_{2})\right\Vert _{n}\geq\left\Vert (x_{1},y_{1})\right\Vert _{n}$.
\end{proof}

\section{The global dynamics}

In this section, we study the global dynamics of the phase space.
We prove a theorem similar to \cite{BSV09}. They studied the global
dynamics of orientation reversing Lozi maps when one fixed point does
not have a homoclinic intersection, and gave an explicit description
of the basin of the Misiurewicz's strange attractor \cite{Mi80}.
Here we generalize their theorem to the orientation preserving maps.
We show that an orbit is either unbounded or eventually trapped inside
a trapping region. Our theorem is not restricted to the condition
of no homoclinic intersection. On the other hand, we do not study
the geometrical structure of the basins to shorten the proof.

Denote by $\omega(x)$ the omega limit set of $x$. 
\begin{thm}
\label{thm:global dynamics}Suppose that $(a,b)\in\ParaFull$. There
exists a compact set $T\subset\mathbb{R}^{2}$ such that for all $\boldsymbol{v}\in\mathbb{R}^{2}$
exactly one of the following is true:
\begin{enumerate}
\item $\lim_{n\rightarrow\infty}\pi_{1}\circ\Lozi^{n}(\boldsymbol{v})=\lim_{n\rightarrow\infty}\pi_{2}\circ\Lozi^{n}(\boldsymbol{v})=-\infty$.
\item $\omega(\boldsymbol{v})\subset T$.
\end{enumerate}
\end{thm}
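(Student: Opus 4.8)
The plan is to reduce the two-dimensional dynamics to the scalar second-order recurrence hidden inside $\Lozi$, isolate a half-plane on which orbits provably run off to $-\infty$, and confine every remaining orbit to a fixed ball. Writing $\Lozi^{n}(\boldsymbol{v})=(x_{n},x_{n-1})$ with $x_{0}=\pi_{1}(\boldsymbol{v})$ and $x_{-1}=\pi_{2}(\boldsymbol{v})$, the second coordinate is always the previous first coordinate, so alternative (1) is equivalent to the single condition $x_{n}\to-\infty$, and the iteration reads $x_{n+1}=-a\abs{x_{n}}-bx_{n-1}+(a-b-1)$. On the left branch $x_{n}\le 0$ this recurrence is linear, and the substitutions $v_{n}=x_{n}-\mu x_{n-1}$ and $w_{n}=x_{n}-\lambda x_{n-1}$ diagonalize it into $v_{n+1}=\lambda v_{n}+(a-b-1)$ and $w_{n+1}=\mu w_{n}+(a-b-1)$, whose fixed values are $v^{*}=\frac{a-b-1}{1-\lambda}=-(1-\mu)$ and $w^{*}=\lambda-1$, namely the $v$- and $w$-coordinates of $\boldsymbol{z}_{-}$. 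By Lemma \ref{lem:Expanding multiplier} we have $\lambda>1$ and $0\le\mu<1$, so on the left branch the $v$-direction expands away from $\boldsymbol{z}_{-}$ while the $w$-direction contracts toward it.

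First I would treat the escape. Set $E=\{(x,y)\in\mathbb{R}^{2}: x\le 0 \text{ and } x-\mu y<v^{*}\}$, whose boundary line carries $W^{S}(\boldsymbol{z}_{-})$. If $(x_{0},x_{-1})\in E$ then $x_{0}\le 0$ forces the linear branch, so $v_{1}-v^{*}=\lambda(v_{0}-v^{*})<0$ and $x_{1}=v_{1}+\mu x_{0}<0$; hence $\Lozi(E)\subset E$ and, by induction, the orbit remains on the left branch with $v_{n}-v^{*}=\lambda^{n}(v_{0}-v^{*})\to-\infty$. Because $w_{n}$ converges geometrically (ratio $\mu$) to $w^{*}$ and so stays bounded, $x_{n-1}=(v_{n}-w_{n})/(\lambda-\mu)\to-\infty$, which is exactly alternative (1). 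Thus every orbit that ever meets $E$ escapes, and the set $E$ is a clean, explicit escaping region.

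It then remains to prove the confinement lemma, which is the heart of the matter: \emph{if the forward orbit of $\boldsymbol{v}$ never enters $E$, then $\limsup_{n}\norm{\Lozi^{n}(\boldsymbol{v})}\le R$ for a constant $R=R(a,b)$}. Taking $T=\overline{B}(0,R)$ and using that $\omega(\boldsymbol{v})$ is closed and invariant and thus contained in this ball then yields alternative (2). The two alternatives are mutually exclusive, since (1) forces the orbit to be unbounded while (2) keeps it in a compact set, and they are exhaustive, since an orbit either meets $E$ and escapes or never meets $E$ and is confined; so the confinement lemma finishes the theorem.

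I expect this confinement step to be the main obstacle. To establish it I would run a two-branch case analysis controlling the pair $(v_{n},w_{n})$. An a priori upper bound $x_{n+1}\le (a-b-1)-bx_{n-1}$ (from $-a\abs{x_{n}}\le 0$) shows that large positive excursions of $x$ can only be produced by large negative values of $x_{n-1}$, and avoiding $E$ is precisely what controls such negative excursions: on the left branch, not lying in $E$ means $v_{n}\ge v^{*}$, which together with the contraction of $w_{n}$ bounds $x_{n}$ from below. The delicate case is a step taken on the right branch $x_{n}>0$, where $\Lozi_{+}$ flips signs and can momentarily push $v_{n}$ below $v^{*}$ without the point being in $E$; there I would use the invariance and $\lambda$-expansion of the universal unstable cone $K^{U}$ (Theorem \ref{thm:Invaniance of the universal cones}) to show that such a point is carried back across $W^{S}(\boldsymbol{z}_{-})$ after one further step, so that as long as $E$ is avoided the pair $(v_{n},w_{n})$ cannot leave a fixed rectangle. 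This is also where the orientation-preserving hypothesis $b\ge 0$ enters: unlike the orientation-reversing situation of \cite{BSV09}, the fold images accumulate monotonically on one side, so the bookkeeping of which branch is visited differs and must be redone, even though the resulting trapping picture is the same.
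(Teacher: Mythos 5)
Your escape half is correct and in fact reproduces the paper's region $E$: the line $x-\mu y=v^{*}$ is exactly $W^{S}(\Lozi_{-},\boldsymbol{z}_{-})$, the invariance computation $v_{n+1}-v^{*}=\lambda(v_{n}-v^{*})$ on the left branch is right, and $x_{n}\rightarrow-\infty$ follows as you say. The problem is the confinement lemma, which you correctly identify as ``the heart of the matter'' and then do not prove. The paper establishes confinement by a completely explicit geometric argument: $T$ is the triangle bounded by $W^{S}(\Lozi_{-},\boldsymbol{z}_{-})$, $W^{U}(\Lozi_{-},\boldsymbol{z}_{-})$ and $\Lozi_{+}(W^{U}(\Lozi_{-},\boldsymbol{z}_{-}))$, the complement is cut into six further pieces $E,F,G,H,J,S$, and the transition diagram among them is checked piece by piece, forcing every orbit into $E$ or into $T$. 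Your substitute is the claim that, as long as $E$ is avoided, ``the pair $(v_{n},w_{n})$ cannot leave a fixed rectangle,'' to be derived from the universal unstable cone and the assertion that a right-branch point with $v<v^{*}$ is ``carried back across $W^{S}(\boldsymbol{z}_{-})$ after one further step.'' Neither assertion is justified, and the proposed mechanism is not the right one: cone invariance controls directions and expansion of difference vectors, not positions, so it cannot by itself produce a positional bound; and the rectangle must be uniform in the initial condition to yield a single compact $T$, which a one-step recovery statement does not give. As it stands, the second alternative of the theorem is not established.

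It is worth noting that the two inequalities you already wrote down do close this gap, without any cone argument. On the left branch, $(x_{n},x_{n-1})\notin E$ reads $x_{n}\geq\mu x_{n-1}-(1-\mu)$, and on the right branch $x_{n}>0$; by induction this gives $x_{n}\geq-M$ for all $n$ with $M=\max(1,|x_{0}|,|x_{-1}|)$ (using $0\leq\mu<1$ from Lemma \ref{lem:Expanding multiplier}), and then $x_{n+1}\leq(a-b-1)-bx_{n-1}\leq(a-b-1)+bM$ bounds the orbit above, so every $E$-avoiding orbit is bounded, albeit with an orbit-dependent bound. Setting $L=\liminf_{n}x_{n}$ and $U=\limsup_{n}x_{n}$ and reapplying the same two inequalities along subsequences gives $L\geq\mu L-(1-\mu)$ when $L<0$, hence $L\geq-1$ in all cases, and then $U\leq(a-b-1)-bL\leq a-1$; so $\omega(\boldsymbol{v})\subset[-1,a-1]^{2}$, a compact set independent of $\boldsymbol{v}$. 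With that bootstrap supplied, your route becomes a legitimate and arguably more elementary alternative to the paper's partition argument; without it, the confinement half is a genuine gap.
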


The set $T$ in the theorem is the trapping region described in \cite{Pr21},
and contains the trapping region defined in \cite{CL98}.
\begin{proof}
Let $\chi=W^{S}(\Lozi_{-},\boldsymbol{z}_{-})=\{x=f(y)\}$, $\phi_{1}=W^{U}(\Lozi_{-},\boldsymbol{z}_{-})=\{y=g_{1}(x)\}$,
and $\phi_{2}=\Lozi_{+}(\phi_{1})=\{y=g_{2}(x)\}$ where $f$, $g_{1}$,
and $g_{2}$ are affine maps. Clearly, $\chi$ and $\phi_{1}$ intersect
at $\boldsymbol{z}_{-}$; $\phi_{1}$ and $\phi_{2}$ intersect at
$(u_{\infty},0)$, where $u_{\infty}=\lambda-1>0$.

We claim that $\chi$ and $\phi_{2}$ intersect at the second quadrant.
If $b=0$, then $\chi$ and $\phi_{2}$ intersect at the point $(-1,1)$.
Suppose that $b>0$. Let $(0,j)$ be the intersection point of $\chi$
and the $y$-axis. Let $(0,k)$ be the intersection point of $\phi_{2}$
and the $y$-axis. We have $j=\frac{\lambda}{b}-1>0$ and $k=\frac{\lambda-1}{\lambda+\frac{2b}{\lambda}}>0$.
Compute 
\[
j-k>\frac{\lambda-1}{b}-\frac{\lambda-1}{\lambda}=(\lambda-1)\left(\frac{1}{b}-\frac{1}{\lambda}\right)>0.
\]
Thus, $\chi$ and $\phi_{2}$ intersect at the second quadrant. 

First, we partition the phase space. Let $T$ be the closed set enclosed
by $\chi$, $\phi_{1}$, and $\phi_{2}$. Let $E=\{x<f(y)\text{ and }x<0\}$,
$F=\{x>f(y),y<g_{1}(x)\text{, and }x\leq0\}$, $G=\{x>0,y<g_{1}(x)\text{, and }y\leq0\}$,
$H=\{y>0,y>g_{2}(x)\text{, and }x\geq0\}$, $J=\{x\geq f(y),y>g_{2}(x)\text{, and }x<0\}$,
and $S=\{x=f(y)\text{ and }y<-1\}$. See Figure \ref{fig:Partition of the phase space}
for an illustration. By definition, the sets $E$, $F$, $G$, $H$,
$J$, $S$, and $T$ form a partition of the phase space.

\begin{figure}
\begin{minipage}[c]{0.4\columnwidth}%
\center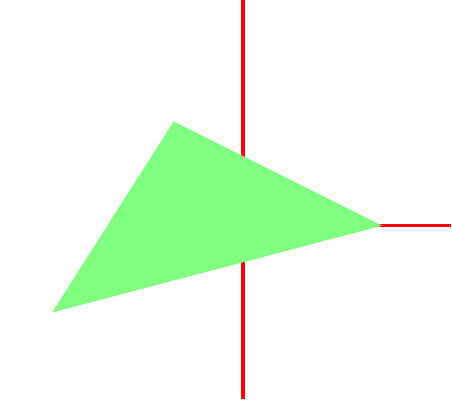

\caption{\label{fig:Partition of the phase space}A partition of the phase
space.}
\end{minipage}\hfill{}%
\begin{minipage}[c]{0.6\columnwidth}%
\[
S\rightarrow\text{converges to \ensuremath{z_{-}\in T}}
\]
\[
\xymatrix{ &  &  &  & E\ar[r] & \text{escapes to infinity}\\
F\ar[rr]^{\text{finite iterations}} &  & G\ar[r]\ar[dr]\ar[urr] & H\ar[d]\ar[dr]\ar[ur]\\
 &  &  & J\ar[r] & T\ar[uu]\ar@(dr,ur)
}
\]

\caption{\label{fig:Global dynamics}Trajectories in the partition.}
\end{minipage}

\end{figure}

Next, we study the iterations of the sets. The set $E$ is $\Lozi$-invariant
and $\lim_{n\rightarrow\infty}\pi_{1}\circ\Lozi^{n}(\boldsymbol{v})=\lim_{n\rightarrow\infty}\pi_{2}\circ\Lozi^{n}(\boldsymbol{v})=-\infty$
for all $\boldsymbol{v}\in E$. The set $S$ is $\Lozi$-invariant
and $\lim_{n\rightarrow\infty}\Lozi^{n}(\boldsymbol{v})=z_{-}$ for
all $\boldsymbol{v}\in S$. By definition, $\Lozi(F)=F\cup G$. In
fact, for all $\boldsymbol{v}\in F$, there exists $n>0$ such that
$\Lozi^{n}(\boldsymbol{v})\in G$. This is because $F$ is a subset
of a component separated by the stable and unstable manifolds of the
affine map $\Lozi_{-}$. Moreover, we have $\Lozi(G)\subset\{y>g_{2}(x)\text{ and }y>0\}\subset E\cup J\cup H$
and $\Lozi(J)\subset T$.

We claim that $\Lozi(H)\subset T\cup J\cup E$. Let $\phi_{3}=\Lozi_{+}(\phi_{2})=\{y=g_{3}(x)\}$
where $g_{3}$ is an affine map. First, the point $\Lozi_{+}(u_{\infty},0)$
is the intersection of $\phi_{2}$ and $\phi_{3}$. Compute 
\[
\pi_{1}\circ\Lozi_{+}(u_{\infty},0)=-(\lambda-1)\left(\lambda-1+\frac{2b}{\lambda}\right)<0.
\]
Thus, the intersection point lies on the second quadrant. Second,
let $m_{2}$ and $m_{3}$ be the slopes of $\phi_{2}$ and $\phi_{3}$
respectively. By Lemma \ref{lem:Invariance of the unstable cone},
we get $m_{3}<m_{2}<0$. Consequently, by the two consequences, we
conclude $\Lozi(H)\subset\{y<g_{3}(x)\text{ and }y\geq0\}\subset T\cup J\cup E$.

We claim that $\Lozi(T)\subset T\cup E$. We break the set $T$ into
two components: $T\cap\{x\leq0\}$ and $T\cap\{x>0\}$. We have $\Lozi(T\cap\{x\leq0\})\subset\Lozi(\{x\geq f(y),y\geq g_{1}(x)\text{, and }x\leq0\})=\{x\geq f(y),y\geq g_{1}(x)\text{, and }y\leq0\}\subset T$
and $\Lozi(T\cap\{x>0\})\subset\Lozi(\{x>0\text{ and }y\geq g_{1}(x)\})=\{y>0\text{ and }y\leq g_{2}(x)\}\subset T\cup E$.

Finally, the orbit of a point follows the diagram in Figure \ref{fig:Global dynamics}.
This proofs the theorem.
\end{proof}
The corollary follows immediately from the theorem and $\Lozi(T\backslash D)\subset E$
when $(a,b)\in\ParaModel$.
\begin{cor}
\label{cor:Invariant sets are bounded in D}Suppose that $(a,b)\in\ParaModel$.
If $K\subset\mathbb{R}^{2}$ is a bounded $\Lozi$-invariant set,
then $K\subset D\cap T$.
\end{cor}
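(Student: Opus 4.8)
The plan is to reduce the corollary to the two ingredients named in the sentence preceding it: a bounded invariant set must sit inside the trapping region $T$, and $\Lozi(T\setminus D)\subset E$. Since $a>3b+1\geq b+1$ for $b\geq 0$, we have $\ParaModel\subset\ParaFull$, so Theorem \ref{thm:global dynamics}, together with the partition $\{E,F,G,H,J,S,T\}$ of Figure \ref{fig:Partition of the phase space} and the transition diagram of Figure \ref{fig:Global dynamics}, is at our disposal. I read ``invariant'' as $\Lozi(K)=K$ (which is the relevant case, since the intended application is to periodic orbits).

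First I would establish $K\subset T$. As $K$ is bounded, no orbit in $K$ realizes alternative (1) of Theorem \ref{thm:global dynamics}, so every point of $K$ has a bounded forward orbit. Since $\Lozi(K)=K$, each $\boldsymbol{v}\in K$ also carries a full backward orbit inside the bounded set $K$. Reading Figure \ref{fig:Global dynamics} backwards, every transient region is reached only through $F$, and a point of $F$ has an unbounded backward orbit: $F$ lies between $\chi=W^{S}(\Lozi_{-},\boldsymbol{z}_{-})$ and $\phi_{1}=W^{U}(\Lozi_{-},\boldsymbol{z}_{-})$, and $\Lozi_{-}^{-1}$ expands along $\chi$, driving $|y|\to\infty$; likewise a point of $S\subset\chi$ runs off to $y=-\infty$ under backward iteration. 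Hence a bounded bi-infinite orbit can meet none of $E,F,G,H,J,S$, giving $K\subset T$.

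The substantive step is the geometric claim $\Lozi(T\setminus D)\subset E$. Recall $D=V(\beta_{\infty},\gamma_{\infty})$ with $\beta_{\infty}\subset W^{S}(\boldsymbol{z}_{-})=\chi$ and $\gamma_{\infty}=\Pi_{+}(\beta_{\infty})$, so $\Lozi_{+}(\gamma_{\infty})=\beta_{\infty}$. After checking $T\subset\mathbb{R}\times\Iv$ (its upper vertex $\chi\cap\phi_{2}$ sits at $y\leq 1$, equal to $1$ when $b=0$), and noting that the left boundary of $T$ on $\Iv$ is the segment $\beta_{\infty}\subset\chi$ that also bounds $D$, the set $T\setminus D$ reduces to the part of $T$ with $x\geq\gamma_{\infty}(y)$; by Lemma \ref{lem:Existence of beta_infty, gamma_infty} this lies in $\mathbb{R}_{+}\times\Iv$, the domain of the $\Lozi_{+}$ branch. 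On that branch $\Lozi_{+}$ is affine with $x$-component strictly decreasing in $x$, so it carries $\{x\geq\gamma_{\infty}(y)\}$ strictly into $\{x\leq\beta_{\infty}(y)\}$; since $\beta_{\infty}\subset\chi$ has $x<0$ throughout and $E=\{x<f(y),\ x<0\}$ is exactly the side of $\chi$ opposite $T$, these images land in $E$. I would then use Lemma \ref{lem:Existence of beta_infty, gamma_infty} and Corollary \ref{cor:The image of D} to dispatch the two horizontal boundary edges of $D$. This branch-by-branch verification that each piece of $T\setminus D$ is pushed across $\chi$ into $E$ is the main obstacle; the rest is bookkeeping.

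With both facts in hand the conclusion is immediate. Points of $E$ escape to infinity, so $K\cap E=\emptyset$. If some $\boldsymbol{v}\in K$ lay in $T\setminus D$, then $\Lozi(\boldsymbol{v})\in E$, while $\Lozi(\boldsymbol{v})\in\Lozi(K)=K$, forcing $\Lozi(\boldsymbol{v})\in E\cap K=\emptyset$, a contradiction. Hence $K\cap(T\setminus D)=\emptyset$, and combined with $K\subset T$ this yields $K\subset D\cap T$.
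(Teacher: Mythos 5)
Your proof is correct and follows exactly the route the paper indicates: it combines Theorem~\ref{thm:global dynamics} with the inclusion $\Lozi(T\backslash D)\subset E$, which is precisely the one-sentence justification the paper itself gives for this corollary. You have simply supplied the details the paper leaves implicit, namely the backward-orbit argument showing a bounded invariant set cannot meet the transient pieces $E,F,G,H,J,S$, and the verification that $T\backslash D$ lies in $\{x>\gamma_{\infty}(y)\}\subset\mathbb{R}_{+}\times\Iv$ so that $\Lozi_{+}$ pushes it across $\beta_{\infty}\subset\chi$ into $E$.
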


\bibliographystyle{alphaurl}
\phantomsection\addcontentsline{toc}{section}{\refname}\nocite{*}
\bibliography{Reference}

\begin{thebibliography}{HMT17}

\bibitem[Ben01]{Be01}
Ivar Bendixson.
\newblock Sur les courbes d{\'e}finies par des {\'e}quations
  diff{\'e}rentielles.
\newblock {\em Acta Math.}, 24:1--88, 1901.
\newblock \href {https://doi.org/10.1007/BF02403068}
  {\path{doi:10.1007/BF02403068}}.

\bibitem[B{\v{S}}21]{BS21}
Jan Boro{\'n}ski and Sonja {\v{S}}timac.
\newblock Densely branching trees as models for {H{\'e}non}-like and
  {Lozi}-like attractors.
\newblock 2021.
\newblock URL: \url{https://arxiv.org/abs/2104.14780}, \href
  {http://arxiv.org/abs/2104.14780} {\path{arXiv:2104.14780}}.

\bibitem[BSV09]{BSV09}
Diogo Baptista, Ricardo Severino, and Sandra Vinagre.
\newblock The basin of attraction of {Lozi} mappings.
\newblock {\em Int. J. Bifurcation Chaos}, 19(03):1043--1049, 2009.
\newblock \href {https://doi.org/10.1142/S0218127409023469}
  {\path{doi:10.1142/S0218127409023469}}.

\bibitem[CE80]{CE80}
Pierre Collet and Jean-Pierre Eckmann.
\newblock {\em Iterated maps on the interval as dynamical systems}.
\newblock Birkh{\"a}user, 1980.
\newblock \href {https://doi.org/10.1007/978-0-8176-4927-2}
  {\path{doi:10.1007/978-0-8176-4927-2}}.

\bibitem[CL98]{CL98}
Yongluo Cao and Zengrong Liu.
\newblock Strange attractors in the orientation-preserving {Lozi} map.
\newblock {\em Chaos Solitons and Fractals}, 9(11):1857--1864, 1998.
\newblock \href {https://doi.org/10.1016/S0960-0779(97)00180-X}
  {\path{doi:10.1016/S0960-0779(97)00180-X}}.

\bibitem[CP18]{CP18}
Sylvain Crovisier and Enrique Pujals.
\newblock Strongly dissipative surface diffeomorphisms.
\newblock {\em Comment. Math. Helv.}, 93(2):377--400, 2018.
\newblock \href {https://doi.org/10.4171/CMH/438} {\path{doi:10.4171/CMH/438}}.

\bibitem[Gal02]{Ga02}
Zbigniew Galias.
\newblock Obtaining rigorous bounds for topological entropy for discrete time
  dynamical systems.
\newblock In {\em Proc. Int. Symposium on Nonlinear Theory and its
  Applications}, pages 619--622, 2002.

\bibitem[Guc79]{Gu79}
John Guckenheimer.
\newblock Sensitive dependence to initial conditions for one dimensional maps.
\newblock {\em Commun. Math. Phys.}, 70(2):133--160, 1979.
\newblock \href {https://doi.org/10.1007/BF01982351}
  {\path{doi:10.1007/BF01982351}}.

\bibitem[H{\'e}n76]{He76}
Michel H{\'e}non.
\newblock A two-dimensional mapping with a strange attractor.
\newblock {\em Commun. Math. Phys.}, 50(1):69--77, 1976.
\newblock \href {https://doi.org/10.1007/BF01608556}
  {\path{doi:10.1007/BF01608556}}.

\bibitem[HMT17]{HMT17}
P.~Hazard, M.~Martens, and C.~Tresser.
\newblock Infinitely many moduli of stability at the dissipative boundary of
  chaos.
\newblock {\em Trans. Amer. Math. Soc.}, 370(1):27--51, sep 2017.
\newblock \href {https://doi.org/10.1090/tran/6940}
  {\path{doi:10.1090/tran/6940}}.

\bibitem[Ish97a]{Is97a}
Yutaka Ishii.
\newblock Towards a kneading theory for {Lozi} mappings {I}: A solution of the
  pruning front conjecture and the first tangency problem.
\newblock {\em Nonlinearity}, 10(3):731, 1997.
\newblock \href {https://doi.org/10.1088/0951-7715/10/3/008}
  {\path{doi:10.1088/0951-7715/10/3/008}}.

\bibitem[Ish97b]{Is97b}
Yutaka Ishii.
\newblock Towards a kneading theory for {Lozi} mappings. {II}: Monotonicity of
  the topological entropy and {Hausdorff} dimension of attractors.
\newblock {\em Commun. Math. Phys.}, 190(2):375--394, dec 1997.
\newblock \href {https://doi.org/10.1007/s002200050245}
  {\path{doi:10.1007/s002200050245}}.

\bibitem[KP02]{KP02}
Steven~G. Krantz and Harold~R. Parks.
\newblock {\em A primer of real analytic functions}.
\newblock Birkh{\"a}user Advanced Texts. Birkh{\"a}user, second edition, 2002.
\newblock \href {https://doi.org/10.1007/978-0-8176-8134-0}
  {\path{doi:10.1007/978-0-8176-8134-0}}.

\bibitem[Kuc21]{Pr21}
Przemys{\l}aw Kucharski.
\newblock Lozi maps.
\newblock 2021.

\bibitem[KY01]{KY01}
Judy Kennedy and James Yorke.
\newblock Topological horseshoes.
\newblock {\em Transactions of the American Mathematical Society},
  353(6):2513--2530, 2001.
\newblock \href {https://doi.org/10.1090/S0002-9947-01-02586-7}
  {\path{doi:10.1090/S0002-9947-01-02586-7}}.

\bibitem[Lax02]{La02}
Peter~D. Lax.
\newblock {\em Functional analysis}.
\newblock Wiley, 2002.

\bibitem[Leo59]{Le59}
N.~N. Leonov.
\newblock On a pointwise mapping of a line into itself (in {Russian}).
\newblock {\em Radiofisika}, 2:942--956, 1959.

\bibitem[Lor63]{Lo63}
Edward~N. Lorenz.
\newblock Deterministic nonperiodic flow.
\newblock {\em J. Atmos. Sci.}, 20(2):130--148, March 1963.
\newblock \href {https://doi.org/10.1175/1520-0469(1963)020<0130:DNF>2.0.CO;2}
  {\path{doi:10.1175/1520-0469(1963)020<0130:DNF>2.0.CO;2}}.

\bibitem[Loz78]{Lo78}
Ren{\'e} Lozi.
\newblock Un attracteur {\'e}trange (?) du type attracteur de {H{\'e}non}.
\newblock {\em Le Journal de Physique Colloques}, 39(C5):9--10, aug 1978.
\newblock \href {https://doi.org/10.1051/jphyscol:1978505}
  {\path{doi:10.1051/jphyscol:1978505}}.

\bibitem[LY75]{LY75}
Tien-Yien Li and James~A. Yorke.
\newblock Period three implies chaos.
\newblock {\em Amer. Math. Monthly}, 82(10):985--992, 1975.
\newblock \href {https://doi.org/10.2307/2318254} {\path{doi:10.2307/2318254}}.

\bibitem[Mis80]{Mi80}
Micha{\l} Misiurewicz.
\newblock Strange attractors for the {Lozi} mappings.
\newblock {\em Annals of the New York Academy of Sciences}, 357(1):348--358,
  1980.
\newblock \href {https://doi.org/10.1111/j.1749-6632.1980.tb29702.x}
  {\path{doi:10.1111/j.1749-6632.1980.tb29702.x}}.

\bibitem[M{\v{S}}16]{MS16}
Micha{\l} Misiurewicz and Sonja {\v{S}}timac.
\newblock Symbolic dynamics for {Lozi} maps.
\newblock {\em Nonlinearity}, 29(10):3031, 2016.
\newblock \href {https://doi.org/10.1088/0951-7715/29/10/3031}
  {\path{doi:10.1088/0951-7715/29/10/3031}}.

\bibitem[M{\v{S}}18]{MS18}
Micha{\l} Misiurewicz and Sonja {\v{S}}timac.
\newblock Lozi-like maps.
\newblock {\em Discrete Contin. Dynam. Systems}, 38(6):2965--2985, 2018.
\newblock \href {https://doi.org/10.3934/dcds.2018127}
  {\path{doi:10.3934/dcds.2018127}}.

\bibitem[MT88]{MT88}
John Milnor and William Thurston.
\newblock On iterated maps of the interval.
\newblock In James~C. Alexander, editor, {\em Dynamical Systems}, pages
  465--563, Berlin, Heidelberg, 1988. Springer Berlin Heidelberg.
\newblock \href {https://doi.org/10.1007/BFb0082847}
  {\path{doi:10.1007/BFb0082847}}.

\bibitem[New70]{Ne70}
Sheldon~E. Newhouse.
\newblock Nondensity of axiom {A(a)} on $\mathcal{S}^{2}$.
\newblock In S.~S.~Chern S.~Smale, editor, {\em Global analysis}, volume~14 of
  {\em Proceedings of Symposia in Pure Mathematics}, pages 191--202. American
  Mathematical Soc., 1970.
\newblock \href {https://doi.org/10.1090/pspum/014/0277005}
  {\path{doi:10.1090/pspum/014/0277005}}.

\bibitem[New74]{Ne74}
Sheldon~E. Newhouse.
\newblock Diffeomorphisms with infinitely many sinks.
\newblock {\em Topology}, 13(1):9--18, 1974.
\newblock \href {https://doi.org/10.1016/0040-9383(74)90034-2}
  {\path{doi:10.1016/0040-9383(74)90034-2}}.

\bibitem[New79]{Ne79}
Sheldon~E. Newhouse.
\newblock The abundance of wild hyperbolic sets and non-smooth stable sets for
  diffeomorphisms.
\newblock {\em Publications Math{\'e}matiques de l'IH{\'E}S}, 50:101--151,
  1979.

\bibitem[NY92]{NY92}
Helena~E. Nusse and James~A. Yorke.
\newblock Border-collision bifurcations including "period two to period three"
  for piecewise smooth systems.
\newblock {\em Physica D: Nonlinear Phenomena}, 57(1):39--57, 1992.
\newblock \href {https://doi.org/10.1016/0167-2789(92)90087-4}
  {\path{doi:10.1016/0167-2789(92)90087-4}}.

\bibitem[Ou21]{Ou21}
Dyi-Shing Ou.
\newblock Transitions from one- to two-dimensional dynamics.
\newblock Presented at the First Dynamical Systems Summer Meeting, B{\c
  e}dlewo, Poland, August 2021.
\newblock URL: \url{https://youtu.be/3RhnoM-3KYc}.

\bibitem[Poi81]{Po81}
Henri Poincar{\'e}.
\newblock M{\'e}moire sur les courbes d{\'e}finies par une {\'e}quation
  diff{\'e}rentielle.
\newblock {\em Journal de Math{\'e}matiques Pures et Appliqu{\'e}es},
  7:375--422, 1881.
\newblock URL: \url{http://eudml.org/doc/235914}.

\bibitem[Poi82]{Po82}
Henri Poincar{\'e}.
\newblock M{\'e}moire sur les courbes d{\'e}finies par une {\'e}quation
  diff{\'e}rentielle.
\newblock {\em Journal de Math{\'e}matiques Pures et Appliqu{\'e}es},
  8:251--296, 1882.
\newblock URL: \url{http://eudml.org/doc/234359}.

\bibitem[Rob83]{Ro83}
Clark Robinson.
\newblock Bifurcation to infinitely many sinks.
\newblock {\em Commun. Math. Phys.}, 90(3):433--459, 1983.
\newblock \href {https://doi.org/10.1007/BF01206892}
  {\path{doi:10.1007/BF01206892}}.

\bibitem[Sha64]{Sa64}
Oleksandr~Mykolayovych Sharkovsky.
\newblock Coexistence of cycles of a continuous mapping of the line into itself
  (in {Russian}).
\newblock {\em Ukrain. Mat. Zh.}, 16(1):61--71, 1964.

\bibitem[Sin78]{Si78}
David Singer.
\newblock Stable orbits and bifurcation of maps of the interval.
\newblock {\em SIAM J. Appl. Math.}, 35(2):260--267, 1978.
\newblock \href {https://doi.org/10.1137/0135020} {\path{doi:10.1137/0135020}}.

\bibitem[SS10]{St10}
Elias~M. Stein and Rami Shakarchi.
\newblock {\em Complex analysis}, volume~2.
\newblock Princeton University Press, 2010.

\end{thebibliography}

\end{document}